\numberwithin{equation}{section}
\newtheorem{theorem}{Theorem}[section]
\newtheorem{definition}[theorem]{Definition}
\newtheorem{proposition}[theorem]{Proposition}
\newtheorem{corollary}[theorem]{Corollary}
\newtheorem{lemma}[theorem]{Lemma}
\newtheorem{remark}[theorem]{Remark}
\newtheorem{question}[theorem]{Question}
\newcommand{\cali}[1]{\mathscr{#1}}
\newcommand{\supp}{{\rm supp}}
\newcommand{\dist}{{\rm dist}}
\newcommand{\id}{{\rm id}}
\newcommand{\reg}{{\rm reg}}
\newcommand{\codim}{{\rm codim\ \!}}
\newcommand{\Cc}{\cali{C}}
\newcommand{\Dc}{\cali{D}}
\newcommand{\Lc}{\cali{L}}
\newcommand{\Qc}{\cali{Q}}
\newcommand{\Uc}{\cali{U}}
\newcommand{\C}{\mathbb{C}}
\newcommand{\N}{\mathbb{N}}
\newcommand{\R}{\mathbb{R}}
\renewcommand{\P}{\mathbb{P}}
\newcommand{\dsh}{{\mathrm{DSH}}}
\newcommand{\Xf}{{\mathfrak{X}}}
\newcommand{\Xfh}{{\widehat{\mathfrak{X}}}}
\newcommand{\dyn}{\mathrm{dyn}}
\newcommand{\pmain}{p_\mathrm{main}}
\title[]{Inverse images of positive closed currents\\ under holomorphic endomorphisms of compact K\"ahler manifolds}
\author{Taeyong Ahn}
\address{(Ahn) Department of Mathematics Education, Inha University, 100 Inha-ro, Michuhol-gu, Incheon 22212, Republic of Korea}%
\email{t.ahn@inha.ac.kr}
\date{\today}
\keywords{equidistribution, compact K\"ahler manifold, holomorphic endomorphism, positive closed current}
\begin{document}
\begin{abstract}
	We prove that for a surjective holomorphic endomorphism $f$ of a compact K\"ahler manifold $X$ of dimension $k\ge 2$ and for some integer $p$ with $1\le p\le k$, there exists a proper invariant analytic subset $E$ for $f$ such that if a positive closed $(p, p)$-current $S$ can be represented by a smooth form in a neighborhood of $E$, the sequence $d_p^{-n}(f^n)^*(S-\alpha_S)$ converges to $0$ exponentially fast in the sense of currents, where $d_p$ is the dynamical degree of order $p$ and $\alpha_S$ is a smooth closed $(p, p)$-form in the de Rham cohomology class of $S$.
\end{abstract}
\maketitle

\section{Introduction}
Let $(X, \omega)$ be a compact K\"ahler manifold of complex dimension $k\ge 2$ such that $\int_X\omega^k=1$. Let $f:X\to X$ be a surjective holomorphic map. For $0\le p\le k$, the dynamical degree $d_p$ of order $p$ of $f$ is the spectral radius of the pull-back operator $f^*$ acting on the Hodge cohomology group $H^{p, p}(X, \C)$. It is known that $d_p$ itself is an eigenvalue of $f^*$ on $H^{p, p}(X, \C)$. An inequality due to Khovanskii, Teissier and Gromov (\cite{DN}, \cite{Gromov}) implies that the function $p\to \log d_p$ is concave on $0\le p\le k$. In particular, there are integers $\pmain$ and $\pmain'$ with $0\le \pmain\le \pmain'\le k$ such that
\begin{displaymath}
	d_0<\cdots<d_{\pmain}=\cdots=d_{\pmain'}>\cdots>d_k.
\end{displaymath}
We always have $d_0=1$. The last dynamical degree $d_k$ is also called the topological degree of $f$ because it is equal to the cardinality of $f^{-1}(x)$ for a generic point $x$ in $X$. We call $d_{\pmain}$ the main dynamical degree of $f$.
\medskip

The aim of  this paper is to study the dynamics of holomorphic endomorphisms on compact K\"ahler manifolds by proving the following theorem:
\begin{theorem}
	\label{thm:main}
	Let $(X, \omega)$ be a compact K\"ahler manifold of dimension $k\ge 2$ and $f:X\to X$ a surjective holomorphic endomorphism. Let $1 \le p\le k$ be an integer such that $d_{p-1}<d_p$. Then, there exists a proper (possibly empty) analytic subset $E$ invariant under $f$ such that 
	if a positive closed $(p, p)$-current $S$ can be represented by a smooth form in a neighborhood of $E$, we have
	\begin{align*}
		d_p^{-n}(f^n)^*(S-\alpha_S) \to 0
	\end{align*}
	exponentially fast in the sense of currents where $\alpha_S$ is a smooth closed $(p, p)$-form in the de Rham cohomology class $\{S\}$ of $S$. More precisely, there exist constants $c>0$ and $0<\rho<1$ such that for a smooth test $(k-p, k-p)$-form $\varphi$, we have
	\begin{align*}
		\left|\langle d_p^{-n}(f^n)^*(S-\alpha_S), \varphi \rangle \right|\le c\rho^n\|\varphi\|_{C^2}.
	\end{align*}
\end{theorem}

The equidistribution of inverse images of a positive closed current under holomorphic or meromorphic endomorphisms has been extensively researched on the complex projective space $\P^k$. To list a few, \cite{Lyubich}, \cite{eigenvaluations}, \cite{FS-II}, \cite{DS05}, \cite{DS09}, \cite{DS10}, \cite{Taflin}, \cite{Ahn21} and references therein.
\smallskip

However, it has not been studied much on compact K\"ahler manifolds. For meromorphic maps and for $p=k$, see \cite{DNT}. For holomorphic endomorphisms, non-pluripolar products were considered in \cite{AV}. For holomorphic automorphisms, see \cite{DS10-1}, \cite{dTD}. See also \cite{LV}, \cite{Vergamini}. Notice that our theorem works for general bidegrees and also for holomorphic automorphisms as well.
\smallskip

In \cite{Ahn16}, the following was proved on $\P^k$:
\begin{theorem}[Theorem 1.3 in \cite{Ahn16}]\label{thm:main_ahn16}
	Let $f:\P^k\to\P^k$ be a surjective holomorphic endomorphism of algebraic degree $d\ge 2$.
	Then, there is a proper (possibly empty) invariant analytic subset $E$ for $f$ such that if a positive closed $(p, p)$-current $S$ can be represented by a smooth form in a neighborhood of $E$,
	$d^{-pn}(f^n)^*(S)$ converges to $c_ST^p$ exponentially fast in the sense of currents where $c_S=\langle S, \omega^{k-p}\rangle$, $T^p=\lim_{n\to\infty}d^{-pn}(f^n)^*(\omega^p)$ is the Green current of order $p$ associated with $f$, and $\omega$ is the Fubini-Study form normalized so that $\int_{\P^k}\omega^k=1$.
\end{theorem}
One important property of a holomorphic endomorphism $f$ of $\P^k$ is that the graph of $f^{-1}$ is a holomorphic correspondence. Intuitively, if a point is not in the Julia set, its forward images accumulate near the attracting set under iteration. So, if a point is not trapped in the attracting set, iterates push its inverse images away from the attracting set to the boundary of the Fatou set, which is in the Julia set. Since $f^{-1}$ is a holomorphic correspondence, once the initial current has no mass on the attracting set, then its inverse images never have mass on the attracting set due to the work in \cite{DS07}. The condition in Theorem \ref{thm:main_ahn16} is a sufficient condition that the initial current has no mass on the attracting set. Therefore, all the mass moves towards the Julia set. Theorem \ref{thm:main_ahn16} is a quantitative explanation of this phenomenon. Technically, this property appears in the availability of the Lojasiewicz type inequality as in Lemma 3.3 in \cite{Ahn16}. Then, using the Lojasiewicz type inequality, we can control the influence from the set $E$, which appears in the H\"older continuity of the quasi-potential of $f_*(\omega)$.
\smallskip

On a general compact K\"ahler manifold, \cite[Lemma 4.7]{DNV} guarantees that the same is true and motivates us to generalize the work in \cite{Ahn16} to general compact K\"ahler manifolds.
\begin{lemma}
	[Lemma 4.7 in \cite{DNV}] Let $f$ be a surjective holomorphic map from $X$ to $X$. Then $f^{-1}$ is a holomorphic correspondence.
\end{lemma}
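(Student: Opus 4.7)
\medskip

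\noindent\textbf{Proof proposal.} The plan is to exhibit the graph of $f$ itself, with the two factor projections swapped, as the analytic graph of the correspondence $f^{-1}$.

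First, I would set $\Gamma_f:=\{(x,y)\in X\times X:y=f(x)\}$. Since $f$ is holomorphic, $\Gamma_f$ is an analytic subvariety of $X\times X$, and the first projection $\pi_1\colon\Gamma_f\to X$, $(x,y)\mapsto x$, is a biholomorphism; in particular $\Gamma_f$ is smooth of pure dimension $k$. The second projection $\pi_2\colon\Gamma_f\to X$, $(x,y)\mapsto y$, is proper by compactness of $X$ and surjective because $f$ is surjective. Re-interpreting $\Gamma_f$ with the roles of $\pi_1$ and $\pi_2$ reversed gives precisely the graph of $f^{-1}$ as a multivalued map from $X$ to $X$. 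The push--pull operators $(f^{-1})^{*}=(\pi_1)_{*}\pi_2^{*}$ and $(f^{-1})_{*}=(\pi_2)_{*}\pi_1^{*}$ attached to the analytic cycle $[\Gamma_f]$ then agree with $f_{*}$ and $f^{*}$ respectively, confirming that the correspondence $f^{-1}$ acts on smooth forms and positive closed currents in the expected way.

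The technical point that makes the correspondence holomorphic rather than merely meromorphic is that $\pi_2$ should have finite (zero-dimensional) fibers. Since $\dim X=\dim X$ and $f$ is surjective, $f$ is automatically generically finite of some degree $d_k\ge 1$, and by properness it is a finite analytic branched covering above the complement of a proper analytic subset. The main obstacle is ruling out positive-dimensional fibers globally: any positive-dimensional irreducible component $Z$ of some fiber $f^{-1}(y_0)$ would lie in the critical locus $\{df\text{ not invertible}\}$, and one has to exclude this. Here I would invoke the K\"ahler hypothesis: the form $f^{*}\omega$ is smooth and semi-positive with $\int_X(f^{*}\omega)^k=d_k\int_X\omega^k>0$, while its restriction to a contracted subvariety $Z$ vanishes identically, and the projection formula applied to the effective cycle $[Z]$ gives $(f^{*}[\omega])^{\dim Z}\cdot[Z]=[\omega]^{\dim Z}\cdot f_{*}[Z]=0$; together with the positivity of $[\omega]^{\dim Z}\cdot[Z]$ and the nefness of $f^{*}[\omega]$, this type of argument---iterated along the endomorphism $f$ if necessary---is the mechanism that excludes contracted positive-dimensional subvarieties and therefore yields finiteness of every fiber of $\pi_2$.

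Once the finiteness of fibers of $\pi_2$ is secured, the three ingredients---analyticity and pure dimensionality of $\Gamma_f$, properness of both projections, and finiteness of $\pi_2$---together give exactly the definition of a holomorphic correspondence for $f^{-1}$, completing the proof. The crux of the argument, and the step that really uses the compact K\"ahler hypothesis rather than just the holomorphicity of $f$, is the exclusion of positive-dimensional contracted fibers.
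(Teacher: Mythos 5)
Your reduction of the lemma to the finiteness of the fibers of $f$ is the right move, and your description of the graph construction is correct. The paper itself does not reprove the lemma (it only cites DNV), so I can only assess your argument on its own merits, and there is a genuine gap in the one step you yourself flag as the crux.

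The issue is in the argument excluding positive-dimensional contracted fibers. The projection-formula computation
\[
\big(f^*\{\omega\}\big)^{q}\cdot[Z]=\{\omega\}^{q}\cdot f_*[Z]=0,\qquad q=\dim Z\ge 1,
\]
is fine, but you then claim that combining this with the positivity of $\{\omega\}^{q}\cdot[Z]$ and the nefness of $f^*\{\omega\}$, ``iterated along the endomorphism $f$ if necessary,'' produces a contradiction. It does not, at least not as stated: the two intersection numbers refer to \emph{different} cohomology classes, $f^*\{\omega\}$ and $\{\omega\}$, and a nef class can perfectly well have zero intersection number with a proper subvariety. A useful sanity check is the blow-up $\pi:\widetilde{X}\to\mathbb{P}^2$ at a point with exceptional curve $E$: the class $\pi^*\{\omega_{\mathrm{FS}}\}$ is nef with $\pi^*\{\omega_{\mathrm{FS}}\}\cdot[E]=0$, while $\{\omega_{\widetilde{X}}\}\cdot[E]>0$ for any K\"ahler form on $\widetilde{X}$. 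This is exactly the configuration of intersection numbers you invoke, and there is no contradiction there. Of course $\pi$ is not a self-map, so it is not a counterexample to the lemma; but it shows your argument, which never uses that $f$ maps $X$ to \emph{itself}, cannot be complete.

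The ingredient that distinguishes a surjective endomorphism from a blow-down, and which any correct proof has to exploit, is that $f^*$ is a linear \emph{bijection} of $H^{1,1}(X,\mathbb{R})$. (Injectivity follows from $f_*f^*=d_k\,\mathrm{id}$; bijectivity then follows from finite-dimensionality.) Your proof never uses this, nor does it actually carry out the promised iteration, which is why the conclusion does not follow. You would need to replace the sentence beginning ``together with the positivity$\ldots$'' by a genuine argument, e.g.\ one that passes through the invertibility of $f^*$ on $H^{1,1}(X,\mathbb{R})$ and an invariant nef or K\"ahler class, or through the Stein factorization of $f$.

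A secondary, smaller point: whether the lemma even requires finiteness of fibers depends on the precise definition of ``holomorphic correspondence'' being used (it is taken from DS07/DNV and not restated in this paper). If only surjectivity and generic finiteness of the two projections were required, the statement would be immediate from the graph construction alone and your whole discussion of contracted fibers would be unnecessary. Your interpretation that finiteness is what separates ``holomorphic'' from ``meromorphic'' correspondences here is plausible and consistent with the way the lemma is used later in the paper, but it would be worth confirming it against the actual definition before investing in the finiteness argument.
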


However, there are differences between projective spaces and compact K\"ahler manifolds. Firstly, general compact K\"ahler manifolds may not have many automorphisms and therefore, it is not clear that positive closed currents can be approximated by smooth positive closed currents. Secondly, the Green potential (see Subsection \ref{sec:superpotentials} for the definition) may not be negative. In addition, the cohomology groups of a general compact K\"ahler manifold are not simple compared to those of a complex projective space. Lastly, the existence of the Green current is not clear.
\medskip

In this work, for the lack of symmetry, we quantify the approximation theorem of positive closed currents introduced by Dinh-Sibony in \cite{DS04} and \cite{DS10-1} so as to control the $C^1$-norm of approximating smooth closed currents. Compared to the case of $\P^k$, approximating smooth closed currents may not be positive. So, we need additional estimates in the form of (semi-)regular transforms. In this work, we use the Green potential kernel induced from the work by Bost-Gillet-Soul\'e in \cite{BGS}. In general, the Green potential is not negative and therefore, we had to extend the estimates for the Green quasi-potential in \cite{DS09} and \cite{Ahn16} to (semi-)regular transforms of positive closed currents. Since the cohomology groups may be complicated, the mass of a positive closed current may not behave as nicely as in the case of $\P^k$. So, we normalize the inverse image of a positive closed current with the dynamical degree. Since the existence of the Green current is not clear, we rather observe the current $S-\alpha_S$ instead of $S$ and later in Section \ref{sec:Green}, we consider sufficient conditions for the Green current to exist.
\medskip

Summarizing the discussion in Section \ref{sec:Green}, we obtain the following:



\begin{theorem}\label{thm:almost_simple}
	Let $(X, \omega)$ be a compact K\"ahler manifold of dimension $k\ge 2$ and $f:X\to X$ a surjective holomorphic endomorphism such that that $d_{p-1}<d_p$ and $d_p$ is a simple eigenvalue of $f^*:H^{p, p}(X, \C)\to H^{p, p}(X, \C)$ and that other eigenvalues of $f^*$ on $H^{p, p}(X, \C)$ have modulus strictly less than $d_p$.
	Then, the Green current of order $p$, that is, $T^+_p:=\lim_{n\to\infty}d_p^{-n}(f^n)^*\omega^p$, exists and there exists a proper analytic subset $E$ invariant under $f$ such that for every positive closed $(p, p)$-current $S$ smooth in a neighborhood of $E$, we have
	\begin{align*}
		d_p^{-n}(f^n)^*S \to c_ST^+_p
	\end{align*}
	exponentially fast in the sense of currents where $T^+_p:=\lim_{n\to\infty}d_p^{-n}(f^n)^*\omega^p$ is the Green current of order $p$, 
	and $c_S=\lim_{n\to\infty}\frac{ (f^n)^*\{S\}\smile\{\omega^{k-p}\}}{d_p^{n} \{T^+_p\}\smile\{\omega^{k-p}\}}$. In particular, when the action of $f^*$ on cohomology is simple and $p$ is the order of the main dynamical degree ($p=\pmain$), we have $c_S=\frac{\left\langle S, T_p^-\right\rangle}{\left\langle \omega^p, T_p^-\right\rangle}$ where $T_p^-:=\lim_{n\to\infty}d_p^{-n}(f^n)_*\omega^{k-p}$.
\end{theorem}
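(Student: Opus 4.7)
The plan is to combine Theorem~\ref{thm:main} with a spectral decomposition of $f^*$ on $H^{p,p}(X,\C)$ and with the semi-regular transform machinery underlying the proof of Theorem~\ref{thm:main}.

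\emph{Spectral setup.} I would first invoke the hypothesis on $d_p$ to produce an $f^*$-stable direct sum decomposition
\[
H^{p,p}(X,\C) = \C\, e^+ \oplus V, \qquad f^* e^+ = d_p\, e^+, \qquad \rho\!\left(f^*|_V\right) < d_p,
\]
and let $\pi$ be the projection onto $\C e^+$. Via Poincar\'e duality $H^{p,p}(X,\C) \otimes H^{k-p,k-p}(X,\C) \to \C$, the projection $\pi$ is computed by cup product with a corresponding left eigenvector $\kappa$ of $f_*$ on $H^{k-p,k-p}(X,\C)$; when the action of $f$ is simple on cohomology and $\pf$ is the order of the main dynamical degree, $\kappa$ is a scalar multiple of $\{T^-\}$, which is what will produce the last formula for $c_S$.

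\emph{Construction of $T^+_p$.} Because $\omega^p$ is smooth on $X$, Theorem~\ref{thm:main} applied to $S = \omega^p$ and any smooth closed representative $\alpha_{\omega^p}$ of $\{\omega^p\}$ yields $d_p^{-n}(f^n)^*(\omega^p - \alpha_{\omega^p}) \to 0$ exponentially, so existence of $T^+_p$ reduces to the convergence of $d_p^{-n}(f^n)^*\alpha_{\omega^p}$ for one well chosen representative. I would write $\{\omega^p\} = a\, e^+ + v$ with $v \in V$, pick smooth closed $(p,p)$-forms $\Omega, \beta$ representing $a e^+$ and $v$ respectively, and set $\alpha_{\omega^p} := \Omega + \beta$. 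The $\ddbar$-lemma produces a smooth $(p-1,p-1)$-form $u$ with $f^*\Omega - d_p \Omega = \ddbar u$, which iterates to the telescoping identity
\[
d_p^{-n}(f^n)^*\Omega = \Omega + \ddbar\!\Bigl(\sum_{m=0}^{n-1} d_p^{-(m+1)} (f^m)^* u\Bigr),
\]
whose geometric summability in the sense of currents will follow from the semi-regular transform estimates developed in the paper. For the $V$-component, the classes $d_p^{-n}(f^n)^*v$ decay in cohomology at rate $(\rho/d_p)^n$; I would pick smooth closed representatives $\gamma_n$ with $\mathcal{C}^2$-norm decaying at the same rate (possible since $V$ is finite-dimensional) and control the cohomologically trivial remainder $d_p^{-n}(f^n)^*\beta - \gamma_n$ by the same potential estimates, obtaining $d_p^{-n}(f^n)^*\beta \to 0$ exponentially. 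Combining these pieces defines $T^+_p$ with $\{T^+_p\} = a\, e^+$ and an exponential rate.

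\emph{General $S$ and the constant $c_S$.} For an $S$ smooth in a neighborhood of $E$, write $\{S\} = c\, e^+ + v_S$. A direct computation using the spectral decomposition yields
\[
\lim_n \frac{(f^n)^*\{S\} \smile \{\omega^{k-p}\}}{d_p^n\, \{T^+_p\} \smile \{\omega^{k-p}\}} = \frac{c}{a} =: c_S,
\]
reducing to $\langle S, T^-\rangle/\langle \omega^{\pf}, T^-\rangle$ in the simple-action case via $\kappa = \{T^-\}$. Setting $\alpha_S := c_S\, \alpha_{\omega^p} + \beta_S$ with $\beta_S$ a smooth closed representative of $\{S\} - c_S \{\omega^p\} \in V$, Theorem~\ref{thm:main} gives $d_p^{-n}(f^n)^*(S - \alpha_S) \to 0$ exponentially, while the argument of the previous paragraph applied to $\alpha_{\omega^p}$ and to $\beta_S$ gives $d_p^{-n}(f^n)^*\alpha_S \to c_S T^+_p$ exponentially. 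Adding these two limits finishes the proof.

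\emph{Main obstacle.} The delicate step will be the quantitative upgrade from exponential decay in cohomology to exponential decay as currents, for both the $\ddbar$-telescoping series in the $e^+$-direction and the remainder in the $V$-direction. Both rely on a careful choice of smooth closed representatives of decaying cohomology classes, together with the paper's extension of the Dinh--Sibony Green quasi-potential estimates to (generally non-positive) semi-regular transforms.
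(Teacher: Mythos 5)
Your proposal takes a genuinely different route from the paper. The paper does not construct $T^+_p$ from scratch: it cites Proposition 5.5 in \cite{DNV} (stated as Proposition \ref{prop:construction_Green}) for the existence of $T^+$ with continuous superpotentials and Lemma 4.1 in \cite{AV} for the exponential convergence $d_\pf^{-n}(f^n)^*(\alpha_S)\to c_{\alpha_S}T^+$, and then simply notes that the same proofs go through under the weaker ``almost simple'' hypothesis on $H^{p,p}$ alone, after which Theorem \ref{thm:main} finishes the job. You instead re-derive those ingredients via a spectral splitting $H^{p,p}=\C e^+\oplus V$ and a $\ddbar$-telescoping. That route is more self-contained and makes the mechanism visible, at the cost of redoing work that the paper deliberately outsources.

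However, your handling of the $V$-component has a real gap. You write $\{\omega^p\}=a\,e^++v$, pick a smooth closed representative $\beta$ of $v$, choose smooth closed representatives $\gamma_n$ of $d_p^{-n}(f^n)^*v$ with small $C^2$-norm, and propose to ``control the cohomologically trivial remainder $d_p^{-n}(f^n)^*\beta-\gamma_n$ by the same potential estimates.'' This does not work as stated: the Green potential estimate of Proposition \ref{prop:Green_kernel} bounds $\|U_{R}\|_{L^{1+1/k}}$ by $\|R\|_*$, and $\|d_p^{-n}(f^n)^*\beta-\gamma_n\|_*$ is merely bounded, not decaying (the mass of $d_p^{-n}(f^n)^*(\beta+C\omega^p)$ and of $Cd_p^{-n}(f^n)^*\omega^p$ is controlled by cohomology classes whose norms are bounded, but do not go to zero). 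So the decay of the cohomology classes in $V$ does not by itself give decay of the currents. The correct fix is to run exactly the same $\ddbar$-telescoping you used in the $e^+$-direction, but for each generalized eigenvector (Jordan block) of $f^*\vert_V$: if $f^*\beta_i-\lambda_i\beta_i=\ddbar v_i$ for a smooth $v_i$, then
\begin{align*}
d_p^{-n}(f^n)^*\beta_i-\left(\frac{\lambda_i}{d_p}\right)^n\beta_i
= d_p^{-n}\,\ddbar\!\left(\sum_{m=0}^{n-1}\lambda_i^{\,n-1-m}(f^m)^*v_i\right),
\end{align*}
and the $L^1$-norm of the potential on the right is $\lesssim d_p^{-n}\sum_{m=0}^{n-1}|\lambda_i|^{n-1-m}d_{p-1}^m$, which decays exponentially since $|\lambda_i|<d_p$ and $d_{p-1}<d_p$ (with a polynomial factor when $|\lambda_i|=d_{p-1}$, and with upper-triangular terms handled recursively for Jordan blocks). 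With that correction, plus a verification that $a\neq 0$ (needed for $c_S=c/a$ to make sense, and guaranteed because $e^+$ is the dominant direction and $\omega^p$ is K\"ahler), your argument goes through and yields the exponential rate directly, without needing the superpotential continuity of $T^+_p$ as an intermediate step.
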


\begin{corollary}
	Assume $f$ and $E$ as in Theorem \ref{thm:almost_simple}. Then, if an analytic subset $H$ of pure codimension $(p, p)$ does not meet $E$, then \begin{align*}
		d_p^{-n}(f^n)^*[H] \to c_HT^+_p
	\end{align*}
	exponentially fast in the sense of currents where $[H]$ denotes the current of integration on $H$ and $c_H=\lim_{n\to\infty}\frac{ (f^n)^*\{[H]\}\smile\{\omega^{k-p}\}}{d_p^{n} \{T^+_p\}\smile\{\omega^{k-p}\}}$.
\end{corollary}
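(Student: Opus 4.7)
The plan is to deduce the corollary as a direct specialization of Theorem~\ref{thm:almost_simple} applied to the current of integration $S:=[H]$. The only thing to verify is that $[H]$ falls within the hypotheses of the theorem, namely that it is a positive closed $(p,p)$-current which is smooth in a neighborhood of the invariant analytic set~$E$.

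First, I would recall that since $H$ is an analytic subset of pure codimension $p$, the current of integration $[H]$ is a well-defined positive closed $(p,p)$-current on $X$ (this is Lelong's theorem). Next, the hypothesis $H\cap E=\varnothing$ together with the fact that $H$ and $E$ are both closed analytic (hence closed) subsets of the compact manifold $X$ gives an open neighborhood $U$ of $E$ with $U\cap H=\varnothing$. On $U$ the current $[H]$ is identically zero, which is in particular a smooth form. Thus $[H]$ is smooth in a neighborhood of $E$, and Theorem~\ref{thm:almost_simple} applies with $S=[H]$.

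Applying the theorem then yields the exponentially fast convergence
\begin{equation*}
d_p^{-n}(f^n)^*[H]\longrightarrow c_{[H]}\, T^+_p
\end{equation*}
in the sense of currents, with
\begin{equation*}
c_{[H]}=\lim_{n\to\infty}\frac{(f^n)^*\{[H]\}\smile\{\omega^{k-p}\}}{d_p^{n}\,\{T^+_p\}\smile\{\omega^{k-p}\}}.
\end{equation*}
Setting $c_H:=c_{[H]}$ gives exactly the statement of the corollary. There is no genuine obstacle here; the only subtle point is the observation that ``smoothness in a neighborhood of $E$'' is satisfied vacuously by any current that vanishes on an open neighborhood of $E$, and this is guaranteed by $H\cap E=\varnothing$. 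The behavior of the constant $c_H$ under further simplifying hypotheses (e.g.\ simple action on cohomology) is inherited verbatim from the corresponding statement in Theorem~\ref{thm:almost_simple} and requires no additional argument.
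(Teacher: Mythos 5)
Your proof is correct and is exactly the intended (and essentially automatic) deduction: since $H$ and $E$ are disjoint closed analytic subsets of the compact manifold $X$, the current $[H]$ vanishes identically, hence is smooth, on a neighborhood of $E$, so Theorem~\ref{thm:almost_simple} applies directly with $S=[H]$. The paper treats this corollary as immediate and gives no separate argument, so there is nothing further to compare.
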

The condition in Theorem \ref{thm:almost_simple} means a version of hyperbolicity that there is only one maximally increasing direction in $H^{p,p}(X, \C)$ for the operator $d_p^{-1}f^*$ and in all the other directions, it is decreasing. When $X=\P^k$, we have $d_p=d^p$ for $1\le p\le k$. Thus, Theorem \ref{thm:almost_simple} implies Theorem \ref{thm:main_ahn16}.
\smallskip

One remark is that while our primary interest is on holomorphic endomorphisms, Theorem \ref{thm:main} applies to holomorphic automorphisms as well. Based on the proof, if $f$ is a holomorphic automorphism of $X$, then $E=\emptyset$, which means that every positive closed current satisfies Theorem \ref{thm:main}. Some examples of this are holomorphic automorphisms of hyperk\"ahler manifolds with positive entropy in \cite{oguiso}. Concerning Theorem \ref{thm:almost_simple}, in the case of holomorphic automorphisms, \cite{dTD} obtained finer results without speed of convergence. See also \cite{DS10-1}.
\smallskip

Another remark is that in this context, it is reasonable to consider a version of the Dinh-Sibony conjecture \cite[Conjecture 1.4]{DS08} as follows and our work partially answers Question in the non-intersecting case.
\begin{question} Let $(X, \omega)$ be a compact K\"ahler manifold of dimension $k\ge 2$ and $f:X\to X$ a surjective holomorphic endomorphism with simple action on cohomology. Let $p$ be the order of the main dynamical degree ($p=\pmain$) and $T_p^\pm$ the associated Green currents. Then, for a generic analytic subset $H$ of $X$ of pure dimension $k-p$, $d_{p}^{-n}(f^n)^*[H]$ converges to $c_HT_p^+$ in the sense of currents, where $c_H=\frac{\langle [H], T_p^-\rangle}{\langle \omega^p, T_p^-\rangle}$. Here, $H$ is generic if either $H\cap \mathcal{E}=\emptyset$ or $\codim H\cap \mathcal{E}=p+\codim \mathcal{E}$ for any irreducible component $\mathcal{E}$ of every totally invariant proper analytic subset of $X$. 
\end{question}

\noindent
\textbf{Notation.} We denote by $\Phi_n$ and $\Psi_n$ the hypersurface of the critical points and that of the critical values of $f^n$, respectively for $n=1, 2, \cdots$. For an analytic subset $V$ of $X$, $[V]$ means the current of integration over the regular part of $V$ and $\mathrm{multi}_x V$ means the multiplicity of $V$ at $x\in V$ as an analytic subset. For a positive closed current $R$, $\nu(x, R)$ means the Lelong number of $R$ at $x\in X$. 
\medskip

The distance $\dist(\cdot, \cdot)$ on a compact K\"ahler manifold means the distance with respect to the natural metric associated with the K\"ahler form. For a subset $A$ of $X$ and a constant $\theta>0$, the $\theta$-neighborhood $A_\theta$ of $A$ denotes the set of points $x\in X$ such that $\dist(x, A)<\theta$. The norms $\|\cdot\|_\infty$, $\|\cdot\|_{C^\alpha}$ and $\|\cdot\|_{L^\alpha}$ (or $\|\cdot\|_{\infty, U}$, $\|\cdot\|_{C^\alpha, U}$ and $\|\cdot\|_{L^\alpha, U}$ for a subset $U$ of $X$) of a function or a form are the norms of the function or the sum of the corresponding norms of its coefficients (on $U$) with respect to a fixed finite atlas of $X$, respectively.
\bigskip

\noindent
\textbf{Acknowledgments.} 
The research of the author was supported by the National Research Foundation of Korea (NRF) grant funded by the Korea government (MSIT) (No. RS-2023-00250685). The paper was partially prepared during the visit of the author at the University of Michigan, Ann Arbor. He would
like to express his gratitude to the organization for hospitality. The author would like to thank Dan Burns for constructive discussions and for his support.

\section{Preliminaries}
\subsection{Currents}\label{sec:currents}
For the basics of currents, we refer the reader to \cite{Demailly}. In this section, we introduce some notations that we will use.
\medskip

Let $p\in\{1, 2, \ldots, k\}$. For a positive/negative $(p, p)$-current $S$ on $X$, we define the mass of $S$ by
\begin{displaymath}
	\|S\|:=|\langle S, \omega^{k-p}\rangle|.
\end{displaymath}
Let $\Cc_p$ denote the cone of positive closed $(p, p)$-currents on $X$, $\Dc_p$ the real vector space generated by $\Cc_p$ and $\Dc_p^0$ the space of currents $S\in\Dc_p$ such that $\{S\}=0$ in $H^{p,p}(X, \R)$. The duality between the cohomology groups implies that if $S$ is a current in $\Cc_p$, its mass depends only on the class $\{S\}$ in $H^{p,p}(X, \R)$. We define the $*$-norm $\|\cdot\|_*$ on $\Dc_p$ by
\begin{displaymath}
	\|S\|_*:=\min (\|S^+\|+\|S^-\|),
\end{displaymath}
where the minimum is taken over $S^\pm\in\Cc_p$ such that $S=S^+-S^-$. A subset in $\Dc_p$ is said to be $*$-bounded if it is bounded with respect to the $*$-norm. We will use the following $*$-topology on $\Dc_p$ and $\Dc_p^0$. We say that $S_n$ converges to $S$ in $\Dc_p$ if $S_n\to S$ weakly as currents and if the set $\{\|S_n\|_*\}$ is bounded. Note that if we restrict the $*$-topology to a $*$-bounded subset of $\Dc_p$, then it coincides with the usual weak topology on the space of currents. According to \cite{DS10-1}, smooth forms are dense in $\Dc_p$ and $\Dc_p^0$ with respect to the $*$-topology. We denote by $\widetilde{\Dc_p}$ and $\widetilde{\Dc_p^0}$ the subsets of smooth forms in $\Dc_p$ and $\Dc_p^0$, respectively.
\medskip

We will also work on the space of $L^1$-functions which can be written as a difference of two quasi-plurisubharmonic (q-psh for short) functions on $X$. On that space, we will use the DSH-norm which will be denoted by $\|\cdot\|_\dsh$ and defined by
\begin{align*}
	\|f\|_\dsh:=\|f\|_{L^1}+\|dd^c f\|_*.
\end{align*} 

There are also some other natural norms and distances on $\Dc_p$ which are closely related to the weak topology. 
For $\alpha>0$, if $S$ and $S'$ are currents in $\Dc_p$, we define
\begin{align*}
	\|S\|_{C^{-\alpha}}:=\sup_{\|\varphi\|_{C^\alpha}\le 1}|\langle S, \varphi\rangle|\quad\textrm{ and }\quad \dist_\alpha(S, S'):=\|S-S'\|_{C^{-\alpha}},
\end{align*}
where the supremum is taken over the set of the smooth test $(k-p, k-p)$-forms $\varphi$ on $X$ with $\|\varphi\|_{C^\alpha}\le 1$.
Observe that $\|\cdot\|_{C^{-\alpha}}\lesssim \|\cdot\|_*$ for every $\alpha>0$. 
\medskip

Thanks to the standard theory of interpolation between Banach spaces, we have
\begin{proposition}[Section 2.1 in \cite{DS09}] Let $\alpha$ and $\alpha'$ be strictly positive real numbers with $\alpha<\alpha'$. Then on any $*$-bounded subset of $\Dc_p$, the topology induced from $\dist_\alpha$ or from $\dist_{\alpha'}$ coincides with the weak topology. Moreover, on any $*$-bounded subset of $\Dc_p$, there is a constant $c_{\alpha,\alpha'}>0$ such that
	\begin{align*}
		\dist_{\alpha'}\le \dist_\alpha\le c_{\alpha,\alpha'}[\dist_{\alpha'}]^{\alpha/\alpha'}.
	\end{align*}
	In particular, a function on a $*$-bounded subset of $\Dc_p$ is H\"older continuous with respect to $\dist_\alpha$ if and only if it is H\"older continuous with respect to $\dist_{\alpha'}$.
\end{proposition}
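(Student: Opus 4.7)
The plan is to carry out the standard real-interpolation argument between Hölder spaces, transferred from test forms to currents by duality. The monotonicity $\dist_{\alpha'}\le \dist_\alpha$ is essentially tautological: since $X$ is compact and $\alpha<\alpha'$, the embedding $C^{\alpha'}(X)\hookrightarrow C^{\alpha}(X)$ of test $(k-s,k-s)$-forms is continuous (with a constant we absorb into the normalization), so the $C^{\alpha'}$-unit ball is contained in the $C^{\alpha}$-unit ball, and taking suprema yields the claimed inequality.

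The interpolation bound $\dist_\alpha\lesssim [\dist_{\alpha'}]^{\alpha/\alpha'}$ is the substantive part. First I would fix a finite atlas of $X$, a smooth partition of unity subordinate to it, and a standard Euclidean mollifier $\rho_\epsilon$ of scale $\epsilon>0$; these produce a regularization $\varphi\mapsto \varphi_\epsilon$ defined chart by chart for smooth test forms $\varphi$. Standard convolution estimates then give
\begin{align*}
\|\varphi - \varphi_\epsilon\|_{C^0}\le C\,\epsilon^{\alpha}\|\varphi\|_{C^\alpha} \quad\text{and}\quad \|\varphi_\epsilon\|_{C^{\alpha'}}\le C\,\epsilon^{\alpha-\alpha'}\|\varphi\|_{C^\alpha}
\end{align*}
with $C$ depending only on $\alpha$, $\alpha'$, and the atlas. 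For $S,S'$ in a fixed $*$-bounded set, the splitting
\begin{align*}
|\langle S-S',\varphi\rangle|\le |\langle S-S',\varphi_\epsilon\rangle|+|\langle S-S',\varphi-\varphi_\epsilon\rangle|,
\end{align*}
combined with $|\langle S-S',\varphi_\epsilon\rangle|\le \|\varphi_\epsilon\|_{C^{\alpha'}}\cdot \dist_{\alpha'}(S,S')$ and $|\langle S-S',\varphi-\varphi_\epsilon\rangle|\le (\|S\|_*+\|S'\|_*)\cdot\|\varphi-\varphi_\epsilon\|_{C^0}$, produces
\begin{align*}
\dist_\alpha(S,S')\lesssim \epsilon^{\alpha-\alpha'}\dist_{\alpha'}(S,S')+\epsilon^{\alpha},
\end{align*}
uniformly in $S,S'$. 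Choosing $\epsilon:=[\dist_{\alpha'}(S,S')]^{1/\alpha'}$ balances both summands and gives the desired inequality.

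To identify both topologies with the weak topology on any $*$-bounded subset $\mathcal K\subset \Dc_s$, by the two-sided comparison it suffices to treat one value, say $\alpha$. Convergence in $\dist_\alpha$ trivially implies weak convergence. For the converse, suppose $S_n,S\in \mathcal K$ with $S_n\to S$ weakly but $\dist_\alpha(S_n,S)\ge \eta>0$ along a subsequence, and pick almost-extremal forms $\varphi_n$ with $\|\varphi_n\|_{C^\alpha}\le 1$. Arzelà–Ascoli yields a $C^0$-convergent subsequence $\varphi_n\to \varphi$, and then
\begin{align*}
\langle S_n-S,\varphi_n\rangle=\langle S_n-S,\varphi\rangle+\langle S_n-S,\varphi_n-\varphi\rangle\to 0,
\end{align*}
the first summand by weak convergence against the fixed $\varphi$ and the second by the mass bound on $\mathcal K$, contradicting the lower bound.

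The only genuinely technical point is arranging the chartwise regularization so that the two scaling estimates hold with constants uniform over $X$; both are Bernstein-type inequalities on $\R^{2k}$ that transfer to $X$ through the partition of unity. The essential role of the $*$-bound is to dominate the low-frequency error $\|\varphi-\varphi_\epsilon\|_{C^0}$ against $S-S'$, so the statement would fail without it; given it, the Hölder-transfer consequence for functions on $\mathcal K$ is immediate from the two-sided inequality.
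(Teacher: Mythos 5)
Your proof is correct, and since the paper itself does not prove this proposition but cites it from DS09 with the remark that it follows from ``the standard theory of interpolation between Banach spaces,'' you are effectively supplying the concrete mollification argument that underlies that abstract appeal. The structure is sound: the monotonicity $\dist_{\alpha'}\le\dist_\alpha$ from the nesting of the test-form balls; the splitting of $\langle S-S',\varphi\rangle$ into a high-frequency piece controlled by $\dist_{\alpha'}$ and a low-frequency remainder controlled by the $*$-norms; the balanced choice $\epsilon=[\dist_{\alpha'}]^{1/\alpha'}$; and the Arzel\`a--Ascoli compactness argument to pass between weak convergence and $\dist_\alpha$-convergence, using $*$-boundedness to make sense of the pairing against the merely continuous limit $\varphi$. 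Where the reference reaches for the abstract real-interpolation identification of $C^\alpha$ between $C^0$ and $C^{\alpha'}$ (or $C^1$), you unwind it into the two Bernstein-type convolution estimates $\|\varphi-\varphi_\epsilon\|_{C^0}\lesssim\epsilon^\alpha\|\varphi\|_{C^\alpha}$ and $\|\varphi_\epsilon\|_{C^{\alpha'}}\lesssim\epsilon^{\alpha-\alpha'}\|\varphi\|_{C^\alpha}$; both routes give the same $\alpha/\alpha'$ exponent, but yours is self-contained, makes the dependence on the $*$-bound transparent, and is the argument one would actually implement. The one small point worth flagging: the clean inequality $\dist_{\alpha'}\le\dist_\alpha$ with constant $1$ requires the Hölder norms on $X$ to be set up so that $\|\cdot\|_{C^\alpha}\le\|\cdot\|_{C^{\alpha'}}$; with an atlas-dependent norm one in general only gets $\dist_{\alpha'}\le C\,\dist_\alpha$, which is still harmless for the topology statement, but you should state the normalization once rather than ``absorb'' it silently, since the displayed inequality in the proposition has no constant on the left.
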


\subsection{Semi-regular transforms}\label{sec:reg_transform}

We recall semi-regular transforms of currents in \cite{DS10-1}.
\medskip

Consider the compact K\"ahler manifold $\mathfrak{X}:=X\times X$ and $\Delta$ the diagonal submanifold of $\Xf$. We use $(x, y)$ to denote a point in $\Xf$. Let $\pi_i: \mathfrak{X}\to X$ denote the canonical projection on its factor for $i=1, 2$. Then, $\omega_\mathfrak{X}:=\pi_1^*\omega + \pi_2^*\omega$ is a natural K\"ahler form on $\mathfrak{X}$. Let $\pi:\widehat{\mathfrak{X}}\to \mathfrak{X}$ be the blow-up of $\mathfrak{X}$ along $\Delta$ in $\mathfrak{X}$ and let $\widehat{\Delta}:=\pi^{-1}(\Delta)$ denote the exceptional hypersurface. We define $\Pi_i:=\pi_i\circ\pi$ for $i=1, 2$. Then, $\Pi_i$ and its restrcition to $\widehat{\Delta}$ are both submersions for $i=1, 2$. By a theorem of Blanchard in \cite{Bl}, $\Xfh$ is a compact K\"ahler manifold. We fix a K\"ahler form $\omega_{\Xfh}$ on $\widehat{X\times X}$ throughout the article. For later use in Section \ref{sec:regularization}, we assume that $\omega_{\Xfh}$ is normalized as follows. The current $\pi_*\left([\widehat{\Delta}]\wedge\omega_{\Xfh}^{k-1}\right)$ has support in $\Delta$ and is a positive closed current of bidimension $(k, k)$. Hence, by the support theorem, $\pi_*\left([\widehat{\Delta}]\wedge\omega_{\Xfh}^{k-1}\right)$ is a positive constant multiple of $[\Delta]$. By multiplying a proper positive constant to $\omega_{\Xfh}$, we may assume that $\pi_*\left([\widehat{\Delta}]\wedge\omega_{\Xfh}^{k-1}\right)=[\Delta]$.
\medskip

\begin{definition}
	Let $q\in\{0, 1, \ldots, k\}$. Let $\mathcal{Q}$ be a $(q, q)$-form on $\widehat{\mathfrak{X}}$ which is smooth outside $\widehat{\Delta}$ and such that there exists a constant $c_\Qc>0$ satisfying
	\begin{align*}
		|\mathcal{Q}|\le -c_\Qc\log\dist(\cdot,\widehat{\Delta})\quad\textrm{ and }\quad|\nabla \mathcal{Q}|\le c_\Qc\dist(\cdot, \widehat{\Delta})^{-1}
	\end{align*}
	in a neighborhood of $\widehat{\Delta}$. Let $p\in \{k-q, k-q+1, \ldots, k\}$. A linear mapping $\Lc^\mathcal{Q}$ on the space of currents of bidegree $(p,p)$ on $X$ to the space of $(p+q-k, p+q-k)$-currents on $X$ defined by
	\begin{align*}
		\Lc^\mathcal{Q}(S):=(\Pi_2)_*(\Pi_1^*(S)\wedge\mathcal{Q})
	\end{align*}
	is called a semi-regular transform of bidegree $(q-k, q-k)$ associated with the form $\mathcal{Q}$.
	\medskip
	
	If $\mathcal{Q}$ is smooth, then the transform $\Lc^\mathcal{Q}$ is said to be regular. If $\mathcal{Q}$ is positive, then the transform $\Lc^\mathcal{Q}$ is said to be positive. If $\mathcal{Q}$ is closed, then the transform $\Lc^\mathcal{Q}$ is said to be closed.
\end{definition}
Here, $|\mathcal{Q}|$ and $|\nabla \mathcal{Q}|$ mean the sum of the absolute value of the coefficients of $\mathcal{Q}$ and the sum of the estimates of the gradients of their coefficients with respect to a fixed finite atlas of $\widehat{\mathfrak{X}}$, respectively.

\begin{remark}
	A positive semi-regular transform maps positive currents to positive currents. A closed semi-regular transform $\Lc^\mathcal{Q}$ maps closed currents to closed currents and satisfies $\Lc^\mathcal{Q}(dd^c S)=dd^c\Lc^\mathcal{Q}(S)$ for every current $S$.
\end{remark}

Young's inequality (or H\"older's inequality) gives the following proposition.
\begin{proposition}
	[Lemma 2.1 in \cite{DS04} or Proposition 2.3.2 in \cite{DS10}]\label{prop:regularization_principle} Any semi-regular transform can be extended to a linear continuous operator from the space of currents of order $0$ to the space of $L^{1+1/k}$-forms. It defines a linear continuous operator from the space of $L^\alpha$-forms, $\alpha\ge 1$, to the space of $L^{\alpha'}$-forms where $\alpha'$ is given by $(\alpha')^{-1}+1=\alpha^{-1}+(1+1/k)^{-1}$ if $\alpha<k+1$ and $\alpha'=\infty$ if $\alpha\ge k+1$. It also defines a linear continuous operator from the space of $L^\infty$-forms to the space of $C^1$-forms.
\end{proposition}

\subsection{Superpotentials}\label{sec:superpotentials}
For superpotentials on compact K\"ahler manifolds, we refer the reader to \cite{DS10-1}. See also \cite{DS09} for the theory on complex projective spaces.
\medskip

Let $\beta=\{\beta_1, \cdots, \beta_{h_p}\}$ with $h_p=\dim H^{p,p}(X, \R)$  be a fixed family of real smooth closed $(p,p)$-forms such that the family of classes $\{\beta\}:=\{\{\beta_1\}, \cdots, \{\beta_{h_p}\}\}$ is a basis of $H^{p,p}(X, \R)$. We can also find another family $\beta^*=\{\beta^*_1, \cdots, \beta^*_{h_p}\}$ of real smooth closed $(k-p, k-p)$-forms so that $\{\beta^*\}:=\{\{\beta^*_1\}, \cdots, \{\beta^*_{h_p}\}\}$ is a basis of $H^{k-p, k-p}(X, \R)$ and is the dual basis of $\{\beta\}$ with respect to the cup-product $\smile$.
\medskip

Let $R$ be a current in $\Dc_{k-p+1}^0$ and $U_R$ a potential of $R$, that is, a $(k-p, k-p)$-current such that $dd^cU_R=R$. Adding to $U_R$ a suitable combination of $\beta^*_i$'s allows us to assume that $\langle U_R, \beta_i\rangle=0$ for $i=1, \cdots, h_p$. We say that $U_R$ is $\beta$-normalized.
\medskip

\begin{definition}[Definition 3.2.2 in \cite{DS10-1}]
	Let $\beta$ and $\beta^*$ be given families of real smooth closed forms of bidegree $(p, p)$ and $(k-p, k-p)$ as above, respectively. Let $S\in\Dc_p$. The $\beta$-normalized superpotential $\Uc_S$ of $S$ is the function defined on $\widetilde{\Dc_{k-p+1}^0}$ by
	\begin{align*}
		\Uc_S(R):=\langle S, U_R\rangle\,\, \textrm{ for }\, R\in \widetilde{\Dc_{k-p+1}^0},
	\end{align*}
	where $U_R$ is an $\beta$-normalized smooth potential of $R$. We say that $S$ has a bounded superpotential if $\Uc_S$ is bounded on each $*$-bounded subset of $\widetilde{\Dc_{k-p+1}^0}$. We say that $S$ has a continuous superpotential if $\Uc_S$ can be extended to a continuous function on $\Dc_{k-p+1}^0$. Here, the topology is with respect to the norm $\|\cdot\|_{C^{-\alpha}}$ for some $\alpha>0$. In this case, the extension is also denoted by $\Uc_S$ and is also called a $\beta$-normalized superpotential of $S$. We say that $S$ has a H\"older continuous superpotential if $\Uc_S$ is continuous and H\"older continuous on $\Dc_{k-p+1}^0$ with respect to the norm $\|\cdot\|_{C^{-\alpha}}$ for some $\alpha>0$.
\end{definition}

\begin{remark}
	The definition of the $\beta$-normalized superpotential $\Uc_S$ of $S$ is independent of the choice of $\beta^*$. If $S$ belongs to $\Dc_p^0$, the superpotential is independent of the choice of the family $\beta$.
	So, when we are dealing with superpotentials of currents $S\in\Dc_p^0$, we will not specify the families $\beta$ and $\beta^*$ and simply call $\Uc_S$ the superpotential of $S$.
\end{remark}

We recall the Green potential kernel and the Green potential of $S$ for $S\in\Dc_p^0$ in \cite{DS10-1}, which are useful in computing superpotentials. For reader's convenience, we summarize the construction.
\medskip

The integration on the diagonal submanifold $\Delta$ of $\mathfrak{X}$ defines a positive closed $(k, k)$-current $[\Delta]$. By the K\"unneth formula, we have a canonical isomorphism
\begin{align*}
	H^{k, k}(\Xf, \C)\simeq \sum_{0\le i\le k} H^{i, k-i}(X,\C)\otimes H^{k-i, i}(X, \C).
\end{align*}
Then, $[\Delta]$ is cohomologous to a smooth real closed $(k ,k)$-form $\alpha_\Delta$ which is a finite combination of forms of type $\pi_1^*(\psi)\wedge \pi_2^*(\psi')$ where $\psi$ and $\psi'$ are closed forms on $X$ of bidegree $(i, k-i)$ and $(k-i, i)$, respectively. So, $\alpha_\Delta$ satisfies $d_x\alpha_\Delta=d_y\alpha_\Delta=0$. Replacing $\alpha_\Delta(x, y)$ by $[\alpha_\Delta(x, y)+\alpha_\Delta(y,x)]/2$, we may assume that $\alpha_\Delta$ is symmetric, i.e. invariant under the involution $(x, y)\to (y, x)$ where $(x, y)$ denotes the coordinates of $\Xf$.
\medskip

According to \cite{BGS}, there is a real smooth closed $(k-1, k-1)$-form $\eta$ on $\Xfh$ such that $\pi^*(\alpha_\Delta)$ is cohomologous to $[\widehat{\Delta}]\wedge \eta$, where $[\widehat{\Delta}]$ is the positive closed $(1, 1)$-current of integration on $\widehat{\Delta}$. Hence, $\pi_*([\widehat{\Delta}]\wedge \eta)$ is cohomologous to $\alpha_\Delta$ and therefore to $[\Delta]$. Since the map $(x, y)\to (y, x)$ induces an involution on $\Xfh$, we can also choose $\eta$ symmetric with respect to this involution.
\medskip

Let $\alpha_{\widehat \Delta}$ be a real closed $(1, 1)$-form on $\Xfh$, which is cohomologous to $[\widehat{\Delta}]$. We can choose $\alpha_{\widehat \Delta}$ symmetric. There is a q-psh function $u$ on $\Xfh$ such that $dd^cu=[\widehat{\Delta}]-\alpha_{\widehat \Delta}$. This function is necessarily symmetric. Subtracting from $u$ a constant, we may assume that $u<-2$.\medskip

Choose a real smooth $(k-1, k-1)$-form $\delta_{\Xfh}$ on $\Xfh$ such that $dd^c\delta_\Xfh= \pi^*(\alpha_\Delta) -\alpha_{\widehat\Delta}\wedge\eta$ where $\alpha_\Delta$, $\alpha_{\widehat \Delta}$ and $\eta$ are as above. We can choose $\delta_\Xfh$ to be symmetric. Let $\Lc^K$ be the semi-regular transform associated with the form $K:=u \eta -\delta_\Xfh$. The we have the following proposition.
\begin{proposition}[Proposition 2.1 in \cite{DS10-1}]\label{prop:Green_kernel}
	Let $S\in\Dc_p^0$ for $p\ge 1$. Then $U_S:=\Lc^K(S)$ is a potential of $S$, that is, $dd^cU_S=S$ in the sense of currents. Moreover, we have
	\begin{align*}
		\|U_S\|_{L^{1+1/k}}\le c\|S\|_*
	\end{align*}
	for some constant $c>0$ independent of $S$.
\end{proposition}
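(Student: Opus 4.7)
The plan is to verify $dd^cU_S=S$ by a direct computation on the blow-up $\Xfh$, and then to extract the $L^{1+1/k}$ bound from the regularization principle applied to a difference of positive currents. First I would compute $dd^c K$. Since $\eta$ is closed,
\begin{align*}
	dd^c(u\eta)=dd^cu\wedge\eta=([\widehat\Delta]-\alpha_{\widehat\Delta})\wedge\eta,
\end{align*}
and combined with $dd^c\delta_\Xfh=\pi^*\alpha_\Delta-\alpha_{\widehat\Delta}\wedge\eta$ this gives $dd^cK=[\widehat\Delta]\wedge\eta-\pi^*\alpha_\Delta$. Because $\Pi_1^*S$ is closed and $dd^c$ commutes with proper push-forward, I obtain
\begin{align*}
	dd^cU_S=(\Pi_2)_*\bigl(\Pi_1^*S\wedge[\widehat\Delta]\wedge\eta\bigr)-(\Pi_2)_*\bigl(\Pi_1^*S\wedge\pi^*\alpha_\Delta\bigr).
\end{align*}

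I claim the first term equals $S$ and the second vanishes. Let $j\colon\widehat\Delta\hookrightarrow\Xfh$ be the inclusion, so $[\widehat\Delta]\wedge(\cdot)=j_*j^*(\cdot)$, and note that $\Pi_1$ and $\Pi_2$ coincide on $\widehat\Delta$ (both factor as $\widehat\Delta\to\Delta\cong X$) with common value $p:=\Pi_1|_{\widehat\Delta}$. Then the projection formula collapses the first term to $S\wedge p_*(j^*\eta)$; the factor $p_*(j^*\eta)$ is a constant on $X$ that must equal $1$ by the normalization built into the choice of $\eta$, equivalent to $\pi_*([\widehat\Delta]\wedge\eta)=[\Delta]$ (this follows from a support-theorem argument applied to the closed current $\pi_*([\widehat\Delta]\wedge\eta)$, which is of bidegree $(k,k)$, supported on $\Delta$, and cohomologous to $[\Delta]$; any residual discrepancy cohomologous to zero can be absorbed into $\delta_\Xfh$ at the start of the construction). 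For the second term, the Künneth-style decomposition $\alpha_\Delta=\sum_i\pi_1^*\psi_i\wedge\pi_2^*\psi'_i$ with closed $\psi_i,\psi'_i$ converts it via the projection formula into $\sum_i\langle S,\psi_i\rangle\psi'_i$; each coefficient is either zero by bidegree (if $\psi_i$ is not of complementary bidegree $(k-s,k-s)$) or equals $\{S\}\smile\{\psi_i\}=0$ since $\{S\}=0$. Hence $dd^cU_S=S$.

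For the $L^{1+1/k}$ estimate, $\eta$ and $\delta_\Xfh$ are smooth and the q-psh function $u$ satisfies $|u|\lesssim-\log\dist(\cdot,\widehat\Delta)$ and $|\nabla u|\lesssim\dist(\cdot,\widehat\Delta)^{-1}$, so $K$ satisfies the size conditions of a semi-regular kernel. Writing $S=S^+-S^-$ with $S^\pm\in\Cc_s$ and $\|S^+\|+\|S^-\|$ arbitrarily close to $\|S\|_*$, each $S^\pm$ is of order $0$ with mass equal to $\|S^\pm\|$, so Proposition \ref{prop:reg_1step} applied to $\Lc^K$ yields $\|\Lc^K(S^\pm)\|_{L^{1+1/k}}\le c\|S^\pm\|$, and summing and taking infimum gives $\|U_S\|_{L^{1+1/k}}\le c\|S\|_*$. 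I expect the main obstacle to be the exact identity $p_*(j^*\eta)=1$ (equivalently, $\pi_*([\widehat\Delta]\wedge\eta)=[\Delta]$ as currents rather than merely cohomology classes); a secondary subtlety is justifying that the wedge product $\Pi_1^*S\wedge[\widehat\Delta]$ makes sense for $S$ of order zero, which is permissible since $\Pi_1|_{\widehat\Delta}$ is a submersion onto $X$ and hence $S$ admits a pull-back to $\widehat\Delta$ as a current of order zero.
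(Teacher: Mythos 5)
Your proof is correct, and it reconstructs the argument of Proposition 2.1 in \cite{DS10-1}, which the paper simply quotes without reproducing the proof. The computation $dd^cK=[\widehat\Delta]\wedge\eta-\pi^*\alpha_\Delta$, the identification of $(\Pi_2)_*(\Pi_1^*S\wedge[\widehat\Delta]\wedge\eta)$ with $S$ via the projection formula on $\widehat\Delta$, the vanishing of $(\Pi_2)_*(\Pi_1^*S\wedge\pi^*\alpha_\Delta)$ from the K\"unneth decomposition and $\{S\}=0$, and the $L^{1+1/k}$ bound via Proposition \ref{prop:reg_1step} applied to a near-optimal decomposition $S=S^+-S^-$ are all exactly the expected steps. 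One remark that removes your main hedge: the identity $\pi_*([\widehat\Delta]\wedge\eta)=[\Delta]$ holds on the nose as currents, not merely in cohomology, without any need to adjust $\delta_\Xfh$. Indeed $[\widehat\Delta]\wedge\eta$ is a closed flat current of bidimension $(k,k)$; writing $\eta=(\eta+m\,\omega_\Xfh^{k-1})-m\,\omega_\Xfh^{k-1}$ for $m\gg0$ large enough that the first factor is positive, both $\pi_*([\widehat\Delta]\wedge(\eta+m\,\omega_\Xfh^{k-1}))$ and $\pi_*([\widehat\Delta]\wedge m\,\omega_\Xfh^{k-1})$ are positive closed currents supported on $\Delta$ of bidimension $(k,k)$, hence constant multiples of $[\Delta]$ by the support theorem; subtracting gives $\pi_*([\widehat\Delta]\wedge\eta)=c[\Delta]$ for some $c\in\R$, and the cohomological relation $\{\pi_*([\widehat\Delta]\wedge\eta)\}=\{[\Delta]\}$ forces $c=1$. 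Equivalently $p_*(j^*\eta)$ is a closed $0$-form on $X$, hence a constant, and that constant is $1$.
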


We will call $U_S:=\Lc^K(S)$ the Green potential of $S$. Notice that $K$ is not necessarily positive. However, there exists a constant $m_K>0$ such that $\eta+m_K\omega_{\Xfh}^{k-1}$ and $\delta_\Xfh+m_K\omega_{\Xfh}^{k-1}$ are both strictly positive. We define the forms $K_\pm$ to be
\begin{align*}
	K_+:= u(\eta+m_K\omega_{\Xfh}^{k-1})-(m_K\omega_{\Xfh}^{k-1})\quad\textrm{ and }\quad K_-:= u(m_K\omega_{\Xfh}^{k-1})-(\delta_\Xfh + m_K\omega_{\Xfh}^{k-1}).
\end{align*}
Both $K_\pm$ are strictly negative and $\Lc^K=\Lc^{K_+}-\Lc^{K_-}$. In particular, the transforms $\Lc^{K_\pm}$ associated with the forms $K_\pm$ are negative.
%
%

\section{Regularization of positive closed currents}\label{sec:regularization}

The regularization of positive closed currents in this section was introduced in \cite{DS04} and \cite{DS10-1}. The main purpose of this section is to study its quantitative aspects.
\medskip

Let $u$ be the q-psh function on $X$ as in Subsection \ref{sec:superpotentials}. Let $\chi$ be a smooth convex increasing function on $\R\cup \{-\infty\}$ such that $\chi(t)=t$ for $t\ge 0$, $\chi(t)=-1$ for $t\le -2$ and $0\le \chi'\le 1$. For $\theta\in\C^*$, we define
\begin{align*}
	\chi_\theta(t):=\chi(t-\log|\theta|)+\log|\theta|\quad \textrm{ and }\quad u_\theta:=\chi_\theta(u).
\end{align*}
Then, $u_\theta = u_{|\theta|}$ and $u_\theta$ decreasingly converges to $u$ as $|\theta|\to 0$. Let $m_{\widehat{\Delta}}>0$ be so that $m_{\widehat{\Delta}}\omega_{\Xfh}-\alpha_{\widehat\Delta}$ is positive. Then, we have
\begin{align*}
	dd^cu_\theta&=(\chi_\theta''\circ u)du\wedge d^cu+(\chi_\theta'\circ u)dd^cu=-(\chi_\theta'\circ u)\alpha_{\widehat\Delta}\geq -m_{\widehat \Delta}\omega_{\Xfh}.
\end{align*}
So, for $\theta\in\C^*$, the smooth closed $(1, 1)$-current $\alpha_{\widehat \Delta}+dd^cu_\theta$ can be written as a difference of two smooth positive closed $(1, 1)$-currents as $\alpha_{\widehat \Delta}+dd^cu_\theta=(m_{\widehat \Delta}\omega_{\Xfh}+dd^cu_\theta)-(m_{\widehat \Delta}\omega_{\Xfh}-\alpha_{\widehat \Delta})$. So, $\big\|\alpha_{\widehat{\Delta}}+dd^cu_\theta\big\|_*$ is uniformly bounded in $\theta$.
\medskip

Recall that $\pi_*\left([\widehat{\Delta}]\wedge\omega_{\Xfh}^{k-1}\right)=[\Delta]$ by our choice of $\omega_{\Xfh}$. Let $\theta\in\C^*$ be such that $|\theta|<1$. For notational convenience, we write
\begin{align*}
	\Lc_\theta^+:=\Lc^{(m_{\widehat \Delta}\omega_{\Xfh}+dd^cu_\theta)\wedge\omega_{\Xfh}^{k-1}},\quad	\Lc^-:=\Lc^{(m_{\widehat \Delta}\omega_{\Xfh}-\alpha_{\widehat \Delta})\wedge\omega_{\Xfh}^{k-1}}\quad\textrm{ and }\quad \Lc_\theta:=\Lc^{(\alpha_{\widehat \Delta}+dd^cu_\theta)\wedge\omega_{\Xfh}^{k-1}}.
\end{align*}
Note that we can choose all the forms and functions involved in $\Lc_\theta^+$, $\Lc^-$ and $\Lc_\theta$ symmetric. So, for a $(p, p)$-current $S$ and a smooth $(k-p, k-p)$-form $\varphi$, we have $\langle \Lc_\theta^+(S), \varphi \rangle=\langle S, \Lc_\theta^+(\varphi) \rangle$ and some other relationships of this sort. We obviously have $\Lc_\theta=\Lc_\theta^+-	\Lc^-$ and $\Lc_\theta\to \Lc_0 := \id$ as $\theta\to 0$ on smooth forms (see \cite[p.486]{DS04}). The transforms $\Lc_\theta^+$ and $\Lc^-$ are positive closed and of bidegree $(0, 0)$.
\medskip

Our regularizing kernel $(\alpha_{\widehat \Delta}+dd^cu_\theta)\wedge\omega_{\Xfh}^{k-1}$ is slightly different from the one in \cite{DS10-1} but it works in the same way as in \cite[Lemma 2.4.6]{DS10-1}. Also, in \cite[Lemma 2.4.6]{DS10-1}, we do not actually need the closedness of a smooth current. 
%
So, we get the following lemma.
\begin{lemma}
	\label{lem:reg_smooth_estimate}Let $\varphi$ be a smooth $(k-p, k-p)$-form. Then, for every $\theta\in\C$, $\Lc_\theta(\varphi)$ is smooth and
	\begin{align*}
		\|\Lc_\theta(\varphi)-\varphi\|_\infty\le c|\theta|\|\varphi\|_{C^1}
	\end{align*}
	where $c>0$ is a constant independent of $\varphi$ and $\theta$.
\end{lemma}

From \cite[Lemma 2.4.1]{DS10-1}, we can deduce the following lemma. The constant $c_1>0$ below in Lemma \ref{lem:regularization_C1_dist} comes from the description of the support of $dd^cu_\theta$ in terms of $\theta$.
\begin{lemma}\label{lem:regularization_C1_dist}
	Let $\theta\in\C^*$ be such that $|\theta|\ll 1$. Then, there exist constants $c_1, c_2>0$ independent of $\theta$ such that for any subset $U$ of $X$ and for any $C^1$-form $S$ and $\supp S\Subset U$, 
	$\Lc_\theta(S)$ is a $C^1$ form with compact support in $U_{c_1\theta}$ such that
	\begin{align*}
		\|\Lc_\theta(S)\|_{C^1, U_{c_1\theta}}
		\le c_2\|S\|_{C^1, U}.
	\end{align*}
\end{lemma}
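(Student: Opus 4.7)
The plan is to combine the support structure of the smoothing kernel $\mathcal{K}_\theta := (\alpha_{\widehat{\Delta}} + dd^c u_\theta)\wedge\omega_{\Xfh}^{k-1}$ with a norm estimate essentially in the spirit of Lemma~2.4.1 of~\cite{DS10-1}. First I would establish the support claim: since $\chi_\theta(t) = t$ for $t \ge \log|\theta|$, one has $u_\theta = u$ on the set $\{u \ge \log|\theta|\}$, and there the smooth form $\alpha_{\widehat{\Delta}} + dd^c u_\theta = \alpha_{\widehat{\Delta}} + dd^c u$ vanishes pointwise (it agrees with $[\widehat{\Delta}]$, which is zero off $\widehat{\Delta}$). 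Hence $\supp \mathcal{K}_\theta \subset \{u \le \log|\theta|\}$. Because $u$ has a logarithmic singularity along $\widehat{\Delta}$, namely $u = \log\dist(\cdot,\widehat{\Delta}) + O(1)$ locally, there is a constant $c_0 > 0$ with $\{u \le \log|\theta|\} \subset \{\dist(\cdot,\widehat{\Delta}) \le c_0|\theta|\}$. Pushing through $\pi$ and the two projections, $\Pi_1^*(S) \wedge \mathcal{K}_\theta$ is supported where $\Pi_1 \in U$ and $\dist(\Pi_1, \Pi_2) \le c_1|\theta|$ for an appropriate $c_1$, so $(\Pi_2)_*$ produces a form supported in $U_{c_1|\theta|}$.

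For the $C^1$-estimate, I would work in local charts near $\widehat{\Delta}$. On the transition region $\{\log|\theta|-2 \le u \le \log|\theta|\}$, a direct computation using $u = \log\dist(\cdot,\widehat{\Delta}) + O(1)$ yields $|du_\theta| \lesssim |\theta|^{-1}$ and $|dd^c u_\theta| \lesssim |\theta|^{-2}$, while this region has volume $O(|\theta|^2)$ since $\widehat{\Delta}$ has real codimension $2$ and the transition is of width $\sim |\theta|$. These bounds give $\|\mathcal{K}_\theta\|_{L^1}$ uniformly bounded in $\theta$, and hence $\|\Lc_\theta(S)\|_{\infty} \lesssim \|S\|_\infty$. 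The delicate part is the first-derivative bound: a naive $y$-derivative falling on the kernel costs a factor $|\theta|^{-1}$. To avoid this, I would use the $(x,y) \leftrightarrow (y,x)$ symmetry of $u$, $\alpha_{\widehat{\Delta}}$, $\eta$ and $\omega_{\Xfh}$: the singular piece of $\mathcal{K}_\theta$ is anti-symmetric under the involution, so a $y$-derivative on $\mathcal{K}_\theta$ can be replaced, modulo a bounded smooth remainder, by an $x$-derivative, which is then transferred to $S$ via Stokes' theorem. Both resulting terms are controlled by $\|\mathcal{K}_\theta\|_{L^1}\|S\|_{C^1}$, giving $\|\Lc_\theta(S)\|_{C^1,\Xfh} \lesssim \|S\|_{C^1}$ with constants independent of $\theta$. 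Localizing through the support statement (enlarging $c_1$ if needed) delivers the sought bound $\|\Lc_\theta(S)\|_{C^1, U_{c_1|\theta|}} \le c_2\|S\|_{C^1, U}$.

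The main obstacle is precisely this $\theta$-uniform $C^1$ control: the pointwise blow-up $|dd^c u_\theta| \lesssim |\theta|^{-2}$ forces one to prove the derivative estimate via integration-by-parts rather than pointwise differentiation of the kernel, which in turn genuinely requires the symmetric construction of $u$, $\eta$ and $\alpha_{\widehat{\Delta}}$ under $(x,y) \leftrightarrow (y,x)$. All the other ingredients — the characterization of $\supp \mathcal{K}_\theta$, the volume bound on the transition region, and the localization of $\Lc_\theta(S)$ to the $c_1|\theta|$-neighborhood of $U$ — are essentially direct consequences of the definition of $u_\theta$ and the logarithmic nature of $u$.
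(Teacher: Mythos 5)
The high-level strategy in your proposal matches the paper's: transfer the $y$-derivative to a derivative falling on $S$, and then control a residual term coming from the kernel. The support claim and the $L^1$ bound on $\mathcal{K}_\theta$ are also on target. However, there are two concrete gaps in the key step.

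First, your justification for the derivative transfer invokes the wrong structure. You claim the singular part of $\mathcal{K}_\theta$ is \emph{anti-symmetric} under $(x,y)\leftrightarrow(y,x)$, but the paper explicitly chooses $u$, $\alpha_{\widehat\Delta}$, $\eta$ and $\delta_{\Xfh}$ to be \emph{symmetric} under this involution, so $\mathcal{K}_\theta$ is symmetric. In any case, neither symmetry nor anti-symmetry alone gives the identity $\partial_y K \approx -\partial_x K$ that you are using. What actually underlies the transfer is that near $\widehat\Delta$ the singular dependence of the kernel is through $y-x$ (more precisely through the blowup coordinate $w$ built from $z=y-x$), so that translating both $x$ and $y$ by $hv$ leaves $w$ fixed. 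The paper implements this as a change of variable $x\to x-hv$ inside the integral, which is exactly a discrete integration by parts exploiting the $(x,z)$ structure.

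Second, and more seriously, the residual term is \emph{not} a ``bounded smooth remainder.'' After the transfer, the leftover is the $x$-derivative of the kernel holding $w$ fixed: since $dd^c u_\theta$ involves $\chi_\theta''(u)\,du\wedge d^c u$ with $|du|\gtrsim 1/|w_1|\sim 1/|\theta|$ on its support, this residual is pointwise of order $|\theta|^{-2}$, exactly as singular as the kernel itself. Your claim that it is bounded, and hence that ``both resulting terms are controlled by $\|\mathcal{K}_\theta\|_{L^1}\|S\|_{C^1}$,'' would require the residual to be $O(1)$, which is false. The actual control, which the paper spells out, is that the residual is $O(|\theta|^{-2})$ pointwise but supported (in the $x$-integration, for fixed $y'$) in the slab $|x_1-y_1'|<c'|\theta|$, whose area measure contributes $O(|\theta|^2)$; the two factors cancel and give a $\theta$-uniform bound. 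Without this cancellation argument, the estimate does not close, so as written the proposal has a genuine hole at precisely the step you yourself flagged as delicate.
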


\begin{proof}
	The uniform norm estimate is quite straightforward. So, we omit it and focus on the estimate of its gradient. Since $S$ has $C^1$-coefficients, we have
	\begin{align*}
		\Lc_\theta(S)=\Lc^{(\alpha_{\widehat{\Delta}}+dd^cu_\theta)\wedge\omega_{\Xfh}^{k-1}}(S)&=(\Pi_2)_*(\Pi_1^*(S)\wedge (\alpha_{\widehat{\Delta}}+dd^c u_\theta)\wedge\omega_{\Xfh}^{k-1})\\
		&=(\pi_2)_*(\pi_1^*(S)\wedge \pi_*((\alpha_{\widehat{\Delta}}+dd^c u_\theta)\wedge \omega_{\Xfh}^{k-1})).
	\end{align*}
	According to \cite[Lemma 2.4.1]{DS10-1}, there exists a constant $c_1>0$ such that the support of $\pi_*((\alpha_{\widehat{\Delta}}+dd^c u_\theta)\wedge \omega_{\Xfh}^{k-1})$ is in $(\Delta)_{c_1|\theta|}$ for $\theta\in\C^*$ with $|\theta|\ll 1$. Also, notice that the support of $\pi_*(dd^c u_\theta)$ is away from $\Delta$. For this estimate, we do not need closedness. Applying a partition of unity to $S$, we may assume that the support of $S$ and the support of $\Lc_\theta(S)$ belong to the same coordinate neighborhood. Let $P\in X$. Let $(x, y)=(x_1, \cdots, x_k, y_1, \cdots, y_k)\in X\times X$ be local coordinates in a neighborhood of a point $(P, P)\in X\times X$ such that $(0, \cdots, 0, 0, \cdots, 0)$ corresponds to $(P, P)$ and that $S$ and $\Lc_\theta(S)$ are supported in the neighborhood when $|\theta|\ll 1$. Let $P'$ be a point in the support of $\Lc_\theta(S)$ and $y'=(y_1', \cdots, y_k')$ its coordinates.
	\medskip
	
	Let $v=(v_1, \cdots, v_k)\in \C^k$ be a direction vector such that $|v_1|^2+\cdots+|v_k|^2=1$. Then, we consider the derivative of $\Lc^{(\alpha_{\widehat{\Delta}}+dd^cu_\theta)\wedge\omega_{\Xfh}^{k-1}}(S)$ at $P'$ in the direction of $v$. We have
	{\begin{align}
			\frac{\Lc_\theta(S)(y'+hv)-\Lc_\theta(S)(y')}{h}	\label{eq:diff_quo}&=\frac{1}{h}\int_{x}[(\pi_1^*(S)(x)\wedge \pi_*((\alpha_{\widehat{\Delta}}+dd^c u_\theta)\wedge \omega_{\Xfh}^{k-1}))(x, y'+hv)\\
			\notag&\quad\quad\quad\quad\quad-\pi_1^*(S)(x)\wedge \pi_*((\alpha_{\widehat{\Delta}}+dd^c u_\theta)\wedge \omega_{\Xfh}^{k-1}))(x, y')].
	\end{align}}
	Since $S$ has $C^1$-coefficients, for
	\begin{align}
		\label{eq:S_C_1}&\frac{1}{h}\int_{x}[(\pi_1^*(S)(x+hv)\wedge \pi_*((\alpha_{\widehat{\Delta}}+dd^c u_\theta)\wedge \omega_{\Xfh}^{k-1}))(x, y')\\
		\notag&\quad\quad\quad\quad\quad-\pi_1^*(S)(x)\wedge \pi_*((\alpha_{\widehat{\Delta}}+dd^c u_\theta)\wedge \omega_{\Xfh}^{k-1}))(x, y')],
	\end{align} 
	$\lim_{h\to 0}\eqref{eq:S_C_1}$ exists and is a continuous form whose uniform norm is bounded by $\|S\|_{C^1}$ up to a multiplicative constant independent of $\theta$. Indeed, this is just the transform of the derivative of $S$ in the direction of $v$, which is a continuous form. So, it is enough to estimate
	\begin{align}
		\notag\eqref{eq:diff_quo}-\eqref{eq:S_C_1}&=\frac{1}{h}\int_{x}[(\pi_1^*(S)(x+hv)\wedge \pi_*((\alpha_{\widehat{\Delta}}+dd^c u_\theta)\wedge \omega_{\Xfh}^{k-1}))(x, y')\\
		\notag&\quad\quad\quad\quad\quad\quad\quad-\pi_1^*(S)(x)\wedge \pi_*((\alpha_{\widehat{\Delta}}+dd^c u_\theta)\wedge \omega_{\Xfh}^{k-1}))(x, y'+hv)]\\
		\notag&=\frac{1}{h}\int_{x}[(\pi_1^*(S)(x+hv)\wedge \pi_*((\alpha_{\widehat{\Delta}}+dd^c u_\theta)\wedge \omega_{\Xfh}^{k-1}))(x, y')\\
		\notag&\quad\quad\quad\quad\quad\quad\quad\quad-\pi_1^*(S)(x+hv)\wedge \pi_*((\alpha_{\widehat{\Delta}}+dd^c u_\theta)\wedge \omega_{\Xfh}^{k-1}))(x+hv, y'+hv)]\\
		\label{integral:5-1}&=\frac{1}{h}\int_{x}[(\pi_1^*(S)(x+hv)\wedge[ \pi_*((\alpha_{\widehat{\Delta}}+dd^c u_\theta)\wedge \omega_{\Xfh}^{k-1}))(x, y')\\
		\notag&\quad\quad\quad\quad\quad\quad\quad\quad- \pi_*((\alpha_{\widehat{\Delta}}+dd^c u_\theta)\wedge \omega_{\Xfh}^{k-1}))(x+hv, y'+hv)]]
	\end{align}
	for all $h\in \R$ with $0<|h|\ll 1$ and show that this quantity is bounded by $\|S\|_\infty$ up to a multiplicative constant independent of $\theta$. The second equality comes from the change of the variable $x\to x-hv$ inside the integral. Since $S$ is bounded, it suffices to consider the estimate of 
	\begin{align}\label{qty:kernel_diff}
		\frac{1}{h}[ \pi_*((\alpha_{\widehat{\Delta}}+dd^c u_\theta)\wedge \omega_{\Xfh}^{k-1}))(x, y')- \pi_*((\alpha_{\widehat{\Delta}}+dd^c u_\theta)\wedge \omega_{\Xfh}^{k-1}))(x+hv, y'+hv)].
	\end{align}
	We change variables $(x, y)\to (x, z:=y-x)$. Then, in the neighborhood of $(P, 0)\in\{(x, z)\}$, $\Delta=\{z=0\}$ and the operator $(\pi_2)_*$ is the integration with respect to the $x$-variable. Since the support of $\pi_*(dd^c u_\theta)$ is away from $\Delta$, it is enough to consider the case where one of $z_i$'s is not equal to $0$. Without loss of generality, we may consider a sector $\{0<|z_1|<1, |z_i|<2|z_1|\}$. Then, once again, we change variables $(x, z)\to (x, z_1, z_2/z_1, \cdots, z_k/z_1)$ in that neighborhood. We denote by $w_1=z_1$ and $w_i=z_i/z_1$ for $i=2, \cdots, k$. With respect to the coordinates $(x, w)$, we have
	\begin{align*}
		\eqref{qty:kernel_diff}=\frac{1}{h}[ \pi_*((\alpha_{\widehat{\Delta}}+dd^c u_\theta)\wedge \omega_{\Xfh}^{k-1})(x, w')- \pi_*((\alpha_{\widehat{\Delta}}+dd^c u_\theta)\wedge \omega_{\Xfh}^{k-1}))(x+hv, w')]
	\end{align*}
	where $w'_1=y'_1-x_1$ and $w'_i=(y'_i-x_i)/(y'_1-x_1)$ for $i=2, \cdots, k$. With respect to the coordinates $(x, w)$, the function $u$ can be written as $u=\log|w_1|+ \psi$ for a smooth function $\psi$ of $z$ and $w$. Then, we have
	\begin{align*}
		dd^c u_\theta &= dd^c [\chi_\theta (u)]=\chi_\theta''(u)d(\log|w_1|+ \psi)\wedge d^c(\log|w_1|+ \psi)+\chi_\theta'(u)dd^c(\log|w_1|+ \psi).
	\end{align*}
	Hence, as $h\to 0$, we get some extra derivatives of $\psi$ and $\chi_\theta(u)$ with respect to $x$, which are bounded independently of $\theta$. So, by the argument used in \cite[Lemma 2.4.1]{DS10-1}, $|\eqref{qty:kernel_diff}|$ is bounded by $1/|\theta|^2$ up to a multiplicative constant independent of $\theta$. The support of $dd^c(\alpha_{\widehat{\Delta}}+dd^cu_\theta)$ sits inside $\{|x_1-y_1'|<c'|\theta|\}$. When computing the push-forward $\pi_*$, we integrate a bounded integrand on $|x_1-y'_1|<c'|\theta|$ for some $c'>0$ independent of $\theta$. Then, it gives us a value  of order $|\theta|^2$ and cancels the factor $1/|\theta|^2$. So, the value $|\eqref{integral:5-1}|$ is bounded by $\|S\|_\infty$ up to a multiplicative constant independent of $\theta$ and $S$.
\end{proof}

The above two lemmas imply the following:
\begin{proposition}\label{prop:uniform_estimate_smooth}
	Let $\varphi$ be a smooth test $(k-p, k-p)$-form. Then, $\|\Lc_\theta^{k+2}(\varphi)-\varphi\|_\infty$ converges to $0$ as $\theta\to 0$.
\end{proposition}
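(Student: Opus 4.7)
The plan is to telescope as
\begin{equation*}
\Lc_\theta^{k+2}(\varphi) - \varphi = \sum_{j=0}^{k+1}\bigl(\Lc_\theta(\Lc_\theta^j(\varphi)) - \Lc_\theta^j(\varphi)\bigr),
\end{equation*}
to control each summand by Lemma \ref{lem:reg_smooth_estimate}, and to supply the required $C^1$ bound on $\Lc_\theta^j(\varphi)$ by iterating Lemma \ref{lem:regularization_C1_dist}.

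To use Lemma \ref{lem:reg_smooth_estimate} at each step, I first note that for $\theta \neq 0$ the cut-off $u_\theta = \chi_\theta(u)$ equals the constant $-1+\log|\theta|$ in a neighborhood of $\widehat{\Delta}$ and is smooth away from $\widehat{\Delta}$, so $u_\theta$ is smooth on all of $\Xfh$. Hence the regularizing kernel $(\alpha_{\widehat \Delta}+dd^cu_\theta)\wedge\omega_{\Xfh}^{k-1}$ is smooth, and $\Lc_\theta$ maps any current of order $0$ to a smooth form of the same bidegree (the transform has bidegree $(0,0)$). In particular, each iterate $\Lc_\theta^j(\varphi)$ is a smooth $(k-s,k-s)$-form, so Lemma \ref{lem:reg_smooth_estimate} applied with $\Lc_\theta^j(\varphi)$ in place of $\varphi$ gives
\begin{equation*}
\bigl\|\Lc_\theta(\Lc_\theta^j(\varphi)) - \Lc_\theta^j(\varphi)\bigr\|_\infty \le c\,|\theta|\,\bigl\|\Lc_\theta^j(\varphi)\bigr\|_{C^1}
\end{equation*}
for every $j$. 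Next I iterate Lemma \ref{lem:regularization_C1_dist} with $U=X$: the support condition $\supp S \Subset X$ holds automatically because $X$ is compact, and $U_{c_1\theta}=X$, so the conclusion reads $\|\Lc_\theta(S)\|_{C^1}\le c_2\|S\|_{C^1}$ with $c_2$ independent of $\theta$ and $S$. Applying this $j$ times yields $\|\Lc_\theta^j(\varphi)\|_{C^1}\le c_2^j\|\varphi\|_{C^1}\le c_2^{k+1}\|\varphi\|_{C^1}$ for $0\le j\le k+1$.

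Combining the two bounds, each summand in the telescoping sum is bounded by $c\,c_2^{k+1}\,|\theta|\,\|\varphi\|_{C^1}$, whence
\begin{equation*}
\bigl\|\Lc_\theta^{k+2}(\varphi)-\varphi\bigr\|_\infty \le (k+2)\,c\,c_2^{k+1}\,|\theta|\,\|\varphi\|_{C^1} \to 0 \quad \text{as } \theta\to 0.
\end{equation*}
There is no serious obstacle here; the crucial ingredient is the uniform-in-$\theta$ $C^1$ estimate of Lemma \ref{lem:regularization_C1_dist}, without which the $|\theta|$ factor produced by Lemma \ref{lem:reg_smooth_estimate} would be defeated by a blow-up of the $C^1$ norm under iteration. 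For smooth $\varphi$ any fixed power $\ge 1$ in place of $k+2$ would suffice; the specific choice $k+2$ is made for later use, matching Proposition \ref{prop:reg_1step}, where roughly $k+1$ semi-regular transforms are needed to upgrade a current of order $0$ to $L^\infty$ and one more to reach $C^1$, so that $k+2$ becomes the natural exponent once the smoothness assumption on $\varphi$ is relaxed.
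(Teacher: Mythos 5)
Your proof is correct and follows essentially the same route as the paper's: telescope $\Lc_\theta^{k+2}(\varphi)-\varphi$ into $k+2$ differences, bound each by Lemma~\ref{lem:reg_smooth_estimate}, and control the $C^1$-norms of the iterates uniformly in $\theta$ via Lemma~\ref{lem:regularization_C1_dist}. You have simply spelled out explicitly (the smoothness of $u_\theta$, the application with $U=X$, the chain of $C^1$ bounds) what the paper leaves as "observe" and "a previous lemma completes the proof."
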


\begin{proof}
	It is enough to observe that
	\begin{align*}
		\Lc_\theta^{k+2}(\varphi)-\varphi=\sum_{i=0}^{k+1}\Lc_\theta(\Lc_\theta^i(\varphi))-\Lc_\theta^i(\varphi)
	\end{align*}
	and that Lemma \ref{lem:regularization_C1_dist} implies $\|\Lc_\theta^i(\varphi)\|_{C^1}$ is uniformly bounded by $\|\varphi\|_{C^1}$ up to a constant independent of $\theta$ and $i=0, \cdots, k+1$. Then, Lemma \ref{lem:reg_smooth_estimate} completes the proof.
\end{proof}

For $S\in\Dc_p$, we define
\begin{align}\label{eq:theta_regularization}
	S_\theta&:=\Lc_\theta\circ\cdots\circ\Lc_\theta(S)=\Lc_\theta^{k+2}(S).
\end{align}
According to Proposition \ref{prop:regularization_principle}, $S_\theta$ is a current with $C^1$-coefficients. According to \cite{DS04}, $S_\theta\in\{S\}$. This regularization is essentially the same as the one in \cite{DS04} but slightly different from the one in \cite{DS10-1}. Since currents in $\Dc_s$ are of order $0$, Proposition \ref{prop:uniform_estimate_smooth} implies the convergence $S_\theta\to S$ in the sense of currents
\begin{corollary}
	For each current $S\in\Dc_p$, the sequence $(S_\theta)$ of $C^1$-smooth currents converges to $S$ in the sense of currents.
\end{corollary}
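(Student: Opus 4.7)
The plan is to reduce the weak convergence $S_\theta \to S$ to the uniform convergence $\Lc_\theta^{k+2}(\varphi) \to \varphi$ established in Proposition \ref{prop:uniform_estimate_smooth}, via a duality argument. Given a smooth test form $\varphi$ of bidegree $(k-s,k-s)$ on $X$, the strategy is to transfer all $k+2$ applications of $\Lc_\theta$ off the current $S$ and onto the smooth form $\varphi$, where Proposition \ref{prop:uniform_estimate_smooth} can be applied directly.

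The key identity is the self-adjointness
\[
\langle \Lc_\theta(S), \varphi \rangle = \langle S, \Lc_\theta(\varphi) \rangle
\]
for $S \in \Dc_s$ and smooth $\varphi$ of bidegree $(k-s,k-s)$. This rests on the symmetry of the kernel $\mathcal{Q}_\theta := (\alpha_{\widehat\Delta}+dd^c u_\theta)\wedge\omega_{\Xfh}^{k-1}$ under the involution $\sigma\colon \Xfh \to \Xfh$ lifting $(x,y)\mapsto(y,x)$: the forms $\alpha_{\widehat\Delta}$ and $u$, hence $u_\theta=\chi_\theta\circ u$, are chosen symmetric, and $\omega_{\Xfh}$ may be assumed symmetric (by replacing it by a suitable positive rescaling of $(\omega_{\Xfh}+\sigma^*\omega_{\Xfh})/2$, which remains K\"ahler and whose $(k-1)$-st wedge can be renormalized to preserve $\pi_*([\widehat\Delta]\wedge\omega_{\Xfh}^{k-1})=[\Delta]$). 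Using $\Pi_i\circ\sigma=\Pi_{3-i}$ and the projection formula, the definition $\Lc^{\mathcal{Q}_\theta}(\cdot)=(\Pi_2)_*(\Pi_1^*(\cdot)\wedge\mathcal{Q}_\theta)$ becomes indistinguishable from its formal adjoint, giving the claim. Iterating yields
\[
\langle S_\theta,\varphi\rangle = \langle S,\Lc_\theta^{k+2}(\varphi)\rangle.
\]

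Since $S \in \Dc_s$ is a difference of positive closed currents, it is a current of order zero with finite $*$-mass, and pairing against continuous forms is controlled by sup-norm: $|\langle S,\psi\rangle|\lesssim \|S\|_*\cdot\|\psi\|_\infty$. Applying this with $\psi = \Lc_\theta^{k+2}(\varphi)-\varphi$ and invoking Proposition \ref{prop:uniform_estimate_smooth} gives
\[
\bigl|\langle S_\theta - S,\varphi\rangle\bigr| = \bigl|\langle S,\Lc_\theta^{k+2}(\varphi)-\varphi\rangle\bigr| \lesssim \|S\|_*\cdot\|\Lc_\theta^{k+2}(\varphi)-\varphi\|_\infty \longrightarrow 0,
\]
which establishes the weak convergence.

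The main technical point in this plan is the self-adjointness of $\Lc_\theta$, which relies on the symmetry of every ingredient in $\mathcal{Q}_\theta$. The only subtlety is checking that symmetrizing $\omega_{\Xfh}$ is compatible with the earlier normalization $\pi_*([\widehat\Delta]\wedge\omega_{\Xfh}^{k-1})=[\Delta]$; since both the K\"ahler property and this pushforward identity transform by a single positive scalar under $\sigma$, a single rescaling handles it. Everything else is immediate from the prior results of the section.
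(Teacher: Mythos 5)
Your proof is correct and is essentially the paper's own argument made explicit: the paper's one-line justification (``Since currents in $\Dc_s$ are of order $0$, Proposition~\ref{prop:uniform_estimate_smooth} implies the convergence'') leaves the duality step $\langle S_\theta,\varphi\rangle=\langle S,\Lc_\theta^{k+2}(\varphi)\rangle$ implicit, and you have correctly identified that it rests on the self-adjointness of $\Lc_\theta$, i.e.\ on the invariance of the kernel $(\alpha_{\widehat\Delta}+dd^c u_\theta)\wedge\omega_{\Xfh}^{k-1}$ under the involution $\sigma$. Your observation that $\omega_{\Xfh}$ must be chosen (or replaced, then rescaled to preserve $\pi_*([\widehat\Delta]\wedge\omega_{\Xfh}^{k-1})=[\Delta]$) to be $\sigma$-symmetric is a legitimate detail the paper glosses over, and the same unstated self-adjointness is also invoked in the proof of Lemma~\ref{lem:mass_regular_transform} when passing $\Lc_\theta$ across the pairing.
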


Note that $S_\theta$ is not positive in general. However, the regularization $S_\theta$ can be written as $S_\theta =\sum_i (-1)^{{\rm sgn}(i)}\Lc_{i, 1}\circ\cdots \circ \Lc_{i, k+2}(S)$ where $\Lc_{i,j}$ is either $\Lc_\theta^+$ or $\Lc^-$ and ${\rm sgn}(i)$ is the number of indices $\Lc_{i,j}$ such that $\Lc_{i,j}=\Lc^-$. Then, $S_\theta$ can be written as 
$$S_\theta=S_\theta^+-S_\theta^-$$
where $S_\theta^\pm$ are positive closed currents of bidegree $(p,p)$. Indeed, $S_\theta^+$ (or $S_\theta^-$) is just the sum of terms $(-1)^{{\rm sgn}(i)}\Lc_{i, 1}\circ\cdots \circ \Lc_{i, k+2}(S)$ with an even (or odd) number of $\Lc^-$'s. Proposition \ref{prop:regularization_principle} implies that $S_\theta^\pm$ are of $C^1$-coefficients. We give $C^1$-estimates of $S_\theta^\pm$ in terms of $\theta$.
\begin{proposition}\label{prop:C1_estimate_regularization}
	There exists a constant $c_\reg>0$ such that for all $\theta\in\C^*$ with $|\theta|\ll 1$ and for all $S\in\Cc_p$, we have
	\begin{align*}
		\|S^\pm_\theta\|\le c_\reg\|S\|\quad \textrm{ and }\quad\|S^\pm_\theta\|_{C^1}\le c_\reg|\theta|^{-(2k+2)(k+2)}\|S\|.
	\end{align*}
\end{proposition}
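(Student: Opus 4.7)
The plan is to decompose $S_\theta = \Lc_\theta^{k+2}(S) = (\Lc_\theta^+ - \Lc^-)^{k+2}(S)$ as a signed sum of $2^{k+2}$ compositions $\Lc_{i,1} \circ \cdots \circ \Lc_{i,k+2}(S)$ with each $\Lc_{i,j} \in \{\Lc_\theta^+, \Lc^-\}$. Since $\Lc_\theta^+$ and $\Lc^-$ are positive closed semi-regular transforms of bidegree $(0,0)$, every such composition is a positive closed $(s,s)$-current on $X$. Grouping by the parity of the number of $\Lc^-$ factors furnishes the decomposition $S_\theta = S_\theta^+ - S_\theta^-$ with $S_\theta^\pm \in \Cc_s$, so it suffices to bound each of the $2^{k+2}$ compositions individually.

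\textbf{Mass estimate.} For a positive closed $(s,s)$-current the mass depends only on its cohomology class in $H^{s,s}(X,\R)$ and is a continuous linear functional thereof. Because $u_\theta$ is smooth on $\Xfh$ for $\theta\ne 0$, the form $dd^cu_\theta$ is exact; hence the cohomology classes of both kernels $(m_{\widehat{\Delta}}\omega_{\Xfh} + dd^cu_\theta)\wedge\omega_{\Xfh}^{k-1}$ and $(m_{\widehat{\Delta}}\omega_{\Xfh} - \alpha_{\widehat{\Delta}})\wedge\omega_{\Xfh}^{k-1}$ are $\theta$-independent. Consequently the induced action of any composition of $\Lc_\theta^+$'s and $\Lc^-$'s on $H^{s,s}(X,\R)$ does not depend on $\theta$ either, so each compositional summand is a positive current whose mass is controlled by a $\theta$-independent multiple of $\|S\|$. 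Summing the $2^{k+2}$ terms gives $\|S_\theta^\pm\| \le c_\reg \|S\|$.

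\textbf{$C^1$ estimate.} The $C^1$ bound is obtained by applying Proposition \ref{prop:regularization_principle} iteratively along the $k+2$ factors in each composition. The rule $1/\alpha' = 1/\alpha - 1/(k+1)$ shows that starting from $S$ of order zero, exactly $k+1$ applications suffice to land in $L^\infty$ and one more lands in $C^1$ --- precisely why the regularization was defined with $k+2$ iterations. The operator $\Lc^-$ has a $\theta$-independent kernel and contributes bounded constants, so only the $\Lc_\theta^+$ factors produce $\theta$-dependence. Using $u = \log\dist(\cdot,\widehat{\Delta}) + O(1)$ in the local coordinates of the proof of Lemma \ref{lem:regularization_C1_dist}, one has pointwise control $\|(m_{\widehat{\Delta}}\omega_{\Xfh} + dd^cu_\theta)\wedge\omega_{\Xfh}^{k-1}\|_\infty \lesssim |\theta|^{-2}$ with analogous estimates on the derivatives, which --- after accounting for fiber integration and the Young-type interpolation underlying Proposition \ref{prop:regularization_principle} --- yields a factor bounded by $|\theta|^{-(2k+2)}$ per application. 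Multiplying $k+2$ such factors produces the announced exponent $(2k+2)(k+2)$.

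\textbf{Main obstacle.} The principal technical difficulty is extracting the explicit $\theta$-dependence at each step without the cancellation exploited in Lemma \ref{lem:regularization_C1_dist}. There, the difference $\alpha_{\widehat{\Delta}} + dd^cu_\theta$ admits a support estimate (a tube of width $\sim|\theta|$ and measure $\sim|\theta|^2$) that compensates the pointwise blowup of $dd^cu_\theta$; but for the single positive piece $\Lc_\theta^+$ no such cancellation is available and the bound must carry the blowup. I would therefore need to carry out, in the local coordinates around $\widehat{\Delta}$ used in the proof of Lemma \ref{lem:regularization_C1_dist}, a careful analysis of $\Lc_\theta^+$ acting on $L^\alpha$-forms, deducing from the pointwise expression $\chi_\theta''(u)\,du\wedge d^cu + \chi_\theta'(u)\, dd^cu$ the quantitative operator norms required to iterate across the $k+2$ steps.
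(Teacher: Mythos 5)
Your proposal for the mass estimate is sound and essentially matches the paper's intent: each compositional summand $\Lc_{i,1}\circ\cdots\circ\Lc_{i,k+2}(S)$ is positive and closed, and since the kernels lie in $\theta$-independent cohomology classes (the $dd^cu_\theta$ contribution being exact), the class, hence the mass, of the image is controlled by $\|S\|$ uniformly in $\theta$.

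For the $C^1$ estimate, however, you have correctly diagnosed the obstacle but not resolved it, and your proposed route is not the one that works. The paper does \emph{not} iterate $\Lc_\theta^+$ through $L^\alpha$ spaces and try to extract a $|\theta|^{-(2k+2)}$ factor per step from Proposition \ref{prop:regularization_principle}; that interpolation gives regularity but no mechanism to track the $\theta$-dependence of the kernel's $C^1$ norm under push-forward, which is exactly the difficulty you flag at the end. The missing idea is a \emph{further} decomposition of the two transforms:
\begin{align*}
\Lc_\theta^+ = \Lc^{m_{\widehat\Delta}\omega_{\Xfh}\wedge\omega_{\Xfh}^{k-1}} + \Lc^{dd^cu_\theta\wedge\omega_{\Xfh}^{k-1}}, \qquad
\Lc^- = \Lc^{m_{\widehat\Delta}\omega_{\Xfh}\wedge\omega_{\Xfh}^{k-1}} - \Lc^{\alpha_{\widehat\Delta}\wedge\omega_{\Xfh}^{k-1}}.
\end{align*}
This isolates all the $\theta$-dependence in the single operator $\Lc^{dd^cu_\theta\wedge\omega_{\Xfh}^{k-1}}$, whose kernel $dd^cu_\theta\wedge\omega_{\Xfh}^{k-1}$ has support bounded away from $\widehat\Delta$ (at distance comparable to $|\theta|$). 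Consequently its push-forward to $X\times X$ is smooth, and $\Lc^{dd^cu_\theta\wedge\omega_{\Xfh}^{k-1}}$ maps any order-zero current directly to a $C^1$ (indeed smooth) form in one step --- no iterative gain of integrability is needed. The preceding lemma in the paper then gives $\|\Lc^{dd^cu_\theta\wedge\omega_{\Xfh}^{k-1}}(S)\|_{C^1}\le c|\theta|^{-(2k+2)}\|S\|$ by an explicit coordinate computation near $\widehat\Delta$ (the blow-up Jacobian contributes $|\theta|^{-(2k-1)}$ and the derivatives of $dd^cu_\theta$ contribute $|\theta|^{-3}$). Since the other two operators in the refined decomposition are $\theta$-independent, the composition of $k+2$ factors is dominated by $(\Lc^{dd^cu_\theta\wedge\omega_{\Xfh}^{k-1}})^{\circ(k+2)}$, giving the exponent $(2k+2)(k+2)$. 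Without this refinement your plan stalls precisely at the ``main obstacle'' you name; the hand-waved ``factor bounded by $|\theta|^{-(2k+2)}$ per application'' does not follow from Young's inequality or the $L^\alpha$ interpolation, but from this structural split of the kernel.
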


\begin{lemma}
	Let $S\in\Cc_p$. Then for all $\theta\in\C^*$ with $|\theta|\ll 1$, the current $\Lc^{dd^c u_\theta\wedge\omega_{\Xfh}^{k-1}}(S)$ is smooth and we have
	\begin{align*}
		\|\Lc^{dd^c u_\theta\wedge\omega_{\Xfh}^{k-1}}(S)\|_{C^1}\le c|\theta|^{-(2k+2)}\|S\|
	\end{align*}
	where $c>0$ is a constant independent of $S$ and $\theta$.
\end{lemma}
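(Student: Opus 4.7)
The plan is to combine an explicit coefficient estimate of the kernel $K_\theta := dd^c u_\theta \wedge \omega_{\Xfh}^{k-1}$ near the exceptional divisor $\widehat{\Delta}$ with the classical mass-pairing bound for positive currents. Since $K_\theta$ is smooth away from $\widehat{\Delta}$ and vanishes identically in a small tube around $\widehat{\Delta}$ (because $u_\theta$ is locally constant there by the definition of $\chi_\theta$), the whole contribution to $\Lc^{K_\theta}(S)$ comes from a transition annulus of thickness $\sim |\theta|$. Once the $C^1$-size of $K_\theta$ on this annulus is controlled, the $C^1$ bound on $\Lc^{K_\theta}(S)$ will follow by differentiating under the push-forward and applying the standard estimate $|\langle S, \phi\rangle| \lesssim \|\phi\|_\infty \|S\|$ that holds for a positive closed current $S$ paired with a smooth form $\phi$.

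The first concrete step is to analyze $dd^c u_\theta$ explicitly via
\[ dd^c u_\theta = \chi_\theta''(u)\,du\wedge d^c u + \chi_\theta'(u)\,dd^c u. \]
By \cite[Lemma 2.4.1]{DS10-1}, its support is contained in a $c|\theta|$-neighborhood of $\widehat{\Delta}$ and is disjoint from $\widehat{\Delta}$ itself. In local blow-up coordinates $(x, w)$ on $\widehat{\Xf}$ in which $\widehat{\Delta} = \{w_1 = 0\}$ and $u = \log|w_1| + \psi$ for some smooth $\psi$, the forms $du$ and $d^c u$ have coefficients of size $|w_1|^{-1}$, so $du \wedge d^c u$ is of size $|w_1|^{-2}$. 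Since $\chi_\theta'$ and $\chi_\theta''$ are bounded independently of $\theta$ and the support sits where $|w_1| \gtrsim |\theta|$, one obtains $\|K_\theta\|_\infty \lesssim |\theta|^{-2}$; each further derivative in any direction costs at most an extra factor $|\theta|^{-1}$. This yields the sharp bound $\|K_\theta\|_{C^1} \lesssim |\theta|^{-3}$, which a fortiori implies the (crude) stated bound $\|K_\theta\|_{C^1} \le c|\theta|^{-(2k+2)}$ for small $|\theta|$.

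The second step uses that for $\theta \neq 0$ the kernel $K_\theta$ is smooth, hence $\Lc^{K_\theta}(S)$ is a smooth form and differentiation in $y$ commutes with the push-forward: for any $y \in X$ and any unit tangent direction $V$,
\[ \nabla_V \Lc^{K_\theta}(S)(y) \;=\; (\Pi_2)_*\bigl(\Pi_1^*(S)\wedge \nabla_V^{(y)} K_\theta\bigr)(y). \]
Since $S$ is positive closed of mass $\|S\|$ and $\nabla_V^{(y)} K_\theta$ is smooth, the mass-pairing bound gives $|\Lc^{K_\theta}(S)(y)| + |\nabla_V \Lc^{K_\theta}(S)(y)| \lesssim \|K_\theta\|_{C^1}\|S\|$, and combining with Step~1 delivers the claim.

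The main obstacle is the local coordinate analysis in Step 1: one must check chart-by-chart that the singularities of $du$ scale like the inverse of a defining function of $\widehat{\Delta}$ and that the supports of $\chi_\theta'(u)$ and $\chi_\theta''(u)$ really sit in a $c|\theta|$-tube so that the pointwise bound $|w_1|^{-1} \lesssim |\theta|^{-1}$ applies there. This is the same type of computation that underlies the proof of Lemma \ref{lem:regularization_C1_dist}; the present lemma is the simpler of the two, since here only a loose $C^1$-bound on the resulting form is required, with no uniform control needed in the directions parallel to $\widehat{\Delta}$.
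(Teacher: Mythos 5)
There is a genuine gap in Step 2: the claimed mass-pairing bound $\|\Lc^{K_\theta}(S)\|_{C^1}\lesssim\|K_\theta\|_{C^1}\|S\|$, with the $C^1$-norm of $K_\theta$ measured in blow-up coordinates on $\Xfh$, is not the correct estimate, and consequently your ``sharp'' bound $\|K_\theta\|_{C^1}\lesssim|\theta|^{-3}$ does not control the transform. The transform equals $(\pi_2)_*\bigl(\pi_1^*(S)\wedge\pi_*(K_\theta)\bigr)$ after pushing down through $\pi$, so the fiber integral is taken in the $x$-variable on $\Xf=X\times X$ against the \emph{downstairs} kernel $\pi_*(K_\theta)(x,y)$, and it is $\|\pi_*(K_\theta)\|_{C^1}$ that controls the output, not $\|K_\theta\|_{C^1}$ upstairs. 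The push-down by the blow-up introduces a Jacobian: in coordinates $w_1=z_1$, $w_j=z_j/z_1$ ($j\ge2$), one has $dw_j=z_1^{-1}(dz_j-w_j\,dz_1)$, so every $dw_j$ ($j\ge2$) occurring in $\omega_{\Xfh}^{k-1}$ contributes an extra factor of $z_1^{-1}$ after pushing down. On the support $\{|z_1|\gtrsim|\theta|\}$ this makes the coefficients of $\pi_*(\omega_{\Xfh}^{k-1})$ of size $|\theta|^{-(2k-2)}$ and their $y$-derivatives of size $|\theta|^{-(2k-1)}$; combined with the $|\theta|^{-3}$ from $\pi_*(dd^cu_\theta)$ this produces exactly the exponent $-(2k+2)$ of the statement. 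Your analysis captures only the singularity of $K_\theta$ intrinsic to $\Xfh$ and misses this Jacobian, which is the whole point of the paper's computation and the reason the exponent depends on $k$.

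A related issue is that the differentiation-under-the-pushforward formula
$\nabla_V\Lc^{K_\theta}(S)(y)=(\Pi_2)_*\bigl(\Pi_1^*(S)\wedge\nabla_V^{(y)}K_\theta\bigr)(y)$
is not well-defined as written: there is no canonical ``$y$-derivative'' of a form on $\Xfh$ without a choice of horizontal lift, since the fibers $\Pi_2^{-1}(y)$ move with $y$. In the $(x,y)$-chart on $\Xf$ the current $\pi_1^*(S)=S(x)$ is $y$-independent, so the derivative falls entirely on $\pi_*(K_\theta)$ — again the downstairs kernel. If instead one trivializes via $z=x-y$ and lifts to blow-up coordinates $(w,y)$, then $\Pi_1^*(S)=S(y+z(w))$ acquires a $y$-dependence and the derivative produces a term involving $\nabla S$, which cannot be bounded by $\|S\|$ for a positive closed current of order $0$. (This is precisely the extra difficulty handled in the paper's proof of Lemma~\ref{lem:regularization_C1_dist}, which requires $S$ to be $C^1$; the present lemma avoids it by working with the downstairs kernel.) Your Step~1 local analysis of $dd^cu_\theta$ and the support description are correct, but Step~2 needs to be replaced by the push-down estimate of $\pi_*(\omega_{\Xfh}^{k-1})$ in $(x,y)$-coordinates as in the paper.
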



\begin{proof}
	The first assertion is from the observation that the support of $dd^c u_\theta$ is away from $\widehat{\Delta}$. So, we consider the second assertion. We have
	\begin{align*}
		\Lc^{dd^cu_\theta\wedge\omega_{\Xfh}^{k-1}}(S)&=(\Pi_2)_*(\Pi_1^*(S)\wedge dd^c u_\theta\wedge\omega_{\Xfh}^{k-1})\\
		&=(\pi_2)_*\left(\pi_1^*(S)\wedge \pi_*(dd^c u_\theta)\wedge \pi_*\left(\omega_{\Xfh}^{k-1}\right)\right).
	\end{align*}
	For the support of $dd^c u_\theta$ does not intersect $\widehat{\Delta}$ and $\pi$ is biholomorphic outside $\Delta$. We estimate $\pi_*(dd^c u_\theta)\wedge \pi_*\left(\omega_{\Xfh}^{k-1}\right)$.
	\medskip
	
	Since $X$ is compact and all the derivatives of $u_\theta$ are uniformly bounded outside a neighborhood of $\Delta$, it suffices to estimate in a neighborhood of $(P, P)\in \Delta \subset X\times X$ for $P\in X$. Let $(x, y)=(x_1, \cdots, x_k, y_1, \cdots, y_k)\in X\times X$ be local coordinates in a neighborhood of a point $(P, P)\in X\times X$ so that $(0, 0)$ corresponds to $(P, P)$. After shrinking the neighborhood of $(P, P)$ if necessary, we make a change of variables $(x, y)\to (x, z:=y-x)$. Then, in the neighborhood of $(P, P)$, $\Delta=\{z=0\}$ and the operator $(\pi_2)_*$ is the integration with respect to the $x$-variable.
	\medskip
	
	Since the support of $\pi_*\left(dd^c u_\theta\wedge\omega_{\Xfh}^{k-1}\right)$ is away from $\Delta$, it is enough to estimate the $C^1$-norm of $\pi_*\left(dd^c u_\theta\wedge\omega_{\Xfh}^{k-1}\right)$ in a sector $\mathfrak{S}$ of the form $|z_1|<\varepsilon$ and $|z_j|<2|z_1|$ where $z=(z_1, \cdots, z_k)$. Here, for simplicity, we just take $\varepsilon=1$. We apply the same arguments to other sectors.
	\medskip
	
	The following is a slight modification of \cite[Lemma 2.4.1]{DS10-1}. We again make a change of variables $w_1=z_1$ and $w_j=z_j/z_1$ for $j\ge 2$ so that $(x, w_1, w_2, \cdots, w_k)$ becomes the natural coordinates of $\pi^{-1}(\mathfrak{S})\subset \widehat{X\times X}$ and $\widehat{\Delta}=\{w_1=0\}$.
	\medskip
	
	As in the proof of \cite[Lemma 2.4.1]{DS10-1}, the support of $dd^c u_\theta$ sits inside $\{c_1|\theta|\le|w_1|\}$ for some constant $c_1>0$ independent of $\theta$. 
	As in Lemma \ref{lem:regularization_C1_dist}, with respect to the coordinates $(x, w)$, the function $u$ can be written as $u=\log|w_1|+ \psi$ for a smooth function $\psi$ of $x$ and $w$. Then, we have
	\begin{align*}
		dd^c u_\theta &= dd^c [\chi_\theta (u)]=\chi_\theta''(u)d(\log|w_1|+ \psi)\wedge d^c(\log|w_1|+ \psi)+\chi_\theta'(u)dd^c(\log|w_1|+ \psi).
	\end{align*}
	Observe that $\pi_*(dd^c u_\theta)$ is obtained from replacing $w_1, \cdots, w_k$ by $z_1, z_2/z_1, \cdots, z_k/z_1$. So, we have
	\begin{align*}
		&\pi_*(dd^c u_\theta)=\chi_\theta''(u)d(\log|z_1|+ \psi(x, z_1, z_2/z_1, \cdots, z_k/z_1))\wedge d^c(\log|z_1|+ \psi(x, z_1, z_2/z_1, \cdots, z_k/z_1))\\
		&\quad\quad\quad\quad\quad\quad+\chi_\theta'(u)dd^c(\log|z_1|+ \psi(x, z_1, z_2/z_1, \cdots, z_k/z_1)).
	\end{align*}
	
	Also, each component of $\omega_{\Xfh}^{k-1}$ is of the form $C(x, w)dx_I\wedge d\bar x_J\wedge d w_{I'}\wedge d\bar w_{J'}$ where $C(x, w)$ is a smooth function of $x$ and $w$, and $dx_I\wedge d\bar x_J\wedge d w_{I'}\wedge d\bar w_{J'}$ is the wedge product of $dx_1, \cdots, dx_k$ and $dw_1, \cdots, dw_k$ of bidegree $(k-1, k-1)$. Again, $\pi_*\left(\omega_{\Xfh}^{k-1}\right)$ is obtained from replacing $w_1, \cdots, w_k$ by $z_1, z_2/z_1, \cdots, z_k/z_1$. From direct computations, we have
	\begin{align*}
		d\left(\frac{y_i-x_i}{y_1-x_1}\right)=\frac{1}{y_1-x_1}d(y_i-x_i)-\frac{y_i-x_i}{(y_1-x_1)^2}d(y_1-x_1)
	\end{align*}
	for each $i=2, \cdots, k$. After changing coordinates back to $(x, y)$ from $(x, z)$, each coefficient of $\pi_*\left(\omega_{\Xfh}^{k-1}\right)$ is of the form $(y_1-x_1)^{-l} (\overline{y_1} - \overline{x_1})^{-l'} C'(x, y_1-x_1, (y_2-x_2)/(y_1-x_1), \cdots, (y_k-x_k)/(y_1-x_1))$ where $0\le l, l'\le k-1$ and $C'$ is a smooth function of $x, y_1-x_1, (y_2-x_2)/(y_1-x_1), \cdots, (y_k-x_k)/(y_1-x_1)$. The support of $dd^c u_\theta$ lies in $\{c_1|\theta|\le |z_1|\}$. The $C^1$-norm of each coefficient of $\pi_*\left(\omega_{\Xfh}^{k-1}\right)$ with respect to $y$ is bounded by $|\theta|^{-2k+1}$ up to a multiplicative constant independent of $\theta$. The proof of \cite[Lemma 2.4.1]{DS10-1} implies that the $C^1$-norm of the coefficients of $dd^c u_\theta$ on its support is bounded by $|\theta|^{-3}$ up to a multiplicative constant independent of $\theta$. So, we get
	\begin{align*}
		\left\|(\pi_2)_*\left(\pi_1^*(S)\wedge \pi_*(dd^c u_\theta)\wedge \pi_*\left(\omega_{\Xfh}^{k-1}\right)\right)\right\|_{C^1}\lesssim |\theta|^{-(2k+2)}\|S\|
	\end{align*}
	where the inequality $\lesssim$ means $\le$ up to a multiplicative constant independent of $S$ and $\theta$.
\end{proof}

\begin{proof}[Proof of Proposition \ref{prop:C1_estimate_regularization}]
	The first estimate is from the same argument used in \cite[Theorem 1.1]{DS04}. We consider the second inequality. Without loss of generality, we may assume that $S$ is a positive closed $(p, p)$-current on $X$. Notice that $\Lc_\theta^+=\Lc^{m_{\Xfh}\wedge\omega_{\Xfh}^k}+\Lc^{dd^c u_\theta\wedge\omega_{\Xfh}^{k-1}}$ and $\Lc^-=\Lc^{m_{\Xfh}\wedge\omega_{\Xfh}^k}-\Lc^{\alpha_{\Xfh}\wedge\hat{\omega}^{k-1}}$. Hence, $S_\theta^\pm$'s are the sum of $(k+2)$-times compositions of $\Lc^{m_{\Xfh}\wedge\omega_{\Xfh}^k}$, $\Lc^{\alpha_{\Xfh}\wedge\omega_{\Xfh}^{k-1}}$ or $\Lc^{dd^c u_\theta\wedge\omega_{\Xfh}^{k-1}}$. The terms containing only either $\Lc^{m_{\Xfh}\wedge\omega_{\Xfh}^k}$ or $\Lc^{\alpha_{\Xfh}\wedge\omega_{\Xfh}^{k-1}}$ are independent of $\theta$. So, the dominating part of the operator is $\left(\Lc^{dd^cu_\theta\wedge \omega_{\Xfh}^{k-1}}\right)^{k+2}$ and the previous lemma implies the desired estimate.
\end{proof}



\begin{lemma}\label{lem:mass_regular_transform}
	Let $S$ be a positive $(p, p)$-current. There exists a constant $c>0$ such that for all $\theta\in\C^*$ with $|\theta|\ll 1$, we have
	\begin{align*}
		\|\Lc_\theta^+(S)\|\le c(1+|\theta|)\|S\|,\quad \|\Lc^-(S)\|\le c\|S\|\quad\textrm{ and }\quad \|\Lc_\theta(S)\|_*\le c(2+|\theta|)\|S\|
	\end{align*}
	where the constant $c>0$ is independent of $S$ and $\theta$.
\end{lemma}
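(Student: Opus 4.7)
The plan is to exploit the fact that $\Lc_\theta^+$ and $\Lc^-$ are \emph{positive} closed semi-regular transforms, which reduces the mass bounds to a cohomological computation. Since the excerpt has already shown $dd^cu_\theta\ge -m_{\widehat{\Delta}}\omega_{\Xfh}$, the kernel $(m_{\widehat{\Delta}}\omega_{\Xfh}+dd^cu_\theta)\wedge\omega_{\Xfh}^{k-1}$ is a smooth positive closed $(k,k)$-form on $\Xfh$; likewise $(m_{\widehat{\Delta}}\omega_{\Xfh}-\alpha_{\widehat{\Delta}})\wedge\omega_{\Xfh}^{k-1}$ is smooth positive closed by the choice of $m_{\widehat{\Delta}}$. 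Consequently, for any $S\in\Cc_p$, both $\Lc_\theta^+(S)$ and $\Lc^-(S)$ are positive closed $(p,p)$-currents whose mass is computed as $\langle\cdot,\omega^{k-p}\rangle$.

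For the first inequality, the plan is to write
\[
\|\Lc_\theta^+(S)\|=\int_{\Xfh}\Pi_1^*(S)\wedge(m_{\widehat{\Delta}}\omega_{\Xfh}+dd^cu_\theta)\wedge\omega_{\Xfh}^{k-1}\wedge\Pi_2^*(\omega^{k-p}).
\]
All four factors are closed of total bidegree $(2k,2k)$ on $\Xfh$, and $\Pi_1^*(S)$ is a well-defined closed current of order zero because $\Pi_1$ is a submersion (as recorded earlier in the paper). Since $u_\theta$ is smooth, $dd^cu_\theta$ has vanishing cohomology class on $\Xfh$, and integration by parts kills the $dd^cu_\theta$-contribution against the closed product $\Pi_1^*(S)\wedge\omega_{\Xfh}^{k-1}\wedge\Pi_2^*(\omega^{k-p})$. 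What remains, $m_{\widehat{\Delta}}\int\Pi_1^*(S)\wedge\omega_{\Xfh}^k\wedge\Pi_2^*(\omega^{k-p})$, equals by the projection formula $m_{\widehat{\Delta}}\langle S,(\Pi_1)_*(\omega_{\Xfh}^k\wedge\Pi_2^*(\omega^{k-p}))\rangle$, a pairing of the positive closed current $S$ with a fixed smooth form independent of $S$ and $\theta$; this is bounded by $c\|S\|$. The identical argument for the $\theta$-free kernel of $\Lc^-$ yields $\|\Lc^-(S)\|\le c\|S\|$.

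The third inequality is then purely formal. Because $\Lc_\theta=\Lc_\theta^+-\Lc^-$ and $\Lc_\theta^+(S),\Lc^-(S)\in\Cc_p$, this is an admissible splitting in the definition of $\|\cdot\|_*$, so
\[
\|\Lc_\theta(S)\|_*\le\|\Lc_\theta^+(S)\|+\|\Lc^-(S)\|\le c(1+|\theta|)\|S\|+c\|S\|\le c(2+|\theta|)\|S\|
\]
after absorbing the constants.

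The only delicate step is the Stokes/cohomology manipulation when $S$ is merely a current. This is legitimate because $\Pi_1$ is a submersion, so $\Pi_1^*(S)$ is a well-defined closed current of order zero, and because the kernel is smooth for $\theta\ne 0$, making the relevant integration by parts unambiguous. In fact the argument gives the stronger bound $\|\Lc_\theta^+(S)\|\le c\|S\|$ with constant independent of $\theta$; the $(1+|\theta|)$ factor in the statement appears to be a loose placeholder, but it is what the lemma asks for.
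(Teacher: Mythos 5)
Your cohomological argument is elegant, but it has a genuine gap: you assume $S\in\Cc_p$ (positive \emph{and closed}) so that $\Pi_1^*(S)$ is a closed current and the $dd^cu_\theta$-contribution can be killed by Stokes. The lemma, however, is stated for a general \emph{positive} current of bidegree $(p,p)$, not a positive closed one, and this generality is essential: in Lemma~\ref{lem:integral_semi-regular_tr} the operators $\Lc^{\Theta_j}\in\{\Lc_\theta^+,\Lc^-\}$ are applied to currents such as $\chi_{s,t}\,\Lc^\Phi(S)$ and $\chi^{E,m-1}\Lc^{\Theta_m}(\cdots)$, which are positive but certainly not closed because of the cut-off functions, and the mass bounds of Lemma~\ref{lem:mass_regular_transform} are invoked precisely there. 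For non-closed $S$ the term $\int\Pi_1^*(S)\wedge dd^cu_\theta\wedge\omega_{\Xfh}^{k-1}\wedge\Pi_2^*(\omega^{k-p})$ does not vanish (one gets $\langle dd^c\Pi_1^*(S)\wedge\omega_{\Xfh}^{k-1}\wedge\Pi_2^*(\omega^{k-p}),u_\theta\rangle$, which is genuinely nonzero in general), so the cohomological collapse that your proof hinges on is unavailable.

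The paper sidesteps this by exploiting the \emph{symmetry} of the kernel rather than exactness: since the kernels are invariant under the involution $(x,y)\mapsto(y,x)$, the transforms $\Lc_\theta^\pm$, $\Lc^-$ are self-adjoint in the pairing $\langle\cdot,\cdot\rangle$, so $\langle\Lc_\theta^+(S),\omega^{k-p}\rangle=\langle S,\Lc_\theta(\omega^{k-p})+\Lc^-(\omega^{k-p})\rangle$ for \emph{any} positive $S$ with no appeal to $dd^c$-closedness. One then just bounds $\|\Lc_\theta(\omega^{k-p})\|_\infty$ via Lemma~\ref{lem:reg_smooth_estimate} (whence the $1+|\theta|$ factor) and $\|\Lc^-(\omega^{k-p})\|_\infty$ by a fixed constant. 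If you wish to keep your approach, you would need to either restrict it explicitly to closed $S$ (and then the first two inequalities would not cover the paper's applications) or replace Stokes by the self-adjointness argument. Note also that the third inequality necessarily assumes $S\in\Cc_p$ simply for $\|\cdot\|_*$ to be defined; that part of your argument is fine once the first two estimates are established in the correct generality.
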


\begin{proof}
	From Lemma \ref{lem:reg_smooth_estimate}, we get
	\begin{align*}
		\langle \Lc_\theta^+(S), \omega^{k-p}\rangle=\langle \Lc_\theta(S) +\Lc^-(S), \omega^{k-p}\rangle=\langle S, \Lc_\theta(\omega^{k-p})+\Lc^-(\omega^{k-p})\rangle\le c'(1+ |\theta|)\|S\|
	\end{align*}
	and
	\begin{align*}
		\langle \Lc^-(S), \omega^{k-p}\rangle=\langle S, \Lc^-(\omega^{k-p})\rangle\le c''\|S\|.
	\end{align*}
	for some $c'>0$ and $c''>0$. Using the definition of the $*$-norm, we get the last inequality.
\end{proof}

\begin{lemma}\label{lem:disjoint_estimate_1}
	Let $S_1$ and $S_2$ are positive currents of bidegree $(p_1, p_1)$ and $(p_2, p_2)$ such that $p_1+p_2=k$ and $\supp S_1\cap \supp S_2=\emptyset$. Then, for all $\theta\in\C^*$ with $|\theta|\ll 1$, we have
	\begin{align*}
		\langle S_1, \Lc^\theta(S_2)\rangle = 0.
	\end{align*}
\end{lemma}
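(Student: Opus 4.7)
The plan is a support-localization argument: the regularizing kernel $K_\theta:=(\alpha_{\widehat\Delta}+dd^cu_\theta)\wedge\omega_{\Xfh}^{k-1}$ is concentrated in an $O(|\theta|)$-neighborhood of the exceptional divisor $\widehat\Delta$, so $\Lc_\theta$ only smears supports by $O(|\theta|)$; once $|\theta|$ is smaller than the distance between the disjoint supports, the pairing must vanish.

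First I would observe that $K_\theta$ vanishes identically on the set $\{u\ge \log|\theta|\}$. Indeed, on this set $\chi_\theta'(u)=1$ and $\chi_\theta''(u)=0$, hence $u_\theta=u$ and $dd^cu_\theta=dd^cu$; away from $\widehat\Delta$, the identity $dd^cu=[\widehat\Delta]-\alpha_{\widehat\Delta}$ reduces to the smooth identity $dd^cu=-\alpha_{\widehat\Delta}$, so $\alpha_{\widehat\Delta}+dd^cu_\theta=0$ there. Exactly as in the proof of Lemma \ref{lem:regularization_C1_dist} (following the cited Lemma 2.4.1 of \cite{DS10-1}), the region $\{u\le \log|\theta|\}$ is contained in $\pi^{-1}((\Delta)_{c|\theta|})$ for some $c>0$ independent of $\theta$, so $\supp\pi_*K_\theta\subset (\Delta)_{c|\theta|}$.

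Next, using the projection formula $\Lc_\theta(S_2)=(\pi_2)_*(\pi_1^*S_2\wedge\pi_*K_\theta)$, I would rewrite
\begin{align*}
	\langle S_1,\Lc_\theta(S_2)\rangle=\langle \pi_2^*S_1\wedge\pi_1^*S_2,\pi_*K_\theta\rangle_{X\times X},
\end{align*}
where $\pi_2^*S_1\wedge\pi_1^*S_2$ is the well-defined tensor product $S_2\otimes S_1$ on $X\times X$ (the two factors depend on independent variables), supported on $\supp S_2\times \supp S_1$. Since $\supp S_1\cap\supp S_2=\emptyset$ and $X$ is compact, $d_0:=\dist(\supp S_1,\supp S_2)>0$; for every $\theta$ with $|\theta|<d_0/c$ we have $(\supp S_2\times \supp S_1)\cap (\Delta)_{c|\theta|}=\emptyset$, so the supports of the two factors in the pairing are disjoint and the pairing is zero.

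The main technical point is to justify this manipulation when $S_1,S_2$ are merely positive currents of order $0$. This can be handled by a standard cutoff: choose a smooth function $\chi$ on $X\times X$ with $\chi=1$ on $\supp S_2\times \supp S_1$ and $\chi=0$ on a neighborhood of $(\Delta)_{c|\theta|}$, split $\pi_*K_\theta=\chi\pi_*K_\theta+(1-\chi)\pi_*K_\theta$, and observe that $\chi\pi_*K_\theta\equiv 0$ (supports disjoint) while $(1-\chi)(S_2\otimes S_1)\equiv 0$. Equivalently, one can pull the cutoff down to $X$ and apply it to $\Lc_\theta(S_2)$ itself, whose support lies in $(\supp S_2)_{c|\theta|}$ by the same analysis and is therefore disjoint from $\supp S_1$. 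This is the only subtle point; once it is in place, the conclusion is immediate.
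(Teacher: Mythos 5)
Your proof is correct and follows essentially the same support-localization strategy as the paper, whose proof is a single sentence citing Lemma 2.4.1 of \cite{DS10-1} for the claim that $\supp\,\pi_*\big((\alpha_{\widehat{\Delta}}+dd^cu_\theta)\wedge\omega_{\Xfh}^{k-1}\big)$ shrinks uniformly to $\Delta$ as $\theta\to 0$. You make the mechanism explicit (by computing that $\alpha_{\widehat{\Delta}}+dd^cu_\theta=0$ on $\{u\ge\log|\theta|\}$) and add the cutoff bookkeeping needed to pair two currents of order $0$; both additions are valid and simply fill in details the paper leaves tacit.
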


\begin{proof}
	\cite[Lemma 2.4.1]{DS10-1} implies that the support of $\pi_*\left((\alpha_{\Xfh}+dd^cu_\theta)\wedge\omega_{\Xfh}^{k-1}\right)$ uniformly shirinks to $\Delta$ as $\theta\to 0$.
\end{proof}

\begin{lemma}\label{lem:disjoint_estimate_2}
	Let $S_1$ and $S_2$ are positive currents of bidegree $(p_1, p_1)$ and $(p_2, p_2)$ such that $p_1+p_2=k$ and $\supp S_1\cap \supp S_2=\emptyset$. Then, we have
	\begin{align*}
		\left|\langle S_1, \Lc^-(S_2)\rangle\right| \lesssim \dist(\supp S_1, \supp S_2)^{2-2k}\|S_1\|\cdot\|S_2\|.
	\end{align*}
	The inequality $\lesssim$ means $\le$ up to a multiplicative constant independent of $S_1$ and $S_2$.
\end{lemma}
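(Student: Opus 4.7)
The plan is to push the kernel $\mathcal{Q}:=(m_{\widehat\Delta}\omega_{\Xfh}-\alpha_{\widehat\Delta})\wedge\omega_{\Xfh}^{k-1}$ of $\Lc^-$ down from $\Xfh$ to $\Xf=X\times X$ and exploit the disjointness of the supports to stay away from the diagonal $\Delta$. Since $\supp S_1\cap\supp S_2=\emptyset$, the tensor product $\pi_1^*(S_2)\wedge\pi_2^*(S_1)$ is a well-defined positive $(k,k)$-current on $\Xf$ supported in $\supp S_2\times\supp S_1\subset\Xf\setminus\Delta$. Using that $\pi$ is biholomorphic off $\widehat\Delta$ and that $\mathcal{Q}$ is smooth on $\Xfh$, I first rewrite
\[
\langle S_1,\Lc^-(S_2)\rangle=\int_{\Xf\setminus\Delta}\pi_1^*(S_2)\wedge\pi_2^*(S_1)\wedge\pi_*\mathcal{Q}.
\]

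The next step is to estimate $\pi_*\mathcal{Q}$ near $\Delta$. In a blow-up chart $(x,w_1,\dots,w_k)$ with $w_1=z_1$ and $w_j=z_j/z_1$ for $j\ge 2$, where $z=y-x$ in local coordinates $(x,y)$ on $\Xf$ near a diagonal point, each differential $dw_j$ or $d\bar w_j$ with $j\ge 2$ becomes $dz_j/z_1-(z_j/z_1^2)\,dz_1$ or its conjugate under $\pi$, contributing a factor of order $|z_1|^{-1}$. A smooth $(k,k)$-form on $\Xfh$ carries at most $2(k-1)$ such $w$-differentials in total, so $|\pi_*\mathcal{Q}|\lesssim |z_1|^{2-2k}\lesssim\dist(\cdot,\Delta)^{2-2k}$ on the sector $|z_j|\le 2|z_1|$; the remaining sectors are handled by symmetry. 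This is precisely the type of local coordinate computation already used in the proof of \cite[Lemma 2.4.1]{DS10-1} and in the preceding lemmas of this section.

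Since $(x,y)\in\supp S_2\times\supp S_1$ implies $\dist((x,y),\Delta)=|x-y|/\sqrt 2\ge\dist(\supp S_1,\supp S_2)/\sqrt 2$, the coefficients of $\pi_*\mathcal{Q}$ on the support of $\pi_1^*(S_2)\wedge\pi_2^*(S_1)$ are bounded by a constant times $\dist(\supp S_1,\supp S_2)^{2-2k}$. Combining this sup-bound with the identity
\[
\langle\pi_1^*(S_2)\wedge\pi_2^*(S_1),\omega_{\Xf}^k\rangle=\binom{k}{s_1}\|S_1\|\,\|S_2\|,
\]
obtained by expanding $\omega_{\Xf}^k=(\pi_1^*\omega+\pi_2^*\omega)^k$ and noting that only the binomial term of bidegrees matching $(s_1,s_1)$ on the first factor and $(s_2,s_2)$ on the second survives, yields the claimed bound via Fubini. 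The main technical point is the pointwise singularity estimate of $\pi_*\mathcal{Q}$ near $\Delta$; once it is in hand, everything else is bookkeeping with tensor products on $\Xf$.
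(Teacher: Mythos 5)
Your proof is correct and follows essentially the same route as the paper: both push the pairing down to $\Xf\setminus\Delta$, estimate $\pi_*\mathcal{Q}$ (equivalently $\pi_*\omega_\Xfh^k$, since the kernel of $\Lc^-$ is a smooth positive form on the compact $\Xfh$, hence $\lesssim\omega_\Xfh^k$) near the diagonal by $\dist(\cdot,\Delta)^{2-2k}$, and then multiply by the mass of the tensor product. The only real difference is that the paper cites the pointwise kernel estimate from \cite[Lemma 3.1]{DS04}, whereas you re-derive it explicitly in blow-up coordinates $(x,w_1,\dots,w_k)$; the binomial identity $\langle\pi_1^*(S_2)\wedge\pi_2^*(S_1),\omega_\Xf^k\rangle=\binom{k}{s_1}\|S_1\|\|S_2\|$ you use is correct and makes the mass step slightly more explicit than the paper's terse statement.
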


\begin{proof}
	The support of $\pi_1^*(S_1)\wedge \pi_2^*(S_2)$ is away from $\Delta$. On its support, \cite[Lemma 3.1]{DS04} implies that
	\begin{align*}
		\left\|\pi_*\left(\omega_\Xfh^k\right)\right\|_{\infty, \supp \pi_1^*(S_1)\wedge\pi_2^*(S_2)}\lesssim \dist(\supp S_1, \supp S_2)^{2-2k}.
	\end{align*}
	Since $S_1$ and $S_2$ are positive, the mass of $\pi_1^*(S_1)\wedge \pi_2^*(S_2)$ is bounded by $\|S_1\|\cdot\|S_2\|$.
\end{proof}

\section{Analytic (sub)multiplicative cocycles}\label{sec:analytic_multi_cocycle}
The notion of analytic (sub)multiplicative cocycles was first introduced by Favre \cite{FA2000}, \cite{FA2000-1} and further studied by Dinh \cite{Dinh09} and Gignac \cite{GIGNAC}.
\medskip

Let $Z$ be an irreducible compact complex space of dimension $l$, not necessarily smooth. Let $g: Z\to Z$
be an open holomorphic map.
\begin{definition}
	[Definition 1.1 in \cite{Dinh09}]\label{def:anal_multi_cocycle} A sequence $\{\kappa_n\}$ of functions $\kappa_n: Z\to (0, \infty)$ for $n\ge 0$ is said to be an analytic submultiplicative (resp., multiplicative) cocycle (with respect to $g$) if for all $m, n\ge 0$ and for all $z\in Z$,
	\begin{enumerate}
		\item $\kappa_n$ is upper-semicontinuous (usc for short) with respect to the Zariski topology on $Z$ and $\kappa_n\ge c_\kappa^n$ for some constant $c_\kappa>0$, and
		\item $\kappa_{m+n}(z)\le \kappa_n(z)\cdot \kappa_m(g^n(z))$ (resp., $=$).
	\end{enumerate}
\end{definition}

\begin{definition}
	[Introduction in \cite{Dinh09}] $$ \kappa_{-n}(z):=\max_{w\in g^{-n}(z)}\kappa_n(w).$$
\end{definition}

Observe that $\kappa_{-n}$ is usc in the Zariski sense. The following theorem is the key in this section.
\begin{theorem}[Theorem 1.2 in \cite{Dinh09}]\label{thm:limit_anal_multi_cocycle}
	The sequence $\{(\kappa_{-n})^{1/n}\}$ converges to a function $\kappa_-$ defined over $Z$ with the following properties: for all $\delta>\inf_Z \kappa_-$, the set $\{\kappa_-\ge \delta\}$ is a proper analytic subset of $Z$, invariant under $g$ and contained in the orbit of $\{\kappa_n\ge \delta^n\}$ for all $n\ge 0$. In particular, $\kappa_-$ is usc in the Zariski sense.
\end{theorem}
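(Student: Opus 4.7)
The plan is to follow the Favre--Dinh strategy: for each threshold $\delta$, construct a proper analytic $g$-invariant subset $V(\delta)$ of $Z$ directly from the super-level sets of $\kappa_{-n}$, and then define $\kappa_-$ as the supremum of those $\delta$ for which $z \in V(\delta)$. First I would fix $\delta > 0$ and put $E_n(\delta) := \{\kappa_n \geq \delta^n\}$, which is analytic by Zariski upper-semicontinuity of $\kappa_n$. Since $Z$ is compact, $g^n$ is proper, so Remmert's proper mapping theorem yields that $F_n(\delta) := g^n(E_n(\delta))$ is also analytic; by the definition of $\kappa_{-n}$ and the surjectivity of $g^n$ (automatic for an open holomorphic self-map of an irreducible compact space), one has $F_n(\delta) = \{\kappa_{-n} \geq \delta^n\}$.

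The submultiplicative cocycle condition then yields the basic covering inclusion
\[
F_{m+n}(\delta) \subseteq F_m(\delta) \cup g^m(F_n(\delta)).
\]
Indeed, if $w \in g^{-(m+n)}(z)$ realizes $\kappa_{m+n}(w) \geq \delta^{m+n}$, the inequality $\kappa_n(w)\kappa_m(g^n(w)) \geq \delta^{m+n}$ forces either $\kappa_n(w) \geq \delta^n$ (placing $z = g^m(g^n(w))$ in $g^m(F_n(\delta))$) or $\kappa_m(g^n(w)) \geq \delta^m$ (placing $z$ in $F_m(\delta)$). Iterating gives
\[
F_{kN}(\delta) \subseteq \bigcup_{j=0}^{k-1} g^{jN}(F_N(\delta)),
\]
so every super-level set of $\kappa_{-n}$ at a large exponent is contained in the forward $g$-orbit of a single analytic set $F_N(\delta)$.

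The technical heart of the argument is then to show that this forward orbit stabilizes: the ascending family $\bigcup_{j=0}^{m} g^j(F_N(\delta))$ is eventually equal to a proper analytic $g$-forward-invariant set $V_N(\delta) \subset Z$. Here openness of $g$ is crucial: it preserves the dimension of irreducible analytic subsets and has discrete fibers. Combined with the finite decomposition of $F_N(\delta)$ into irreducible components and the descending chain condition on analytic subvarieties of the compact space $Z$, one argues component-by-component that each irreducible piece of $F_N(\delta)$ is either eventually periodic under $g$ (contributing a finite analytic union to the orbit) or is absorbed into an invariant component of strictly larger dimension, which can only happen finitely many times. I expect this combinatorial--dimensional step to be the main obstacle, and it is the place where one truly uses the dynamical hypotheses on $g$ beyond continuity.

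Once the $V_N(\delta)$ are constructed, set $V(\delta) := \bigcap_{N} V_N(\delta)$, a decreasing intersection of proper analytic $g$-invariant sets, hence itself proper analytic and $g$-invariant; the containment $V(\delta) \subseteq V_n(\delta) \subseteq \bigcup_{j \geq 0} g^j(E_n(\delta))$ supplies the ``orbit'' inclusion claimed in the theorem for every $n$. Defining $\kappa_-(z) := \sup\{\delta : z \in V(\delta)\}$ makes $\{\kappa_- \geq \delta\}$ coincide with $V(\delta)$, yielding Zariski upper-semicontinuity. Convergence of $(\kappa_{-n})^{1/n}$ to $\kappa_-$ then follows by combining the upper bound coming from the iteration inclusion (for $z$ outside $V_N(\delta)$ one gets $\limsup_n \kappa_{-n}^{1/n}(z) \leq \delta$) with the matching lower bound coming from the trivial estimate $\kappa_{-n} \geq c_\kappa^n$ refined by the very definition of $V(\delta)$ as the intersection of the $V_N(\delta)$.
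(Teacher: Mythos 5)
The paper does not prove this statement: it is quoted verbatim from Dinh's paper \cite{Dinh09} (Theorem 1.2 there) and used as a black box. So there is no in-paper proof to compare against; I can only comment on whether your argument is sound.

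Your outline captures the right skeleton — the sets $F_n(\delta)=g^n(\{\kappa_n\ge\delta^n\})=\{\kappa_{-n}\ge\delta^n\}$, the covering inclusion $F_{m+n}\subseteq F_m\cup g^m(F_n)$, and the iterated inclusion $F_{kN}\subseteq\bigcup_{j=0}^{k-1}g^{jN}(F_N)$ are all correct and are the starting point of Dinh's argument. However, the step you yourself flag as ``the main obstacle'' is in fact a genuine gap, and as written it is false. You claim that $V_N(\delta):=\bigcup_{j\ge 0}g^j(F_N(\delta))$ is eventually a proper analytic $g$-invariant subset by invoking the descending chain condition and arguing that each irreducible component of $F_N(\delta)$ is ``eventually periodic or absorbed into a component of larger dimension.'' But the forward orbit of a proper analytic subset under an open holomorphic self-map need not stabilize to an analytic set: already for $g(z)=z^2$ on $\P^1$ and a non-preperiodic point, the orbit is an infinite set that is Zariski dense. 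Nothing in the hypotheses on $\kappa_n$ (just Zariski upper-semicontinuity and the submultiplicative inequality) gives $F_N(\delta)$ any structure a priori that rules out this behavior, so your component-by-component dichotomy does not hold. The actual theorem asserts only that $\{\kappa_-\ge\delta\}$ is analytic and is \emph{contained in} the orbit of $\{\kappa_n\ge\delta^n\}$ — not that the orbit itself is analytic. Dinh's proof establishes the analyticity of the level set $\{\kappa_-\ge\delta\}$ directly, using a more careful stabilization argument that exploits the cocycle inequality (not just the covering inclusion) together with the Noetherian property, after first establishing the pointwise convergence of $\kappa_{-n}^{1/n}$; one cannot manufacture $\{\kappa_-\ge\delta\}$ as an intersection of orbit closures the way you propose.

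Two smaller issues: you build $V(\delta)$ for every $\delta>0$, but the construction only has a chance of being non-trivial for $\delta>\inf_Z\kappa_-$ (for small $\delta$ the sets $E_n(\delta)$ equal $Z$), which is exactly the restriction in the statement, and your sketch should invoke it explicitly; and the final ``lower bound'' needed to identify $\lim\kappa_{-n}^{1/n}$ with your $\kappa_-$ is asserted but not argued. The combination of these means the convergence statement, which in Dinh's proof is established first and is logically prior to the description of the level sets, is not actually derived here.
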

\medskip

The above notions and related properties work well on compact K\"ahler manifolds and so we will adopt the settings used in \cite{Ahn16}. From now on, we assume the hypotheses of Theorem \ref{thm:main}. Then, $f$ is a finite-to-one map and by the open mapping theorem between complex spaces, $f$ is open.
\subsection{Local multiplicity of $f^n$}\label{subsec:local_multi}
For each $n\in\N$, define $\mu_n(x)$ to be the local multiplicity of $f^n$ at $x\in X$. Then, $\{\mu_n\}$ is an analytic multiplicative cocycle with respect to $f$. By Theorem \ref{thm:limit_anal_multi_cocycle}, the limit function $\mu_-$ with $\min_X \mu_-=1$ exists for the sequence $\{\mu_n\}$. Since $X$ is compact and $\mu_-$ is usc in the Zariski sense, there exists a constant $d_f>0$ such that $d_f=\max_X\mu_-$. Since the topological degree of the map $f$ is $d_k$, $\mu_1(x)\le d_k$ for all $x\in X$ and therefore, by (2) of Definition \ref{def:anal_multi_cocycle}, $(\mu_{-n}(x))^{1/n}\le d_k$ and $d_f\le d_k$.
\medskip

If $d_f>1$, then we define $E_\lambda:=\{\mu_-\ge d_f\lambda^{-1}\}$ for $1\le\lambda<d_f$. Then, $f(E_\lambda)\subseteq E_\lambda$.

\subsection{Multiplicity of the analytic subset defined by the set of critical values of $f^n$}
For notational convenience, we denote $\xi(x, \varphi):=\nu(x, dd^c\log|\varphi|)$ for a point $x\in X$ and a holomorphic function $\varphi:X\to\C$ where $\nu(x, dd^c\log|\varphi|)$ denotes the Lelong number of the current $dd^c\log|\varphi|$ at $x\in X$. Then, by the chain rule, we have $\xi(x, J_{f^{m+n}})=\xi(x, J_{f^n})+\xi(x, J_{f^m}\circ f^n)$ for any $x\in X$ and for any $m, n\in \N$, where $J_{f^n}$ denotes the Jacobian determinant of $f^n$. We also have the following proposition:
\begin{proposition}
	[Remark 3 in \cite{Favre}; for a sharper version, see also \cite{Parra}] For any $x\in X$ and for any $m, n\ge 0$, the following inequality holds:
	\begin{align*}
		\xi(x, J_{f^m}\circ f^n)\le (2k-1+2\xi(x, J_{f^n}))\cdot \xi(f^n(x), J_{f^m}).
	\end{align*}
\end{proposition}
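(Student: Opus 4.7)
The plan is to reduce the inequality to a local statement on vanishing orders of a composition of holomorphic germs, and then bound it via the Lojasiewicz exponent of $f^n$ at $x$, which in turn is controlled by $\xi(x,J_{f^n})$.

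In local charts carrying $x$ and $y_0:=f^n(x)$ to the origin, view $g:=f^n$ as a finite open holomorphic germ at $0\in\mathbb{C}^k$ and $\varphi:=J_{f^m}$ as a holomorphic germ at $0$. Setting $d:=\xi(f^n(x),J_{f^m})=\mathrm{ord}_0\varphi$ and $j:=\xi(x,J_{f^n})=\mathrm{ord}_0 J_g$, and using that $\nu(0,dd^c\log|\psi|)=\mathrm{ord}_0\psi$ for a nonzero holomorphic germ $\psi$, the proposition becomes
\begin{align*}
    \mathrm{ord}_0(\varphi\circ g)\le\bigl(2k-1+2j\bigr)\,d.
\end{align*}

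Let $L(g)$ denote the Lojasiewicz exponent of $g$ at $0$, that is, the smallest $L>0$ such that $|g(x')|\ge c|x'|^{L}$ holds for $x'$ near $0$ (which exists since $g$ is finite at $0$). I would first establish $\mathrm{ord}_0(\varphi\circ g)\le d\,L(g)$ as follows: choose $v\in\mathbb{C}^k$ with $\varphi(tv)=c_v t^d+O(t^{d+1})$ and $c_v\neq 0$; by openness and finiteness of $g$, for each small $t$ pick $x_t\in g^{-1}(tv)$ with $x_t\to 0$. The lower Lojasiewicz bound yields $|x_t|\lesssim|t|^{1/L(g)}$, and hence $|\varphi\circ g(x_t)|\sim|t|^d\gtrsim|x_t|^{dL(g)}$, which forces $\mathrm{ord}_0(\varphi\circ g)\le dL(g)$ on passing to the limit along this family.

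The substantive step is the Lojasiewicz bound $L(g)\le(2k-1)+2j$, which is Remark~3 in \cite{Favre} (sharpened in \cite{Parra}). Its proof goes through a local intersection-theoretic analysis: the $k$ hypersurfaces $\{g_i=0\}$ meet set-theoretically only at $0$ since $g$ is finite, and a B\'ezout-type count relates their multiplicities at $0$ to intersections with the critical divisor $\{J_g=0\}$ through the determinant identity $J_g=\det(\partial_j g_i)$. The factor $2$ reflects passing from complex to real dimensions in the Lojasiewicz inversion of $g$, while the additive $2k-1$ absorbs possible cancellations among the partial derivatives in the Jacobian determinant. Combining the two steps gives $\mathrm{ord}_0(\varphi\circ g)\le dL(g)\le d(2k-1+2j)$, which is the desired inequality. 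The main obstacle is precisely this second step: the reduction to the Lojasiewicz estimate is transparent, but extracting the explicit constants $(2k-1,2)$ requires the delicate local algebraic-geometric input of \cite{Favre,Parra}, since a naive bound in terms of $\max_i\mathrm{ord}_0 g_i$ alone is insufficient due to cancellations inside $\det(\partial_j g_i)$.
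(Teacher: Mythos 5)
Your first step --- the reduction $\mathrm{ord}_0(\varphi\circ g)\le d\cdot L(g)$ by pulling back a generic ray of $\varphi$ through the finite open germ $g$ --- is correct and standard. The problem is the second step. The Lojasiewicz exponent of a finite germ $g:(\C^k,0)\to(\C^k,0)$ is simply not bounded by $(2k-1)+2\,\mathrm{ord}_0 J_g$, and this is not what Favre's Remark~3 asserts. Consider $g(x,y)=(x,\,xy+y^p)$ on $(\C^2,0)$ for $p\ge 2$: it is finite (the only preimage of the origin is the origin) and open, with $J_g = x + p\,y^{p-1}$, so $\mathrm{ord}_0 J_g = 1$ for every $p$; yet $g(0,s)=(0,s^p)$ forces $L(g)\ge p$. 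For $p>5=(2\cdot2-1)+2\cdot 1$ your proposed bound fails, and $L(g)$ can be made arbitrarily large while $\mathrm{ord}_0 J_g$ stays equal to $1$.

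The source of the error is structural: $L(g)$ tracks the worst ray of approach (here the axis $x=0$), whereas the vanishing order of a holomorphic pullback is governed by the generic ray, and the two can diverge arbitrarily while the Jacobian order is fixed. For the germ above, direct inspection of the leading monomial $x^{a+b}y^b$ in each summand $x^a(xy+y^p)^b$ shows $\mathrm{ord}_0(\varphi\circ g)\le 2\,\mathrm{ord}_0\varphi$ for every holomorphic $\varphi$, well inside the proposition's bound $5\,\mathrm{ord}_0\varphi$ --- so the proposition is not threatened, but your two-step route does not recover it. Note also that the paper gives no argument here: it simply cites \cite{Favre} and \cite{Parra}, so the entire burden is precisely the estimate you attribute to Remark~3, and the version you posit (a linear bound on $L(g)$ in terms of $\mathrm{ord}_0 J_g$) is false. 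The actual content of \cite{Favre} is a Lelong-number comparison that does not factor through $L(g)$.
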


For each $n\in \N$, we define $\mu_n'(x):= 2k-1+2\xi(x, J_{f^n})$ on $X$. Then, the above proposition implies that $\{\mu_n'\}$ is an analytic submultiplicative cocycle with respect to $f$ (see \cite[Section 3]{FJ}). Hence, by Theorem \ref{thm:limit_anal_multi_cocycle}, the limit function $\mu_-'$ exists for $\{\mu_n'\}$. We have $\min_X\mu_-'=1$. Since $\mu_-'$ is usc in the Zariski sense, there exists a constant $d_f'>0$ such that $d_f'=\max_X\mu_-'$.
\medskip

If $d_f'>1$, we define $E'_{\lambda'}:=\{\mu'_-\ge d_f'(\lambda')^{-1}\}$ for $1\le\lambda'<d_f'$. 
%
%
Then, $f(E'_{\lambda'})\subseteq E'_{\lambda'}$.
\begin{lemma}
	[Lemma 2.6 in \cite{Ahn16}]\label{lem:choice_exceptional_set} Assume that $d_f, d_f'>1$. Let $E_\lambda$ and $E'_{\lambda'}$ be defined for $\lambda$ and $\lambda'$ with $1<\lambda<d_f$ and $1<\lambda'<d_f'$, respectively. Let $E:=E_\lambda\cup E'_{\lambda'}$. Then, $E$ is invariant under $f$ and there exists $n_E\in\N$ such that for every $m\in\N$, 
	\begin{enumerate}
		\item $\displaystyle \mu_{-n_Em}(x)<\left(\frac{d_f}{\lambda}\right)^{n_Em}$ for $x\in\Psi_{n_Em}\setminus E$;
		\item $\displaystyle\mathrm{multi}_x \Psi_{n_Em}=\nu(x,[\Psi_{n_Em}])<c_\Psi n_Em\left(\left(\frac{d_f}{\lambda}\right)^k\frac{d_f'}{\lambda'}\right)^{n_Em}$ for $x\in\Psi_{n_Em}\setminus E$,
	\end{enumerate}
	where $c_\Psi$ denotes the number of the irreducible components in the hypersurface $\Psi_1$ of the critical values of $f$. 
\end{lemma}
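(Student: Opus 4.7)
The plan is to apply Theorem~\ref{thm:limit_anal_multi_cocycle} to both analytic cocycles $\{\mu_n\}$ and $\{\mu_n'\}$ to extract a single iteration exponent $n_E$ after which both cocycles are effectively controlled off of $E$, and then to bootstrap this to every multiple $n_Em$ via (sub)multiplicativity. That $E$ is a proper analytic subset invariant under $f$ is immediate from Theorem~\ref{thm:limit_anal_multi_cocycle}: applied to $\{\mu_n\}$ with $\delta=d_f/\lambda>1$, it produces $E_\lambda$ as such a set, and the same conclusion holds for $E'_{\lambda'}$.

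For part (1), Theorem~\ref{thm:limit_anal_multi_cocycle} together with the Zariski upper semicontinuity of $\mu_-$ and the noetherian property of analytic subsets of $X$ implies that the decreasing sequence of analytic sets $\{(\mu_{-n})^{1/n}\ge d_f/\lambda\}$ is eventually contained in $E_\lambda$. I choose $n_E$ large enough to ensure this and, simultaneously, $\{(\mu'_{-n})^{1/n}\ge d_f'/\lambda'\}\subset E'_{\lambda'}$ for later use in part (2). Then $\mu_{-n_E}(z)<(d_f/\lambda)^{n_E}$ and $\mu'_{-n_E}(z)<(d_f'/\lambda')^{n_E}$ for every $z\in X\setminus E$. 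Now let $x\in\Psi_{n_Em}\setminus E$ and $y\in f^{-n_Em}(x)$. Since $f(E)\subset E$ (forward invariance), applying the contrapositive yields $f^{n_Ej}(y)\in X\setminus E$ for every $j=0,1,\ldots,m-1$. The multiplicativity of $\mu_n$ then gives
\begin{align*}
\mu_{n_Em}(y) = \prod_{j=0}^{m-1}\mu_{n_E}(f^{n_Ej}(y)) \le \prod_{j=0}^{m-1}\mu_{-n_E}(f^{n_E(j+1)}(y)) < (d_f/\lambda)^{n_Em},
\end{align*}
and taking the maximum over $y\in f^{-n_Em}(x)$ yields (1).

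For part (2), the same iteration applied to the submultiplicative cocycle $\{\mu_n'\}$ gives $\mu'_{-n_Em}(x)<(d_f'/\lambda')^{n_Em}$ for $x\notin E$. I then decompose
\begin{align*}
\Psi_{n_Em} = \bigcup_{j=0}^{n_Em-1} f^j(\Psi_1),
\end{align*}
expressing $\Psi_{n_Em}$ as a union of at most $n_Em\cdot c_\Psi$ irreducible image hypersurfaces of the form $f^j(V)$, with $V$ an irreducible component of $\Psi_1$. Bounding $\mathrm{multi}_x\Psi_{n_Em}$ by the sum of the multiplicities of these pieces at $x$, and using a local pushforward estimate
\begin{align*}
\mathrm{multi}_x f^j(V) \lesssim \mu_j(y)^{k-1}\,\mathrm{multi}_y V
\end{align*}
for a suitable $y\in f^{-j}(x)\cap V$, together with the fact that the multiplicities of $\Phi_n$ at preimages are controlled by $\mu'_n$ through the definition $\mu'_n(y)=2k-1+2\xi(y,J_{f^n})$, I arrive at
\begin{align*}
\mathrm{multi}_x\Psi_{n_Em} < c_\Psi\, n_Em\cdot \mu_{-n_Em}(x)^k\cdot\mu'_{-n_Em}(x).
\end{align*}
Substituting the two exponential estimates then yields (2).

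The chief obstacle is the precise local multiplicity estimate feeding into (2). One must quantify how the local degree $\mu_j(y)$ of a finite holomorphic map amplifies multiplicity under pushforward (contributing powers up to $k$), while also accounting for how the critical-value hypersurface $\Psi_1=f(\Phi_1)$ accumulates under iteration via the submultiplicative cocycle $\mu_n'$, and combining these to obtain the factor $\mu_{-n_Em}(x)^k\cdot\mu'_{-n_Em}(x)$ rather than a larger power. The linear prefactor $c_\Psi\cdot n_Em$ and the decomposition $\Psi_{n_Em}=\bigcup_j f^j(\Psi_1)$ are, by contrast, essentially combinatorial and straightforward.
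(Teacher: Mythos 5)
The paper does not prove this lemma itself; it is quoted verbatim from Lemma~2.6 of \cite{Ahn16}, so there is no in-paper proof to compare against. Evaluated on its own terms, your reconstruction has the right skeleton for part~(1) but part~(2) has a genuine gap. In part~(1), the telescoping $\mu_{n_Em}(y)=\prod_{j=0}^{m-1}\mu_{n_E}(f^{n_Ej}(y))$, together with $\mu_{n_E}(f^{n_Ej}(y))\le\mu_{-n_E}(f^{n_E(j+1)}(y))$ and forward invariance of $E$, is correct. What is glossed over is the existence of a \emph{single} $n_E$ with $\mu_{-n_E}(z)<(d_f/\lambda)^{n_E}$ for \emph{every} $z\notin E$. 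The statement of Theorem~\ref{thm:limit_anal_multi_cocycle} as recorded here gives pointwise convergence of $\mu_{-n}^{1/n}$ and the analyticity/invariance of $E_\lambda=\{\mu_-\ge d_f/\lambda\}$; it does not immediately say the superlevel sets $\{\mu_{-n}\ge (d_f/\lambda)^n\}$ are eventually contained in $E_\lambda$. The sets $\{\mu_{-n}^{1/n}\ge d_f/\lambda\}$ need not be decreasing, and pointwise convergence on the non-compact set $X\setminus E_\lambda$ does not produce a uniform $n_E$ by itself; one needs the finer structural statement from Dinh's Theorem~1.2 (or Ahn's Lemma~2.6) relating $E_\lambda$ to the stabilization of the sets $\{\mu_n\ge\delta^n\}$ under iteration, and you should cite and use that, not the Noetherian property alone.

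The real gap is in part~(2). You decompose $\Psi_{n_Em}=\bigcup_{j}f^j(\Psi_1)$ and invoke a pushforward estimate $\mathrm{multi}_x f^j(V)\lesssim\mu_j(y)^{k-1}\,\mathrm{multi}_y V$ for $V$ an irreducible component of $\Psi_1$. But $\Psi_1$ is a fixed hypersurface, so $\mathrm{multi}_y V$ is a uniformly bounded constant; this route yields at best $\mathrm{multi}_x\Psi_{n_Em}\lesssim c_\Psi n_Em\,\mu_{-n_Em}(x)^{k-1}$, which has neither the exponent $k$ nor the factor $\mu'_{-n_Em}(x)$ needed to reach $\big((d_f/\lambda)^k\,d_f'/\lambda'\big)^{n_Em}$. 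Your appeal to "$\mu'_n(y)=2k-1+2\xi(y,J_{f^n})$ controls the multiplicities of $\Phi_n$" is the right idea, but it is not connected to your decomposition: $\Phi_n$ does not appear in $\bigcup_j f^j(\Psi_1)$. The $\mu'$ factor must come in through $\Psi_n=f^n(\Phi_n)$, the comparison $[\Psi_n]\le (f^n)_*[\Phi_n]$, and a Favre-type bound on Lelong numbers of pushforwards in which the multiplicity of $\Phi_n$ at the preimage (controlled by $\xi(\cdot,J_{f^n})$, hence by $\mu'_n$) enters alongside powers of the local degree $\mu_n$; you also then have to control the number of preimages contributing. As written, the intermediate bound $\mathrm{multi}_x\Psi_{n_Em}<c_\Psi n_Em\,\mu_{-n_Em}(x)^k\mu'_{-n_Em}(x)$ is asserted rather than derived, and you acknowledge this yourself. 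Until that estimate is actually established—with the exponents and the combinatorics of preimages pinned down—part~(2) is not proved.
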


\section{H\"older Continuity}\label{sec:Holder}
In this section, we collect the H\"older continuity properties of some functions and some superpotentials which will be used to prove Theorem \ref{thm:main}.

\begin{definition}
	Let $K$ and $\alpha$ be positive constants. Let $U$ be an open subset of $X$. A continuous map $g:X\to \R$ is said to be $(K, \alpha)$-H\"older continuous on $U$ if for every $x, y \in U$, we have
	\begin{align*}
		|g(x)-g(y)|\le K\dist(x, y)^\alpha.
	\end{align*} 	
\end{definition}

In this section, we consider $X$ and $f:X\to X$ as in Theorem \ref{thm:main}. Let $E$ be an analytic subset invariant under $f$ and $\delta>1$ a real number such that $\mu_{-1}<\delta$ on $X\setminus E$, where $\mu_{-1}$ is as in Subsection \ref{subsec:local_multi}. 
As a corollary to \cite[Proposition 4.2]{DS10}, we obtain the following corollary:
\begin{corollary}[Corollary 4.4 in \cite{DS10}]\label{cor:lojasiewicz}
	There are an integer $N_E$ and a constant $c_E\geq 1$ such that if $0<t<1$ is a constant and if $x, y$ are two points in $X$ with $\dist(x, E)>t$ and $\dist(y, E)>t$, then we can write
	\begin{displaymath}
		f^{-1}(x)=\{x_1, ..., x_{d_k}\}\,\,\,\textrm{ and }\,\,\,f^{-1}(y)=\{y_1, ..., y_{d_k}\}
	\end{displaymath}
	with $\dist(x_i, y_i)\leq c_Et^{-N_E}\dist(x, y)^{1/\delta}$.
\end{corollary}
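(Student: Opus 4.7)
The plan is to apply Proposition 4.2 of \cite{DS10}, which gives a local H\"older estimate for the inverse branches of $f$, and upgrade it to a uniform estimate quantifying how the constants depend on the distance to $E$. The two essential inputs are the assumption $\mu_{-1}<\delta$ on $X\setminus E$, which bounds the local multiplicity of every branch $w\in f^{-1}(x)$ by $\delta$, and a Lojasiewicz-type estimate for the defining ideal of $E$.

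At each preimage $w\in f^{-1}(x)$ with $x\in X\setminus E$, the local multiplicity $\mu_1(w)$ is bounded above by $\mu_{-1}(x)<\delta$. The classical H\"older estimate for holomorphic branched coverings (via Weierstrass preparation together with Noether normalization) gives a neighbourhood $V_w$ of $w$ and a constant $C_w\ge 1$ such that
\begin{align*}
\dist(z_1,z_2)\le C_w\,\dist(f(z_1),f(z_2))^{1/\mu_1(w)}
\end{align*}
for all $z_1,z_2\in V_w$. Since $\mu_1(w)\le\delta$ and distances on $X$ are bounded, one absorbs a uniform factor into $C_w$ to rewrite this as $\dist(z_1,z_2)\le C'_w\,\dist(f(z_1),f(z_2))^{1/\delta}$; set $C(x):=\max_{w\in f^{-1}(x)}C'_w$.

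Next, I quantify the dependence of $C(x)$ on $t=\dist(x,E)$. The constants $C'_w$ are controlled by the local analytic data of $f$ (the coefficients of the Weierstrass polynomials representing the branches), and the locus where they blow up lies inside $E$ because outside $E$ the hypothesis $\mu_{-1}<\delta$ prevents the local degrees from exceeding $\delta$. A Lojasiewicz-type inequality for the analytic subset $E$ --- namely that $\dist(\cdot,E)$ is bounded below by a positive power of a holomorphic function vanishing on $E$, which follows from resolution of singularities --- then yields the polynomial upper bound $C(x)\le c_E t^{-N_E}$ for suitable $c_E\ge 1$ and $N_E\in\N$ depending only on $f$ and $E$. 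The preimages $x_i$ and $y_i$ are matched by continuity of $f^{-1}$ on $\{\dist(\cdot,E)>t\}$: the $d_k$ sheets pair canonically whenever $\dist(x,y)$ is below a threshold of order $(t/c_E)^{\delta N_E}$, and the local estimate on each sheet produces the claimed inequality.

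The main obstacle is the polynomial rate $t^{-N_E}$. A priori one has only continuity of $C(x)$ on $X\setminus E$, with no control on how fast it blows up as $x$ approaches $E$; the polynomial rate is extracted precisely from the analyticity of $E$ (via resolution of singularities, or via explicit estimates on the defining equations of $E$), and this is the essential content of Proposition 4.2 of \cite{DS10}.
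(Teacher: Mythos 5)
The paper itself offers no proof of this statement: it is quoted verbatim as Corollary 4.4 of \cite{DS10}, deduced there from Proposition 4.2 of the same reference. So there is no "paper's own proof" against which to check you line by line; what can be checked is whether your sketch is a sound reconstruction of the argument and whether it matches what the cited proposition/corollary pair actually delivers.

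Your high-level strategy is in the right spirit, but two things deserve flagging. First, you are essentially reconstructing a proof of \cite[Proposition 4.2]{DS10} itself (the local H\"older estimate for inverse branches, with a polynomial blow-up of the constant near $E$ obtained from a Lojasiewicz-type inequality), rather than the much shorter step of deducing the global Corollary 4.4 from that local proposition. That deduction is a covering/chaining argument: one covers $\{\dist(\cdot,E)>t\}$ by a bounded number (polynomial in $t^{-1}$) of local charts supplied by Proposition 4.2 and chains the local estimates along a path from $x$ to $y$ staying at distance $\gtrsim t$ from $E$, then compares with $\dist(x,y)$. Your proposal compresses the hard content of Proposition 4.2 into a few sentences ("via Weierstrass preparation together with Noether normalization", "follows from resolution of singularities") while skipping the genuinely easy step that the corollary consists of; this inversion of effort means the proposal is not really a proof of the corollary as stated. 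Second, the matching of the labels $x_i\leftrightarrow y_i$ is asserted to work "whenever $\dist(x,y)$ is below a threshold", but the corollary claims the estimate for all pairs $x,y$ in $\{\dist(\cdot,E)>t\}$. When $\dist(x,y)$ is not small the estimate is trivially satisfied after enlarging $c_E$ (since $\diam X<\infty$ and $t<1$), but that observation must be made explicit, and for intermediate distances one needs either the chaining argument above or an argument that the matching can be chosen measurably/continuously on the whole region. Relatedly, preimages should be listed with multiplicity (critical points produce repeated $x_i$'s); the matching argument must be compatible with that convention. None of this breaks your outline, but as written the proposal leaves the one step actually asked for (Proposition 4.2 $\Rightarrow$ Corollary 4.4) essentially undone while gesturing at the deeper local estimate it depends on.
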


Let $\omega_f$ be a smooth closed $(1, 1)$-form in $\{f_*(\omega)\}$. Then, there exists a q-psh function $u_f:X\to\R$ such that $f_*(\omega)-\omega_f=dd^cu_f$. The function $u_f$ is unique up to a constant. By adding a proper constant, we may assume that $u_f<-1$. We can investigate the H\"older continuity property of $u_f$ outside the set $E$.
\begin{lemma}
	\label{lem:Holder_conti_qpotential} Assume that $0<s\ll 1$. Then, $u_f$ is $(c_fs^{-N_E}, \delta^{-1})$-H\"older continuous on $X \setminus E_s$ for some $c_f>0$ independent of $s$ where $N_E$ is a constant as in Corollary \ref{cor:lojasiewicz}.
\end{lemma}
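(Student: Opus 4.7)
The plan is to reduce the H\"older continuity of $u_f$ to that of the inverse branches of $f$, as controlled by Corollary~\ref{cor:lojasiewicz}, by writing $u_f$ locally, up to a smooth correction, as a sum of smooth local potentials of $\omega$ summed over the fibers of $f$.

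I would first fix a finite atlas $\{W_\beta\}$ of coordinate balls on $X$ on each of which the local $dd^c$-lemma provides a smooth function $v_\beta$ with $\omega=dd^cv_\beta$ on $W_\beta$, and with Lipschitz constants uniformly bounded by a constant $L$ depending only on $(X,\omega)$. For each $x_0\in X\setminus E_s$ I then choose a ball $B=B(x_0,r)$ with $r$ comparable to $s$, such that $B\subset X\setminus E_{s/2}$ and $f^{-1}(B)$ splits as a disjoint union $\bigsqcup_j U_j$ with each $U_j$ sitting inside some chart $W_{\beta(j)}$ and with $f|_{U_j}:U_j\to B$ a branched cover. Since $\mu_{-1}<\delta$ on $X\setminus E$, the local multiplicity of $f|_{U_j}$ at every point is strictly less than $\delta$.

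Applying $f_*\circ dd^c=dd^c\circ f_*$ sheet by sheet, I would verify the local identity
\begin{align*}
f_*\omega|_B=dd^c\Phi\quad\text{with}\quad \Phi(y):=\sum_{x\in f^{-1}(y)}v_{\beta(j(x))}(x),
\end{align*}
where $j(x)$ is the index of the sheet containing $x$ and preimages are counted with multiplicity. Then $dd^c(u_f-\Phi)=-\omega_f|_B$ is smooth, so by elliptic regularity $u_f-\Phi$ is smooth on $B$, in particular Lipschitz there with a constant independent of $s$. Thus it suffices to bound $|\Phi(x)-\Phi(y)|$ for $x,y\in B\cap(X\setminus E_s)$. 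Corollary~\ref{cor:lojasiewicz} with $t=s$ produces a pairing of $f^{-1}(x)=\{x_1,\dots,x_{d_k}\}$ with $f^{-1}(y)=\{y_1,\dots,y_{d_k}\}$ satisfying $\dist(x_i,y_i)\le c_Es^{-N_E}\dist(x,y)^{1/\delta}$. For $\dist(x,y)$ sufficiently small in terms of $s$ (so that $c_Es^{-N_E}\dist(x,y)^{1/\delta}$ is smaller than the separation between the sheets $U_j$), paired points lie in the same sheet $U_{j(i)}\subset W_{\beta(i)}$, and the Lipschitz bound on $v_{\beta(i)}$ yields
\begin{align*}
|\Phi(x)-\Phi(y)|\le\sum_i|v_{\beta(i)}(x_i)-v_{\beta(i)}(y_i)|\le L\,d_k\,c_E\,s^{-N_E}\dist(x,y)^{1/\delta}.
\end{align*}
Combined with the Lipschitz estimate for $u_f-\Phi$ this produces the required bound on each small ball, and patching over a finite cover of $X\setminus E_s$ by such balls gives the global estimate. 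For $\dist(x,y)$ not small, the inequality holds trivially after absorbing the uniform bound on $u_f|_{X\setminus E_s}$ into $c_f$.

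The main technical obstacle is carrying out the local decomposition uniformly in $s$: namely, arranging that $f^{-1}(B(x_0,r))$ separates into sheets each fitting inside a single chart $W_\beta$, with an explicit lower bound on the inter-sheet separation. As $s\to 0$ one must track how the admissible radius $r$ and the sheet separation depend on $s$, and this is precisely the mechanism that produces the factor $s^{-N_E}$ in the H\"older constant, by absorbing both the H\"older constant from the pairing of preimages in Corollary~\ref{cor:lojasiewicz} and any Lipschitz distortion of $v_\beta$ over shrinking balls. The exponent $\delta^{-1}$ comes directly from the bound $\mu_{-1}<\delta$ on $X\setminus E$ via the same corollary.
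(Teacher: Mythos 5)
Your high-level strategy---writing $u_f$ locally as a fiberwise sum of smooth local $dd^c$-potentials of $\omega$ and then applying Corollary~\ref{cor:lojasiewicz}---is the right one and, when carried out correctly, produces exactly the exponent $\delta^{-1}$ and the factor $s^{-N_E}$; the paper omits the proof, but the surrounding discussion makes clear that this is the intended mechanism, adapted from \cite{Ahn16} and \cite{DS10}. However, there is a genuine gap in the way you handle the sheet decomposition. You require the Lojasiewicz pairing $x_i\leftrightarrow y_i$ to stay within a single connected component $U_j$ of $f^{-1}(B)$, and to ensure this you appeal to a lower bound on the separation between the sheets, which you flag as the "main technical obstacle" to be tracked as $s\to 0$. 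No such bound holds uniformly on $X\setminus E_s$: the sheets cluster around the individual preimages of $x_0$, and two distinct preimages coalesce as $x_0$ approaches the critical value set $\Psi_1$, which in general is not contained in $E$; so the condition $\dist(x_0,E)>s$ imposes no lower bound on inter-sheet separation, and for $x_0$ close to $\Psi_1$ your "$\dist(x,y)$ sufficiently small" threshold degenerates to $0$. The trivial bound you invoke in the complementary range then contributes a constant $\Delta_B^{-1/\delta}$, where $\Delta_B$ is the sheet separation, which is not of the form $c_f s^{-N_E}$.

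The way to close the gap is to weaken the requirement: one does not need $x_i$ and $y_i$ to lie in the same sheet, only that the \emph{same} local potential $v_\beta$ be used to evaluate both. Since the pairing gives $\dist(x_i,y_i)\le c_E s^{-N_E}\dist(x,y)^{1/\delta}$, for $\dist(x,y)$ below a fixed multiple of $s^{\delta N_E}$ the two points lie in a common chart of a fixed finite atlas (by its Lebesgue number), so one should assign a single $v_\beta$ to each cluster of mutually nearby sheets; then the Lipschitz estimate on $v_\beta$ applies to cross-sheet pairs as well, and for larger $\dist(x,y)$ the trivial bound contributes only $s^{-N_E}$. Two further assertions in your write-up also need correction. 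The admissible radius $r$ of $B(x_0,r)$ cannot be taken comparable to $s$---again by Corollary~\ref{cor:lojasiewicz} one needs $r\lesssim s^{\delta N_E}$ for $f^{-1}(B)$ to break into chart-sized pieces---and the pluriharmonic correction $u_f-\Phi$ on such a ball has Lipschitz constant of order $r^{-1}$, not one independent of $s$. Both are harmless once one restricts to $\dist(x,y)\le r$ and interpolates against the exponent $\delta^{-1}$, which yields $r^{-1/\delta}\sim s^{-N_E}$, but these factors have to be computed rather than asserted to be uniformly bounded.
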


\begin{lemma}\label{lem:smooth_approx_Holder}
	Let $g:X\to\R$ be a continuous function. Let $K$, $\alpha$ and $\theta$ be positive real numbers. Assume that $\theta\in\C^*$ be such that $|\theta|\ll 1$. Let $W_1$ and $W_2$ be two neighborhoods of $E$ such that $W_1^c\subset \overline{(W_1^c)_{\theta}}\subset W_{2}^c$. Assume that $g$ is $(K, \alpha)$-H\"older continuous on $W_{2}^c$. Then, there exists a smooth function $\widetilde g$ defined in a neighborhood of $W_1^c$ to $\R$ such that
	\begin{align*}
		\|\widetilde g\|_{C^2, W_1^c}\le c \theta^{-2(k+1)}\quad\textrm{ and }\quad \|g-\widetilde g\|_{\infty, W_1^c}\le c K\theta^{\alpha}
	\end{align*}
	where the constant $c>0$ is independent of $K$, $\alpha$, $\theta$, $W_1$ and $W_2$. 
\end{lemma}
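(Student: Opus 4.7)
The plan is to construct $\widetilde g$ by local convolution with a smooth mollifier at scale $\theta$. The geometric hypothesis $\overline{(W_1^c)_\theta}\subset W_2^c$ is tailor-made for this: it guarantees that the averages defining $\widetilde g$ at points of $W_1^c$ only sample values of $g$ in $W_2^c$, i.e.\ in the region where the Hölder estimate is available, so issues at the boundary of the Hölder region are avoided and no extension of $g$ past $W_2^c$ is required.

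Concretely, I would fix once and for all a finite atlas $\{(U_i,\phi_i)\}$ of $X$ and a subordinate partition of unity $\{\chi_i\}$, both depending only on $X$. Choose a radial bump $\rho\in C_c^\infty(\C^k)$ with $\supp\rho\subset B(0,1)$ and $\int\rho=1$, and set $\rho_\theta(y):=\theta^{-2k}\rho(y/\theta)$. On $U_i\cap W_1^c$ define
\begin{align*}
\widetilde g_i(x):=\int_{\C^k}\rho_\theta(y)\,g\bigl(\phi_i^{-1}(\phi_i(x)-y)\bigr)\,d\lambda(y),
\end{align*}
which is well defined after a harmless rescaling of $\theta$ absorbing the (uniform) bi-Lipschitz constants of the $\phi_i$, since then $\phi_i^{-1}(\phi_i(x)-y)\in (W_1^c)_{c\theta}\subset W_2^c$ for every $x\in W_1^c\cap U_i$ and every $y\in\supp\rho_\theta$. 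Finally set $\widetilde g:=\sum_i\chi_i\widetilde g_i$ on a neighborhood of $W_1^c$. The Hölder estimate is immediate from the definition:
\begin{align*}
|g(x)-\widetilde g_i(x)|\le\int\rho_\theta(y)\bigl|g(x)-g(\phi_i^{-1}(\phi_i(x)-y))\bigr|\,d\lambda(y)\le cK\theta^\alpha,
\end{align*}
and summing against the partition of unity preserves this bound. For the $C^2$ estimate, differentiate under the integral sign: the derivatives fall on $\rho_\theta$ and are controlled by $\|D^2\rho_\theta\|_{L^1}\le c\theta^{-2}$ times $\|g\|_{\infty,W_2^c}$, and the derivatives of the cutoffs $\chi_i$ contribute only a universal constant. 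The quantity $\|g\|_\infty$ is finite, and independent of $K,\alpha,\theta,W_1,W_2$, because $g$ is continuous on the compact manifold $X$, so it is absorbed into $c$. The resulting bound $\|\widetilde g\|_{C^2,W_1^c}\le c\theta^{-2}$ is strictly stronger than the required $c\theta^{-2(k+1)}$, so there is ample room.

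The main obstacle is purely notational: ensuring compatibility between the intrinsic distance $\dist(\cdot,\cdot)$ on $X$ and the Euclidean distance in the charts, so that the set of $y$'s used in the convolution really corresponds to points in $W_2^c$. This is settled once the atlas and its bi-Lipschitz constants are fixed, by replacing $\theta$ with $c'\theta$ in $\rho_\theta$ for a universal $c'>0$. Apart from this bookkeeping, the argument is the classical sup--$L^1$ duality estimate for convolution, and the proof does not invoke the heavier machinery of Section~\ref{sec:regularization}; the weaker exponent $-2(k+1)$ in the statement comfortably accommodates any loss that would arise from using the semi-regular transforms $\Lc_\theta$ of Section~\ref{sec:regularization} instead, if one preferred an intrinsic approach.
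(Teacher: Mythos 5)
Your proposal is correct and follows essentially the same route as the paper's proof: local convolution at scale $\theta$ in fixed charts, exploiting $\overline{(W_1^c)_\theta}\subset W_2^c$ so the mollification only samples the Hölder region, followed by patching with a fixed partition of unity. You even obtain the sharper $C^2$-bound $c\theta^{-2}$ (the paper records only the cruder $c\theta^{-2(k+1)}$, which is what it uses downstream), so no gap.
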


\begin{proof}
	We first consider the following local case. Denote by $B$ and $B'$ two balls in a cooridnate chart with center at $0$ and of radii $1$ and $2$, respectively. Let $K$, $\alpha$ and $\theta$ be positive real numbers. Let $E\subset\C^k$ be an analytic subset, and $V$ and $W$ two open neighborhoods of $E$ such that $W^c\subset\overline{(W^c)_{\theta}}\subset V^c$. Assume that $B\setminus W\neq \emptyset$ in order to avoid triviality. Let $g:B'\to \R$ be a continuous function which is $(K, \alpha)$-H\"older continuous on $B'\setminus V$.
	\medskip
	
	Let $g_\reg$ denote the restriction to the set $B\setminus W$ of the standard regularization by convolution of $g$. Then, $g_\reg: B\setminus W\to\R$ becomes a desired smooth function. More precisely, let $\psi:\C^k\to\R$ be a smooth function such that $\supp\, \psi\Subset \{|z|<1\}$, $\psi(z)\ge 0$ for all $z\in \C^k$, $\psi(z)=\psi(|z|)$, $\psi$ is decreasing in $|z|$ and $\int_{\C^k}\psi(z)d\lambda(z)=1$ where $\lambda$ denotes the standard Lebesgue measure on $\C^k$. Then, $g_\reg$ is defined by
	\begin{align*}
		g_\reg(x):=\int_{y\in \C^k}g(x-y)|\theta|^{-2k}\psi(y/\theta)d\lambda(y).
	\end{align*}
	
	Since the support of $\psi(y/\theta)$ sits inside $\{|y|<|\theta|\}$, for any $x\in B-W$, 
	\begin{align*}
		|g(x)-g_\reg(x)|&=\left|\int_{y\in \C^k}(g(x)-g(x-y))|\theta|^{-2k}\psi(y/\theta)d\lambda(y)\right|\\
		&\le \int_{y\in \C^k}|g(x)-g(x-y)||\theta|^{-2k}\psi(y/\theta)d\lambda(y)\\
		&\le \int_{y\in \C^k}K|y|^\alpha|\theta|^{-2k}\psi(y/\theta)d\lambda(y)\le \int_{y\in \C^k}K|\theta|^\alpha|\theta|^{-2k}\psi(y/\theta)d\lambda(y)\le K|\theta|^\alpha
	\end{align*}
	For the second argument, we used a change of variables. This is also standard.
	\begin{align*}
		g_\reg(x)&=\int_{y\in \C^k}g(x-y)|\theta|^{-2k}\psi(y/\theta)d\lambda(y)=\int_{z\in \C^k}g(z)|\theta|^{-2k}\psi((x-z)/\theta)d\lambda(z).
	\end{align*}
	Since $\psi$ is smooth, we have
	$$\|g_\reg\|_{C^2}\lesssim \sup_{B'}|g|\cdot|\theta|^{-2(k+1)}\|\psi\|_{C^2}.$$
	Observe that the constants involved in the inequalites other than $K$, $\alpha$, $\theta$ and $\sup_{B'}|g|$ are some constants related to $\psi$. So, for a fixed $g:B'\to \R$, we get the same inequalities for different $K$, $\alpha$, $\theta$, $V$ and $W$ as long as $W^c\subset\overline{(W^c)_{\theta}}\subset V^c$ holds and $g$ is $(K, \alpha)$-H\"older continuous on $B'\setminus V$.
	\medskip
	
	Now, we consider the general case. At each point $x_0\in X$, we take an open neighborhood $U_{x_0}$ and a coordinate chart $\phi_{x_0}: U_{x_0}\to\C^k$ such that $B'\subset \phi_{x_0}(U_{x_0})$. This can be always achieved by scaling $\phi_{x_0}$. Then, we let $B_{x_0}:=\phi_{x_0}^{-1}(B)$ and $B'_{x_0}:=\phi_{x_0}^{-1}(B')$. Since $X$ is compact, we can find a finite cover $(B_j)_{j=1, \cdots, N}$ out of $(B_{x_0})_{x_0\in X}$. Let $(\chi_j)_{j=1, \cdots, N}$ be a partition of unity subordinated to the cover $(B_j)_{j=1, \cdots, N}$.
	\medskip
	
	Notice that on each $B_j'$, the Euclidean metric and the metric induced from $X$ are equivalent. So, there exists a constant $c_j>1$ such that $c_j^{-1}\dist(x, y)\le |x-y|\le c_j\dist(x, y)$ for $x, y\in B'_j$. So, we apply the above model case with $\theta$ replace by $c_j^{-1}\theta_i$ and with $V=W_{2}\cap B_j'$ and $W=W_1\cap B_j'$ and denote by $g_{i, j}$ the resulting function $g_\reg$ on $B_j$. Then, the desired function $g_i$ is obtained by
	\begin{align*}
		g_i=\sum_{j=1}^N \chi_j g_{i, j}.
	\end{align*}
\end{proof}

We also consider the H\"older continuity of some superpotentials. Since $f$ is not just a holomorphic correspondence but a holomorphic map, $f^*(\alpha)$ is smooth whenever $\alpha$ is smooth. Hence, we can apply the same argument as in the proof Lemma 5.4.3 in \cite{DS09} and we get the desired H\"older continuity.
\medskip


\begin{lemma}\label{lem:Holderconti_f_*}
	Let $p\in\{1, 2, \ldots, k\}$. The superpotential of $f_*(\omega^p)-\alpha_{f_*(\omega^p)}$ admits H\"older continuous superpotentials, where $\alpha_{f_*(\omega^p)}$ is a real closed smooth $(p, p)$-form in $\{f_*(\omega^p)\}$.
\end{lemma}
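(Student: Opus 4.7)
The plan is to follow the strategy of Lemma 5.4.3 in \cite{DS09}, where the key new observation is that, since $f$ is a genuine holomorphic map (not merely a correspondence), $f^*(\alpha)$ inherits the smoothness of $\alpha$. This is the ingredient that lets the $\P^k$ argument transfer to the compact K\"ahler setting, provided the Green potential kernel $K$ from Subsection \ref{sec:superpotentials} and the quantitative regularization of Section \ref{sec:regularization} are in place. Write $T_s:=f_*(\omega^s)-\alpha_{f_*(\omega^s)}\in\Dc_s^0$; the goal is to find $\alpha,\beta>0$ and $c>0$ such that, on any fixed $*$-bounded family $\Fc\subset\widetilde{\Dc_{k-s+1}^0}$, $|\Uc_{T_s}(R_1)-\Uc_{T_s}(R_2)|\le c\|R_1-R_2\|_{C^{-\alpha}}^{\beta}$ for every $R_1,R_2\in\Fc$.

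First I would set $R:=R_1-R_2$, take the Green potential $U_R:=\Lc^K(R)$, and exploit the symmetry of $K$ together with the projection formula (valid because $f^*$ of a smooth form is smooth) to get
\[
\Uc_{T_s}(R_1)-\Uc_{T_s}(R_2)=\langle\omega^s,f^*(U_R)\rangle-\langle\Lc^K(\alpha_{f_*(\omega^s)}),R\rangle.
\]
The second term is easy: iterating Proposition \ref{prop:regularization_principle} on the smooth $\alpha_{f_*(\omega^s)}$ shows that $\Lc^K(\alpha_{f_*(\omega^s)})$ is $C^1$, whence the pairing is H\"older in $\|R\|_{C^{-\alpha}}$ for any $\alpha\in(0,1)$. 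For the main term I would approximate $U_R$ by the smooth $(U_R)_\theta:=\Lc_\theta^{k+2}(U_R)$ and split
\[
\langle\omega^s,f^*(U_R)\rangle=\langle\omega^s,f^*((U_R)_\theta)\rangle+\langle f_*(\omega^s),U_R-(U_R)_\theta\rangle.
\]
Proposition \ref{prop:C1_estimate_regularization} supplies $\|(U_R)_\theta\|_{C^1}\lesssim|\theta|^{-(2k+2)(k+2)}\|R\|_*$, and since $f$ is smooth off $\Phi_1$ this $C^1$-bound survives pullback, yielding a H\"older bound of the first piece in $\|R\|_{C^{-\alpha}}$ that blows up polynomially as $\theta\to 0$. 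The error piece converges to $0$ as $\theta\to 0$ by Proposition \ref{prop:uniform_estimate_smooth} combined with the $L^{1+1/k}$-estimate of Proposition \ref{prop:Green_kernel}. Balancing the two against $\theta=\|R\|_{C^{-\alpha}}^{\gamma}$ for a suitable small $\gamma>0$ produces the sought uniform H\"older exponent.

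The hardest step, as in \cite{DS09}, is the error pairing $\langle f_*(\omega^s),U_R-(U_R)_\theta\rangle$: since $f_*(\omega^s)$ is only a current of order $0$, an $L^1$-bound on the approximation error is not enough. This is where Lemma \ref{lem:Holder_conti_qpotential} enters. For $s=1$ the quasi-potential $u_f$ of $f_*(\omega)-\omega_f$ is H\"older on $X\setminus E_s$; using that $f^*$ preserves smoothness, an inductive construction produces an $(s-1,s-1)$-potential of $f_*(\omega^s)-\alpha_{f_*(\omega^s)}$ whose H\"older norm on $X\setminus E_s$ is controlled by a negative power of $s$. Integration by parts then converts the error pairing into $\langle(\text{H\"older potential}),(R-R')\rangle$ plus boundary terms that can be absorbed using Lemma \ref{lem:smooth_approx_Holder}, yielding a bound of the form $|\theta|^{\gamma'}\|R\|_*$ for some $\gamma'>0$. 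Tracking all constants through this bookkeeping, uniformly over the $*$-bounded family $\Fc$, is where the bulk of the technical effort lies, though no conceptually new ingredient beyond those already developed in Sections \ref{sec:regularization} and the H\"older estimates of this section is needed.
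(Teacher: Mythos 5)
Your proposal takes a genuinely different route from the paper's actual proof, and it has a real gap at the step you yourself flag as ``the hardest.''

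The paper's proof is very short and goes through a comparison argument rather than a from-scratch estimate. It starts from the fact that $f_*(\omega)$ admits a H\"older continuous quasi-potential (this is the only place where the DS09-style argument you invoke is used, and it is for bidegree $(1,1)$ only). It then cites \cite[Proposition~3.4.2]{DS10-1} to conclude that the power $[f_*(\omega)]^s$ has a H\"older continuous superpotential, and finally observes that $f_*(\omega^s)\le[f_*(\omega)]^s$, so that the domination principle of \cite[Theorem~1.1]{DNV} transfers the H\"older superpotential from $[f_*(\omega)]^s$ to $f_*(\omega^s)$. No regularization or Green-kernel bookkeeping is needed at this stage; the whole point of the positivity comparison $f_*(\omega^s)\le[f_*(\omega)]^s$ is precisely to avoid running the $\P^k$-style estimate for general bidegrees.

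The gap in your proposal is the ``inductive construction'' of a H\"older $(s-1,s-1)$-potential of $f_*(\omega^s)-\alpha_{f_*(\omega^s)}$. For $s=1$ this is exactly $u_f$, a function, and Lemma~\ref{lem:Holder_conti_qpotential} applies. But for $s>1$ there is no obvious inductive way to produce such a potential directly for $f_*(\omega^s)$: you would essentially be asserting the conclusion of the lemma as an input. The natural current for which the induction does close is $[f_*(\omega)]^s=(\omega_f+dd^cu_f)^s$, because one can expand it in terms of $u_f$ and apply \cite[Proposition~3.4.2]{DS10-1}; but $[f_*(\omega)]^s$ is not the same current as $f_*(\omega^s)$ (the pushforward does not commute with wedge powers), so one still has to pass from $[f_*(\omega)]^s$ to $f_*(\omega^s)$, and the only mechanism for that is the domination $f_*(\omega^s)\le[f_*(\omega)]^s$ together with the domination principle --- neither of which appears in your argument. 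Without it, the integration-by-parts step in your last paragraph does not get off the ground, and the bound $|\theta|^{\gamma'}\|R\|_*$ on the error pairing is not established. The rest of your outline --- the projection formula, the $C^1$-regularity of $\Lc^K(\alpha_{f_*(\omega^s)})$ via Proposition~\ref{prop:regularization_principle}, and the interpolation between the regularized potential and the error --- is sound and would carry the day once the H\"older quasi-potential input for general $s$ is actually supplied; but as written, the input is asserted, not constructed.
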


\begin{proof}
	We know that $f_*(\omega)$ admits H\"older continuous quasi-potentials. Then, by \cite[Proposition 3.4.2]{DS10-1}, we have the H\"older continuity of superpotentials of $[f_*(\omega)]^p$. Then, since $f_*(\omega^p)\le [f_*(\omega)]^p$, the domination principle in \cite[Theorem 1.1]{DNV} implies that the superpotential of $f_*(\omega^p)$ is H\"older continuous.
\end{proof}

\section{Proof of Theorem \ref{thm:main}}
From the assumption on $f$ we have $d_{p-1}<d_p$. We denote $d:=d_p/d_{p-1}$. We first determine an invariant set $E$ for $f$. Then, since $E$ is invariant under $f$, without loss of generality, it suffices to prove the theorem with $f$ replaced by some power $f^{N_f}$ of $f$. So, we find a good iterate $f^{N_f}$ of $f$ and prove the statement of Theorem \ref{thm:main} for $f^{N_f}$.\medskip

We will use $L:=d_p^{-1}f^*$ on $\Dc_p$ and $\Lambda:=d_{p-1}^{-1}f_*$ on $\Dc_{k-p+1}$. Note that since the set of indeterminacy is empty, $(f^n)^*=(f^*)^n$. So, $L^n(\cdot)=d_p^{-n}(f^n)^*(\cdot)$ on $\Dc_p$.
\medskip

Recall the two constants $d_f$ and $d_f'$ associated with $f$ as in Section \ref{sec:analytic_multi_cocycle}. We first consider the case of $d_f>1$ and $d_f'>1$ and the other cases later.

\subsection{$d_f>1$ and $d_f'>1$} We first set up some technical constants for the proof. We define a non-negative integer $n_\dyn$ such that 
$$d_f\le \left(\frac{d_s}{d_{s-1}}\right)^{n_\dyn}\quad\textrm{ for all }\,s=1, 2, \ldots, \pmain.$$
We can find a sufficiently large $N_1\in\N$ so that for every $n\ge N_1$, $$(40k^2c_\Psi n)^{8n_\dyn k}<d_f^{n},$$
where $c_\Psi$ is the number of the irreducible components in the hypersurface $\Psi_1$ of the critical values of $f$ as in Section \ref{sec:analytic_multi_cocycle}.
\medskip

Recall the definition of the dynamical degree. There exists a $N_2\in\N$ such that for every $n\ge N_2$,
\begin{align*}
	\langle (f^n)^*\omega^p, \omega^{k-p}\rangle\le \left(d_p+ \frac{1}{4}\right)^n\quad \textrm{ and }\quad\langle (f^n)_*\omega^{k-p+1}, \omega^{p-1}\rangle\le \left(d_{p-1}+ \frac{1}{4}\right)^n.
\end{align*}
Let $c_m>0$ be a constant such that for every integer $1\le l\le k$, $-c_m\|\varphi\|_\infty\omega^l\le\varphi\le c_m\|\varphi\|_\infty\omega^l$ holds for every smooth $(l, l)$-form $\varphi$. Then, if $n\ge N_2$, we have
\begin{align*}
	\langle L^n(S), \omega^{k-p}\rangle &= \langle L^n(\alpha_S), \omega^{k-p}\rangle\le c_m\left(\frac{5}{4}\right)^n\|\alpha_S\|_\infty
\end{align*}
for every current $S\in\Cc_p$ where $\alpha_S$ is a smooth closed form in $\{S\}$. In the same way, if $n\ge N_2$
\begin{align*}
	\langle \Lambda^n(R), \omega^{p-1}\rangle &= \langle \Lambda^n(\alpha_R), \omega^{p-1}\rangle\le  c_m\left(\frac{5}{4}\right)^n\|\alpha_R\|_\infty
\end{align*}
for every current $R\in\Cc_{k-p+1}$ where $\alpha_R$ is a smooth closed form in $\{R\}$.
\medskip

Then, take $N=N_1+N_2$ and we 
can choose $1<\lambda<d_f$ such that
\begin{align*}
	(40k^2c_\Psi N)^{(Nk)^{-1}}d_f^{1-(8n_\dyn k^2)^{-1}}<\lambda<d_f.
\end{align*}
Further, 
we can also choose $1<\lambda'<d_f'$ such that
\begin{align}
	\label{eq:multiplicity}1<\left(\frac{d_f}{\lambda}\right)^k\left(\frac{d_f'}{\lambda'}\right)<(40k^2c_\Psi N)^{-N^{-1}}d_f^{(8n_\dyn k)^{-1}}<(40k^2c_\Psi N)^{-N^{-1}}d^{(8 k)^{-1}}.
\end{align}
\medskip

For such $\lambda$ and $\lambda'$, we set
$$E:=E_\lambda\cup E_\lambda'$$
where $E_\lambda:=\{\mu_-\ge d_f\lambda^{-1}\}$ and $E'_{\lambda'}:=\{\mu'_-\ge d_f'(\lambda')^{-1}\}$ as in Section \ref{sec:analytic_multi_cocycle}. For each $j\in\N$ and for $\displaystyle \delta_j:=c_\Psi n_EjN\left(\left(\frac{d_f}{\lambda}\right)^k\frac{d_f'}{\lambda'}\right)^{n_EjN}$, Lemma \ref{lem:choice_exceptional_set} with $m=jN$ implies
\begin{enumerate}
	\item $\displaystyle \mu_{-n_EjN}(x)<\left(\frac{d_f}{\lambda}\right)^{n_EjN}<\delta_j$ for $x\in\Psi_{n_EjN}\setminus E$;
	\item $\displaystyle\mathrm{multi}_x \Psi_{n_EjN}=\nu(x,[\Psi_{n_EjN}])<c_\Psi n_EjN\left(\left(\frac{d_f}{\lambda}\right)^k\frac{d_f'}{\lambda'}\right)^{n_EjN}=\delta_j$ for $x\in\Psi_{n_EjN}\setminus E$.
\end{enumerate}

We look into the relationship between $\delta_j$ and $d$ which is a crucial condition for the proof. For each $j\in\N$, from \eqref{eq:multiplicity}
and $40k^2c_\Psi N>3$, we get
\begin{align*}
	40k^2\delta_j=(40k^2c_\Psi n_EjN)\left(\left(\frac{d_f}{\lambda}\right)^k\left(\frac{d_f'}{\lambda'}\right)\right)^{n_EjN}<(40k^2c_\Psi N)^{n_Ej}\left(\left(\frac{d_f}{\lambda}\right)^k\left(\frac{d_f'}{\lambda'}\right)\right)^{n_EjN}<d^{n_EjN(8 k)^{-1}}.
\end{align*}
\medskip

\textcolor{black}{
	Hence, we take $j=n_\dyn$ and $N_f=n_\dyn n_E N$. Since $E$ is invariant under $f$, it is invariant under $f^{N_f}$. We replace $f$ by $f^{N_f}$, $\lambda$ by $\lambda^{N_f}$ and $\lambda'$ by $(\lambda')^{N_f}$. Also $\Psi_1$ is replaced by $\Psi_{N_f}$, which will be denoted by $V$. The dynamical degree $d_s$ is replaced by $d_s^{N_f}$ for $s=0, 1, \ldots, k$, and $d$ by $d^{N_f}$. Also, $d_f$ and $d'_f$ are replaced by $d_f^{N_f}$ and ${d_f'}^{N_f}$, respectively. Then, we may assume that $f:X\to X$ satisfy
	\begin{align}
		\label{eq:ConditionM-1}&\mu_{-1}(x)<\delta\textrm{ for }x\in V\setminus E\quad\textrm{ and }\\
		\label{eq:ConditionM-2}&\mathrm{multi}_x V<\delta\textrm{ for }X\in V\setminus E,
	\end{align}
	where $\displaystyle \delta:=c_\Psi n_\dyn n_EN\left(\frac{d_f}{\lambda}\right)^k\left(\frac{d_f'}{\lambda'}\right).$
} Note that in the expression of $\delta$, we keep $c_\Psi$ from the replacement, and that $n_\dyn$, $n_E$ and $N$ remain unchanged. The number $c_\Psi n_\dyn n_EN$ plays the role of an upper bound of the number of irreducible components in $\Psi_{{N_f}}$. After the replacement, we have
\begin{align}
	\label{cond:delta_condition}(40k^2\delta)^{6k}<d^{3/4}
\end{align}
and mass estimates as follows:
\begin{align}
	\label{cond:mass_condition}\|L^n(S)\|\le c_m\left(\frac{5}{4}\right)^{N_fn}\|\alpha_S\|_\infty\quad\textrm{ and }\quad \|\Lambda^n(R)\|\le c_m\left(\frac{5}{4}\right)^{N_fn}\|\alpha_R\|_\infty
\end{align}
for $S\in\Cc_p$ and for $R\in\Cc_{k-p+1}$.\smallskip

The following two lemmas will be used to determine appropriate neighborhoods of the set $E$ and those of subsets of $V\setminus E$. Lemma \ref{lem:EV_dist} was induced from a Lojasiewicz type inequality as in \cite{Ahn16}. See also \cite{FS-II} and \cite{DS10}. It is of local nature, and so it is valid in our case as well.\smallskip

Let $\alpha_V$ be a closed smooth $(1, 1)$-form in $\{[V]\}$. Then, we can find a unique q-psh function $\varphi_V$ over $X$ such that $\sup_X \varphi_V=0$ and $dd^c\varphi_V=[V]-\alpha_V$. Let $E_V=V\cap E$. Then, from our construction, for all $x\in V\setminus E_V$, $\mathrm{multi}_xV<\delta$ from \eqref{eq:ConditionM-2}.
\begin{lemma}[Lemma 3.3 in \cite{Ahn16}]\label{lem:EV_dist}
	There are constants $C, A>0$ such that for $x\in X$,
	\begin{align*}
		\delta\log\dist(x, V) + C\log\dist(x, E_V)-A\le \varphi_V(x)\le \log\dist(x, V)+A.
	\end{align*}
\end{lemma}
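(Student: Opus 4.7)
The asserted inequality is local, so I fix a finite atlas $\{(U_\alpha, \phi_\alpha)\}$ of $X$ in which the hypersurface $V \cap U_\alpha$ is cut out by a single holomorphic function $g_\alpha$. On $U_\alpha$ one has $dd^c \log|g_\alpha| = [V]$, while $dd^c \varphi_V = [V] - \alpha_V$ globally, hence $dd^c(\varphi_V - \log|g_\alpha|) = -\alpha_V$ is smooth and $\varphi_V - \log|g_\alpha|$ is bounded on each relatively compact chart. It therefore suffices to establish the two-sided comparison with $\varphi_V$ replaced by $\log|g_\alpha|$ in each chart and then patch with a partition of unity, absorbing the bounded errors into the additive constant $A$.

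The upper bound is straightforward: $g_\alpha$ is holomorphic, hence Lipschitz on $\overline{U_\alpha}$, and vanishes on $V \cap U_\alpha$, so $|g_\alpha(x)| \le C_\alpha \dist(x, V)$. Taking logarithms yields the right-hand inequality. The lower bound is the Lojasiewicz-type estimate, whose crucial input is \textbf{Condition (M)}(2): every $y \in V \setminus E_V$ satisfies $\mathrm{multi}_y V < \delta$, so the order of vanishing of $g_\alpha$ at $y$ is strictly less than $\delta$. The classical Lojasiewicz inequality applied at each such $y$, combined with a compactness argument on $\overline{U_\alpha} \setminus (E_V)_\epsilon$, gives a uniform bound $|g_\alpha(x)| \ge c_\epsilon \dist(x, V)^\delta$ for points whose distance to $E_V$ is at least $\epsilon$.

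To let $\epsilon \to 0$ and absorb the degeneration into the advertised $\dist(x, E_V)^C$ factor, the plan is to pass to an embedded resolution of the pair $(V, E_V)$ on which both become normal crossing divisors. On such a resolution the pullback of $g_\alpha$ is a monomial in local coordinates, and reading off its exponents yields directly an estimate
\begin{align*}
    |g_\alpha(x)| \,\ge\, c \, \dist(x, V)^\delta \, \dist(x, E_V)^C
\end{align*}
with $C$ controlled by the maximal excess of vanishing order of $g_\alpha$ along $E_V$. Taking logarithms and patching produces the left-hand inequality.

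The main obstacle is precisely this quantitative degeneration: one has to verify that the loss of the Lojasiewicz constant as $x \to E_V$ is at worst polynomial in $\dist(x, E_V)$ with a finite exponent. Because this is a purely local statement about the local defining function $g_\alpha$, the multiplicity bound on $V \setminus E_V$, and an embedded resolution (or a stratification of $V$ along its high-multiplicity locus), the argument of \cite[Lemma 3.3]{Ahn16}, although stated on $\P^k$, transports verbatim to the compact K\"ahler setting.
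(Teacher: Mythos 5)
The paper offers no proof of this lemma beyond a citation: it invokes Lemma~3.3 of \cite{Ahn16} and remarks that the argument there is local, hence transfers to the compact K\"ahler setting. Your attempt is therefore being measured against a citation rather than a written argument, and on that footing it is a reasonable reconstruction. The reduction to a finite atlas, the elementary Lipschitz bound $|g_\alpha(x)|\le C_\alpha\dist(x,V)$ for the upper estimate, and the use of {\bf Condition (M)}(2) together with a compactness argument on $\overline{U_\alpha}\setminus (E_V)_\epsilon$ to obtain $|g_\alpha|\gtrsim \dist(\cdot,V)^\delta$ away from $E_V$ are exactly the right ingredients and mirror the role the multiplicity bound plays in the cited lemma.

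Where your write-up is thinner than it should be is the passage to the quantitative degeneration as $x\to E_V$. You propose a log resolution of the pair $(V,E_V)$ and assert that ``reading off exponents'' of the monomialised $\pi^*g_\alpha$ yields $|g_\alpha(x)|\ge c\,\dist(x,V)^\delta\dist(x,E_V)^C$. This elides the actual content: the pullbacks of the two metric distances $\dist(\cdot,V)$ and $\dist(\cdot,E_V)$ are not themselves monomials on the resolution, so one still needs \L{}ojasiewicz-type comparisons (for the proper holomorphic map $\pi$ and for real-analytic defining functions of $V$ and $E_V$) to convert the monomial normal form of $\pi^*g_\alpha$ into an inequality between downstairs distances with a controlled exponent $C$. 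That comparison is precisely where the proof of such an estimate lives, and a fully correct argument must supply it; you also need to observe that the exponent coming from the strict transform of $V$ is dominated by $\delta$ thanks to the multiplicity bound, and that any extra vanishing along the exceptional locus, which lies over $E_V$, can be absorbed into the $\dist(x,E_V)^C$ factor. None of this invalidates your plan, and the resolution route is one legitimate way to prove such an inequality, but as written it is a sketch of a sketch. Since the paper likewise defers to \cite{Ahn16} with only a locality remark, your proposal and the paper are at a comparable level of rigor; I would accept it as capturing the intended approach, while noting that the resolution step is the place where the real work is left implicit.
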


\begin{lemma}
	[Lemma 3.1 in \cite{DS10}]\label{lem:smooth_region} There is a constant $m_f\ge 1$ such that for every subset $A$, $B$ of $X$,
	\begin{align*}
		\dist(f^{-j}(A), f^{-j}(B))\ge m_f^{-j}\dist(A, B)\quad \textrm{ for }j= 0, 1, 2, \ldots.
	\end{align*}
	In particular, if $f(B)\subseteq B$, 
	\begin{align*}
		\dist(f^{-j}(A), B)\ge m_f^{-j}\dist(A, B)\quad \textrm{ for }j= 0, 1, 2, \ldots.
	\end{align*}
\end{lemma}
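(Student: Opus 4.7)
The plan is to obtain the estimate from the fact that $f$, being a smooth (holomorphic) map on the compact manifold $X$, is globally Lipschitz with respect to the Riemannian distance induced by $\omega$. More precisely, the differential $df$ is a continuous section of a bundle on $X$, hence its operator norm attains a finite maximum $m_f \ge 1$ on $X$, and by integrating along minimizing geodesics we get $\dist(f(x), f(y)) \le m_f \dist(x, y)$ for every $x, y \in X$.

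From this global Lipschitz estimate I would deduce the one-step version of the claim. Let $x \in f^{-1}(A)$ and $y \in f^{-1}(B)$, so $f(x) \in A$ and $f(y) \in B$. Then
\begin{align*}
\dist(A, B) \le \dist(f(x), f(y)) \le m_f\, \dist(x, y).
\end{align*}
Taking the infimum over all such $x$ and $y$ yields $\dist(f^{-1}(A), f^{-1}(B)) \ge m_f^{-1} \dist(A, B)$. Applying this inequality inductively gives $\dist(f^{-j}(A), f^{-j}(B)) \ge m_f^{-j} \dist(A, B)$ for every $j \ge 0$, which is the first assertion.

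For the second assertion, the key observation is that the hypothesis $f(B) \subseteq B$ implies $B \subseteq f^{-1}(B)$, and therefore by induction $B \subseteq f^{-j}(B)$ for every $j \ge 0$. Since enlarging the target set can only decrease the distance to it, we obtain
\begin{align*}
\dist(f^{-j}(A), B) \ge \dist(f^{-j}(A), f^{-j}(B)) \ge m_f^{-j} \dist(A, B),
\end{align*}
which is the desired bound. There is no real obstacle here: the statement is essentially a packaging of the Lipschitz property of $f$ together with an elementary set-theoretic observation about forward-invariant sets. The only point that deserves a brief mention is that compactness of $X$ is what guarantees the existence of a finite Lipschitz constant $m_f$ in the first place.
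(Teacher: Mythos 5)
Your proof is correct; the paper gives no argument of its own here, simply citing Lemma 3.1 of Dinh--Sibony, and your reasoning — a global Lipschitz constant $m_f$ for $f$ obtained from compactness of $X$, the one-step inequality $\dist(f^{-1}(A),f^{-1}(B))\ge m_f^{-1}\dist(A,B)$ by applying $f$ to near-minimizing pairs, induction on $j$, and then the observation that $f(B)\subseteq B$ gives $B\subseteq f^{-j}(B)$ so the inf-distance only decreases — is exactly the intended argument.
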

Let $\varepsilon>0$ be a sufficiently small positive real number so that $\varepsilon<m_f^{-1}$. Later, this number will be precisely determined in terms of $n$. We define the following sequences of real numbers and currents for $n\in\N$ and $1\le i\le n$.
\begin{itemize}
	\item $s_{n,i}=\varepsilon^{nCi}$,
	
	\item $\varepsilon_{n,i}=\varepsilon^{nC(4k+N_E)(40k^2\delta)^{6ki}}$, and
	
	\item $t_{n,i}={\varepsilon_{n,i}}^{(10k)^{-1}}$,
	
	\item $R_{n, 0}=R$ and $R_{n, i}:=(\Lambda(R_{n, i-1}))_{\varepsilon_{n, i}}$.
\end{itemize}
where the subscript means the regularization \eqref{eq:theta_regularization} as in Section \ref{sec:regularization}. The constant $C>0$ is as in Lemma \ref{lem:EV_dist} and the constant $N_E$ is as in Corollary \ref{cor:lojasiewicz}.\\

Now, we prove the statement of Theorem \ref{thm:main} with the new $f$ obtained from the replacement. Let $S\in\Cc_p$ be such that $S$ has a smooth representation in a neighborhood of $E$. Let $\varphi$ be a smooth test $(k-p, k-p)$-form. Our goal is to show
\begin{align*}
	|\langle L^n(S-\alpha_S), \varphi\rangle|=\left| \Uc_{L^n(S-\alpha_S)}(dd^c\varphi)\right|\le c\|\varphi\|_{C^2}\rho^n\quad\textrm{ for all }n\in\N
\end{align*}
for some constants $c>0$ and $0<\rho<1$ independent of $\varphi$. So, it suffices to show that there exist $c'>0$ and $0<\rho<1$ such that $$\left|\Uc_{L^n(S-\alpha_S)}(R)\right|\le c'\rho^n$$ when $n\gg 1$ for smooth $R\in\Dc^0_{k-p+1}$ such that $R=R_+-R_-$, where $R_\pm$ are smooth currents in $\Cc_{k-p+1}$ with $\|R_\pm\|_\infty\le 1$. Then, following \cite{DS09} and \cite{Ahn16}, we can expand
\begin{align*}
	\langle U_{L^n(S-\alpha_S)}, R\rangle&=\langle LU_{L^{n-1}(S-\alpha_S)}, R\rangle=d^{-1}\langle U_{L^{n-1}(S-\alpha_S)}, \Lambda(R)\rangle\\
	&=d^{-1}\langle U_{L^{n-1}(S-\alpha_S)}, \Lambda(R_{n,0})-R_{n,1}\rangle+d^{-1}\langle U_{L^{n-1}(S-\alpha_S)},R_{n,1}\rangle\\
	&=d^{-1}\langle U_{L^{n-1}(S-\alpha_S)}, \Lambda(R_{n,0})-R_{n,1}\rangle+d^{-2}\langle U_{L^{n-2}(S-\alpha_S)},\Lambda(R_{n,1})\rangle\\
	&=d^{-1}\langle U_{L^{n-1}(S-\alpha_S)}, \Lambda(R_{n,0})-R_{n,1}\rangle\\
	&\quad\quad\quad + \cdots + d^{-i}\langle U_{L^{n-i}(S-\alpha_S)},\Lambda(R_{n,i-1})-R_{n,i}\rangle\\
	&\quad\quad\quad + \cdots + d^{-n}\langle U_{S-\alpha_S},\Lambda(R_{n,n-1})-R_{n,n}\rangle +d^{-n}\langle U_{S-\alpha_S},R_{n,n}\rangle
\end{align*}
where the potentials $U_{L^{n-i}(S-\alpha_S)}$'s are the Green potentials. Here, the pairings in the above make sense since $R_{n,i}$'s have $C^1$-coefficients and so, $\Lambda(R_{n,i})$'s admit continuous superpotentials for all $n\in\N$ and for all $1\le i\le n$. Since the operator $\Lambda$ and the regularizing operators are linear, it is enough to estimate
\begin{enumerate}
	\item $d^{-i}\langle U_{L^{n-i}(S-\alpha_S)},\Lambda(R_{n,i-1})-R_{n,i}\rangle$ for smooth $R\in\Cc_{k-p+1}$ with $\|R\|_\infty\le 1$ and for $i=1, \cdots, n$, and
	\item $d^{-n}\langle U_{S-\alpha_S},R_{n,n}\rangle$ for smooth $R\in\Dc^0_{k-p+1}$ such that $R=R_+-R_-$ where $R_\pm$ are smooth currents in $\Cc_{k-p+1}$ with $\|R_\pm\|_\infty\le 1$.
\end{enumerate}

Suppose that when $\varepsilon>0$ is sufficiently small, we have (1) $\lesssim \varepsilon$ and (2) $\lesssim nd^{n/4}(-\log\varepsilon)$ for all integers $n$ and $i$ with $1\le i\le n$, where the inequality $\lesssim$ means $\le$ up to a multiplicative constant independent of $n$ and $i$. (These estimates of (1) and (2) will be proved in Sections \ref{sec:(1)} and \ref{sec:(2)}.) Then, from the expansion, we have
\begin{align*}
	\left|\langle U_{L^n(S-\alpha_S)}, R\rangle\right|&\lesssim 6n\varepsilon+nd^{n/4}(-\log\varepsilon)
\end{align*}
when $0<\varepsilon\ll 1$. We take $\varepsilon=d^{-n}$. Then, $\rho=d^{1/8}$ proves the statement for the case of $d_f>1$ and $d_f'>1$.
\medskip

\subsection{$d_f=1$ or $d_f'=1$} In these cases, we get either $E=\emptyset$ or $E'=\emptyset$. We will consider only the case of $d_f=1$ and $d_f'=1$. The other two cases can be deduced in the same way. It is enough to find a good iterate $f^{N_f}$ of $f$ so that we can apply the previous arguments. Recall $d:=d_p/d_{p-1}$. Choose a sufficiently large $N_1\in\N$ so that for every $n\ge N_1$, we have
\begin{align*}
	(40k^2c_\psi n)^{8k}<d^n.
\end{align*}
Then, as in the case of $d_f>1$ and $d_f'>1$, we can find $1<\lambda<d$ and $0<\lambda'<1$ so that
\begin{align*}
	1<\left(\frac{d}{\lambda}\right)^k\left(\frac{1}{\lambda'}\right)<(40k^2c_\psi N_1)^{-N_1^{-1}}d^{(8k)^{-1}}.
\end{align*}
As in the case of $d_f>1$ and $d'_f>1$, there exist constants $c_m>0$ and $N_2\in\N$ such that
\begin{align*}
	\langle L^n(S), \omega^{k-p}\rangle \le c_m\left(\frac{5}{4}\right)^n\|\alpha_S\|_\infty\quad\textrm { and }\quad\langle \Lambda^n(R), \omega^{p-1}\rangle \le c_m\left(\frac{5}{4}\right)^n\|\alpha_R\|_\infty
\end{align*}
for every $n\ge N_2$ where $S\in\Cc_p$ and $R\in\Cc_{k-p+1}$ are currents, and $\alpha_S$ and $\alpha_R$ are smooth closed forms in $\{S\}$ and $\{R\}$, respectively. Let $N=N_1+N_2$.
\medskip

For such $1<\lambda<d$ and $0<\lambda'<1$, as remarked in the above, we have $E:=\emptyset$ and the arguments in Lemma \ref{lem:choice_exceptional_set} imply that for some large enough $n_E\in\N$, we have 
\begin{enumerate}
	\item $\displaystyle \mu_{-n_EN}(x)<\left(\frac{d}{\lambda}\right)^{n_EN}<\delta$ for $x\in\Psi_{n_EN}$;
	\item $\displaystyle\mathrm{multi}_x \Psi_{n_EN}<c_\Psi n_EN\left(\left(\frac{d}{\lambda}\right)^k\frac{1}{\lambda'}\right)^{n_EN}=\delta$ for $x\in\Psi_{n_EN}$,
\end{enumerate}
where $\displaystyle \delta=c_\Psi n_EN\left(\left(\frac{d}{\lambda}\right)^k\frac{1}{\lambda'}\right)^{n_EN}$ and $c_\Psi$ denotes the number of the irreducible components in the hypersurface $\Psi_1$ of the critical values of $f$. We replace $f$ by $f^{N_f}$ where $N_f:=n_EN$. Then, with these multiplicity conditions, our arguments in the case of $d_f>1$ and $d'_f>1$ work in the same way and give us the desired conclusion. Note that in this case, we have $E=\emptyset$, we only need to compute on $W_{n,i,1}$ and $W_{n,i,3}$ (for these sets, see Section \ref{sec:(1)}) and that every positive closed current of bidegree $(p,p)$ satisfies Theorem \ref{thm:main}.
\hfill $\square$
\bigskip

We end this section with a lemma about some estimates related to the currents $R_{n,i}$, which were not needed in the case of $\P^k$ due to the simplicity of its cohomology.

\begin{lemma}
	\label{lem:mass_estimates}
	Let $C_m=2c_\reg c_m\left(\frac{5}{4}\right)^{N_f}$ where $c_\reg$ is as in Proposition \ref{prop:C1_estimate_regularization} and $c_m$ as in \eqref{cond:mass_condition}. Let $R\in\Cc_{k-p+1}$ be a smooth current. Then, for all positive integers $n$ and $i$ such that $1\le i\le n$, the current $R_{n,i}$ can be expressed $R_{n,i}=R_{n,i}^+-R_{n,i}^-$ where $R_{n,i}^\pm$ are positive closed $(k-p+1, k-p+1)$-currents with $C^1$-coefficients such that
	\begin{enumerate}
		\item $\displaystyle \|R_{n,i}^+\|_{C^1}+\|R_{n,i}^-\|_{C^1}\le \|R\|_{C^1} (C_m)^i\left(\prod_{j=1}^i\varepsilon_{n,j}\right)^{-6k^2}$ and
		\item $\displaystyle \|R_{n,i}^+\|+\|R_{n,i}^-\|\le  \|R\|_{C^1}(C_m)^i \left(\prod_{j=1}^{i-1}\varepsilon_{n,j}\right)^{-6k^2}$.
	\end{enumerate}
\end{lemma}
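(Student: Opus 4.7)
The plan is to prove both bounds \,(1) and (2) simultaneously by induction on $i$, using only two ingredients: Proposition \ref{prop:C1_estimate_regularization} applied at parameter $\varepsilon_{n,i}$, and the mass estimate \eqref{cond:mass_condition} for $\Lambda$ at $n=1$. A preliminary observation: for $k\ge 2$ one checks the elementary inequality $(2k+2)(k+2)\le 6k^2$ (equality at $k=2$), so Proposition \ref{prop:C1_estimate_regularization} in particular yields $\|S^\pm_\theta\|_{C^1}\le c_\reg|\theta|^{-6k^2}\|S\|$ — this is the form we will use, matching the exponent appearing in the statement.

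For the base case $i=1$, $\Lambda(R)=d_{p-1}^{-1}f_*(R)$ is again positive closed, and one defines $R_{n,1}^{\pm}:=(\Lambda(R))_{\varepsilon_{n,1}}^{\pm}$ via Proposition \ref{prop:C1_estimate_regularization}. The required estimates then reduce to controlling $\|\Lambda(R)\|$, which by \eqref{cond:mass_condition} (applied with smooth representative $\alpha_R=R$) is at most $c_m(5/4)^{N_f}\|R\|_\infty\le c_m(5/4)^{N_f}\|R\|_{C^1}$, producing exactly the constant $C_m=2c_\reg c_m(5/4)^{N_f}$. For the inductive step, suppose the decomposition $R_{n,i-1}=R_{n,i-1}^+-R_{n,i-1}^-$ satisfies (1) and (2). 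By linearity of the semi-regular transform one has
\begin{align*}
R_{n,i}=(\Lambda(R_{n,i-1}))_{\varepsilon_{n,i}}=\bigl(\Lambda(R_{n,i-1}^+)\bigr)_{\varepsilon_{n,i}}-\bigl(\Lambda(R_{n,i-1}^-)\bigr)_{\varepsilon_{n,i}},
\end{align*}
and each of the two regularized currents further decomposes via Proposition \ref{prop:C1_estimate_regularization}. Setting
\begin{align*}
R_{n,i}^+:=\bigl(\Lambda(R_{n,i-1}^+)\bigr)_{\varepsilon_{n,i}}^{+}+\bigl(\Lambda(R_{n,i-1}^-)\bigr)_{\varepsilon_{n,i}}^{-},\qquad R_{n,i}^-:=\bigl(\Lambda(R_{n,i-1}^+)\bigr)_{\varepsilon_{n,i}}^{-}+\bigl(\Lambda(R_{n,i-1}^-)\bigr)_{\varepsilon_{n,i}}^{+}
\end{align*}
gives the desired splitting into smooth positive closed currents whose difference is $R_{n,i}$.

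It remains to chain the estimates. Proposition \ref{prop:C1_estimate_regularization} yields $\|R_{n,i}^+\|+\|R_{n,i}^-\|\le 2c_\reg(\|\Lambda(R_{n,i-1}^+)\|+\|\Lambda(R_{n,i-1}^-)\|)$ and $\|R_{n,i}^+\|_{C^1}+\|R_{n,i}^-\|_{C^1}\le 2c_\reg\varepsilon_{n,i}^{-6k^2}(\|\Lambda(R_{n,i-1}^+)\|+\|\Lambda(R_{n,i-1}^-)\|)$. Since $R_{n,i-1}^\pm$ is smooth positive closed, we may apply \eqref{cond:mass_condition} with itself as the smooth representative, which together with $\|\cdot\|_\infty\le\|\cdot\|_{C^1}$ gives $\|\Lambda(R_{n,i-1}^\pm)\|\le c_m(5/4)^{N_f}\|R_{n,i-1}^\pm\|_{C^1}$. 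Invoking the induction hypothesis (1) at level $i-1$ bounds the sum by $c_m(5/4)^{N_f}\|R\|_{C^1}(C_m)^{i-1}\bigl(\prod_{j=1}^{i-1}\varepsilon_{n,j}\bigr)^{-6k^2}$; multiplying by $2c_\reg$ (and by $\varepsilon_{n,i}^{-6k^2}$ for the $C^1$ version) produces exactly (1) and (2) at level $i$, since $2c_\reg c_m(5/4)^{N_f}=C_m$ absorbs into the power $(C_m)^{i-1}\to (C_m)^i$.

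The main bookkeeping obstacle is that the mass estimate \eqref{cond:mass_condition} is phrased in terms of $\|\alpha_R\|_\infty$ for a smooth representative of the cohomology class, but we need to feed back into it the currents $R_{n,i-1}^\pm$ produced by the previous regularization step. This is only legitimate because Proposition \ref{prop:regularization_principle} ensures that $R_{n,i-1}^\pm$ have $C^1$ coefficients, so they may be taken as their own smooth representatives, and the loss $\|\cdot\|_\infty\le\|\cdot\|_{C^1}$ is harmless — it is precisely what forces us to use hypothesis (1) at level $i-1$ rather than (2) when bounding the masses at level $i$, which is why the product $\prod_{j=1}^{i-1}\varepsilon_{n,j}^{-6k^2}$ appears in (2) rather than $\prod_{j=1}^{i-2}\varepsilon_{n,j}^{-6k^2}$.
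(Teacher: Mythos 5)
Your proof is correct and takes essentially the same inductive route as the paper: decompose $R_{n,i}$ via the linearity of $\Lambda$ and the splitting $(\cdot)_{\varepsilon_{n,i}}^{\pm}$ from Proposition \ref{prop:C1_estimate_regularization}, then chain the $C^1$- and mass estimates through \eqref{cond:mass_condition}. Your explicit check that $(2k+2)(k+2)\le 6k^2$ for $k\ge 2$ is a useful clarification that the paper leaves implicit when passing from the exponent in Proposition \ref{prop:C1_estimate_regularization} to the $6k^2$ appearing in the statement.
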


\begin{proof}
	We prove by induction. When $i=1$, Then, $R_{n,1}=(\Lambda(R))_{\varepsilon_{n, 1}}^+-(\Lambda(R))_{\varepsilon_{n, 1}}^+$. Proposition \ref{prop:C1_estimate_regularization} and the estimates \eqref{cond:mass_condition} prove it. So, we may assume that (1) and (2) are true for $i-1$ with $i\ge 2$. Denote by $r_{n,i}=\|R_{n,i}^+\|_{C^1}+\|R_{n,i}^-\|_{C^1}$. Then, from Section \ref{sec:regularization}, we have
	\begin{align*}
		R_{n,i}=(\Lambda(R_{n,i-1}^+))^+_{\varepsilon_{n, i}}+(\Lambda(R_{n,i-1}^-))^-_{\varepsilon_{n, i}}-(\Lambda(R_{n,i-1}^+))^-_{\varepsilon_{n, i}}-(\Lambda(R_{n,i-1}^-))^+_{\varepsilon_{n, i}}
	\end{align*}
	and define
	\begin{align*}
		R_{n,i}^+:=(\Lambda(R_{n,i-1}^+))^+_{\varepsilon_{n, i}}+(\Lambda(R_{n,i-1}^-))^-_{\varepsilon_{n, i}}\quad\textrm{ and }\quad R_{n,i}^-:=(\Lambda(R_{n,i-1}^+))^-_{\varepsilon_{n, i}}+(\Lambda(R_{n,i-1}^-))^+_{\varepsilon_{n, i}}.
	\end{align*}
	From Proposition \ref{prop:C1_estimate_regularization} and \eqref{cond:mass_condition}, we have
	\begin{align*}
		\left\|\left(\Lambda(R_{n,i-1}^\pm)\right)^\pm_{\varepsilon_{n,i}}\right\|_{C^1}\le c_\reg\varepsilon_{n,i}^{-6k^2}\left\|\Lambda(R_{n,i-1}^\pm)\right\|\le c_\reg c_m\left(\frac{5}{4}\right)^{N_f}\varepsilon_{n,i}^{-6k^2}\|R_{n,i-1}^\pm\|_{C^1},
	\end{align*}
	respectively and 
	thus by the triangle inequality, we get
	\begin{align*}
		r_{n,i}\le 2c_\reg c_m\left(\frac{5}{4}\right)^{N_f}\varepsilon_{n,i}^{-6k^2}r_{n,i-1}.
	\end{align*}
	
	For the mass estimate, again from 	\eqref{cond:mass_condition}, we have
	\begin{align*}
		\|R_{n,i}^+\|&\le \|(\Lambda(R_{n,i-1}^+))^+_{\varepsilon_{n,i}}\|+\|(\Lambda(R_{n,i-1}^-))^-_{\varepsilon_{n,i}}\|\le c_\reg(\|\Lambda(R_{n,i-1}^+)\|+\|\Lambda(R_{n,i-1}^-)\|)\\
		&\le c_\reg c_m\left(\frac{5}{4}\right)^{N_f}r_{n,i-1}.
	\end{align*}
	The same is true for $R_{n,i}^-$. So, we have proved (1) and (2) for $i$ as desired.
\end{proof}

\section{Estimates of $d^{-i}\langle U_{L^{n-i}(S-\alpha_S)},\Lambda(R_{n,i-1})-R_{n,i}\rangle$}\label{sec:(1)}
We start by constructing two families of cut-off functions playing the role of smoothing out the boundary of the neighborhoods of $E$ and a subset of $V\setminus E$, on each of which the currents $L^{n-i}(S-\alpha_S)$ and $f_*(\omega^{k-p+1})$ have different analytic properties.
\medskip

The following is a basic regularization lemma. We can get it through patching locally regularized functions on each coordinate chart by use of a partition of unity.
\begin{lemma}\label{lem:basic_cutoff}
	Let $E$ be the analytic subset as above. There are constants $s_0>0$ and  $0<r_E<1$ such that for all $0<s\le s_0$, there exists a smooth function $\chi^E_{s}:X\to\R$ such that $\chi^E_{s}\equiv 1$ on $E_{r_Es}$, $\supp \chi^E_{s}\Subset E_{s}$ and $\|\chi^E_{s}\|_{C^2}\lesssim 1/s^2$ where the inequality $\lesssim$ means $\le$ up to a multiplicative constant independent of $s$.
\end{lemma}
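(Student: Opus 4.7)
The plan is to build $\chi^E_s$ by regularizing, chart-by-chart, the globally Lipschitz function $\widetilde\chi_s(x):=\psi(\dist(x,E)/s)$, where $\psi:\R\to[0,1]$ is a fixed smooth cut-off with $\psi\equiv 1$ on $(-\infty,1/2]$, $\psi\equiv 0$ on $[1,\infty)$ and $\|\psi\|_{C^2}$ bounded. Because $\dist(\cdot,E)$ is $1$-Lipschitz on $X$ with respect to the K\"ahler metric, $\widetilde\chi_s$ is globally $(\|\psi'\|_\infty/s)$-Lipschitz; it fails to be smooth near $E$ and along the cut locus, and this is exactly what the mollification must absorb.

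First I would fix, once and for all, a finite atlas $\{(U_j,\varphi_j)\}_{j=1}^N$ of $X$ together with a subordinated smooth partition of unity $\{\eta_j\}$; on each chart the K\"ahler distance is comparable to the Euclidean distance of $\varphi_j$ up to constants depending only on the atlas. Let $\rho$ be a standard non-negative radial Euclidean mollifier supported in the unit ball with $\int \rho\, d\lambda = 1$, and write $\rho_\varepsilon(y):=\varepsilon^{-2k}\rho(y/\varepsilon)$. Pick a small absolute constant $c_0>0$ and define the local regularization
\begin{align*}
	\chi^{E,j}_s(x)\;:=\;\bigl((\widetilde\chi_s\circ\varphi_j^{-1})\ast\rho_{c_0 s}\bigr)(\varphi_j(x)),
\end{align*}
which is smooth on a slightly shrunken subchart. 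Since $|\widetilde\chi_s|\le 1$, the standard convolution identity $D^2(\widetilde\chi_s\ast\rho_{c_0s})=\widetilde\chi_s\ast D^2\rho_{c_0s}$ together with $\|D^2\rho_{c_0s}\|_{L^1}\lesssim s^{-2}$ yields $\|\chi^{E,j}_s\|_{C^2}\lesssim s^{-2}$, with constant independent of $s$. The global cut-off is then
\begin{align*}
	\chi^E_s\;:=\;\sum_j \eta_j\,\chi^{E,j}_s.
\end{align*}

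Smoothness of $\chi^E_s$ is immediate, and the $C^2$-estimate follows from the chart-wise bound together with the $s$-independent $C^2$-bounds on the $\eta_j$. For the inclusion $\supp\chi^E_s\Subset E_s$, if $\dist(x,E)$ exceeds $s$ by a margin controlled by $c_0$ times the Euclidean/K\"ahler comparison constant, then $\widetilde\chi_s$ vanishes on the convolution support in every relevant chart, so each $\chi^{E,j}_s(x)=0$. For the identity $\chi^E_s\equiv 1$ on $E_{r_E s}$, I would choose $r_E\in(0,1/2)$ small enough that any Euclidean translate of length at most $c_0 s$ of a point with $\dist(\cdot,E)<r_E s$ still satisfies $\dist(\cdot,E)<s/2$; then $\widetilde\chi_s\equiv 1$ on the convolution support, forcing $\chi^{E,j}_s(x)=1$ in each chart for which $\eta_j(x)\ne 0$, so summing with $\sum_j \eta_j\equiv 1$ gives $\chi^E_s(x)=1$.

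The only delicate point, which I expect to be the main obstacle, is keeping the three scales $r_E s$, $c_0 s$ and $s$ simultaneously consistent across all charts in the presence of the distortion between the Euclidean and K\"ahler metrics. This reduces to finitely many uniform metric comparisons on a compact atlas, and fixes the values of $r_E$, $s_0$, and the implicit constant in $\lesssim$, each depending only on $X$, on $E$, and on the chosen atlas and partition of unity.
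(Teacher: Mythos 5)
Your construction — a Lipschitz cut-off of $\dist(\cdot,E)$, mollified chart-by-chart and patched with a partition of unity — is exactly the approach the paper indicates for this lemma (it gives only the one-line hint that one ``patches the locally regularized functions on each coordinate chart''), and the $C^2$-bound via $\|D^2\rho_{c_0s}\|_{L^1}\lesssim s^{-2}$ is the standard and correct estimate. The scale-matching you flag at the end (coordinating $r_E s$, $c_0 s$, and $s$ against the finitely many Euclidean/K\"ahler comparability constants of the atlas) is the only point needing care, and a uniform shrinkage of $s$ and $c_0$ handles it.
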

%
%
\textcolor{black}{
	The following crucial lemma is induced from Lemma \ref{lem:EV_dist}. The argument in \cite[Lemma 7.3]{Ahn16} is of local nature and so it works in our case as well. This is one of the two places where the multiplicity assumption comes to play. The other is the H\"older continuity of $u_f$ where $u_f$ is a function such that $f_*(\omega)-\omega_f=dd^c u_f$ as in Section \ref{sec:Holder}.}
\begin{lemma}[Lemma 7.3 in \cite{Ahn16}]\label{lem:cut-off}
	Let $n$ and $i$ be integers such that $1\le i\le n$. Let $s, t$ be two positive real numbers such that $\frac{1}{2}r_E^{k+2}s_{n,i}\leq s\leq 2s_{n,i}$ and $\frac{1}{2}t_{n,i}\leq t\leq 2{t_{n,i}}^{\left(\frac{2}{2\delta+1}\right)^{k+2}}$ for $s_{n,i}, t_{n,i}$ with $0<\varepsilon\ll 1$ and $r_E$ as in Lemma \ref{lem:basic_cutoff}. Then, there is a function $\chi_{s,t}:X\to\R$ with $0\leq\chi_{s,t}\leq 1$, such that $\chi_{s,t}\equiv 1$ on $V_t\setminus {E_s}$, $\supp (\chi_{s,t})\subseteq V_{t^{(\delta+1/2)^{-1}}}\setminus E_{r_Es}$, and $\|\chi_{s,t}\|_\dsh\leq c_\chi\max\left\{1, 9s^{-2}\right\}$, where $c_\chi>0$ is a constant independent of $s$, $t$, and $\varepsilon$, $i$, $n$ in the definitions of $s_{n,i}$, $t_{n,i}$.
\end{lemma}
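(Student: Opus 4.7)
My plan is to construct $\chi_{s,t}$ as a product of two cutoffs, one built from the quasi-psh function $\varphi_V$ associated with $V$ (via Lemma \ref{lem:EV_dist}) and one from the standard smooth cutoff $\chi^E_s$ around $E$ (from Lemma \ref{lem:basic_cutoff}). Concretely, I set
\begin{align*}
\chi_{s,t}(x) \;:=\; \bigl(1 - \chi_V(\varphi_V(x))\bigr)\bigl(1 - \chi^E_{s}(x)\bigr),
\end{align*}
where $\chi_V\colon\R\cup\{-\infty\}\to[0,1]$ is a smooth convex increasing function equal to $0$ on $(-\infty, a_1]$, equal to $1$ on $[a_2, +\infty)$, and with $\|\chi_V'\|_\infty \le (a_2-a_1)^{-1}$; the thresholds $a_1 < a_2$ will be chosen below as functions of $s$ and $t$.

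To pin down $a_1, a_2$, I would use both inequalities of Lemma \ref{lem:EV_dist}. On $V_t$ the upper bound gives $\varphi_V(x) \le \log t + A$, so taking $a_1 := \log t + A$ forces $\chi_V\circ\varphi_V \equiv 0$ there and hence $\chi_{s,t} \equiv 1$ on $V_t\setminus E_s$ (since $\chi^E_s$ vanishes outside $E_s$). Outside $V_{t^{2/(2\delta+1)}}$ and outside $E_{r_Es}$ one has $\dist(x,V)\ge t^{2/(2\delta+1)}$ and $\dist(x,E_V)\ge r_Es$, so the lower bound gives
\begin{align*}
\varphi_V(x) \;\ge\; \tfrac{2\delta}{2\delta+1}\log t + C\log(r_Es) - A;
\end{align*}
choosing $a_2$ equal to this right-hand side forces $\chi_V(\varphi_V) \equiv 1$ and hence $\chi_{s,t} \equiv 0$ there, yielding the claimed support. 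The sanity check $a_1 < a_2$ reduces to $\tfrac{1}{2\delta+1}|\log t| \ge C|\log(r_Es)| + 2A$, which holds throughout the prescribed ranges: the $(40k^2\delta)^{6ki}$ factor hidden in the exponent of $t_{n,i}$ grows geometrically in $i$ while the exponent of $s_{n,i}$ grows only linearly, so for small $\varepsilon$ the gap is comfortable.

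For the DSH bound, Lemma \ref{lem:cvxincftn} gives $\|\chi_V\circ\varphi_V\|_\dsh \lesssim 1 + \|\varphi_V\|_\dsh = O(1)$, while $\|\chi^E_s\|_{C^2} \lesssim s^{-2}$ by Lemma \ref{lem:basic_cutoff}. Expanding $dd^c\chi_{s,t}$ via the Leibniz rule with $f := 1-\chi_V\circ\varphi_V$ and $g := 1-\chi^E_s$, the terms $f\,dd^cg$ and $g\,dd^cf$ contribute $O(s^{-2})$ and $O(1)$ respectively, and the cross terms $df\wedge d^cg + dg\wedge d^cf$ are controlled by Cauchy--Schwarz, using $\|d\chi^E_s\|_\infty^2 \lesssim s^{-2}$ together with the boundedness of $\int d\varphi_V\wedge d^c\varphi_V \wedge \omega^{k-1}$. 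This yields $\|\chi_{s,t}\|_\dsh \le c_\chi\max\{1, 9s^{-2}\}$.

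The main obstacle is arithmetic bookkeeping: arranging $a_1, a_2$ so that the Lojasiewicz loss $\delta$ and the prescribed ranges of $s, t$ coming from the cocycle sequences $(s_{n,i}), (t_{n,i})$ are simultaneously compatible, and checking that the cross terms above do not exceed the stated $s^{-2}$ bound. Since both checks are local---centered at points of $V\cup E$ and handled in coordinate charts by the quasi-psh estimates---the proof of \cite[Lemma 7.3]{Ahn16} transports verbatim to the compact K\"ahler setting once the smooth cutoff $\chi^E_s$ is constructed globally via the partition of unity in Lemma \ref{lem:basic_cutoff}.
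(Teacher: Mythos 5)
The paper does not actually prove this lemma; it only cites \cite[Lemma 7.3]{Ahn16} and asserts the argument is of local nature. So there is no in-paper proof to compare against, and your attempt must be judged on its own merits. Your overall strategy — build $\chi_{s,t}$ as a product of a cutoff in the variable $\varphi_V$ (calibrated via the two-sided estimate of Lemma~\ref{lem:EV_dist}) and the smooth cutoff $\chi^E_s$ around $E$ from Lemma~\ref{lem:basic_cutoff} — is the right one, and the support computation as well as the ordering $a_1<a_2$ are correct and checked for the given ranges of $s,t$.

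There is, however, a genuine defect in the construction of $\chi_V$: a function cannot simultaneously be convex increasing, equal to $0$ on $(-\infty,a_1]$, and equal to $1$ on $[a_2,\infty)$. A convex increasing function that attains a maximum on a half-line must already have zero slope below that, forcing it to be constant everywhere; in short, the step-shaped profile you want is never convex. As written you then cannot invoke Lemma~\ref{lem:cvxincftn} directly, since that lemma only handles convex increasing $\chi$. The standard repair, which is what you need here, is to drop convexity and instead write the step function explicitly as a difference of two convex increasing, Lipschitz pieces, e.g.
\begin{align*}
\chi_V(t)\ =\ \frac{\max(t,a_1)-\max(t,a_2)}{a_1-a_2}\raisepunct{,}
\end{align*}
or a $C^\infty$-smoothed version thereof. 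Each of $\max(\cdot,a_i)$ is convex increasing with derivative bounded by $1$, so Lemma~\ref{lem:cvxincftn} applies to $\max(\varphi_V,a_i)$ separately; dividing by $|a_1-a_2|\ge 1$ then gives $\|\chi_V(\varphi_V)\|_\dsh=O(1)$ uniformly in $n,i,\varepsilon$, as you want. This fix preserves the rest of your argument verbatim (the support calculation only uses $\chi_V=0$ below $a_1$, $\chi_V=1$ above $a_2$, and $0\le\chi_V\le1$).

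A second, smaller point: the Leibniz/Cauchy--Schwarz route for $\|dd^c(fg)\|_*$ is slippery, because the individual cross terms $df\wedge d^cg+dg\wedge d^cf$ are not closed, so a mass bound on them does not by itself give a bound on the $*$-norm of $dd^c(fg)$ (which requires exhibiting a decomposition into positive closed currents). The cleaner and more standard way to get the product estimate is via the polarization identity $4fg=(f+g)^2-(f-g)^2$, together with the fact that for a bounded DSH function $h$ the composition with the convex function $t\mapsto(t+\|h\|_\infty)^2$ (which has bounded derivative on the range of $h$) controls $\|h^2\|_\dsh$ in terms of $\|h\|_\infty\|h\|_\dsh$. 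With $f=1-\chi_V(\varphi_V)$ and $g=1-\chi^E_s$, both bounded by $1$, this yields $\|\chi_{s,t}\|_\dsh\lesssim\|f\|_\dsh+\|g\|_\dsh\lesssim 1+s^{-2}$, which is the claimed bound. With these two repairs your proof is sound.
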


We define 
\begin{align*}
	\chi_{n,i,1}&:=\left(1-\chi^E_{s_{n,i}/r_E}\right)\chi_{s_{n,i},t_{n,i}}\quad\textrm{ on }W_{n,i,1}:=V_{t_{n,i}^{(\delta+1/2)^{-1}}}\setminus E_{s_{n,i}},\\
	\chi_{n,i,2}&:=\chi^E_{s_{n,i}/r_E} \quad\textrm{ on } W_{n,i,2}:=E_{s_{n,i}/r_E}\quad\textrm{ and }\\
	\chi_{n,i,3}&:=1-\chi_{n,i,1}-\chi_{n,i,2}=\left(1-\chi^E_{s_{n,i}/r_E}\right)\left(1-\chi_{s_{n,i},t_{n,i}}\right) \quad \textrm{ on } W_{n,i,3}:=X\setminus \left(V_{t_{n,i}}\cup E_s\right).
\end{align*}
Then, we can write
\begin{align*}
	&d^{-i}\langle U_{L^{n-i}(S-\alpha_S)},\Lambda(R_{n,i-1})-R_{n,i}\rangle=\sum_{j=1}^3d^{-i}\langle \chi_{n,i,j}U_{L^{n-i}(S-\alpha_S)},\Lambda(R_{n,i-1})-R_{n,i}\rangle
\end{align*}
each of which will be estimated below.

\subsection{Estimates of $d^{-i}\langle \chi_{n,i,1}U_{L^{n-i}(S-\alpha_S)},\Lambda(R_{n,i-1})-R_{n,i}\rangle$ in $W_{n,i,1}$}
In this region, we use the H\"older continuity of the quasi-potential $u_f$ of $f_*(\omega)$. For $u_f$, see Section \ref{sec:Holder}.
\medskip


We will use the following notations throughout this subsection. Let $\theta\in\C^*$ be a constant. Later, $\theta$ will be chosen to be $\varepsilon_{n, i}$ with sufficiently small $\varepsilon$. Let $\Theta_j$ be a smooth positive closed $(k, k)$-form on $\Xfh$, which equals either $(m_{\widehat{\Delta}}\omega_{\Xfh}+dd^cu_\theta)\wedge \omega_{\Xfh}^{k-1}$ or $(m_{\widehat{\Delta}}\omega_{\Xfh}-\alpha_{\widehat{\Delta}})\wedge\omega_\Xfh^{k-1}$ for $j=1, \cdots, k+2$ so that its associated positive closed semi-regular transform $\Lc^{\Theta_j}$ of bidegree $(0, 0)$ coincides with either $\Lc_\theta^+$ or $\Lc^-$ for $j=1, \cdots, k+2$. For the operators $\Lc_\theta^+$'s and $\Lc^-$'s, see Section \ref{sec:regularization}. We also denote by $\Lc^m=\Lc^{\Theta_1}\circ \cdots\circ \Lc^{\Theta_m}$ for $0\le m\le k+2$. Notice that $\Lc^m$ is positive and closed, and that it may depend on $\Theta_j$'s and $\theta$.
\medskip

Differently from \cite{Ahn16}, we cannot use the negativity of $U_{L^{n-i}(S-\alpha_S)}$. See \cite{BGS}. So, we are estimating not just integrals of the form $\langle \chi_{n,i,1}\Lc^{\omega_\Xfh^{k-1}}(S), f_*(\omega^{k-p+1})\rangle$ and $\langle \chi_{n,i,1}\Lc^{u{\omega_\Xfh^{k-1}}}(S),$ $ f_*(\omega^{k-p+1})\rangle$, but more generally
those of the form $\langle \chi_{n,i,1}\Lc^{\omega_\Xfh^{k-1}}(S), \Lc^m(f_*(\omega^{k-p+1}))\rangle$ and $\langle \chi_{n,i,1}\Lc^{u{\omega_\Xfh^{k-1}}}(S),$ $ \Lc^m(f_*(\omega^{k-p+1}))\rangle$. By the continuity of the (semi-)regular transformation and the superpotential of $f_*(\omega^{k-p+1})$, we may assume that $S$ is smooth positive and closed due to \cite{DS04}.\medskip

\subsubsection{Estimate of $\langle \chi_{n,i,1}\Lc^{\omega_\Xfh^{k-1}}(S), \Lc^m(f_*(\omega^{k-p+1}))\rangle$}
\begin{lemma}\label{lem:tubular_closed_basic} Let $S\in\Cc_p$ be a smooth current. For $0<t\ll 1$, we have
	\begin{align*}
		\int_{V_t} \Lc^{{\omega_\Xfh^{k-1}}}(S)\wedge \Lc^m\left(\omega^{k-p+1}\right)\lesssim \|S\| t^{3/2}.
	\end{align*}
	The inequality $\lesssim$ means $\le$ up to a multiplicative constant independent of $t$, $\theta$, and $S$.
\end{lemma}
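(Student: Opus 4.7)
The lemma is essentially a restatement of \cite[Lemma 2.3.7]{DS09} in the present framework. Since the argument there is local in nature and uses only: (i) the existence of a q-psh function $\varphi_V$ describing the hypersurface $V$, (ii) basic estimates on positive closed currents, and (iii) Chern--Levine--Nirenberg type bounds, it transfers without essential change. The plan has four steps.

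\emph{Step 1 (Positivity reduction).} Under the running assumptions on $\Phi$ (a positive smooth $(k-1,k-1)$-form on $\Xfh$ of the type arising in Section~\ref{sec:reg_transform}), the semi-regular transform $\Lc^\Phi$ sends positive currents to positive currents, so $\Lc^\Phi(S)$ is a positive $(p-1,p-1)$-current on $X$ of mass $\lesssim \|\Phi\|_\infty \|S\|$ by Proposition~\ref{prop:regularization_principle}. Similarly, each factor in $\Lc^m$ is positive, so $\Lc^m(\omega^{k-p+1})$ is a positive $(k-p+1,k-p+1)$-form; it is smooth because $\omega^{k-p+1}$ is smooth. Hence we may pull its $L^\infty$-norm out of the integral and reduce to bounding
\[
\int_{V_t} \Lc^\Phi(S) \wedge \omega^{k-p+1}
\]
by $\|\Phi\|_\infty \|S\|\, t^{3/2}$, up to a constant depending only on $(X,\omega)$ and the reference data.

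\emph{Step 2 (Localization).} Using a finite atlas and a partition of unity, it suffices to establish the estimate in a coordinate chart on which a branch of $V$ is smooth (the contributions from the singular locus of $V$ and from $E_V$ are absorbed into the constant, because they form a lower-dimensional set contributing nothing on the top-degree form). Outside any fixed neighborhood of $V$, the integrand is smooth and vanishes on $V_t$ for $t$ small, so only a neighborhood of $V$ matters.

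\emph{Step 3 (DSH cutoff with $t^{3/2}$ mass).} Recall from Lemma~\ref{lem:EV_dist} that $\varphi_V$ is comparable to $\log\dist(\cdot,V)$ away from $E_V$. Following \cite[Lemma~2.3.7]{DS09}, one builds a function $h_t$ of the form $h_t = \kappa_t(\varphi_V)$ for a smooth convex increasing $\kappa_t:\R\cup\{-\infty\}\to\R$ with $\kappa_t\equiv 1$ near $s=0$ and $\kappa_t\equiv 0$ for $s\le \log t - O(1)$, chosen so that
\[
\mathbf{1}_{V_t}\le h_t \quad\text{and}\quad \|h_t\|_{\dsh}\lesssim t^{3/2}.
\]
The $3/2$-exponent arises from interpolating the $L^\infty$-bound $h_t\le 1$ with the $L^1$-type bound on $\varphi_V$ against a positive closed current transverse to $V$; concretely one uses a $3/2$-power truncation of $\varphi_V/\log t$, as in loc.\ cit.

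\emph{Step 4 (Pairing).} Since $\Lc^\Phi(S)\wedge\omega^{k-p+1}$ is a positive current of bidimension $(1,1)$ with $dd^c$-mass controlled by $\|\Phi\|_\infty\|S\|$ (the closed component is bounded by its mass; the non-closed contribution from $dd^c K$-type terms is controlled uniformly in $\theta$ by the construction of $K$ in Section~\ref{sec:superpotentials}), pairing with $h_t$ gives
\[
\int_{V_t}\Lc^\Phi(S)\wedge\omega^{k-p+1}\le \langle h_t,\Lc^\Phi(S)\wedge\omega^{k-p+1}\rangle\lesssim \|h_t\|_{\dsh}\cdot \|\Phi\|_\infty\|S\|\lesssim \|\Phi\|_\infty\|S\|\, t^{3/2}.
\]
Multiplying by $\|\Lc^m(\omega^{k-p+1})\|_\infty$ from Step~1 yields the lemma.

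\emph{Main obstacle.} The only delicate point is Step~3, the construction of $h_t$: one must balance the height of the truncation against its $dd^c$-mass to recover exactly the $t^{3/2}$ rate, and one must verify that the implicit constant depends only on $V$ and $\omega$ and not on $\theta$, $\Phi$, or $S$. Independence of $\theta$ is automatic because $\varphi_V$ is $\theta$-free; independence of $\Phi$ and $S$ follows from the fact that they enter only through the explicit factors $\|\Phi\|_\infty$ and $\|S\|$ produced by the mass bound of Proposition~\ref{prop:regularization_principle}.
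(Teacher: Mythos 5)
Your proposal departs from the paper's argument at the crucial technical step, and the departure introduces a genuine gap. The paper reduces, by positivity, to the measure $\Lc^{\omega_{\Xfh}^{k-1}}(S)\wedge\omega^{k-p+1}$, disintegrates it against the kernel $\pi_*(\omega_{\Xfh}^{k-1})$ (which has singularity $\dist(x,y)^{2-2k}$ near $\Delta$), obtains a $t^{2}$ bound on balls of radius $4t$ centered on $V$, and then appeals to the Lelong-number/Abel-transform mechanism of \cite[Lemma 2.3.7]{DS09} to pass from the ball estimate to $V_t$. Your Step 3 instead asserts that one can build a cutoff $h_t=\kappa_t(\varphi_V)$ with $\|h_t\|_\dsh\lesssim t^{3/2}$ and then pair. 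That DSH estimate is false: for any $\kappa_t$ increasing from $0$ to $1$ on $[\log t,0]$ with $\kappa_t'(-\infty)=0$, Stokes' theorem gives $\|dd^c\kappa_t(\varphi_V)\|_*\lesssim\|\kappa_t'\|_\infty\|\alpha_V\|\sim 1/|\log t|$, and $\|h_t\|_{L^1}\lesssim t^2$, so the best one can hope for from this route is a logarithmic decay $\|h_t\|_\dsh\lesssim 1/|\log t|$, nowhere near $t^{3/2}$. The polynomial rate in the lemma is a property of the kernel of the semi-regular transform, not of any cutoff; the DSH-cutoff construction in this paper (Lemma~\ref{lem:cut-off}) in fact has DSH norm $\lesssim s^{-2}$, i.e.\ large, and is used for an entirely different purpose.

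There is also a smaller slip in Step 4: $\Lc^\Phi(S)\wedge\omega^{k-p+1}$ has bidegree $(k,k)$, i.e.\ it is a positive measure of bidimension $(0,0)$, not a current of bidimension $(1,1)$, so speaking of its controlled ``$dd^c$-mass'' is not meaningful, and the pairing bound $\langle h_t,\mu\rangle\lesssim\|h_t\|_\dsh\cdot\|\Phi\|_\infty\|S\|$ would additionally require a PB-type property of the measure that has not been established. To fix the proof you should follow the paper's route: after the positivity reduction, represent the positive measure via the kernel, localize to balls $B_z(4t)$ to extract the $t^2$ factor from the $\dist^{2-2k}$ singularity against the smooth form $\omega^{k-p+1}$, and close the argument with the Lelong-number and Abel-transform estimates as in \cite[Lemma 2.3.7]{DS09}.
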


The proof is similar to the proof of \cite[Lemma 2.3.7]{DS09}. But $X$ is not homogeneous.

\begin{proof}
	Here since $S$, $\omega^{k-p+1}$ and ${\omega_\Xfh^{k-1}}$ are smooth and positive, we can write
	\begin{align*}
		&\int_{V_t} \Lc^{\omega_\Xfh^{k-1}}(S)\wedge \Lc^m\left(\omega^{k-p+1}\right)\le\left\|{\omega_\Xfh^{k-1}}\right\|_\infty\left\|\Lc^m\left(\omega^{k-p+1}\right)\right\|_\infty\int_{V_t}\Lc^{\omega_\Xfh^{k-1}}(S)\wedge \omega^{k-p+1}\\
		&\quad=\left\|\Lc^m\left(\omega^{k-p+1}\right)\right\|_\infty \left\|{\omega_\Xfh^{k-1}}\right\|_\infty \int_{y\in {V_t}}\left[\int_{x\in X\setminus\{y\}}\pi_*\left(\omega_\Xfh^{k-1}\right)(x, y)\wedge S(x)\right]\wedge \omega^{k-p+1}(y)
	\end{align*}
	
	Since $X$ is compact, it is enough to estimate in a coordinate neighborhood $B\subset X$ with compact closure. So, we prove that
	\begin{align*}
		\int_{y\in {V_t}\cap B}\left[\int_{x\in X\setminus \{y\}}\pi_*\left(\omega_\Xfh^{k-1}\right)(x, y)\wedge S(x)\right]\wedge \omega^{k-p+1}(y)\lesssim t.
	\end{align*}
	
	%
	%
	Let $B_z(4t)$ denote the ball of radius $4t$ centered at some $z\in V\cap B$. By shrinking $B$ if necessary, we may assume that a neighborhood of $\overline{B}$ sits inside the coordinate neighborhood and that $B_z(4t)$ lies in the coordinate chart for $0<t\ll 1$. We estimate
	\begin{align*}
		\int_{y\in B_z(4t)}\left[\int_{x\in X\setminus \{y\}}\pi_*\left(\omega_\Xfh^{k-1}\right)(x, y)\wedge S(x)\right]\wedge \omega^{k-p+1}(y).
	\end{align*}
	By the representation of a flat current in terms of a vector field and a Radon measure (or one might think of it as a version of the Radon-Nikodym theorem), we may write it as
	\begin{align*}
		\int_{x\in X}\left(\int_{y\in B_z(4t)\setminus\{x\}} \left[\Psi_S(x)\righthalfcup\pi_*
		\left(\omega_\Xfh^{k-1}\right)(x, y) \right]\wedge \omega^{k-p+1}(y)\right)d\mu(x).
	\end{align*}
	Here, $\Psi_S(x)$ is actually a $(k-p)$-vector field with $\|\Psi_S(x)\|_\infty\le 1$ and $\left[\Psi_S(x)\righthalfcup\pi_*\left(\omega_\Xfh^{k-1}\right)(x, y)\right]$ is actually a form in $y$ smooth except $x$ and the singularity is at most $\dist(x, y)^{2-2k}$ and one might think of $\mu$ as the trace measure of $S$. Hence, $\int_X d\mu$ can be bounded in terms of $\|S\|$. Since $\|\Psi_S(x)\|_\infty\le 1$ and $\omega_\Xfh^{k-1}$ is smooth, the inner integral is bounded by a uniform constant multiple of $\left[(\sum_{|I|=k-p} e_I(x))\righthalfcup\pi_*\left(\omega_{\widehat{\mathfrak{X}}}^{k-1}\right)(x, y)\right]\wedge \omega^{k-p+1}(y)$ (in Federer's notation) and further bounded by the following uniformly with respect to $y$.
	\begin{align*}
		\int_{B_z(4t)}|x|^{2-2k}\left(dd^c|x|^2\right)^k\lesssim t^2.
	\end{align*}
	So, the arguments using the Lelong number and the Abel transform as in \cite[Lemma 2.3.7]{DS09} complete the proof.
\end{proof}

Next, we modify \cite[Proposition 7.6]{Ahn16} to prove the following:
\begin{lemma}\label{lem:estimate_off_E-1}
	Let $n$ and $i$ be integers such that $1\le i\le n$. Let $l\in \{0, 1, \ldots, k-p+1\}$. Let $s, t$ be two positive real numbers such that $\frac{1}{2}r_E^{k-p-l+2}s_{n,i}\leq s\leq 2s_{n,i}$ and $\frac{1}{2}t_{n,i}\leq t\leq 2(t_{n,i})^{(\frac{2}{2\delta+1})^{k-p-l+2}}$ with $0<\varepsilon\ll 1$. Then, we have
	\begin{align*}
		&\left|\int_{V_t\setminus {E_s}}\Lc^{{\omega_\Xfh^{k-1}}}(S)\wedge \Lc^m\big(f_*(\omega)^l\wedge\omega^{k-p-l+1}\big)\right|\lesssim  \|\alpha_S\|_\infty s^{-2(k+l)-N_E}t^{\beta_l},
	\end{align*}
	where $\beta_l=(2(k+1)(2\delta+1)\delta)^{-l}$. The inequality $\lesssim$ means $\le$ up to a multiplicative constant independent of $n$, $i$, $s$, $t$, $\theta$, $S$ and $\alpha_S$.
\end{lemma}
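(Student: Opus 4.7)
The plan is to proceed by induction on $l$. For the base case $l=0$, the integrand reduces to $\Lc^{\Phi}(S)\wedge \Lc^m(\omega^{k-p+1})$, and Lemma~\ref{lem:tubular_closed_basic} applied on the larger set $V_t\supset V_t\setminus E_s$ gives the bound $\|\Phi\|_\infty\,\|\Lc^m(\omega^{k-p+1})\|_\infty\,\|S\|\, t^{3/2}$. Since $\{S\}=\{\alpha_S\}$ one has $\|S\|\lesssim \|\alpha_S\|_\infty$, and $t^{3/2}\le t^{\beta_0}=t$ for small $t$, while the factor $s^{-2k-N_E}\ge 1$ is absorbed at no cost.

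For the inductive step I would use the decomposition $f_*(\omega)=\omega_f+\ddc u_f$, where $\omega_f$ is a smooth closed representative of $\{f_*(\omega)\}$ and $u_f$ is the q-psh quasi-potential which by Lemma~\ref{lem:Holder_conti_qpotential} is $(c_f s^{-N_E},\delta^{-1})$-H\"older continuous on $X\setminus E_s$. This splits the integral into a smooth part involving $\omega_f$ and a singular part involving $\ddc u_f$. The smooth part, where $\omega_f\lesssim \omega$ as smooth forms, is controlled by the inductive hypothesis at level $l-1$, giving $t^{\beta_{l-1}}\le t^{\beta_l}$ (since $\beta_{l-1}>\beta_l$) and $s^{-2(k+l-1)-N_E}\le s^{-2(k+l)-N_E}$, which already fits inside the claimed estimate.

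For the singular part I would insert the cut-off $\chi_{s,t}$ of Lemma~\ref{lem:cut-off} (equal to $1$ on $V_t\setminus E_s$, supported in $V_{t^{2/(2\delta+1)}}\setminus E_{r_Es}$), and then approximate $u_f$ on $X\setminus E_{r_Es}$ using Lemma~\ref{lem:smooth_approx_Holder} with an auxiliary parameter $\theta>0$, producing $\widetilde{u_f}$ with $\|\widetilde{u_f}\|_{C^2}\lesssim \theta^{-2(k+1)}$ and $\|u_f-\widetilde{u_f}\|_\infty\lesssim s^{-N_E}\theta^{1/\delta}$. Writing $\ddc u_f=\ddc\widetilde{u_f}+\ddc(u_f-\widetilde{u_f})$, the smooth contribution $\ddc\widetilde{u_f}$ feeds back into the inductive hypothesis at level $l-1$ on the support of $\chi_{s,t}$, yielding $\theta^{-2(k+1)}\,t^{2\beta_{l-1}/(2\delta+1)}$ times the expected factors; the remainder $\ddc(u_f-\widetilde{u_f})$ is handled by Stokes' theorem, transferring $\ddc$ onto $\chi_{s,t}$ (this uses that $\Lc^\Phi(S)\wedge\Lc^m(f_*(\omega)^{l-1}\wedge\omega^{k-p-l+1})$ is closed, which holds because $\Phi$ and the $\Theta_j$ defining $\Lc^m$ are closed smooth forms), costing a factor $\|\ddc\chi_{s,t}\|_*\lesssim s^{-2}$ against a factor $s^{-N_E}\theta^{1/\delta}$ from the remainder. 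Choosing $\theta$ as a suitable small power of $t$ (of the order $t^{2\beta_{l-1}\delta/((2\delta+1)(2(k+1)\delta+1))}$) balances the two terms and produces exactly the exponent $\beta_l$ in $t$, while the cumulative loss $s^{-2}\cdot s^{-N_E}\cdot s^{-2(k+l-1)-N_E}$ collapses to $s^{-2(k+l)-N_E}$ as required.

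The main obstacle will be the precise bookkeeping of the $s$- and $t$-exponents at each inductive step: each iteration consumes two powers of $s$ from the cut-off derivative and one factor $s^{-N_E}$ from the H\"older constant of $u_f$, and these must aggregate exactly into the claimed $s^{-2(k+l)-N_E}$ rather than a larger loss; similarly, the shrinking rate of the admissible range of $t$ from $V_t$ down to $V_{t^{2/(2\delta+1)}}$ dictates the recursive definition of $\beta_l$. A secondary point to verify is that the shells $E_{s/r_E}\setminus E_{r_Es}$ and $V_{t^{2/(2\delta+1)}}\setminus V_t$ are thick enough for both Lemma~\ref{lem:smooth_approx_Holder} (which requires $\overline{(W_1^c)_\theta}\subset W_2^c$) and the support hypothesis of $\chi_{s,t}$ to apply simultaneously; this is guaranteed by the choices $s\in[\tfrac12 r_E^{k-p-l+2}s_{n,i},2s_{n,i}]$ and $t\in[\tfrac12 t_{n,i},2t_{n,i}^{(2/(2\delta+1))^{k-p-l+2}}]$ made in the statement, which provide precisely the right room for the $k-p+1$ nested iterations.
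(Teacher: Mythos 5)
Your overall strategy (induction on $l$, base case via Lemma~\ref{lem:tubular_closed_basic}, inductive step via $f_*(\omega)=\omega_f+dd^c u_f$, then approximation of $u_f$ by a smooth function with controlled $C^2$-norm and Stokes applied to the remainder) matches the paper's outline, and your base case is correct. However, there is a genuine gap in the inductive step. You write $dd^c u_f = dd^c\widetilde{u_f}+dd^c(u_f-\widetilde{u_f})$ with $\widetilde{u_f}$ the smooth approximant produced by Lemma~\ref{lem:smooth_approx_Holder}, which is defined only on $X\setminus E_{r_E s}$. That decomposition therefore makes sense only away from $E$. But the quantity to bound involves $\Lc^m\bigl(f_*(\omega)^{l-1}\wedge dd^c u_f\wedge\omega^{k-p-l+1}\bigr)$, and the semi-regular transform $\Lc^m$ is a composition of $\Lc_\theta^+$ and $\Lc^-$; the kernel of $\Lc^-$ (the pushforward of $(m_{\widehat\Delta}\omega_{\Xfh}-\alpha_{\widehat\Delta})\wedge\omega_{\Xfh}^{k-1}$) is not localized near the diagonal. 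Consequently $\Lc^m(\cdots dd^c u_f\cdots)$ evaluated at a point $y\in\supp\chi_{s,t}$ depends on $dd^c u_f(x)$ for all $x\in X$, including $x\in E_{r_E s}$ where $\widetilde{u_f}$ does not exist and where $dd^c u_f$ carries genuine singularities. Restricting to the support of $\chi_{s,t}$ does not shield you from this.

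The paper fixes this with an extra layer you have omitted: it defines the globally valid function $\upsilon_\gamma=\chi^{E,k+2}u_f+(1-\chi^{E,k+2})\widetilde{\upsilon_\gamma}$, so that $u_f-\upsilon_\gamma=(1-\chi^{E,k+2})(u_f-\widetilde{\upsilon_\gamma})$ is globally defined, vanishes near $E$, and has small sup-norm. Then $dd^c\upsilon_\gamma$ splits into $dd^c\bigl((1-\chi^{E,k+2})\widetilde{\upsilon_\gamma}\bigr)$ (smooth, handled by the inductive hypothesis, which is your $(0$-$2)$-analogue) and $dd^c(\chi^{E,k+2}u_f)$, which retains the singular behavior of $u_f$ near $E$. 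Controlling the latter through the chain $\Lc^m$ requires the telescoping sum over insertion positions of the cut-offs $\chi^{E,j}$ together with Lemma~\ref{lem:integral_semi-regular_tr}, which in turn rests on the disjoint-supports estimates Lemma~\ref{lem:disjoint_estimate_1} and Lemma~\ref{lem:disjoint_estimate_2}. This whole mechanism — the cut-off $\chi^{E,k+2}$, the telescoping, and the disjoint-supports estimate — is what makes the inductive step close, and it is absent from your proposal. Without it, the decomposition $dd^c u_f=dd^c\widetilde{u_f}+dd^c(u_f-\widetilde{u_f})$ cannot be substituted inside $\Lc^m$ and the argument does not go through.
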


For this estimate, we need an auxiliary lemma. Lemma \ref{lem:Holder_conti_qpotential} and \eqref{eq:ConditionM-1} imply that $u_f$ is H\"older continuous on $X$ and that $u_f$ is $(c_fs^{-N_E}, \delta^{-1})$-H\"older continuous in $X\setminus E_s$ for $s>0$. We write $$\chi^{E,j}:=\chi^E_{\left(\frac{1}{2}r_E\right)^js}$$ for $j=1, \cdots, k+2$, where $\chi^E_{\left(\frac{1}{2}r_E\right)^js}$ is as in Lemma \ref{lem:basic_cutoff}. Also, recall that the transforms $\Lc^{\Theta_j}$'s are all invariant under $(x, y)\to (y, x)$.
\begin{lemma}\label{lem:integral_semi-regular_tr} Under the assumptions of Lemma \ref{lem:estimate_off_E-1}, for any $1\le j \le m$, we have
	\begin{align*}
		&\Big|\int_X \chi_{s, t} \Lc^{\omega_\Xfh^{k-1}}(S)\wedge \Lc^{\Theta_1}\big(\cdots\Lc^{\Theta_{j-2}}\big(\Lc^{\Theta_{j-1}}\big(\big(1-\chi^{E,j-1}\big)\Lc^{\Theta_{j}}\big(\chi^{E, {j}}\Lc^{\Theta_{j+1}}\big(\chi^{E, {j+1}}\cdots\\
		&\quad\quad\quad\quad\quad\quad\chi^{E, {m-1}}\Lc^{\Theta_m}\big(f_*(\omega)^{l-1}\wedge dd^c \big(\chi^{E, {k+2}} u_f\big)\wedge \omega^{k-p-l+1}\big)\cdots\big)\big)\big)\big)\cdots\big)\Big|\lesssim \|S\|s^{-2k}t^{3/(2\delta+1)}
	\end{align*}
	where $\chi_{s, t}$ is the cut-off function as in Lemma \ref{lem:cut-off}.	The inequality $\lesssim$ means $\le$ up to a multiplicative constant independent of $n$, $i$, $s$, $t$, $\theta$ and $S$.
\end{lemma}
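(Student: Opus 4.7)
The plan is to reduce the integral to an application of the tubular neighborhood estimate Lemma \ref{lem:tubular_closed_basic} by first controlling the $L^\infty$-norm of the inner nested expression, exploiting the positivity and closedness of each $\Lc^{\Theta_j}$ together with the regularizing effect of iterated semi-regular transforms.

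First I would establish a mass bound for the innermost factor. Since $u_f$ is q-psh with $u_f<-1$ and $\|dd^cu_f\|_*$ uniformly bounded, and since $\chi^{E,k+2}$ is smooth with $\|\chi^{E,k+2}\|_{C^2}\lesssim s^{-2}$ by Lemma \ref{lem:basic_cutoff}, a Leibniz expansion
\begin{align*}
dd^c(\chi^{E,k+2}u_f)=\chi^{E,k+2}dd^cu_f+u_fdd^c\chi^{E,k+2}+d\chi^{E,k+2}\wedge d^cu_f+d^c\chi^{E,k+2}\wedge du_f
\end{align*}
together with the DSH calculus as in Lemma \ref{lem:cvxincftn} shows that $\chi^{E,k+2}u_f$ is DSH and $dd^c(\chi^{E,k+2}u_f)=T^+-T^-$ with $T^\pm$ positive closed $(1,1)$-currents of mass $\lesssim s^{-2}$. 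Hence $f_*(\omega)^{l-1}\wedge T^\pm\wedge\omega^{k-p-l+1}$ is a non-negative current of order $0$ whose mass is bounded by $\lesssim s^{-2}$ thanks to the mass estimate \eqref{cond:mass_condition}.

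Next I would propagate this bound through the chain of operations. Each $\Lc^{\Theta_j}$ is positive closed and extends continuously to a linear operator improving integrability by one step via Proposition \ref{prop:regularization_principle}. Multiplication by cut-offs with values in $[0,1]$ preserves both non-negativity and mass bounds, although it destroys closedness (which is not needed here). Applying $\Lc^{\Theta_m}$ to the innermost current yields an $L^{1+1/k}$-form; each subsequent $\Lc^{\Theta_j}$ composed with multiplication by $\chi^{E,j}$ or $(1-\chi^{E,j-1})$ improves integrability further, so after the full chain of $k+2$ transforms the resulting form is bounded in $L^\infty$ with norm dominated by $\lesssim s^{-2k}$ (in fact by $\lesssim s^{-2}$, but the weaker bound $s^{-2k}$ is what we record for convenience), with constants independent of $\theta=\varepsilon_{n,i}$.

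Finally I would conclude via tubular neighborhood estimates. By Lemma \ref{lem:cut-off}, $\chi_{s,t}\in[0,1]$ and $\supp\chi_{s,t}\subset V_{t^{(\delta+1/2)^{-1}}}=V_{t^{2/(2\delta+1)}}$. Bounding the inner expression in $L^\infty$ and pulling it out gives
\begin{align*}
\left|\int\chi_{s,t}\Lc^\Phi(S)\wedge\text{(inner)}\right|\lesssim s^{-2k}\int_{V_{t^{2/(2\delta+1)}}}\Lc^\Phi(S)\wedge\omega^{k-p+1},
\end{align*}
and Lemma \ref{lem:tubular_closed_basic} applied with tubular radius $t^{2/(2\delta+1)}$ gives the remaining factor $\|\Phi\|_\infty\|S\|(t^{2/(2\delta+1)})^{3/2}=\|\Phi\|_\infty\|S\|t^{3/(2\delta+1)}$, as desired. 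The main obstacle is precisely the uniform control in $\theta$ of the $L^\infty$-norm propagation through the nested transforms; this hinges on using the iterative $L^\alpha\to L^{\alpha'}$ estimates in Proposition \ref{prop:regularization_principle} together with the fact that the cut-off multiplications leave the currents non-negative with mass bounded by $s^{-2}$ throughout.
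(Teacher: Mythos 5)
Your proposed route has a genuine gap at the crucial step, namely the claim that after propagating an $L^\infty$-bound through the nested chain of operators $\Lc^{\Theta_1},\dots,\Lc^{\Theta_m}$ you obtain a bound ``with constants independent of $\theta=\varepsilon_{n,i}$.'' This claim is not justified and is in fact false as stated. Each $\Lc^{\Theta_j}$ can equal $\Lc_\theta^+$, whose kernel is $(m_{\widehat{\Delta}}\omega_{\Xfh}+dd^cu_\theta)\wedge\omega_{\Xfh}^{k-1}$. The $dd^cu_\theta$ piece has coefficients of size $\sim|\theta|^{-2}$ on a shell at distance $\sim|\theta|$ from $\widehat\Delta$; after pushing forward by $\pi$ to $X\times X$ the kernel picks up an additional $\dist(\cdot,\Delta)^{2-2k}$ factor, so on its support the kernel is of order $|\theta|^{-2k}$. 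Consequently the constants implicit in Proposition \ref{prop:regularization_principle} (the $L^\alpha\to L^{\alpha'}$ and $L^\infty\to C^1$ bounds via Young's inequality) for $\Lc_\theta^+$ blow up as $\theta\to0$, and the proposal's iterated integrability-improvement scheme does not yield a $\theta$-uniform $L^\infty$ estimate for the inner nested expression on all of $X$.

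The paper's proof avoids this problem precisely by exploiting a structural feature your argument does not use: the cut-off $(1-\chi^{E,j-1})$ sitting just inside $\Lc^{\Theta_{j-1}}$ is supported outside $E_{r_E(\frac{1}{2}r_E)^{j-1}s}$, while the cut-off $\chi^{E,j}$ sitting just inside $\Lc^{\Theta_j}$ is supported inside $E_{(\frac{1}{2}r_E)^{j}s}$. Using the adjoint property of the semi-regular transforms, the paper rewrites the integral as a pairing of two currents whose supports are at distance $\geq(\frac{1}{2}r_E)^{k+2}s$ from each other. The pairing against $\Lc^{\Theta_j}=\Lc_\theta+\Lc^-$ is then treated by Lemma \ref{lem:disjoint_estimate_1} (the $\Lc_\theta$ part vanishes because $\theta=\varepsilon_{n,i}$ is far smaller than $s=s_{n,i}$, making the kernel's support miss the disjoint-support region) together with Lemma \ref{lem:disjoint_estimate_2} (the $\Lc^-$ part contributes $\dist^{2-2k}\lesssim s^{2-2k}$). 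Combined with the mass bound $\lesssim\|\Phi\|_\infty\|S\|\,t^{3/(2\delta+1)}$ for the factor containing $\chi_{s,t}\Lc^\Phi(S)$ (from Lemma \ref{lem:mass_regular_transform} and Lemma \ref{lem:tubular_closed_basic}) and the mass bound $\lesssim s^{-2}$ for the factor containing $dd^c(\chi^{E,k+2}u_f)$ (via the paper's decomposition and a Stokes argument), this gives $s^{2-2k}\cdot s^{-2}\cdot t^{3/(2\delta+1)}=s^{-2k}t^{3/(2\delta+1)}$. In particular the $s^{-2k}$ factor is genuinely the product of the distance power $s^{2-2k}$ with the mass $s^{-2}$, not a weaker form of $s^{-2}$ ``recorded for convenience'' as you suggest. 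Your Leibniz expansion and DSH mass bound for $dd^c(\chi^{E,k+2}u_f)$ in the first step, and the use of the tubular estimate at the end, are sound; but without the adjointness rewrite and the disjoint-support lemmas, the middle step of the argument cannot be made $\theta$-uniform.
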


\begin{proof}
	\begin{align*}
		&\int_X \chi_{s, t} \Lc^{\omega_\Xfh^{k-1}}(S)\wedge \Lc^{\Theta_1}\big(\cdots\Lc^{\Theta_{j-2}}\big(\Lc^{\Theta_{j-1}}\big(\left(1-\chi^{E, {j-1}}\right)\Lc^{\Theta_{j}}\big(\chi^{E, {j}}\Lc^{\Theta_{j+1}}\big(\cdots\\
		&\quad\quad\quad\quad\quad\quad\quad\chi^{E, {m-1}}\Lc^{\Theta_m}\left(f_*(\omega)^{l-1}\wedge dd^c \left(\chi^{E, k+2} u_f\right)\wedge \omega^{k-p-l+1}\right)\cdots\big)\big)\big)\big)\cdots\big)\\
		&=\int_X \left(1-\chi^{E, {j-1}}\right)\Lc^{\Theta_{j-1}}\big(\Lc^{\Theta_{j-2}}\big(\cdots\Lc^{\Theta_1}\left(\chi_{s, t} \Lc^{\omega_\Xfh^{k-1}}(S)\right)\cdots\big)\big)\wedge \Lc^{\Theta_j}\big(\chi^{E, {j}}\Lc^{\Theta_{j+1}}\big(\cdots\\
		&\quad\quad\quad\quad\quad\quad\quad\chi^{E, {m-1}}\Lc^{\Theta_m}\left(f_*(\omega)^{l-1}\wedge dd^c \left(\chi^{E, k+2} u_f\right)\wedge \omega^{k-p-l+1}\right)\cdots\big)\big)
	\end{align*}
	The currents $\left(1-\chi^{E, {j-1}}\right)\Lc^{\Theta_{j-1}}\left(\Lc^{\Theta_{j-2}}\left(\cdots\Lc^{\Theta_1}\left(\chi_{s, t} \Lc^{\omega_\Xfh^{k-1}}(S)\right)\cdots\right)\right)$ and $\chi^{E, {j}}\Lc^{\Theta_{j+1}}\big(\cdots\chi^{E, {m-1}}\big(\Lc^{\Theta_m}$ $\left(f_*(\omega)^{l-1}\wedge dd^c \left(\chi^{E, k+2} u_f\right)\wedge \omega^{k-p-l+1}\right)\big)\cdots\big)$ have disjoint support and the distance between them is bounded below by $\left(\frac{1}{2}r_E\right)^{k+2}s$. Further, from Lemma \ref{lem:mass_regular_transform} and Lemma \ref{lem:tubular_closed_basic}, we see that the mass of the current $\left(1-\chi^{E, {j-1}}\right)\Lc^{\Theta_{j-1}}\left(\Lc^{\Theta_{j-2}}\left(\cdots\Lc^{\Theta_1}\left(\chi_{s, t} \Lc^{\omega_\Xfh^{k-1}}(S)\right)\cdots\right)\right)$ is bounded by
	$\|S\| t^{3/(2\delta+1)}$ up to a multiplicative constant independent of $n$, $i$, $s$, $t$, $\theta$ and $S$. Note that $u_f$ is H\"older continuous and $\chi^{E, k+2}$ is smooth on $X$. So, in the sense of currents, we may write
	\begin{align*}
		dd^c\left(\chi^{E, k+2} u_f\right)&=d \left( {\chi^{E, k+2}+u_f} \right)\wedge d^c\left(\chi^{E, k+2}+u_f\right) -d\chi^{E, k+2}\wedge d^c\chi^{E, k+2}-du_f\wedge d^cu_f\\
		&\quad\quad\quad\quad\quad\quad\quad\quad\quad\quad\quad\quad + u_fdd^c\chi^{E, k+2} + \chi^{E, k+2} dd^cu_f.
	\end{align*}
	The first three terms are positive or negative currents and the last two terms can be written as a difference of positive currents of mass under control. Using Stokes' theorem, we have
	\begin{align*}
		&\left|\int_Xd\left(\chi^{E, k+2}+u_f\right)\wedge d^c\left(\chi^{E, k+2}+u_f\right)\wedge \omega^{k-1}\right|=\left|\int_X\left(\chi^{E, k+2}+u_f\right)\wedge dd^c\left(\chi^{E, k+2}+u_f\right)\wedge \omega^{k-1}\right|\\
		&\quad\quad\quad\le c\left(\sup_X\left|u_f\right|+1\right)\left(\left\|\chi^{E, k+2}\right\|_{C^2}+\|f_*(\omega)\|+1\right),
	\end{align*}
	where $c>0$ is a constant independent of $n$, $i$, $s$, $t$, $\theta$, and $S$. The second and third terms $d\chi^{E, k+2}\wedge d^c\chi^{E, k+2}$ and $du_f\wedge d^cu_f$ can be treated in the same way. By Lemma \ref{lem:mass_regular_transform}, the current $\chi^{E,{j}}\Lc^{j+1}\big(\cdots\chi^{E,{m-1}}\big(\Lc^{\Theta_m} \big(f_*(\omega)^{l-1}\wedge dd^c \big(\chi^{E, k+2} u_f\big)\wedge \omega^{k-p-l+1}\big)\big)\cdots\big)$ can be written as a finite sum of positive or negative currents, the mass of each of which is bounded by $s^{-2}$ up to a multiplicative constant independent of $n$, $i$, $s$, $t$, $\theta$ and $S$. By Lemma \ref{lem:disjoint_estimate_1} and Lemma \ref{lem:disjoint_estimate_2}, 
	\begin{align*}
		&\bigg|\int \chi_{s, t} \Lc^{\omega_\Xfh^{k-1}}(S)\wedge \Lc^{\Theta_1}\big(\cdots\Lc^{\Theta_{j-2}}\big(\Lc^{\Theta_{j-1}}\big(\left(1-\chi^{E, {j-1}}\right)\Lc^{j}\big(\chi^{E, {j}}\Lc^{j+1}\big(\cdots\\
		&\quad\quad\quad\quad\quad\quad\chi^{E, {m-1}}\Lc^{\Theta_m}\left(f_*(\omega)^{l-1}\wedge dd^c \left(\chi^{E, k+2} u_f\right)\wedge \omega^{k-p-l+1}\right)\cdots\big)\big)\big)\big)\cdots\big)\bigg|\lesssim \|S\|s^{-2k}t^{3/(2\delta+1)}.
	\end{align*}
	The inequality $\lesssim$ means $\le$ up to a multiplicative constant independent of $n$, $i$, $s$, $t$, $\theta$, and $S$.
\end{proof}

\begin{proof}[Proof of Lemma \ref{lem:estimate_off_E-1}]
	Our proof is by induction. Observe that $\|S\|=\left|\left\langle S, \omega^{k-p}\right\rangle\right|=\left|\left\langle \alpha_S, \omega^{k-p}\right\rangle\right|\le c_m\left\|\alpha_S\right\|_\infty$ and therefore, Lemma \ref{lem:tubular_closed_basic} proves the case $l=0$. So, we assume that the statement is true for $l-1$ with $l\ge 1$. Let $\chi_{s, t}$ is the cut-off function in Lemma \ref{lem:cut-off}. Then, from the positivity of the integrand, we have
	{\small\begin{align}
			\notag\int_{V_t\setminus E_s} \Lc^{\omega_\Xfh^{k-1}}(S)\wedge \Lc^m\left(f_*(\omega)^l\wedge \omega^{k-p-l+1}\right)&\le \int \chi_{s, t} \Lc^{\omega_\Xfh^{k-1}}(S)\wedge \Lc^m\left(f_*(\omega)^l\wedge \omega^{k-p-l+1}\right)\\
			\label{integral:1}&= \int \chi_{s, t} \Lc^{\omega_\Xfh^{k-1}}(S)\wedge \Lc^m\left(f_*(\omega)^{l-1}\wedge \omega_f\wedge\omega^{k-p-l+1}\right)\\
			\label{integral:2}&\quad+ \int \chi_{s, t} \Lc^{\omega_\Xfh^{k-1}}(S)\wedge \Lc^m\left(f_*(\omega)^{l-1}\wedge dd^c u_f\wedge \omega^{k-p-l+1}\right),
	\end{align}}
	where the form $\omega_f$ and the function $u_f$ are as in Section \ref{sec:Holder}.
	Since $\omega$ is a K\"ahler form, the bound of the integral \eqref{integral:1} is obtained from the induction hypothesis.
	\begin{align*}
		&\left|\eqref{integral:1}\right|
		\lesssim 2\left\|\omega_f\right\|_\infty \left\|\alpha_S\right\|_\infty\left(r_Es\right)^{-2(k+l-1)-N_E}\left(t^{(\delta+1/2)^{-1}}\right)^{\beta_{l-1}}.
	\end{align*}
	
	We only need to estimate the integral \eqref{integral:2}. This is the part that we should be careful when compared to the works in \cite{DS09} and \cite{Ahn16}. We know that $u_f$ is $\left(c_f\left(r_E\left(\frac{1}{2}r_E\right)^{k+3}s\right)^{-N_E}, \delta^{-1}\right)$-H\"older continuous on $X\setminus E_{r_E\left(\frac{1}{2}r_E\right)^{k+3}s}$ from Lemma \ref{lem:Holder_conti_qpotential}. For sufficiently small $\gamma>0$ so that $\overline{\left(X\setminus E_{\left(\frac{1}{2}r_E\right)^{k+3}s}\right)_\gamma}\cap E_{r_E\left(\frac{1}{2}r_E\right)^{k+3}s}=\emptyset$ with $W_1=E_{\left(\frac{1}{2}r_E\right)^{k+3}s}$ and $W_2=E_{r_E\left(\frac{1}{2}r_E\right)^{k+3}s}$, Lemma \ref{lem:smooth_approx_Holder} implies that we can find a smooth function $\widetilde{\upsilon_\gamma}$ defined in a neighborhood $X\setminus E_{\left(\frac{1}{2}r_E\right)^{k+3}s}$ such that
	\begin{align*}
		\left\|\widetilde{\upsilon_\gamma}\right\|_{C^2, X\setminus E_{\left(\frac{1}{2}r_E\right)^{k+3}s}}\le c_H\gamma^{-2(k+1)}\textrm{ and } \left\|u_f-\widetilde{\upsilon_\gamma}\right\|_{\infty, X\setminus E_{\left(\frac{1}{2}r_E\right)^{k+3}s}}\le c_Hc_f\left(r_E\left(\frac{1}{2}r_E\right)^{k+3}s\right)^{-N_E}\gamma^{\delta^{-1}}
	\end{align*}
	where $c_H>0$ is the constant as in Lemma \ref{lem:smooth_approx_Holder} and is independent of $s$. 
	We define
	\begin{align*}
		\upsilon_\gamma=\chi^{E, k+2} u_f + \left(1-\chi^{E, k+2}\right) \widetilde{\upsilon_\gamma}.
	\end{align*}
	
	Then, \eqref{integral:2} is equal to the sum
	\begin{align}
		\label{integral:2-1}&\int \chi_{s, t} \Lc^{\omega_\Xfh^{k-1}}(S)\wedge \Lc^m\left(f_*(\omega)^{l-1}\wedge dd^c \upsilon_\gamma\wedge \omega^{k-p-l+1}\right)\\
		\label{integral:2-2}&\quad\quad\quad+\int \chi_{s, t} \Lc^{\omega_\Xfh^{k-1}}(S)\wedge \Lc^m\left(f_*(\omega)^{l-1}\wedge dd^c \left(u_f-\upsilon_\gamma\right)\wedge\omega^{k-p-l+1}\right).
	\end{align}
	The integral \eqref{integral:2-1} is equal to the sum
	\begin{align}
		\label{int:0-1}&\int \chi_{s, t} \Lc^{\omega_\Xfh^{k-1}}(S)\wedge \Lc^m\left(f_*(\omega)^{l-1}\wedge dd^c \left(\chi^{E, k+2} u_f\right)\wedge \omega^{k-p-l+1}\right)\\
		\label{int:0-2}&\quad+\int \chi_{s, t} \Lc^{\omega_\Xfh^{k-1}}(S)\wedge \Lc^m\left(f_*(\omega)^{l-1}\wedge dd^c\left(\left(1-\chi^{E, k+2}\right)\widetilde \upsilon_\gamma\right)\wedge \omega^{k-p-l+1}\right)
	\end{align}
	
	We can write the integral \eqref{int:0-1} as below:
	\begin{align*}
		\eqref{int:0-1}&=\sum_{j=1}^{m}\int \chi_{s, t} \Lc^{\omega_\Xfh^{k-1}}(S)\wedge \Lc^{\Theta_1}\big(\cdots\Lc^{\Theta_{j-2}}\big(\Lc^{\Theta_{j-1}}\big(\left(1-\chi^{E, {j-1}}\right)\Lc^{\Theta_j}\big(\chi^{E, {j}}\Lc^{\Theta_{j+1}}\big(\cdots\\
		&\quad\quad\quad\quad\quad\quad\quad\quad\quad\quad\quad\quad\chi^{E,{m-1}}\Lc^{\Theta_m}\left(f_*(\omega)^{l-1}\wedge dd^c \left(\chi^{E, k+2} u_f\right)\wedge \omega^{k-p-l+1}\right)\cdots\big)\big)\big)\big)\cdots\big).	
	\end{align*}
	Notice that $\chi^{E, m}dd^c\left(\chi^{E, k+2} u_f\right)=dd^c\left(\chi^{E, k+2} u_f\right)$ is used. Then, Lemma \ref{lem:integral_semi-regular_tr} implies that
	\begin{align*}
		|\eqref{int:0-1}|\lesssim \|S\|s^{-2k}t^{3/(2\delta+1)}.
	\end{align*}
	Since $\left\|\widetilde{\upsilon_\gamma}\right\|_{C^2, X\setminus E_{\left(\frac{1}{2}r_E\right)^{k+3}s}}\le c_H\gamma^{-2(k+1)}$ and $\left\| \chi^{E, k+2}\right\|_{C^2}\lesssim s^{-2}$, the induction hypothesis implies
	\begin{align*}
		|\eqref{int:0-2}|&
		\lesssim \gamma^{-2(k+1)}s^{-2}\cdot\left\|\alpha_S\right\|_\infty\left(r_Es\right)^{-2(k+l-1)-N_E}\left(t^{(\delta+1/2)^{-1}}\right)^{\beta_{l-1}}
	\end{align*}
	and therefore, we get
	\begin{align*}
		|\eqref{integral:2-1}|\lesssim \left\|\alpha_S\right\|_\infty\gamma^{-2(k+1)}s^{-2(k+l)-N_E}\left(t^{(\delta+1/2)^{-1}}\right)^{\beta_{l-1}}.
	\end{align*}
	
	Since $\Lc^m$ is closed, we have
	\begin{align*}
		\eqref{integral:2-2}=\int dd^c\chi_{s, t}\wedge \Lc^{\omega_\Xfh^{k-1}}(S)\wedge \Lc^m\left(\left(u_f-\upsilon_\gamma\right)f_*(\omega)^{l-1}\wedge \omega^{k-p-l+1}\right).
	\end{align*}
	Observe that
	\begin{align*}
		u_f-\upsilon_\gamma=\left(1- \chi^{E, k+2}\right)\left(u_f-\widetilde{\upsilon_\gamma}\right)\textrm{ and }\left\|u_f-\widetilde{\upsilon_\gamma}\right\|_{\infty, X\setminus E_{\left(\frac{1}{2}r_E\right)^{k+3}s}}\lesssim s^{-N_E}\gamma^{\delta^{-1}}.
	\end{align*}
	Since $\left\|\chi_{s, t}\right\|_\dsh\lesssim s^{-2}$, we can find two positive closed $(1, 1)$-currents $\vartheta_{s, t}^\pm$ such that $\vartheta_{s, t}^+-\vartheta_{s, t}^-=dd^c\chi_{s, t}$ and $\left\|\vartheta_{s, t}^\pm\right\|\lesssim s^{-2}$.
	Hence, since $\vartheta_{s, t}^\pm$, ${\omega_\Xfh^{k-1}}$, $S$, $f_*(\omega)$, $\omega$ are all positive and closed, cohomological arguments give us
	\begin{align*}
		|\eqref{integral:2-2}|&\le 2\left\|u_f-\upsilon_\gamma\right\|_\infty\left(\int \vartheta_{s, t}^+\wedge \Lc^{\omega_\Xfh^{k-1}}(S)\wedge \Lc^m\left(f_*(\omega)^{l-1}\wedge \omega^{k-p-l+1}\right)\right)\\
		&=2\left\|u_f-\upsilon_\gamma\right\|_\infty\left(\int \vartheta_{s, t}^+\wedge \Lc^{\omega_\Xfh^{k-1}}\left(\alpha_S\right)\wedge \Lc^m\left(\omega_f^{l-1}\wedge \omega^{k-p-l+1}\right)\right)\\
		&=2\left\|u_f-\upsilon_\gamma\right\|_\infty\left\|\alpha_S\right\|_\infty\left\|\omega_f\right\|^{l-1}\left(\int \vartheta_{s, t}^+\wedge \Lc^{\omega_\Xfh^{k-1}}\left(\omega^p\right)\wedge \Lc^m\left(\omega^{k-p}\right)\right)\\
		&\lesssim 2s^{-N_E}\gamma^{\delta^{-1}} \left\|\alpha_S\right\|_\infty\left\|\omega_f\right\|^{l-1}s^{-2}\lesssim  \left\|\alpha_S\right\|_\infty\gamma^{\delta^{-1}} s^{-2-N_E}.
	\end{align*}
	\medskip
	
	We take $\gamma=t^{(2(k+1)(2\delta+1))^{-l}\delta^{-(l-1)}}$. Then, from the above, we have
	\begin{align*}
		&\left|\eqref{integral:2-1}\right|\lesssim \left\|\alpha_S\right\|_\infty\gamma^{-2(k+1)}s^{-2(k+l)-N_E}\left(t^{(\delta+1/2)^{-1}}\right)^{\beta_{l-1}}\\
		&\quad\lesssim \left\|\alpha_S\right\|_\infty s^{-2(k+l)-N_E}t^{\frac{-2(k+1)}{(2(k+1)(2\delta+1))^{l}\delta^{l-1}}+\frac{1}{(2(k+1)(2\delta+1))^{l-1}\delta^{l-1}(\delta+1/2)}}
		\le \left\|\alpha_S\right\|_\infty s^{-2(k+l)-N_E}t^{\beta_l}
	\end{align*}
	and
	\begin{align*}
		\left|\eqref{integral:2-2}\right|&\lesssim \left\|\alpha_S\right\|_\infty s^{-N_E}\gamma^{\delta^{-1}} s^{-2}\lesssim \left\|\alpha_S\right\|_\infty s^{-2-N_E}t^{\frac{1}{(2(k+1)(2\delta+1))^l\delta^l}}\le \left\|\alpha_S\right\|_\infty s^{-2-N_E}t^{\beta_l}.
	\end{align*}
	Combining these two gives the estimate of \eqref{integral:2}, which proves the desired statement for $l$.
\end{proof}

\begin{remark}
	In the above proof, from the relationship between $s_{n,i}$ and $t_{n, i}$, our choice of $\gamma$ satisfies the desired smallness for the argument when $0<\varepsilon\ll 1$. So, the proof works.
\end{remark}

\subsubsection{Estimates of $\langle \chi_{n,i,1}\Lc^{u{\omega_\Xfh^{k-1}}}(S), \Lc^m(f_*(\omega^{k-p+1}))\rangle$} We prove the following lemma:
\begin{lemma}\label{lem:near_E_semiregular}
	Let $n$ and $i$ be integers such that $1\le i\le n$. Then, for $0<\varepsilon\ll 1$, we have
	\begin{align*}
		\left|\langle \chi_{n,i,1}\Lc^{u{\omega_\Xfh^{k-1}}}(S), \Lc^m(f_*(\omega^{k-p+1}))\rangle \right| \lesssim \left\|\alpha_S\right\|_\infty s_{n,i}^{-4k-N_E}t_{n,i}^{\beta_{k+1}/2}
	\end{align*}
	where $\beta_{k+1}=(2(k+1)(2\delta+1)\delta)^{-(k+1)}$ and the inequality $\lesssim$ means $\le$ up to a multiplicative constant independent of $n$, $i$, $\theta$, $S$ and $\alpha_S$.
\end{lemma}

Recall the definition of $u_\tau$ for $\tau\in\C^*$ with $|\tau|\ll 1$ in the beginning of Section \ref{sec:regularization}. We have
\begin{align*}
	(\log |\tau|-1)+\left(u-u_\tau\right)\le u\le 0
\end{align*}
and therefore,
{\small \begin{align}
		\notag&(\log|\tau|-1)\Lc^{{\omega_\Xfh^{k-1}}}(S)+\Lc^{(u-u_\tau){\omega_\Xfh^{k-1}}}(S)\le \Lc^{u{\omega_\Xfh^{k-1}}}(S)\le 0\quad\textrm{ and }\\
		\notag&(\log|\tau|-1)\left\langle \chi_{s_{n,i}, t_{n,i}}\Lc^{\omega_\Xfh^{k-1}}(S), \Lc^m\left(f_*\left(\omega^{k-p+1}\right)\right)\right\rangle+\left\langle \chi_{s_{n,i}, t_{n,i}}\Lc^{(u-u_\tau){\omega_\Xfh^{k-1}}}(S), \Lc^m\left(f_*\left(\omega^{k-p+1}\right)\right)\right\rangle\\
		\label{eq:approximation_uPhi}&\quad\quad\quad\quad\quad\quad\quad\quad\quad\quad\quad\quad\quad\quad\quad\quad\quad\quad\quad\quad\le \left\langle \chi_{s_{n,i}, t_{n,i}}\Lc^{u{\omega_\Xfh^{k-1}}}(S), \Lc^m\left(f_*\left(\omega^{k-p+1}\right)\right)\right\rangle\le 0
	\end{align}
}
The first term containing $(\log|\tau|-1)\Lc^{{\omega_\Xfh^{k-1}}}(S)$ can be deduced from the previous estimate and $u-u_\tau$ is negative and has support in a small neighborhood of $\widehat{\Delta}$. The estimates about the second term is essentially the same as \cite[Lemma 2.3.9, Lemma 2.3.10]{DS09}.
\begin{lemma} We have
	\begin{align*}
		\left|\left\langle\Lc^{(u-u_\tau){\omega_\Xfh^{k-1}}}(S), \Lc^m\left(\omega^{k-p+1}\right)\right\rangle\right|\lesssim \|S\||\tau|^{1/2}.
	\end{align*}
	The inequality $\lesssim$ means $\le$ up to a multiplicative constant independent of $\theta$, $S$ and $\tau$.
\end{lemma}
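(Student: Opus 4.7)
The plan is to rewrite the pairing as an integral on $\widehat{\mathfrak{X}}$ and exploit that $u - u_\tau \le 0$ is supported in a neighborhood $\widehat\Delta_{c|\tau|}$ of the exceptional divisor (by Lemma 2.4.1 of \cite{DS10-1}, as used in Lemma \ref{lem:regularization_C1_dist}) and is bounded there by $\bigl|\log|\tau|\bigr| + O(|\log\dist(\cdot,\widehat\Delta)|)$. By Fubini and the definition of the semi-regular transform,
$$\bigl\langle \Lc^{(u-u_\tau)\Phi}(S),\Lc^m(\omega^{k-p+1})\bigr\rangle = \int_{\widehat{\mathfrak{X}}} (u-u_\tau)\,\Pi_1^*(S)\wedge \Phi\wedge \Pi_2^*\bigl(\Lc^m(\omega^{k-p+1})\bigr).$$
Using positivity of $\Phi$ and of $\Lc^m(\omega^{k-p+1})$ (the latter as the image of a positive form under a positive closed transform) together with the sign $u-u_\tau\le 0$, the absolute value is bounded by $\|\Phi\|_\infty\|\Lc^m(\omega^{k-p+1})\|_\infty$ times the positive quantity
$$I:=\int_{\widehat\Delta_{c|\tau|}} (u_\tau - u)\,\Pi_1^*(S)\wedge \omega_{\widehat{\mathfrak{X}}}^{k-1}\wedge \Pi_2^*(\omega^{k-p+1}).$$

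Next I would push forward via $\Pi_2$ and estimate $I$ fiberwise over $y\in X$. For each fixed $y$, the kernel $\pi_*(\omega_{\widehat{\mathfrak{X}}}^{k-1})(x,y)$ has the standard diagonal singularity $\lesssim\dist(x,y)^{2-2k}$ (Lemma 3.1 of \cite{DS04}, as used in Lemma \ref{lem:tubular_closed_basic}). In local coordinates $w_1$ transverse to $\widehat\Delta$, $u = \log|w_1| + \psi$ with $\psi$ smooth, so $0 \le u_\tau - u \le \bigl|\log|\tau|\bigr| + |\log\dist(x,y)| + O(1)$ on the support. A direct computation in polar coordinates around $y$ then gives
$$\int_{|x-y|\le c|\tau|}\bigl(\bigl|\log|\tau|\bigr| + \bigl|\log|x-y|\bigr|\bigr)\,|x-y|^{2-2k}\,dV(x)\lesssim |\tau|^2\bigl|\log|\tau|\bigr|,$$
which is far smaller than $|\tau|^{1/2}$ for $|\tau|$ small. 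By first approximating $S$ by smooth positive closed forms (using density of smooth forms in $\Dc_p$) and then applying the Lelong-number / Abel-transform argument from the proof of Lemma \ref{lem:tubular_closed_basic} to handle the contribution of the singular support of $S$, then integrating over $y$ against $\omega^{k-p+1}$, one recovers the factor $\|S\|$.

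The hard part will be ensuring uniformity in $\tau$ when the logarithmically singular function $u - u_\tau$ is paired against the positive closed current $\Pi_1^*(S)$ along the diagonal. As the author indicates, this is done exactly as in Lemmas 2.3.9--2.3.10 of \cite{DS09}: track both the $L^\infty$ bound $\bigl|\log|\tau|\bigr|$ and the $\sim |\tau|$ thickness of the support of $u_\tau - u$, and invoke Cauchy--Schwarz-type estimates to tame the Lelong contributions of $S$. The exponent $1/2$ in the statement is far from sharp but is already a comfortable consequence of the cruder bound $|\tau|^2|\log|\tau||$ derived above.
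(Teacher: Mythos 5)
Your proof follows essentially the same route as the paper's: reduce by positivity of $\Phi$, $\Lc^m(\omega^{k-p+1})$, $\Pi_1^*(S)$ and the sign $u - u_\tau \le 0$ to an estimate involving the kernel $(u_\tau-u)\omega_{\Xfh}^{k-1}$, push down to $X\times X$, and disintegrate with the trace measure of $S$ so that the singular diagonal contribution is integrated against a smooth form. Your explicit polar computation giving $|\tau|^2|\log|\tau||$ is correct (the $|x-y|^{2-2k}$ singularity, combined with the $r^{2k-1}$ Jacobian, leaves an integrable $r(-\log r)$), and you rightly observe that this is much stronger than the claimed $|\tau|^{1/2}$; the paper, following Lemmas~2.3.9--2.3.10 of \cite{DS09}, simply reports the weaker exponent that is used downstream.

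Two small points. First, the step ``approximate $S$ by smooth positive closed forms'' should be removed: on a general compact K\"ahler manifold positive smooth closed representatives need not exist (as the introduction of the paper stresses), and the approximation is unnecessary anyway. The correct mechanism, which you also invoke, is the Federer/Radon--Nikodym representation $S = \Psi_S\,\mu$ with $\|\Psi_S\|\le 1$ and $\mu$ the trace measure: the inner integral over the ``smooth'' factor (against $\omega^{k-p+1}$ and the cut-off kernel) is bounded uniformly by your polar estimate, and the outer integral over $\mu$ supplies the factor $\|S\|$. Second, be careful which variable carries the trace measure in the Fubini: in the definition $\Lc^{\mathcal{Q}}(S)=(\Pi_2)_*(\Pi_1^*(S)\wedge\mathcal{Q})$, $S$ is pulled back by $\Pi_1$, so the inner integral that must be computed against Lebesgue measure is the one in the $\Pi_2$-direction (paired with $\omega^{k-p+1}$), while the $\Pi_1$-direction is integrated against $d\mu$. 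Your display integrates $dV(x)$ in the $\Pi_1$-direction, which is the opposite convention; the estimate is the same once the roles are labeled consistently with the Federer decomposition.
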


\begin{proof}
	From the negativity of $\Lc^{(u-u_\tau){\omega_\Xfh^{k-1}}}(S)$, we have
	\begin{align*}
		\left|\left\langle\Lc^{(u-u_\tau){\omega_\Xfh^{k-1}}}(S), \Lc^m\left(\omega^{k-p+1}\right)\right\rangle\right|\lesssim \left\|\Lc^m\left(\omega^{k-p+1}\right)\right\|_\infty\left|\left\langle\Lc^{(u-u_\tau){\omega_\Xfh^{k-1}}}(S), \omega^{k-p+1}\right\rangle\right|
	\end{align*}
	Since $\omega^{k-p+1}$ is smooth, so we have
	\begin{align*}
		\left\langle \Lc^{(u-u_\tau){\omega_\Xfh^{k-1}}}(S), \omega^{k-p+1}\right\rangle
		=\int_{X\times X\setminus \Delta} \pi_*\left[(u-u_\tau)\omega_\Xfh^{k-1}\right]\wedge\pi_1^*(S)\wedge\pi_2^*\left(\omega^{k-p+1}\right).
	\end{align*}
	Since we will not use the closedness, by disintegration as in Lemma \ref{lem:tubular_closed_basic}, we can write it as
	\begin{align*}
		\int_{x\in X} \left[\int_{y\in X\setminus \{x\}}\Psi_S(x)\righthalfcup\left[(u-u_\tau)\omega_\Xfh^{k-1}\right](x, y)\wedge\omega^{k-p+1}(y)\right]d\mu(x)
	\end{align*}
	in terms of Federer's notation. Note that the form $\Psi_S(x)\righthalfcup\left[\pi_*\left[(u-u_\tau){\omega_\Xfh^{k-1}}\right]\right]$ has a singularity of $-\left\|{\omega_\Xfh^{k-1}}\right\|_\infty\dist(x, y)^{2-2k}\log\dist(x, y)$ and its support lies inside a ball with center at $x$ and of radius $c\tau$ where $c>0$ is a uniform constant independent of $\tau$ and one might think of $\mu$ as the trace measure of $S$. Hence, $\int_X d\mu$ can be bounded in terms of $\|S\|$. Since $\|\Psi_S\|\le 1$, the integrand $\int_{y\in X\setminus \{x\}}\Psi_S(x)\righthalfcup\left[\pi_*\left[(u-u_\tau){\omega_\Xfh^{k-1}}\right]\right]\wedge\pi_2^*\left(\omega^{k-p+1}\right)$ is bounded by a uniform constant multiple of $\tau^{1/2}\left\|{\omega_\Xfh^{k-1}}\right\|_\infty$. So, it is proved. 
\end{proof}

\begin{lemma}\label{lem:u-u_t}
	For $l\in\{0, 1, \ldots, k-p+1\}$, we have
	\begin{align*}
		\left|\int \Lc^{(u-u_\tau){\omega_\Xfh^{k-1}}}(S)\wedge \Lc^m\left(f_*(\omega)^l\wedge \omega^{k-p-l+1}\right)\right|\lesssim \left\|\alpha_S\right\|_\infty|\tau|^{\frac{1}{2(4kd_{k})^l}}
	\end{align*}
	The inequality $\lesssim$ means $\le$ up to a multiplicative constant independent of $\theta$, $S$, $\alpha_S$ and $\tau$.
\end{lemma}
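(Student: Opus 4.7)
I would proceed by induction on $l$. The base case $l=0$ is precisely the preceding lemma, yielding the exponent $1/2$. For $l\ge 1$, decompose $f_*(\omega)=\omega_f+dd^c u_f$, where $\omega_f$ is a smooth closed $(1,1)$-form in $\{f_*(\omega)\}$ and $u_f$ is the q-psh function with $u_f<-1$. This splits the integrand $\Lc^m\bigl(f_*(\omega)^{l-1}\wedge f_*(\omega)\wedge\omega^{k-p-l+1}\bigr)$ into an $\omega_f$-piece and a $dd^c u_f$-piece.

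The $\omega_f$-piece is easy: $\omega_f$ is smooth, so $\omega_f\le c_f\omega$ as positive $(1,1)$-forms on compact $X$, and positivity of $\Lc^m$ gives
\begin{align*}
\Lc^m\bigl(f_*(\omega)^{l-1}\wedge\omega_f\wedge\omega^{k-p-l+1}\bigr)\le c_f\,\Lc^m\bigl(f_*(\omega)^{l-1}\wedge\omega^{k-p-l+2}\bigr),
\end{align*}
so the inductive hypothesis at level $l-1$ yields a bound of order $|\tau|^{1/(2(4kd_k)^{l-1})}$, far stronger than required.

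The $dd^c u_f$-piece is the core. I would approximate $u_f$ at a scale $\gamma>0$ by a smooth function $\widetilde{u_f}^\gamma$, constructed by applying Lemma \ref{lem:smooth_approx_Holder} on $X\setminus E_{r_E s}$ — where $u_f$ is H\"older continuous by Lemma \ref{lem:Holder_conti_qpotential} under \textbf{Condition (M)} — and patching to $u_f$ near $E$ with a cut-off $\chi^E_s$, so that $\|\widetilde{u_f}^\gamma\|_{C^2}\lesssim \gamma^{-2(k+1)}$ off $E$ and $\|u_f-\widetilde{u_f}^\gamma\|_\infty\lesssim\gamma^{1/\delta}$ on the H\"older region. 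The smooth part $dd^c\widetilde{u_f}^\gamma$ is dominated by a constant multiple of $\gamma^{-2(k+1)}\omega$, so reapplying the inductive hypothesis contributes $\gamma^{-2(k+1)}|\tau|^{1/(2(4kd_k)^{l-1})}$. For the error $v:=u_f-\widetilde{u_f}^\gamma$, closedness of $f_*(\omega)$ and $\omega$ gives
\begin{align*}
f_*(\omega)^{l-1}\wedge dd^c v\wedge\omega^{k-p-l+1}=dd^c\bigl(v\cdot f_*(\omega)^{l-1}\wedge\omega^{k-p-l+1}\bigr),
\end{align*}
and since $\Lc^m$ commutes with $dd^c$ (closed semi-regular transform), Stokes' theorem on $X$ transfers $dd^c$ onto $\Lc^{(u-u_\tau)\Phi}(S)$. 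The resulting integral is then bounded by $\|v\|_\infty$ together with the mass of $dd^c\Lc^{(u-u_\tau)\Phi}(S)$ (estimated via the same disintegration argument as in Lemma \ref{lem:tubular_closed_basic} and the previous lemma, exploiting that $u-u_\tau$ is supported in a $c|\tau|$-neighborhood of $\widehat\Delta$). Balancing the two contributions by choosing $\gamma$ as an appropriate power of $|\tau|$ produces the claimed exponent $1/(2(4kd_k)^l)$.

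The main obstacle is this error term: $u_f$ is not globally H\"older continuous, so the approximation must be patched near $E$ and the resulting $s^{-N_E}$-type singular factors must be absorbed. These can be controlled by further $E$-localization using the fact that, for small $|\tau|$, the kernel $(u-u_\tau)\Phi$ concentrates in a neighborhood of $\widehat\Delta$ of size $\sim|\tau|$, so the integral effectively localizes away from $E$; any remaining $E$-contribution is dominated by the base case applied to the cut-off region. The delicate book-keeping between $\gamma$, $s$, and $|\tau|$ is what eventually produces the denominator $(4kd_k)^l$.
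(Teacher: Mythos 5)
The central step of your argument rests on a misconception: you assert that ``$u_f$ is not globally H\"older continuous, so the approximation must be patched near $E$.'' This is false. The paper explicitly records (just before Lemma \ref{lem:integral_semi-regular_tr}) that $u_f$ \emph{is} H\"older continuous on all of $X$, and in the proof of Lemma \ref{lem:u-u_t} it uses that $u_f$ is $(K, d_k^{-1})$-H\"older continuous on $X$ for some constant $K>0$. This follows from Corollary \ref{cor:lojasiewicz} applied with the trivial bound $\mu_{-1}\le d_k$ valid everywhere (one simply sacrifices the sharper exponent $\delta^{-1}$ available off $E$). It is precisely this global H\"older exponent $d_k^{-1}$ that is responsible for the factor $d_k$ in the claimed exponent $|\tau|^{1/(2(4kd_k)^l)}$. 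The paper therefore invokes Lemma \ref{lem:smooth_approx_Holder} with $W_1=W_2=\emptyset$ — no patching, no $\chi^E_s$, no $s^{-N_E}$ factors — and then closes the induction by Stokes on $\widehat{X\times X}$ (transferring $dd^c$ onto $u-u_\tau$, whose $dd^c$ has uniformly bounded $*$-norm in $\tau$) together with a cohomological bound that replaces $S$ by $\alpha_S$ and $f_*(\omega)$ by $\omega_f$.

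Your plan, by contrast, imports the entire $E$-localization machinery of Lemma \ref{lem:estimate_off_E-1}. Besides being unnecessary, it has a genuine gap: you propose to absorb the singular factors $s^{-N_E}$ ``using the fact that, for small $|\tau|$, the kernel $(u-u_\tau)\Phi$ concentrates in a neighborhood of $\widehat\Delta$ of size $\sim|\tau|$, so the integral effectively localizes away from $E$.'' This is a non sequitur. Concentration near $\widehat\Delta$ means that $\Lc^{(u-u_\tau)\Phi}(S)$ is built from the values of $S$ in a $\tau$-neighborhood of each point — a diagonal localization — which says nothing about the mass near the analytic set $E\subset X$; the lemma moreover does not assume $S$ is smooth near $E$, so the current can well charge $E$. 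Even if one could make the patched approximation run, the exponent produced would be governed by $\delta$ (the off-$E$ multiplicity bound) rather than by $d_k$, so it would not yield the statement as written. The fix is simply to drop all of $E$ from this lemma and use the global H\"older continuity, as the paper does.
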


\begin{proof}
	We prove it by induction. We have $\|S\|=\left|\left\langle S, \omega^{k-p}\right\rangle\right|=\left|\left\langle \alpha_S, \omega^{k-p}\right\rangle\right|\le c_m\left\|\alpha_S\right\|_\infty$ The previous lemma corresponds to the case of $l=0$. Let $l\ge 1$. Assume that it is true for $l-1$.
	\begin{align}
		\notag&\int \Lc^{(u-u_\tau){\omega_\Xfh^{k-1}}}(S)\wedge \Lc^m\left(f_*(\omega)^l\wedge \omega^{k-p-l+1}\right)\\
		\label{integral:2-1-1}&=\int \Lc^{(u-u_\tau){\omega_\Xfh^{k-1}}}(S)\wedge \Lc^m\left(f_*(\omega)^{l-1}\wedge \omega_f\wedge \omega^{k-p-l+1}\right)\\
		\label{integral:2-2-1}&\quad\quad\quad +\int \Lc^{(u-u_\tau){\omega_\Xfh^{k-1}}}(S)\wedge \Lc^m\left(f_*(\omega)^{l-1}\wedge dd^cu_f\wedge \omega^{k-p-l+1}\right).
	\end{align}
	
	The estimate of \eqref{integral:2-1-1} comes from the induction hypothesis. 
	Since $f:X\to X$ can have multiplicity at most $d^k$, Lemma \ref{lem:Holder_conti_qpotential} implies that $u_f$ is $(K, d_k^{-1})$-H\"older continuous on $X$ for some $K>0$. We can use Lemma \ref{lem:smooth_approx_Holder} to find a smooth function $\upsilon_\gamma$ such that $\left\|u_f-\upsilon_\gamma\right\|_{\infty}\le K\gamma^{1/d_k}$ and $\left\|\upsilon_\gamma\right\|_{C^\infty}\lesssim K|\gamma|^{-2(k+1)}$ where $\gamma=|\tau|^{\frac{1}{2(4k)^ld_k^{l-1}}}$. Then, since $\Lc^m$ is closed, 
	\begin{align}
		\eqref{integral:2-2-1}&=\notag\int_{\widehat{X\times X}}(u-u_\tau){\omega_\Xfh^{k-1}}\wedge\Pi_1^*(S)\wedge \Pi_2^*\left(\Lc^m\left(f_*(\omega)^{l-1}\wedge dd^cu_f\wedge \omega^{k-p-l+1}\right)\right)\\
		&\label{integral:3-1}=\int_{\widehat{X\times X}}(u-u_\tau){\omega_\Xfh^{k-1}}\wedge\Pi_1^*(S)\wedge dd^c\Pi_2^*\left(\Lc^m\left(\left(u_f-\upsilon_\gamma\right)f_*(\omega)^{l-1}\wedge\omega^{k-p-l+1}\right)\right)\\
		&\label{integral:3-2}\quad\quad\quad+\int_{\widehat{X\times X}}(u-u_\tau){\omega_\Xfh^{k-1}}\wedge\Pi_1^*(S)\wedge \Pi_2^*\left(\Lc^m\left(f_*(\omega)^{l-1}\wedge dd^c(\upsilon_\gamma)\wedge \omega^{k-p-l+1}\right)\right)
	\end{align}
	Since the $*$-norm of $dd^c(u-u_\tau)$ is uniformly bounded independently of $\tau$ and $m_{\widehat{\Delta}}\omega_{\widehat{\mathfrak{X}}}+dd^cu, m_{\widehat{\Delta}}\omega_{\widehat{\mathfrak{X}}}+dd^cu_\tau\ge 0$, we use cohomological arguments to get an estimate of \eqref{integral:3-1} as follows:
	\begin{align*}
		&|\eqref{integral:3-1}|=\left|\int_{\widehat{X\times X}}dd^c(u-u_\tau)\wedge{\omega_\Xfh^{k-1}}\wedge\Pi_1^*(S)\wedge \Pi_2^*\left(\Lc^m\left(\left(u_f-\upsilon_\gamma\right)f_*(\omega)^{l-1}\wedge \omega^{k-p-l+1}\right)\right)\right|\\
		&\le2\|u_f-\upsilon_\gamma\|_\infty\left|\int_{\widehat{X\times X}}m_{\widehat \Delta}\omega_{\Xfh}\wedge{\omega_\Xfh^{k-1}}\wedge\Pi_1^*(S)\wedge \Pi_2^*\left(\Lc^m\left(f_*(\omega)^{l-1}\wedge \omega^{k-p-l+1}\right)\right)\right|\\
		&=2\|u_f-\upsilon_\gamma\|_\infty\left|\int_{\widehat{X\times X}}m_{\widehat \Delta}\omega_{\Xfh}\wedge{\omega_\Xfh^{k-1}}\wedge\Pi_1^*\left(\alpha_S\right)\wedge \Pi_2^*\left(\Lc^m\left(\omega_f^{l-1}\wedge \omega^{k-p-l+1}\right)\right)\right|\lesssim\left\|\alpha_S\right\|_\infty|\tau|^{\frac{1}{2(4kd_k)^l}}.
	\end{align*}
	
	The currents $S$, $f_*(\omega)$ and $\omega$ are all positive and $u-u_\tau\le 0$. So, the induction hypothesis gives
	\begin{align*}
		|\eqref{integral:3-2}|&
		\lesssim \|\upsilon_\gamma\|_{C^2}\left|\int_{\widehat{X\times X}}(u-u_\tau){\omega_\Xfh^{k-1}}\wedge\Pi_1^*(S)\wedge \Pi_2^*\left(\Lc^m\left(f_*(\omega)^{l-1}\wedge \omega^{k-p-l+2}\right)\right)\right|\\
		&\lesssim |\gamma|^{-2(k+1)} \left\|\alpha_S\right\|_\infty|\tau|^{\frac{1}{2(4kd_{k})^{l-1}}}=\left\|\alpha_S\right\|_\infty|\tau|^{\frac{2(k-1)}{2(4k)^ld_k^{(l-1)}}}<\left\|\alpha_S\right\|_\infty|\tau|^{\frac{1}{2(4kd_k)^l}}.
	\end{align*}
\end{proof}

\begin{proof}
	[Proof of Lemma \ref{lem:near_E_semiregular}]
	The current $\Lc^{u{\omega_\Xfh^{k-1}}}(S)$ is negative and the current $\Lc^m(f_*(\omega^{k-p+1}))$ is positive. We take $l=k-p+1$ and $\tau=e^{1-t_{n,i}^{-\beta_{k-p+2}/2}}$. Then, for $0<\varepsilon\ll 1$, \eqref{eq:approximation_uPhi}, Lemma \ref{lem:estimate_off_E-1} and Lemma \ref{lem:u-u_t} give us 
	\begin{align*}
		0&\ge\left\langle \chi_{n,i,1}\Lc^{u{\omega_\Xfh^{k-1}}}(S), \Lc^m\left(f_*\left(\omega^{k-p+1}\right)\right)\right\rangle\ge\left\langle \chi_{s_{n,i}, t_{n,i}}\Lc^{u{\omega_\Xfh^{k-1}}}(S), \Lc^m\left(f_*(\omega)^{k-p+1}\right)\right\rangle\\
		&\ge -\left(t_{n,i}^{-\beta_{k-p+2}/2}\cdot s^{-4k-N_E}\left(t_{n,i}^{(\delta+1/2)^{-1}}\right)^{\beta_{k-p+1}}+e^{\frac{1-t_{n,i}^{-\beta_{k-p+2}/2}}{2(4kd_k)^{k-p+1}}}\right)\left\|\alpha_S\right\|_\infty\\
		&\gtrsim-\left(s^{-4k-N_E}t_{n,i}^{\beta_{k-p+2}}+\frac{2(4kd_k)^{k-p+1}}{{t_{n,i}^{-\beta_{k-p+2}/2}-1}}\right)\left\|\alpha_S\right\|_\infty\gtrsim-s_{n,i}^{-4k-N_E}t_{n,i}^{\beta_{k+1}/2}\left\|\alpha_S\right\|_\infty.
	\end{align*}
\end{proof}
\subsubsection{Estimates of $d^{-i}\langle \chi_{n,i,1}U_{L^{n-i}(S-\alpha_S)},\Lambda(R_{n,i-1})-R_{n,i}\rangle$}
\begin{lemma}\label{Wni1}For all $n$ and $i$ be integers such that $1\le i\le n$ and for $0<\varepsilon\ll 1$, we have
	\begin{align*}
		&\left| \left\langle \chi_{n,i,1}U_{L^{n-i}(S-\alpha_S)},\Lambda(R_{n,i-1})-R_{n,i}\right\rangle\right| \lesssim \varepsilon.
	\end{align*}
	Here, the inequality $\lesssim$ means $\le$ up to a multiplicative constant independent of $n$, $i$, $S$ and $\alpha_S$.
\end{lemma}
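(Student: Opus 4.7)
The plan is to assemble the estimates of the preceding paragraphs into a single bound, keeping careful track of all exponents in $\varepsilon$. First I would use Lemma~\ref{lem:mass_estimates} to write $R_{n,i-1}=R_{n,i-1}^+-R_{n,i-1}^-$ and
\[
R_{n,i}=(\Lambda(R_{n,i-1}^+))_{\varepsilon_{n,i}}^+ + (\Lambda(R_{n,i-1}^-))_{\varepsilon_{n,i}}^- - (\Lambda(R_{n,i-1}^+))_{\varepsilon_{n,i}}^- - (\Lambda(R_{n,i-1}^-))_{\varepsilon_{n,i}}^+,
\]
and in parallel use Proposition~\ref{prop:Green_kernel} to split the Green potential as $U_{L^{n-i}(S-\alpha_S)}=\Lc^{K_+}(L^{n-i}(S-\alpha_S))-\Lc^{K_-}(L^{n-i}(S-\alpha_S))$. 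Writing $L^{n-i}(S-\alpha_S)=L^{n-i}(S)-L^{n-i}(\alpha_S)$, this reduces the pairing to a finite signed sum of terms of the two shapes
\[
\langle \chi_{n,i,1}\Lc^{K_\pm}(L^{n-i}(S)),\,\Lambda(R_{n,i-1}^\pm)\rangle,\qquad \langle \chi_{n,i,1}\Lc^{K_\pm}(L^{n-i}(S)),\,(\Lambda(R_{n,i-1}^\pm))_{\varepsilon_{n,i}}^\pm\rangle,
\]
plus analogous terms in which $L^{n-i}(S)$ is replaced by the smooth form $L^{n-i}(\alpha_S)$.

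Next I would exploit the explicit form $K_\pm=u\Phi'-\Phi''$ with $\Phi',\Phi''$ smooth positive $(k,k)$-forms on $\Xfh$. Since $K_\pm$ and $L^{n-i}(S)$ are negative and positive respectively, each individual term is controlled by two model estimates already proved: the contribution of the smooth piece is handled by Lemma~\ref{lem:estimate_off_E-1} applied with $l=k-p+1$, and the contribution of the singular piece $u\Phi'$ is handled by Lemma~\ref{lem:near_E_semiregular}. In both cases one ends with an upper bound
\[
\lesssim \|\Lc^m(\omega^{k-p+1})\|_\infty\,\|\alpha_S\|_\infty\,s_{n,i}^{-4k-N_E}\,t_{n,i}^{\beta_{k+1}/2},
\]
where $\Lc^m$ is the composition of at most $k+2$ positive semi-regular transforms produced by the outer regularization $(\cdot)_{\varepsilon_{n,i}}^\pm$ acting on $\Lambda(R_{n,i-1}^\pm)$. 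The factor $\|\Lc^m(\omega^{k-p+1})\|_\infty$ is then absorbed into the mass/$C^1$ bounds of Lemma~\ref{lem:mass_estimates}, giving a loss of at most $\|R\|_{C^1}(C_m)^{i-1}\prod_{j=1}^{i-1}\varepsilon_{n,j}^{-6k^2}$; the replacement of $L^{n-i}(S)$ by $L^{n-i}(\alpha_S)$ costs an additional factor $(1+\|f\|_{C^1})^{kn}$, which is only exponential in $n$.

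Finally, inserting the definitions $s_{n,i}=\varepsilon^{nCi}$, $\varepsilon_{n,i}=\varepsilon^{nC(4k+N_E)(40k^2\delta)^{6ki}}$, $t_{n,i}=\varepsilon_{n,i}^{1/(10k)}$, and invoking the crucial inequality $(40k^2\delta)^{6k}<d^{3/4}$ from \eqref{cond:delta_condition}, the resulting total exponent of $\varepsilon$ is at least
\[
nC(4k+N_E)\bigl[(40k^2\delta)^{6k(i-1)}\bigl((40k^2\delta)^{3k}-12k^2\bigr)-i\bigr],
\]
which exceeds $1$ uniformly in $1\le i\le n$ as soon as $\varepsilon$ is small enough. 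Summing the finitely many pieces yields $|\langle \chi_{n,i,1}U_{L^{n-i}(S-\alpha_S)},\Lambda(R_{n,i-1})-R_{n,i}\rangle|\lesssim \varepsilon$.

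The main obstacle, and the reason for all the careful exponent bookkeeping in Section~\ref{sec:regularization} and in the definitions of $s_{n,i}$, $t_{n,i}$, $\varepsilon_{n,i}$, is to check that the geometric growth in $i$ of the exponent inside $\varepsilon_{n,i}$ strictly dominates both the $C^1$-blow-up $\prod_{j<i}\varepsilon_{n,j}^{-6k^2}$ accumulated by the iterated regularizations and the exponential factor $(1+\|f\|_{C^1})^{kn}$ coming from the $\alpha_S$-term. This is precisely what forces $(40k^2\delta)^{3k}-12k^2>0$ and hence why the choice of $N$ at the beginning of Section~6 was made so as to satisfy \eqref{cond:delta_condition}.
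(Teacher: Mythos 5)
Your proposal is correct and follows essentially the same route as the paper: the same decomposition of $R_{n,i}$ and $R_{n,i-1}$ via Lemma~\ref{lem:mass_estimates}, the same split of the Green potential $\Lc^K=\Lc^{K_+}-\Lc^{K_-}$, the reduction to $\Lc^{u\Phi'}$ and $\Lc^{\Phi''}$ terms handled by Lemma~\ref{lem:near_E_semiregular} and Lemma~\ref{lem:estimate_off_E-1} with $l=k-p+1$, and the same final bookkeeping of exponents against \eqref{cond:delta_condition}. (Two cosmetic slips: $\Phi',\Phi''$ are $(k-1,k-1)$-forms, not $(k,k)$-forms; and the uniform control of $\|\Lc^m(\omega^{k-p+1})\|_\infty$ independently of $\varepsilon_{n,i}$ comes from Lemma~\ref{lem:regularization_C1_dist} applied to a fixed smooth form, rather than from Lemma~\ref{lem:mass_estimates}.)
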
 

\begin{proof}
	Lemma \ref{lem:mass_estimates} says that $R_{n,i}=(\Lambda(R_{n,i-1}^+))_{\varepsilon_{n, i}}^++(\Lambda(R_{n,i-1}^-))_{\varepsilon_{n, i}}^--(\Lambda(R_{n,i-1}^+))_{\varepsilon_{n, i}}^--(\Lambda(R_{n,i-1}^-))_{\varepsilon_{n, i}}^+$ for $i\ge 2$. When $i= 1$, we simply remove terms containing $R_{n,0}^-$. Due to Proposition \ref{prop:Green_kernel}, the Green potential kernel can be written as $\Lc^K=\Lc^{K_+}-\Lc^{K_-}$. Therefore, $|\langle \chi_{n,i,1}U_{L^{n-i}(S-\alpha_S)},\Lambda(R_{n,i-1})-R_{n,i}\rangle|$ is bounded
	by the sum of $\left|\langle \chi_{n,i,1}\Lc^{K_\pm}(L^{n-i}(S)),\Lambda(R_{n,i-1}^\pm)\rangle\right|$ and $\big|\langle \chi_{n,i,1}\Lc^{K_\pm}(L^{n-i}(S)),$ $(\Lambda(R_{n,i-1}^\pm))_{\varepsilon_{n, i}}^\pm\rangle\big|$.
	The forms $K_\pm$ are negative and the current $L^{n-i}(S)$ is positive.
	When $i\ge 2$, each term can be estimated by Lemma \ref{lem:estimate_off_E-1}, Lemma \ref{lem:near_E_semiregular} and Lemma \ref{lem:mass_estimates} as below:
	\begin{align*}
		&\left|\left\langle \chi_{n,i,1}\Lc^{K_+}\left(L^{n-i}(S)\right),\Lambda\left(R_{n,i-1}^+\right)\right\rangle\right|=\left|\left\langle \chi_{n,i,1}\Lc^{u\left(\eta+m_K\omega_{\Xfh}^{k-1}\right)-m_K\omega_{\Xfh}^{k-1}}\left(L^{n-i}(S)\right),\Lambda\left(R_{n,i-1}^+\right)\right\rangle\right|\\
		&\quad\lesssim \left\|R^+_{n,i-1}\right\|_\infty(\|\eta\|_\infty + m_K)\left|\left\langle \chi_{n,i,1}\left(\Lc^{u\omega_{\Xfh}^{k-1}}\left(L^{n-i}(S)\right)+\Lc^{\omega_{\Xfh}^{k-1}}\left(L^{n-i}(S)\right)\right),\Lambda\left(\omega^{k-p+1}\right)\right\rangle\right|\\
		&\quad\lesssim \left\|R^+_{n,i-1}\right\|_\infty\left|\left\langle \chi_{n,i,1}\left(\Lc^{u\omega_{\Xfh}^{k-1}}\left(L^{n-i}(S)\right)+\Lc^{\omega_{\Xfh}^{k-1}}\left(L^{n-i}(S)\right)\right),f_*(\omega)^{k-p+1}\right\rangle\right|\\
		&\quad\lesssim \left\|R_{n,i-1}^+\right\|_\infty\|L^{n-i}(\alpha_S)\|_\infty s_{n,i}^{-4k-N_E}t_{n,i}^{\beta_{k+1}/2}\\
		&\quad\lesssim \|R\|_{C^1}(C_m)^{i-1}\left(\prod_{j=1}^{i-1}\varepsilon_{n,j}\right)^{-6k^2}(1+\|f\|_{C^1})^{kn}\|\alpha_S\|_\infty s_{n,i}^{-4k-N_E}t_{n,i}^{\beta_{k+1}/2}\\
		&\quad\lesssim \left((C_m+1)(1+\|f\|_{C^1})^k\right)^n \varepsilon_{n, i-1}^{-12k^2}s_{n,i}^{-4k-N_E}t_{n,i}^{\beta_{k+1}/2}\\
		&\quad\lesssim \left((C_m+1)(1+\|f\|_{C^1})^k\right)^n\varepsilon^{nC(4k+N_E)\left[(40k^2\delta)^{6k(i-1)}\left(\frac{(40k^2\delta)^{6k}}{20k(2(k+1)(2\delta+1)\delta)^{k+1}}-12k^2\right)-i\right]}\\
		&\quad\le \left((C_m+1)(1+\|f\|_{C^1})^k\right)^n\varepsilon^{nC(4k+N_E)\left[(40k^2\delta)^{6k(i-1)}\left((40k^2\delta)^{3k}-12k^2\right)-i\right]}<\varepsilon
	\end{align*}
	for all sufficiently small $\varepsilon>0$. For $i=1$, it can be done in the same way. The regularizing operation $(\cdot)_{\varepsilon_{n, i}}^+$ in Section \ref{sec:regularization} is the $(k+2)$-times compositions of either $\Lc_\theta^+$ or $\Lc^-$. When $\varepsilon>0$ is small enough, the real numbers $\varepsilon_{n, i}$ and $t_{n,i}$ are sufficiently small compared to $s_{n,i}$. Thus, we get
	\begin{align*}
		&\left|\left\langle \chi_{n,i,1}\Lc^{K_+}\left(L^{n-i}(S)\right),\left(\Lambda\left(R_{n,i-1}^+\right)\right)_{\varepsilon_{n, i}}^+\right\rangle\right|\lesssim \left\|R^+_{n,i-1}\right\|_\infty\left|\left\langle \chi_{n,i,1}\Lc^{K_+}\left(L^{n-i}(S)\right),\left(\Lambda\left(\omega^{k-p+1}\right)\right)_{\varepsilon_{n, i}}^+\right\rangle\right|\\
		&\quad\lesssim \left\|R^+_{n,i-1}\right\|_\infty\left|\left\langle \chi_{n,i,1}\Lc^{K_+}\left(L^{n-i}(S)\right),\left(f_*(\omega)^{k-p+1}\right)_{\varepsilon_{n, i}}^+\right\rangle\right|<\varepsilon.
	\end{align*}
	in the same way as previously. All other terms can be computed in the same way. For the terms containing $\alpha_S$, we can apply the same argument as well. Hence, we have
	\begin{align*}
		&\left|\left\langle \chi_{n,i,1}\Lc^{K_\pm}\left(L^{n-i}(S)\right),\Lambda\left(R_{n,i-1}^\pm\right)\right\rangle\right|\textrm{ , }\left|\left\langle \chi_{n,i,1}\Lc^{K_\pm}\left(L^{n-i}(S)\right),\left(\Lambda\left(R_{n,i-1}^\pm\right)\right)_{\varepsilon_{n, i}}^\pm\right\rangle\right|\lesssim \varepsilon 
	\end{align*}
	for all integers $n$ and $i$ such that $1\le i\le n$ and for all sufficiently small $\varepsilon>0$. Here, the inequality $\lesssim$ means $\le$ up to a multiplicative constant independent of $n$, $i$ and $S$.
\end{proof}

\subsection{Estimates of $d^{-i}\langle \chi_{n,i,2}U_{L^{n-i}(S-\alpha_S)},\Lambda(R_{n,i-1})-R_{n,i}\rangle$ in $W_{n,i,2}$} We will use the property that in $W_{n,i,2}$, $L^{n-i}(S)$ is smooth.
\begin{lemma}\label{lem:C^1_Wni2}
	Assume that $S\in\Dc_p$ is smooth in $E_{s/{r_E^3}}$ for $s>0$. The currents $\chi^E_s\Lc^{u{\omega_\Xfh^{k-1}}}(S)$ and $\chi^E_s\Lc^{{\omega_\Xfh^{k-1}}}(S)$ is $C^1$. Furthermore, we have
	\begin{align*}
		&\|\chi^E_s\Lc^{u{\omega_\Xfh^{k-1}}}(S)\|_{C^1}\lesssim s^{1-2k}(\|S\|_{\infty, E_{s/{r_E^2}}}+\|S\|_*)\textrm{ and }\|\chi^E_s\Lc^{{\omega_\Xfh^{k-1}}}(S)\|_{C^1}\lesssim s^{1-2k}(\|S\|_{\infty, E_{s/{r_E^2}}}+\|S\|_*).
	\end{align*}
	Here, the inequality $\lesssim$ means $\le$ up to a multiplicative constant independent of $s$ and $S$.
\end{lemma}

\begin{proof}
	The same proof works for both. So, we only consider $\chi^E_s\Lc^{u{\omega_\Xfh^{k-1}}}(S)$. We have
	\begin{align*}
		\chi^E_s\Lc^{u{\omega_\Xfh^{k-1}}}(S)=\chi^E_s\Lc^{u{\omega_\Xfh^{k-1}}}\left(\chi^E_{s/r_E^2}S\right)+\chi^E_s\Lc^{u{\omega_\Xfh^{k-1}}}\left(\left(1-\chi^E_{s/r_E^2}\right)S\right)
	\end{align*}
	
	As in \cite[Section 2.3]{DS10-1}, $\pi_*(u{\omega_\Xfh^{k-1}})$ is smooth outside $\Delta$ and its gradient satisfies $\left|\nabla \pi_*(u{\omega_\Xfh^{k-1}})\right|\lesssim (\dist(\cdot, \Delta))^{1-2k}$. Hence, we have
	\begin{align*}
		\left\|\chi^E_s\Lc^{u{\omega_\Xfh^{k-1}}}\left(\chi^E_{s/r_E^2}S\right)\right\|_{C^1}&\le 2\left\|\chi^E_s\right\|_{C^1}\left\|\Lc^{u{\omega_\Xfh^{k-1}}}\left(\chi^E_{s/r_E^2}S\right)\right\|_{C^1}\lesssim \left\|\chi^E_s\right\|_{C^2}\left\|\chi^E_{s/r_E^2}S\right\|_\infty\le s^{-2}\|S\|_{\infty, E_{s/r_E^2}}
	\end{align*}
	
	Let $\varphi$ be a smooth test form on $X$. Then, we have
	\begin{align*}
		&\left\langle \chi^E_s\Lc^{u{\omega_\Xfh^{k-1}}}\left(\left(1-\chi^E_{s/r_E^2}\right)S\right), \varphi\right\rangle=\left\langle\Lc^{u{\omega_\Xfh^{k-1}}}\left(\left(1-\chi^E_{s/r_E^2}\right)S\right), \chi^E_s\varphi\right\rangle\\
		&=\int_X\chi^E_s\left[\int_{X\setminus \{z\}} \pi_*(u{\omega_\Xfh^{k-1}})(z, \xi)\wedge \left(\left(1-\chi^E_{s/r_E^2}\right)S\right)(\xi)\right]\wedge \varphi.
	\end{align*}
	Since $\dist\left(\supp \chi_s^E, \supp \left(1-\chi^E_{s/r_E^2}\right)\right)\ge \left(r_E^{-1}-1\right)s$, again according to \cite[Section 2.3]{DS10-1}, we see that the current $\displaystyle \chi^E_s\left[\int_{X\setminus \{z\}} \pi_*(u{\omega_\Xfh^{k-1}})(z, \xi)\wedge \left(\left(1-\chi^E_{s/r_E^2}\right)S\right)(\xi)\right]$ is smooth and
	\begin{align*}
		\left\|\chi^E_s\left[\int_{X\setminus \{z\}} \pi_*(u{\omega_\Xfh^{k-1}})(z, \xi)\wedge \left(\left(1-\chi^E_{s/r_E^2}\right)S\right)(\xi)\right]\right\|_{C^1}\lesssim
		s^{1-2k}\left\|S\right\|_*.
	\end{align*}
\end{proof}

\begin{lemma}\label{lem:Lem_Cor_Wni2} Let $\widetilde{R}\in\Dc_{k-p+1}$. Under the hypotheses in Lemma \ref{lem:C^1_Wni2}, for $0<\theta\ll 1$, we have
	\begin{align*}
		&\left|\langle \chi^E_s\Lc^{u{\omega_\Xfh^{k-1}}}(S), \widetilde{R}-(\widetilde{R})_\theta\rangle\right|\lesssim s^{1-2k}\theta\|S\|_{\infty, E_{s/{r_E^2}}}\|\widetilde{R}\|_*\quad\textrm{ and }\\
		&\quad\quad\quad\quad\quad\quad\quad\left|\langle \chi^E_s\Lc^{{\omega_\Xfh^{k-1}}}(S), \widetilde{R}-(\widetilde{R})_\theta\rangle\right|\lesssim s^{1-2k}\theta\|S\|_{\infty, E_{s/{r_E^2}}}\|\widetilde{R}\|_*.
	\end{align*}
	Here, the inequality $\lesssim$ means $\le$ up to a multiplicative constant independent of $s$, $S$ and $\theta$.
\end{lemma}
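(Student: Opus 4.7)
\medskip

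\noindent\textbf{Proof proposal.} The plan is to reduce the estimate to the $C^1$-bound provided by Lemma \ref{lem:C^1_Wni2}, combined with the quantitative control of the regularization operator furnished by Lemma \ref{lem:reg_smooth_estimate} and Proposition \ref{prop:uniform_estimate_smooth}. Both inequalities are proved in exactly the same way, so I would focus on the first one; the estimate for $\langle \chi^E_s\Lc^{\Phi}(S), \widetilde R-(\widetilde R)_\theta\rangle$ is handled by replacing $u\Phi$ by $\Phi$ throughout.

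First, I would rewrite the pairing so that the regularization acts on the smooth side rather than on $\widetilde R$. By construction $(\widetilde R)_\theta=\Lc_\theta^{k+2}(\widetilde R)$, and the regularizing kernel $(\alpha_{\widehat\Delta}+dd^c u_\theta)\wedge\omega_{\Xfh}^{k-1}$ is symmetric under the involution $(x,y)\mapsto (y,x)$ (because $\alpha_{\widehat\Delta}$, $u$, and $\omega_{\Xfh}$ were all chosen symmetric in Section \ref{sec:superpotentials} and Section \ref{sec:reg_transform}). Consequently $\Lc_\theta$ is self-adjoint with respect to the duality between currents and smooth test forms, and therefore
\begin{align*}
\langle \chi^E_s\Lc^{u\Phi}(S),\widetilde R-(\widetilde R)_\theta\rangle
=\bigl\langle \chi^E_s\Lc^{u\Phi}(S)-\Lc_\theta^{k+2}\bigl(\chi^E_s\Lc^{u\Phi}(S)\bigr),\,\widetilde R\bigr\rangle.
\end{align*}

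Next, I would quantify the proof of Proposition \ref{prop:uniform_estimate_smooth}: by Lemma \ref{lem:reg_smooth_estimate}, for any smooth form $\varphi$ one has $\|\Lc_\theta(\varphi)-\varphi\|_\infty\le c|\theta|\|\varphi\|_{C^1}$, and by Lemma \ref{lem:regularization_C1_dist} the $C^1$-norms of the iterates $\Lc_\theta^i(\varphi)$ stay uniformly bounded by $\|\varphi\|_{C^1}$. Applying the telescoping identity used in Proposition \ref{prop:uniform_estimate_smooth} then yields
\begin{align*}
\bigl\|\Lc_\theta^{k+2}(\chi^E_s\Lc^{u\Phi}(S))-\chi^E_s\Lc^{u\Phi}(S)\bigr\|_\infty \,\lesssim\, |\theta|\,\bigl\|\chi^E_s\Lc^{u\Phi}(S)\bigr\|_{C^1},
\end{align*}
provided $\chi^E_s\Lc^{u\Phi}(S)$ is $C^1$, which is exactly the content of Lemma \ref{lem:C^1_Wni2}.

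Finally, I would pair this $L^\infty$ estimate against $\widetilde R\in\Dc_{k-p+1}$. Writing $\widetilde R=\widetilde R^+-\widetilde R^-$ with $\widetilde R^\pm\in\Cc_{k-p+1}$ and using $|\langle g,T\rangle|\le \|g\|_\infty\|T\|$ for positive currents, then taking the infimum over such decompositions yields $|\langle g,\widetilde R\rangle|\le \|g\|_\infty\|\widetilde R\|_*$. Combined with the $C^1$-bound $\|\chi^E_s\Lc^{u\Phi}(S)\|_{C^1}\lesssim s^{1-2k}(\|S\|_{\infty,E_{s/r_E^2}}+\|S\|_*)\|\Phi\|_{C^1}$ from Lemma \ref{lem:C^1_Wni2}, this produces the advertised bound. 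The only mild subtlety is to justify the self-adjointness step when $\widetilde R$ is not smooth; this is done by approximating $\widetilde R$ in the $*$-topology by smooth forms (which is permissible since the $C^1$ form $\chi^E_s\Lc^{u\Phi}(S)-\Lc_\theta^{k+2}(\chi^E_s\Lc^{u\Phi}(S))$ pairs continuously with $*$-convergent sequences), and this is the only step that requires any care beyond bookkeeping.
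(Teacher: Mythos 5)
Your proposal follows the same route as the paper: move the regularization onto the $C^1$-smooth side by self-adjointness of $\Lc_\theta$ (which holds because the regularizing kernel was chosen symmetric), bound $\|\varphi-(\varphi)_\theta\|_\infty\lesssim|\theta|\,\|\varphi\|_{C^1}$ via the telescoping argument from Proposition \ref{prop:uniform_estimate_smooth} together with Lemmas \ref{lem:reg_smooth_estimate} and \ref{lem:regularization_C1_dist}, plug in the $C^1$-bound from Lemma \ref{lem:C^1_Wni2}, and pair against $\widetilde R$ using the $*$-norm. The paper's proof is terser (it cites Lemma \ref{lem:reg_smooth_estimate} without spelling out the telescoping and iterate $C^1$-control, which you make explicit), but the argument is identical in substance, down to the $(\|S\|_{\infty,E_{s/r_E^2}}+\|S\|_*)$ factor appearing in the intermediate bound.
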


\begin{proof} As previously, we consider the first estimate. The same applies to the second. We have
	\begin{align*}
		\langle \chi^E_s\Lc^{u{\omega_\Xfh^{k-1}}}(S), \widetilde{R}-(\widetilde{R})_\theta\rangle&=\langle \chi^E_s\Lc^{u{\omega_\Xfh^{k-1}}}(S)-\left(\chi^E_s\Lc^{u{\omega_\Xfh^{k-1}}}(S)\right)_\theta, \widetilde{R}\rangle.
	\end{align*}
	Lemma \ref{lem:C^1_Wni2} implies that $\chi^E_s\Lc^{u{\omega_\Xfh^{k-1}}}(S)$
	is a form with $C^1$-coefficients and Lemma \ref{lem:reg_smooth_estimate} implies
	\begin{align*}
		&\left\|\chi^E_s\Lc^{u{\omega_\Xfh^{k-1}}}(S)-\left(\chi^E_s\Lc^{u{\omega_\Xfh^{k-1}}}(S)\right)_\theta\right\|_{\infty}\lesssim s^{1-2k}\theta(\|S\|_{\infty, E_{s/{r_E^2}}}+\|S\|_*).
	\end{align*}
	Hence, we get the estimate.
\end{proof}

\begin{corollary}\label{Cor:Wni2}
	Assume that $S$ be as in Theorem \ref{thm:main}. For $0<\varepsilon\ll 1$, we have
	\begin{align*}
		\left|\left\langle \chi_{n,i,2}U_{L^{n-i}(S-\alpha_S)},\Lambda(R_{n,i-1})-R_{n,i}\right\rangle\right|<(\|S\|_{\infty, E_{s_0}}+\|\alpha_S\|_\infty))\varepsilon.
	\end{align*}
	Here, the inequality $\lesssim$ means $\le$ up to a multiplicative constant independent of $n$, $i$ and $S$.
\end{corollary}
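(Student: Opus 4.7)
The plan is to exploit the smoothness of $L^{n-i}(S-\alpha_S)$ in a neighborhood of $E$ (inherited from the smoothness of $S$ near $E$ together with the forward invariance $f(E)\subseteq E$), combine it with the regularization estimate of Lemma~\ref{lem:Lem_Cor_Wni2}, and reduce everything to the same $\varepsilon$-exponent bookkeeping carried out in the previous subsection for $W_{n,i,1}$.

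First I would decompose the Green pairing. Writing $K=K_+-K_-$ as in Proposition~\ref{prop:Green_kernel} and further splitting $K_\pm$ into their $u$-weighted and smooth pieces, the quantity
$\langle\chi_{n,i,2}U_{L^{n-i}(S-\alpha_S)},\Lambda(R_{n,i-1})-R_{n,i}\rangle$ becomes a finite sum of terms
\[
\bigl\langle\chi^{E}_{s_{n,i}/r_E}\,\Lc^{u\Phi}\bigl(L^{n-i}(S-\alpha_S)\bigr),\ \widetilde R-(\widetilde R)_{\varepsilon_{n,i}}^{\pm}\bigr\rangle\quad\text{and}\quad\bigl\langle\chi^{E}_{s_{n,i}/r_E}\,\Lc^{\Phi}\bigl(L^{n-i}(S-\alpha_S)\bigr),\ \widetilde R-(\widetilde R)_{\varepsilon_{n,i}}^{\pm}\bigr\rangle,
\]
with $\Phi$ running over the smooth positive closed factors of $K_\pm$. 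Using $R_{n,i-1}=R_{n,i-1}^+-R_{n,i-1}^-$ from Lemma~\ref{lem:mass_estimates} and the fact that $\Lambda(R_{n,i-1}^\pm)$ is $C^{1}$, each $\widetilde R$ is of the form $\Lambda(R_{n,i-1}^\pm)$, positive closed and smooth, with mass bounded by
\[
\|\widetilde R\|_{*}\lesssim C_{m}^{\,i}\Bigl(\prod_{j=1}^{i-1}\varepsilon_{n,j}\Bigr)^{-6k^{2}}\|R\|_{C^{1}}.
\]

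Second, I would verify the smoothness hypothesis of Lemma~\ref{lem:Lem_Cor_Wni2} with $s=s_{n,i}/r_E$: on $E_{s_{n,i}/r_E^{4}}$, the current $L^{n-i}(S-\alpha_S)$ is $C^{\infty}$, and the chain rule gives
\[
\|L^{n-i}(S-\alpha_S)\|_{\infty,E_{s_{n,i}/r_E^{4}}}\le C_{f}^{\,n-i}\bigl(\|S\|_{\infty,E_{s_{0}}}+\|\alpha_S\|_\infty\bigr),\qquad C_{f}:=\|df\|_{C^{0}}^{2p}/d_{p}.
\]
This uses the hypothesis that $S$ is smooth in $E_{s_{0}}$, the smoothness of $\alpha_S$, and the Lipschitz estimate $f^{n-i}(E_{\rho})\subseteq E_{\|df\|_{C^{0}}^{n-i}\rho}$ coming from $f(E)\subseteq E$; for $\varepsilon$ small enough one has $\|df\|_{C^{0}}^{n-i}s_{n,i}\le r_E^{4}s_{0}$, so that $f^{n-i}(E_{s_{n,i}/r_E^{4}})\subseteq E_{s_{0}}$. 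Applying Lemma~\ref{lem:Lem_Cor_Wni2} with $\theta=\varepsilon_{n,i}$ then bounds each term above by
\[
s_{n,i}^{\,1-2k}\,\varepsilon_{n,i}\,C_{f}^{\,n}\bigl(\|S\|_{\infty,E_{s_{0}}}+\|\alpha_S\|_\infty\bigr)\,\|\widetilde R\|_{*}.
\]

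Third, multiplying by $d^{-i}\le 1$, inserting the mass bound for $\widetilde R$ and the values $s_{n,i}=\varepsilon^{nCi}$, $\varepsilon_{n,i}=\varepsilon^{nC(4k+N_{E})(40k^{2}\delta)^{6ki}}$, the $\varepsilon$-exponent of the total contribution is at least
\[
nC(4k+N_{E})\Bigl[(40k^{2}\delta)^{6k(i-1)}\bigl((40k^{2}\delta)^{3k}-c'\bigr)-i\Bigr]
\]
for an absolute constant $c'>0$ (using \eqref{cond:delta_condition} and a geometric-series bound on $\prod_{j<i}\varepsilon_{n,j}^{-6k^{2}}$). This is exactly the exponent that was shown to exceed $1$ for all sufficiently small $\varepsilon>0$ in the $W_{n,i,1}$-computation after Lemma~\ref{lem:near_E_semiregular}, and it yields the desired bound $<(\|S\|_{\infty,E_{s_{0}}}+\|\alpha_S\|_\infty)\varepsilon$. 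The main obstacle will be the combinatorial bookkeeping: the Green kernel $K$, the currents $R_{n,i-1}$ and the operations $(\cdot)_{\varepsilon_{n,i}}^{\pm}$ all split into $\pm$-parts, and each of the resulting summands must separately absorb the exponential growth factors $C_{f}^{\,n}$, $C_{m}^{\,i}$ and $\prod\varepsilon_{n,j}^{-6k^{2}}$. The super-geometric decay of $\varepsilon_{n,i}$ enforced by \eqref{cond:delta_condition} is precisely what makes this possible, and the verification essentially transcribes the one done for $W_{n,i,1}$, with the smoothness of $L^{n-i}(S-\alpha_S)$ near $E$ playing the role there played by the H\"older continuity of $u_f$ away from $E$.
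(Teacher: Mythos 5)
Your proposal is correct and follows essentially the same route as the paper: decompose $U_{L^{n-i}(S-\alpha_S)}=\Lc^{K_+}(L^{n-i}(S-\alpha_S))-\Lc^{K_-}(L^{n-i}(S-\alpha_S))$, peel each $K_\pm$ into its $u$-weighted and smooth positive pieces, apply Lemma~\ref{lem:Lem_Cor_Wni2} with $\theta=\varepsilon_{n,i}$ using the smoothness of $L^{n-i}(S-\alpha_S)$ near $E$, and then close with the mass/$C^1$ bounds from Lemma~\ref{lem:mass_estimates} together with the super-exponential decay of $\varepsilon_{n,i}$ enforced by \eqref{cond:delta_condition}. The only real deviation is a small one: you justify the smoothness of $L^{n-i}(S-\alpha_S)$ on $E_{s_{n,i}/r_E^4}$ by a forward Lipschitz estimate $f^{n-i}(E_\rho)\subseteq E_{\|df\|_{C^0}^{n-i}\rho}$ (a direct consequence of $f(E)\subseteq E$ and $f$ smooth), whereas the paper invokes the inverse estimate of Lemma~\ref{lem:smooth_region}; both give the needed inclusion $f^{n-i}(E_{s_{n,i}/r_E^4})\subseteq E_{s_0}$ for small $\varepsilon$, and yours is arguably the more elementary of the two. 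One small bookkeeping caveat: Lemma~\ref{lem:C^1_Wni2} (and the proof of Lemma~\ref{lem:Lem_Cor_Wni2}) actually carry an extra $\|S\|_*$ term alongside $\|S\|_{\infty,E_{s/r_E^2}}$, so the bound should include $\|L^{n-i}(S-\alpha_S)\|_*\lesssim c_m(5/4)^{N_fn}\|\alpha_S\|_\infty$; this only inflates the exponential prefactor and is harmlessly absorbed by the super-exponentially small $\varepsilon_{n,i}$, exactly as you anticipate.
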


\begin{proof}
	Notice that $U_{L^{n-i}(S-\alpha_S)}$ is the difference of $\Lc^{K_+}\left(L^{n-i}(S-\alpha_S)\right)-\Lc^{K_-}\left(L^{n-i}(S-\alpha_S)\right)$ where $K_+= u\left(\eta+m_K\omega_{\Xfh}^{k-1}\right)-\left(m_K\omega_{\Xfh}^{k-1}\right)$ and $K_-= u\left(m_K\omega_{\Xfh}^{k-1}\right)-\left(\delta_\Xfh + m_K\omega_{\Xfh}^{k-1}\right)$. From Lemma \ref{lem:Lem_Cor_Wni2} and Lemma \ref{lem:mass_estimates}, we get
	\begin{align*}
		&\left|\left\langle \chi_{n,i,2}U_{L^{n-i}(S-\alpha_S)},\Lambda(R_{n,i-1})-R_{n,i}\right\rangle\right| =\left|\left\langle \chi_{n,i,2}U_{L^{n-i}(S-\alpha_S)},\Lambda(R_{n,i-1})-\left(\Lambda(R_{n,i-1})\right)_{\varepsilon_{n, i}}\right\rangle\right|\\
		&\quad\lesssim s_{n,i}^{1-2k}\varepsilon_{n, i}\left(\left\|L^{n-i}(S-\alpha_S)\right\|_{\infty, E_{s/{r_E^2}}}+\left\|L^{n-i}(S)\right\|+\left\|L^{n-i}(\alpha_S)\right\|\right)\|\Lambda(R_{n,i-1})\|_*.
	\end{align*}
	By Lemma \ref{lem:smooth_region}, for $0<\varepsilon\ll 1$, $S$ is smooth in $E_{\varepsilon/{r_E^3}}$ and $L^{n-i}(S-\alpha)$ is smooth in $E_{s_{n,i}/{r_E^3}}$. From direct computations, we get
	\begin{align*}
		\left\| L^{n-i}(S-\alpha_S)\right\|_{\infty, E_{s_{n,i}/{r_E^3}}}\le (1+\|f\|_{C^1})^{k(n-i)}\|S-\alpha_S\|_{\infty, E_{s_0}}
	\end{align*}
	where $S$ is smooth in $E_{s_0}$ for some fixed $s_0>0$. From \eqref{cond:mass_condition} and Lemma \ref{lem:mass_estimates}, we have
	\begin{align*}
		&\left|\langle \chi_{n,i,2}U_{L^{n-i}(S-\alpha_S)},\Lambda(R_{n,i-1})-R_{n,i}\rangle\right|\\
		&\quad\lesssim ((1+\|f\|_{C^1})^{kn}\|S-\alpha_S\|_{\infty, E_{s_0}}+2c_m\left(\frac{5}{4}\right)^{N_fn}\|\alpha_S\|_\infty)\left(\left\|\Lambda(R^+_{n,i-1})\right\|+\left\|\Lambda(R^-_{n,i-1})\right\|\right)s_{n,i}^{1-2k}\varepsilon_{n, i}\\
		&\quad \lesssim \left(\left(\left(\frac{5}{4}\right)^{N_f}+\|f\|_{C^1}\right)^{kn}\left(\|S\|_{\infty, E_{s_0}}+\|\alpha_S\|_\infty\right)\right)c_m\left(\frac{5}{4}\right)^{N_f}C_m^i \left(\prod_{j=1}^{i-1}\varepsilon_{n,j}\right)^{-6k^2}s_{n,i}^{1-2k}\varepsilon_{n, i}\\
		&\quad \lesssim \left(\|S\|_{\infty, E_{s_0}}+\|\alpha_S\|_\infty\right)\left(\left(\frac{5}{4}\right)^{N_f}+\|f\|_{C^1}\right)^{kn}(C_m+1)^n \varepsilon_{n,i-1}^{-12k^2}s_{n,i}^{1-2k}\varepsilon_{n, i}\lesssim (\|S\|_{\infty, E_{s_0}}+\|\alpha_S\|_\infty))\varepsilon 
	\end{align*}
	for $0<\varepsilon\ll 1$ where $R_{n,i-1}^\pm$ are smooth positive closed currents in Lemma \ref{lem:mass_estimates}.
	By plugging-in $s_{n,i}$ and $\varepsilon_{n, i}$, we see the desired estimate.
\end{proof}

\subsection{Estimates of $d^{-i}\langle \chi_{n,i,3}U_{L^{n-i}(S-\alpha_S)},\Lambda(R_{n,i-1})-R_{n,i}\rangle$ in $W_{n,i,3}$}
In this region, $\Lambda(R_{n,i})$ is smooth.

\begin{lemma}[Lemma 5.4.7 in \cite{DS09}]\label{lem:Calpha_estimate_Wi2}
	Let $\alpha\geq 0$. For $0<t\ll 1$, we have
	\begin{displaymath}
		\|\Lambda(R)\|_{\cali{C}^\alpha({V_{t}}^c)}\lesssim\|R\|_{\cali{C}^\alpha}t^{-(4+\alpha)k},
	\end{displaymath}
	for any smooth form $R$ of bidegree $(p, p)$ with $0\leq p\leq k$. The inequality is up to a multiplicative constant independent of $t$ and $R$.
\end{lemma}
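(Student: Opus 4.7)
The plan is to reduce the inequality to a purely local computation on $V_t^c$. Since $V=\Psi_{N_f}$ is the hypersurface of critical values of $f$ (after replacing $f$ by $f^{N_f}$), the restriction of $f$ to $f^{-1}(V_t^c)$ is étale, so every $y\in V_t^c$ has exactly $d_k$ preimages $x_1,\dots,x_{d_k}\in X\setminus \Phi$, and there are local holomorphic branches $g_j=(f|_{U_j})^{-1}$ with $f\circ g_j=\mathrm{id}$. On $V_t^c$ we therefore have the pointwise identity
\begin{displaymath}
   \Lambda(R)=d_{p-1}^{-1}\,f_*(R)=d_{p-1}^{-1}\sum_{j=1}^{d_k} g_j^* R,
\end{displaymath}
so it suffices to bound $\|g_j^*R\|_{C^\alpha(V_t^c)}$ uniformly in $j$. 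In local coordinates the coefficients of $g_j^*R$ are polynomial expressions in the coefficients of $R$ (pulled back by $g_j$) and the first partial derivatives of $g_j$; hence $\|g_j^*R\|_{C^\alpha}\lesssim \|R\|_{C^\alpha}\,(1+\|g_j\|_{C^{\alpha+1}})^{2k}$.

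The real work is to bound $\|g_j\|_{C^{\alpha+1}}$ by a negative power of $t$. Differentiating $f\circ g_j=\mathrm{id}$ gives, in local coordinates, $dg_j(y)=(Df(g_j(y)))^{-1}$, so $\|dg_j(y)\|\lesssim |J_f(g_j(y))|^{-1}\,\|Df\|^{k-1}$. Now the Lojasiewicz inequality for the holomorphic function $J_f$ yields $|J_f(x)|\gtrsim \dist(x,\Phi)^{C_0}$ for some $C_0>0$, while the openness of $f$ together with Corollary~\ref{cor:lojasiewicz} (or directly Lemma~\ref{lem:smooth_region} applied to $\Phi$) forces $\dist(x_j,\Phi)\gtrsim \dist(y,V)^{1/\delta}\gtrsim t^{1/\delta}$ for every preimage $x_j$ of $y\in V_t^c$. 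Combining these bounds gives $\|dg_j\|_{\infty,V_t^c}\lesssim t^{-C_1}$ for some $C_1$ depending only on $f$.

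Higher derivatives of $g_j$ are then obtained by iterated differentiation of the relation $Df(g_j)\cdot dg_j=\mathrm{id}$: each new derivative produces at most one extra factor of $(Df\circ g_j)^{-1}$, multiplied by derivatives of $f$ (which are smooth and bounded on $X$) and by lower-order derivatives of $g_j$. By induction one obtains $\|d^{\ell} g_j\|_\infty\lesssim t^{-\ell C_1}$ for every $\ell\geq 1$, and a standard Hölder-interpolation estimate between $\|d g_j\|_\infty$ and $\|d^2 g_j\|_\infty$ then gives $\|g_j\|_{C^{\alpha+1}(V_t^c)}\lesssim t^{-(1+\alpha)C_1}$. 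After optimizing the constants $C_1$ and the power $2k$ coming from the coefficient expansion of $g_j^*R$, one arrives at the claimed bound $t^{-(4+\alpha)k}$.

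The main obstacle, and the reason the exponent has the particular shape $(4+\alpha)k$, is the bookkeeping in the last step: one has to combine $k$-fold products of first derivatives of $g_j$ (each behaving like $t^{-C_1}$) with the Hölder interpolation used to pass from integer-order bounds to $C^{\alpha+1}$, all while keeping the dependence on $\|R\|_{C^\alpha}$ linear. Since the argument is entirely local and intrinsic to $f$, the proof on a general compact Kähler manifold follows exactly the lines of the $\P^k$ case in \cite{DS09}; the only non-trivial ingredient imported from the global geometry is the Lojasiewicz-type control of $\dist(f^{-1}(V_t^c),\Phi)$, which was already established and used earlier in the paper.
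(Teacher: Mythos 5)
The paper does not give its own proof of this lemma; it cites it verbatim as Lemma 5.4.7 of \cite{DS09}, so you are being asked to reconstruct a proof that the author delegated to a reference. Your skeleton (local inverse branches $g_j$ of $f$ away from the critical values $V$, control of $\|g_j\|_{C^{\alpha+1}}$ via a lower bound on $|J_f|$, then pull back $R$) is exactly the natural one and is in the spirit of \cite{DS09}. However, there are genuine gaps.

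First, the higher-derivative step is wrong as stated. Iterating $Df(g_j)\cdot dg_j=\mathrm{id}$ does \emph{not} give $\|d^{\ell}g_j\|\lesssim t^{-\ell C_1}$: differentiating once yields $d^2g_j=-dg_j\cdot(D^2f\circ g_j)(dg_j,\cdot)\cdot dg_j$, so $\|d^2g_j\|\lesssim\|dg_j\|^3\lesssim t^{-3C_1}$, and in general the chain rule gives $\|d^\ell g_j\|\lesssim t^{-(2\ell-1)C_1}$, which is worse than what you write. The correct tool here is that $g_j$ extends holomorphically to a ball $B(y,ct)$ around any $y\in V_t^c$ (the ball is disjoint from $V$ and simply connected), so the Cauchy estimates give $\|d^\ell g_j\|_{\infty}\lesssim t^{-(C_1+\ell-1)}$. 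You should invoke Cauchy explicitly; the interpolation you mention does not fix the induction by itself.

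Second — and this is the main gap — your argument does not produce the specific exponent $(4+\alpha)k$, and the "after optimizing the constants" sentence is not a proof. The constant $C_1$ you obtain is governed by the Lojasiewicz exponent $C_0$ of $J_f$ along $\Phi$ (and by the constant $N_E$ from Corollary \ref{cor:lojasiewicz}), and these depend on $f$ and on $X$; they are not universal quantities like $k$. So the exponent your method yields is of the form $t^{-(\text{const}(f)+\alpha\cdot\text{const}(f))}$, not $t^{-(4+\alpha)k}$. Either you must show that under the normalizations already in force (Condition (M) and the replacement $f\mapsto f^{N_f}$) one can take the exponent equal to $(4+\alpha)k$, or you must acknowledge that one only gets some $f$-dependent exponent $t^{-M(\alpha)}$ and that this is sufficient for the application, provided the cut-off scale $t_{n,i}=\varepsilon_{n,i}^{(10k)^{-1}}$ (which is fixed earlier in the paper to match $(4+1)k=5k$) is adjusted. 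As it stands, the stated lemma is not established.

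A smaller point: you invoke Corollary \ref{cor:lojasiewicz} to get $\dist(x_j,\Phi)\gtrsim t^{1/\delta}$, but that corollary is about pairs of preimages and the set $E$, not about the distance to $\Phi$. The clean way is Lemma \ref{lem:smooth_region}: since $\Phi\subseteq f^{-1}(V)$, one has $\dist(x_j,\Phi)\geq\dist(f^{-1}(y),f^{-1}(V))\geq m_f^{-1}\dist(y,V)>m_f^{-1}t$, i.e. $\dist(x_j,\Phi)\gtrsim t$, which is both simpler and sharper than what you wrote.
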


\begin{corollary}\label{Cor:Wni3}
	For $0<\varepsilon\ll 1$, we have
	\begin{align*}
		d^{-i}\langle \chi_{n,i,3}U_{L^{n-i}(S-\alpha_S)},\Lambda(R_{n,i-1})-R_{n,i}\rangle\lesssim \varepsilon.
	\end{align*}
	Here, the inequality $\lesssim$ means $\le$ up to a multiplicative constant independent of $n$, $i$ and $S$.
\end{corollary}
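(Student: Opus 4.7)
The plan is to exploit that on the support of $\chi_{n,i,3}$, which is disjoint from $V_{t_{n,i}}$, the current $\Lambda(R_{n,i-1})$ is $C^1$-smooth, so that the regularization error $\Lambda(R_{n,i-1})-R_{n,i}=\Lambda(R_{n,i-1})-(\Lambda(R_{n,i-1}))_{\varepsilon_{n,i}}$ is small in the sup-norm there. Because $t_{n,i}=\varepsilon_{n,i}^{1/(10k)}\gg\varepsilon_{n,i}$, an $\varepsilon_{n,i}$-neighborhood of $\supp\chi_{n,i,3}$ still lies in $V_{t_{n,i}/2}^c$, and the $C^1$-control on that enlargement is what I propagate through the regularization.

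First I decompose $R_{n,i-1}=R_{n,i-1}^+-R_{n,i-1}^-$ as in Lemma~\ref{lem:mass_estimates} and apply Lemma~\ref{lem:Calpha_estimate_Wi2} with $\alpha=1$ to each positive closed piece, so that
\[
\|\Lambda(R_{n,i-1})\|_{C^1,\,V_{t_{n,i}/2}^c}\lesssim \|R_{n,i-1}\|_{C^1}\,t_{n,i}^{-5k}\lesssim \|R\|_{C^1}C_m^{\,i-1}\Big(\prod_{j<i}\varepsilon_{n,j}\Big)^{-6k^2}t_{n,i}^{-5k}.
\]
Then a localized version of Lemma~\ref{lem:reg_smooth_estimate} yields
\[
\|\Lambda(R_{n,i-1})-R_{n,i}\|_{\infty,\,\supp\chi_{n,i,3}}\lesssim \varepsilon_{n,i}\cdot\|\Lambda(R_{n,i-1})\|_{C^1,\,V_{t_{n,i}/2}^c},
\]
using that (Lemma~2.4.1 in \cite{DS10-1}) the $dd^cu_{\varepsilon_{n,i}}$-part of the regularizing kernel is supported in an $O(\varepsilon_{n,i})$-neighborhood of $\widehat{\Delta}$ while the other pieces are globally smooth and contribute mass-bounded error. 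Pairing this with Proposition~\ref{prop:Green_kernel} and \eqref{cond:mass_condition}, which give $\|U_{L^{n-i}(S-\alpha_S)}\|_{L^1}\lesssim(5/4)^{N_fn}\|\alpha_S\|_\infty$, produces
\[
\big|\langle\chi_{n,i,3}U_{L^{n-i}(S-\alpha_S)},\Lambda(R_{n,i-1})-R_{n,i}\rangle\big|\lesssim (5/4)^{N_fn}C_m^{\,i}\Big(\prod_{j<i}\varepsilon_{n,j}\Big)^{-6k^2}\varepsilon_{n,i}\,t_{n,i}^{-5k}\|\alpha_S\|_\infty.
\]

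Inserting $t_{n,i}^{-5k}=\varepsilon_{n,i}^{-1/2}$ and $\varepsilon_{n,i}=\varepsilon^{nC(4k+N_E)(40k^2\delta)^{6ki}}$, the net exponent of $\varepsilon$ in $\varepsilon_{n,i}^{1/2}\prod_{j<i}\varepsilon_{n,j}^{-6k^2}$ is of order $a_{i-1}\big((40k^2\delta)^{6k}/2-O(1)\big)$ where $a_j=nC(4k+N_E)(40k^2\delta)^{6kj}$. Since $\delta>1$ makes $(40k^2\delta)^{6k}$ very large, this exponent is positive and grows at least linearly in $n$, so it dominates all the $n$-dependent prefactors $C_m^{\,n}$, $(5/4)^{N_fn}$, and the bound is $\lesssim\varepsilon$ for $\varepsilon>0$ sufficiently small, exactly as in the conclusion of Lemma~\ref{Wni1}. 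The factor $d^{-i}$ outside only improves matters.

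The main obstacle is making the localization in the second step rigorous: Lemma~\ref{lem:reg_smooth_estimate} as stated requires a globally smooth input, whereas $\Lambda(R_{n,i-1})$ is only $C^1$ on $V_{t_{n,i}/2}^c$. I plan to handle this by writing $\Lambda(R_{n,i-1})=\widetilde{\chi}\Lambda(R_{n,i-1})+(1-\widetilde{\chi})\Lambda(R_{n,i-1})$ for a smooth cutoff $\widetilde{\chi}$ equal to $1$ on $V_{3t_{n,i}/4}^c\supset\supp\chi_{n,i,3}$ and supported in $V_{t_{n,i}/2}^c$: the first piece is globally $C^1$ and handled directly by Lemma~\ref{lem:reg_smooth_estimate}; the second is a positive-closed-minus-positive-closed current supported away from $\supp\chi_{n,i,3}$, whose regularization contributes only through the long-range (globally smooth) part of the kernel and is controlled by the mass estimates of Lemmas~\ref{lem:mass_regular_transform} and \ref{lem:mass_estimates}, together with the distance-dependent estimate of Lemma~\ref{lem:disjoint_estimate_2}.
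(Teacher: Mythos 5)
Your proposal is correct and follows essentially the same route as the paper's proof: pair the Green potential (whose mass, or equivalently its $L^1$-norm from Proposition~\ref{prop:Green_kernel}, is bounded by $\|L^{n-i}(S-\alpha_S)\|_*$ via \eqref{cond:mass_condition}) against the uniformly bounded form $\chi_{n,i,3}(\Lambda(R_{n,i-1})-R_{n,i})$; control the sup-norm of the latter through Lemma~\ref{lem:Calpha_estimate_Wi2} combined with a localized version of Lemma~\ref{lem:reg_smooth_estimate}; then check the exponents. You are right to flag the localization as the step needing care -- the paper cites Lemma~\ref{lem:regularization_C1_dist} somewhat tersely here, and your cutoff decomposition is exactly the device that makes the inequality $\|\Lambda(R_{n,i-1})-R_{n,i}\|_{\infty,W_{n,i,3}}\lesssim \varepsilon_{n,i}\|\Lambda(R_{n,i-1})\|_{C^1}$ rigorous.

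One correction to the reasoning in your last paragraph. You say the near-$V$ piece $((1-\widetilde\chi)\Lambda(R_{n,i-1}))_{\varepsilon_{n,i}}$ "contributes only through the long-range (globally smooth) part of the kernel" and propose to bound it with the mass and disjoint-support estimates. In fact the combined regularizing kernel $(\alpha_{\widehat\Delta}+dd^cu_\theta)\wedge\omega_{\Xfh}^{k-1}$ -- before splitting into $\Lc_\theta^+$ and $\Lc^-$ -- pushes forward to a form supported in the $O(\theta)$-neighborhood of $\Delta$ (this is precisely the content of Lemma~2.4.1 in \cite{DS10-1}, quoted in the proof of Lemma~\ref{lem:regularization_C1_dist}: the global contribution of $\alpha_{\widehat\Delta}$ is cancelled by $dd^cu_\theta$ outside $\{u<\log|\theta|\}$). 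Consequently, since $\varepsilon_{n,i}=t_{n,i}^{10k}\ll t_{n,i}$, the regularization of the near-$V$ piece remains supported inside $V_{t_{n,i}}$ and vanishes identically on $W_{n,i,3}$; no invocation of Lemma~\ref{lem:mass_regular_transform} or Lemma~\ref{lem:disjoint_estimate_2} is needed. The "long-range" picture only appears if one works with $\Lc_\theta^\pm$ and $\Lc^-$ separately (whose individual kernels are global); but the combined operator $\Lc_\theta$ is short-range, and that is what the argument should use. This does not break your proof -- your backup estimate would still close -- but the clean reason the second piece drops out is support locality, not a distance-weighted mass bound.
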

\begin{proof}
	Note that the $*$-norm of the Green potential is uniformly bounded if the mass of $S$ is bounded. Indeed, for $S\in\Cc_p$, we have
	\begin{align*}
		\left|\left\langle\Lc^{K_\pm}(S), \omega^{k-p+1}\right\rangle\right|=\left|\left\langle S, \Lc^{K_\pm}\left(\omega^{k-p+1}\right)\right\rangle\right|\lesssim \max\left\{\left\|\Lc^{K_\pm}\left(\omega^{k-p+1}\right)\right\|_\infty\right\}\|S\|
	\end{align*}
	and $U_{S}=\Lc^{K_+}(S)-\Lc^{K_-(S)}$.
	So, from \eqref{cond:mass_condition}, Lemma \ref{lem:Calpha_estimate_Wi2} and Lemma \ref{lem:mass_estimates}, we get
	\begin{align*}
		&\left|d^{-i}\left\langle \chi_{n,i,3}U_{L^{n-i}(S-\alpha_S)},\Lambda(R_{n,i-1})-R_{n,i}\right\rangle\right|\lesssim \|\Lambda(R_{n,i-1})-R_{n,i}\|_{\infty, W_{n,i,3}}\|L^{n-i}(S-\alpha_S)\|_*\\
		&\quad\lesssim\|\Lambda(R_{n,i-1})\|_{C^1, W_{n,i,3}}\varepsilon_{n, i}\|L^{n-i}(S-\alpha_S)\|_*\lesssim\|R_{n,i-1}\|_{C^1}{t_{n,i}}^{-5k}\varepsilon_{n, i}\|L^{n-i}(S-\alpha_S)\|_*\\
		&\quad\lesssim \|R\|_{C^1}(C_m)^{i-1}\left(\prod_{j=1}^{i-1}\varepsilon_{n,j}\right)^{-6k^2} \varepsilon_{n, i}^{1/2}\left( 3c_m\left(\frac{5}{4}\right)^{N_fn}\|\alpha_S\|_\infty \right)\quad\lesssim C_m^n\left(\frac{5}{4}\right)^{N_fn}\varepsilon_{n, i-1}^{-12k^2}\varepsilon_{n, i}^{1/2}\le \varepsilon
	\end{align*}
	for $0<\varepsilon\ll 1$.
\end{proof}

From Lemma \ref{Wni1}, Corollary \ref{Cor:Wni2}, Corollary \ref{Cor:Wni3}, we have
\begin{lemma}
	For $0<\varepsilon\ll 1$, we have
	\begin{align*}
		d^{-i}\langle U_{L^{n-i}(S-\alpha_S)},\Lambda(R_{n,i-1})-R_{n,i}\rangle\lesssim \varepsilon.
	\end{align*}
	Here, the inequality $\lesssim$ means $\le$ up to a multiplicative constant independent of $n$, $i$ and $S$.
\end{lemma}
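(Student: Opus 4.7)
The plan is to exploit the partition of unity $\chi_{n,i,1}+\chi_{n,i,2}+\chi_{n,i,3}\equiv 1$ adapted to the three regions $W_{n,i,1}$, $W_{n,i,2}$, $W_{n,i,3}$, and then to simply add up the three region-by-region estimates already in place. First I would split
\begin{align*}
d^{-i}\langle U_{L^{n-i}(S-\alpha_S)},\Lambda(R_{n,i-1})-R_{n,i}\rangle=\sum_{j=1}^{3} d^{-i}\langle \chi_{n,i,j}U_{L^{n-i}(S-\alpha_S)},\Lambda(R_{n,i-1})-R_{n,i}\rangle.
\end{align*}
The three summands are then bounded by Lemma \ref{Wni1}, Corollary \ref{Cor:Wni2}, and Corollary \ref{Cor:Wni3}, respectively, each by a constant multiple of $\varepsilon$ for all sufficiently small $\varepsilon>0$; since $d=d_p/d_{p-1}>1$, the prefactor $d^{-i}\le 1$ for $i\ge 1$ is harmless.

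Next I would check that the three implicit constants are genuinely independent of $n$ and $i$. In the $W_{n,i,2}$ piece the bound absorbs the fixed quantities $\|S\|_{\infty,E_{s_0}}$ and $\|\alpha_S\|_\infty$ (fixed because $S$ is fixed and smooth on a neighborhood of $E$), while on $W_{n,i,1}$ and $W_{n,i,3}$ the exponential losses coming from $\|R_{n,i-1}\|_{C^1}$ via Lemma \ref{lem:mass_estimates} and from the mass-growth bound \eqref{cond:mass_condition} are dominated by the negative powers of $\varepsilon_{n,i}$, $s_{n,i}$, $t_{n,i}$ produced by the definitions of those sequences together with the key numerical inequality \eqref{cond:delta_condition}, i.e.\ $(40k^2\delta)^{6k}<d^{3/4}$. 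This is precisely the reason the iterate $f^{N_f}$ was chosen: it forces the multiplicity parameter $\delta$ to be small enough compared to $d$ that the various polynomial-in-$\varepsilon^{-1}$ losses are beaten by an honest power of $\varepsilon$.

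Finally, applying the triangle inequality to the three summands and absorbing the $S$-dependent constants into the implicit constant gives the stated estimate. The main obstacle, already handled in the preceding subsection, is the tube region $W_{n,i,1}$ around $V\setminus E$: there neither $L^{n-i}(S-\alpha_S)$ nor $\Lambda(R_{n,i-1})$ is smooth, and Lemma \ref{Wni1} had to combine the H\"older continuity of $u_f$ off $E$ (Lemma \ref{lem:Holder_conti_qpotential}), the Lojasiewicz-type inequality of Lemma \ref{lem:EV_dist}, the multiplicity control of \textbf{Condition (M)}, the cut-off construction of Lemma \ref{lem:cut-off}, and the semi-regular transform estimates Lemma \ref{lem:estimate_off_E-1}, Lemma \ref{lem:u-u_t}, and Lemma \ref{lem:near_E_semiregular}. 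The present lemma itself is then essentially a bookkeeping consolidation of these three regional bounds.
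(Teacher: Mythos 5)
Your proof is correct and is essentially the paper's own: the paper deduces this lemma immediately by combining Lemma \ref{Wni1}, Corollary \ref{Cor:Wni2}, and Corollary \ref{Cor:Wni3} via the decomposition through the partition of unity $\chi_{n,i,1}+\chi_{n,i,2}+\chi_{n,i,3}\equiv 1$, with $d^{-i}\le 1$ harmless. Your additional remark that the $S$-dependent factor $\|S\|_{\infty,E_{s_0}}+\|\alpha_S\|_\infty$ appearing in Corollary \ref{Cor:Wni2} is a fixed quantity that can be absorbed into the implicit constant is a careful and correct reading of what the paper's (slightly loosely stated) ``independent of $S$'' actually means in context.
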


\section{Estimates of $d^{-n}\langle U_{S-\alpha_S},R_{n,n}\rangle$}\label{sec:(2)}
For this estimate, we use a version of exponential estimate \cite[Theorem 3.2.6]{DS10-1}.

\begin{theorem}
	[Theorem 3.2.6 in \cite{DS10-1}]\label{thm:exponential_estimate} Let $S\in\Dc_p$ and $\Uc_S$ be the $\beta$ normalized superpotential of $S$. Then we have for $R$ smooth in $\Dc_{k-p+1}^0$ with $\|R\|_*\le 1$,
	\begin{align*}
		|\Uc_S(R)|\le c\|S\|_*(1+\log^+\|R\|_{C^1}),
	\end{align*}
	where $\log^+:=\max\{\log, 0\}$ and $c>0$ is a constant independent of $S$ and $R$.
\end{theorem}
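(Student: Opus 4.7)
The plan is to reduce $\Uc_S(R)$ to an integral of the Green potential $U_S:=\Lc^K(S)$ against $R$, and then control this integral by combining a Skoda-type exponential integrability with a level-set decomposition tuned to $\|R\|_{C^1}$.

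First I would use the symmetry of the Green kernel $K=u\eta-\delta_\Xfh$ (which is invariant under the involution on $\Xfh$ exchanging the two factors of $\Xf$) to rewrite the pairing. By Proposition~\ref{prop:Green_kernel} applied on the $R$-side, $\Lc^K(R)$ is a potential of $R$ when $R$ is smooth. After adjusting by a linear combination of the $\beta_j^*$'s to enforce $\beta$-normalization, one gets $U_R=\Lc^K(R)+h_R$ where $h_R$ is smooth and contributes at most $c\|S\|_*$ to $|\langle S,h_R\rangle|$; this adjustment is harmless since $R\in\Dc_{k-p+1}^0$. A direct Fubini argument on $\Xfh$ using the symmetry of $K$ then gives $\langle S,\Lc^K(R)\rangle=\langle \Lc^K(S),R\rangle=\langle U_S,R\rangle$. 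Thus the estimate reduces to showing $|\langle U_S,R\rangle|\le c\|S\|_*(1+\log^+\|R\|_{C^1})$ for smooth $R$ with $\|R\|_*\le 1$.

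Next, I would decompose $S=S^+-S^-$ with $\|S^\pm\|\le \|S\|_*+\varepsilon$ and $K=K_+-K_-$ with the strictly negative kernels from Subsection~\ref{sec:superpotentials}, so that it suffices to bound $|\langle W,R\rangle|$ for each negative current $W:=\Lc^{K_\pm}(S^\pm)$ of bidegree $(p-1,p-1)$. The crux is a uniform Skoda-type integrability: there exist $\alpha,C>0$, independent of $S^\pm$ with $\|S^\pm\|\le 1$, such that
\[
\int_X \exp(\alpha |W|)\,\omega^k\le C,
\]
where $|W|$ is the pointwise norm of the coefficients of $W$ with respect to the fixed atlas. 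To establish this, I plan to express $W(y)$, up to a uniformly bounded smooth remainder, as the pairing of $S^\pm$ with the negative section $-u(\cdot,y)$ of $K_\pm$, apply Jensen's inequality to the probability measure $dS^\pm/\|S^\pm\|$ to push the exponential inside the integral in $x$, and then invoke the classical Skoda integrability of the quasi-psh function $u$ on $\Xfh$ (whose singularities along $\widehat{\Delta}$ are analytic), yielding $\int_X\exp(-\alpha'u(x,y))\,\omega(y)^k\le C'$ uniformly in $x$; Fubini then closes the argument.

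Finally, for $\lambda\ge 0$, split
\[
|\langle W,R\rangle|\le \int_{\{|W|<\lambda\}}|W||R|\,\omega^k+\int_{\{|W|\ge\lambda\}}|W||R|\,\omega^k\lesssim \lambda\|R\|_{L^1}+\|R\|_\infty \int_{\{|W|\ge\lambda\}}|W|\,\omega^k,
\]
and the Skoda bound yields the tail estimate $\int_{\{|W|\ge\lambda\}}|W|\,\omega^k\lesssim\|S^\pm\|e^{-\alpha\lambda/2}$. Using $\|R\|_{L^1}\lesssim\|R\|_*\le 1$ together with $\|R\|_\infty\lesssim\|R\|_{C^1}$ and optimizing by $\lambda=(2/\alpha)\log^+\|R\|_{C^1}$ yields $|\langle W,R\rangle|\lesssim \|S\|_*(1+\log^+\|R\|_{C^1})$; summing the four pieces concludes. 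The main obstacle I expect is the uniform Skoda estimate: the Jensen--Fubini reduction is clean in principle, but one must track the semi-regular nature of $K_\pm$ near $\widehat{\Delta}$ carefully so that the reduction to a scalar estimate against $|u|$ is legitimate on every coefficient of $W$ in the fixed atlas.
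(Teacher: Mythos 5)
The paper only \emph{cites} this result (DS10-1, Theorem~3.2.6) and gives no proof, so I am comparing against what one would expect Dinh--Sibony's argument to be; the standard route in their framework is a kernel-regularization/optimization argument, not the Skoda-plus-level-set argument you propose.

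Your strategy (symmetrize, reduce to $U_S=\Lc^K(S)$, prove uniform exponential integrability of $W=\Lc^{K_\pm}(S^\pm)$, then optimize a level-set split) is a genuinely different route. The symmetrization and $\beta$-normalization steps are fine, and the final level-set computation with $\lambda=(2/\alpha)\log^+\|R\|_{C^1}$ would indeed yield the statement \emph{if} you had the Skoda bound $\int_X e^{\alpha|W|}\omega^k\le C$. That bound is the entire content of the theorem, and your sketch of it has a real gap. When $1<p\le k$, the positive closed current $S^\pm$ is \emph{not} a measure; the phrase ``the probability measure $dS^\pm/\|S^\pm\|$'' has no meaning, and you cannot directly write a coefficient of $W(y)$ as $\int(-u(x,y))\,d\mu(x)$ for a $y$-independent probability measure. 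What you can legitimately do is dominate $|W(y)|$ by the density of the trace measure $(\Pi_2)_*\bigl((-u)\,\Pi_1^*(S^\pm)\wedge\Theta\wedge\Pi_2^*\omega^{k-p+1}\bigr)$ with respect to $\omega^k$, disintegrate the positive measure $\Pi_1^*(S^\pm)\wedge\Theta\wedge\Pi_2^*\omega^{k-p+1}$ along $\Pi_2$, apply Jensen fiberwise, and via Fubini reduce to showing $\int_{\Xfh}e^{-\alpha'u}\,d\bigl(\Pi_1^*(S^\pm)\wedge\Theta\wedge\Pi_2^*\omega^{k-p+1}\bigr)\le C$ uniformly in $S^\pm$. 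This last integral is against a measure depending on $S^\pm$, \emph{not} against $\omega^k$, so ``the classical Skoda integrability of $u$'' does not apply directly: you need a quantitative bound on the mass of this measure in small tubes around $\widehat\Delta$, uniformly in $S^\pm$ with $\|S^\pm\|\le1$. This is plausible (the decay in a tube of radius $r$ should be $\lesssim r^2$ by codimension, which beats the $|w_1|^{-\alpha}$ singularity for $\alpha<2$), but it is precisely the hard, nonobvious part, and it is not ``classical.'' You flag this as ``the main obstacle,'' which is the right instinct, but the sketch does not close it.

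A more direct proof, entirely within the machinery already in this paper, avoids the Skoda estimate: write $\Uc_S(R)=\langle S,\Lc^K(R)\rangle$ up to an $O(\|S\|_*)$ normalization term, split the kernel as $K=\bigl(u_\theta\eta-\delta_\Xfh\bigr)+\bigl((u-u_\theta)\eta\bigr)$ using the truncation $u_\theta$ of Section~\ref{sec:regularization}, estimate $\|\Lc^{u_\theta\eta}(R)-\Lc^{\delta_\Xfh}(R)\|_\infty\lesssim|\log|\theta||\,\|R\|$ from the uniform bound on the regularized kernel, estimate the remaining term using that $u-u_\theta$ is supported in a $c|\theta|$-tube around $\widehat\Delta$ and $|u-u_\theta|\lesssim-\log\dist(\cdot,\widehat\Delta)$, which gives $\|\Lc^{(u-u_\theta)\eta}(R)\|_\infty\lesssim\|R\|_{C^1}\,|\theta|^{2k}|\log|\theta||$, pair each piece with $S$ using only the mass of $S$, and optimize $|\theta|\asymp\|R\|_{C^1}^{-1/(2k)}$ to produce the factor $1+\log^+\|R\|_{C^1}$. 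This yields the statement with elementary estimates and no exponential integrability, and it is almost certainly what DS10-1 does. If you want to salvage your route, the minimal fix is to state and prove the uniform bound on the mass of $\Pi_1^*(S^\pm)\wedge\Theta\wedge\Pi_2^*\omega^{k-p+1}$ in $\theta$-tubes around $\widehat\Delta$ as a separate lemma before invoking Jensen/Fubini.
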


\begin{corollary} For $0<\varepsilon\ll 1$, we have
	\begin{align*}
		\left|d^{-n}\langle U_{S-\alpha_S},R_{n,n}\rangle\right|\lesssim \|S\|_*(1+\log^+\|R_{n,n}\|_{C^1})\lesssim nd^{n/4}(-\log\varepsilon)\|S\|_*.
	\end{align*}
	Here, the inequality $\lesssim$ means $\le$ up to a multiplicative constant independent of $n$ and $S$.
\end{corollary}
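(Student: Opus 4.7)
The plan is to invoke Theorem 7.1 (the exponential superpotential estimate \cite[Theorem 3.2.6]{DS10-1}) on the pairing $\langle U_{S-\alpha_S},R_{n,n}\rangle$, and then to combine it with the mass and $C^{1}$-estimates of $R_{n,n}$ from Lemma 6.9.

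First I would verify the hypotheses of Theorem 7.1. The current $S-\alpha_{S}$ lies in $\Dc^0_p$ because $\alpha_{S}$ is a smooth representative of the de Rham class $\{S\}$. To see that $R_{n,n}\in\Dc^0_{k-p+1}$: the input $R=R_{+}-R_{-}$ is in $\Dc^0_{k-p+1}$ by hypothesis, the operator $\Lambda=d_{p-1}^{-1}f_{*}$ preserves cohomology classes (so maps $\Dc^0_{k-p+1}$ into itself), and the regularization $(\cdot)_{\varepsilon_{n,i}}$ from Section~\ref{sec:regularization} also preserves cohomology classes; inductively $R_{n,n}$ is smooth and has trivial class. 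Normalizing by $\|R_{n,n}\|_{*}$ and applying Theorem 7.1 to the normalized current then yields, after scaling back,
$$|\langle U_{S-\alpha_{S}},R_{n,n}\rangle|\le c\,\|S-\alpha_{S}\|_{*}\,\|R_{n,n}\|_{*}\,\bigl(1+\log^{+}\|R_{n,n}\|_{C^{1}}\bigr).$$
Since $\alpha_{S}$ is a fixed smooth form in $\{S\}$, $\|S-\alpha_{S}\|_{*}\lesssim\|S\|$. I would then feed in the mass bound $\|R_{n,n}\|_{*}\lesssim\|R\|_{C^{1}}\,C_{m}^{n}\bigl(\prod_{j=1}^{n-1}\varepsilon_{n,j}\bigr)^{-6k^{2}}$ from Lemma 6.9 and use the calibration $(40k^{2}\delta)^{6k}<d^{3/4}$ of \eqref{cond:delta_condition} to show that the composite prefactor $d^{-n}\|R_{n,n}\|_{*}$ is absorbed into the implicit constant, which gives the first $\lesssim$.

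For the second inequality I would apply the $C^{1}$-bound
$$\|R_{n,n}\|_{C^{1}}\le\|R\|_{C^{1}}\,C_{m}^{n}\Bigl(\prod_{j=1}^{n}\varepsilon_{n,j}\Bigr)^{-6k^{2}}$$
of Lemma 6.9 and take logarithms. The key observation is that the exponents $nC(4k+N_{E})(40k^{2}\delta)^{6kj}$ defining $\varepsilon_{n,j}$ form a geometric sequence in $j$ with ratio $(40k^{2}\delta)^{6k}>1$, so the partial sum $\sum_{j=1}^{n}\log\varepsilon_{n,j}^{-1}$ is comparable, up to a constant depending only on $k$ and $\delta$, to its final term $\log\varepsilon_{n,n}^{-1}$. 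Consequently $\log^{+}\|R_{n,n}\|_{C^{1}}\lesssim\log\varepsilon_{n,n}^{-6k^{2}}$, which yields the second $\lesssim$.

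The main obstacle is the control of the prefactor $d^{-n}\|R_{n,n}\|_{*}$: the pointwise iteration of push-forward and regularization blows up the naive mass bound in Lemma 6.9 through the factors $\varepsilon_{n,j}^{-6k^{2}}$, and it is precisely the sharp calibration \eqref{cond:delta_condition} between $d$ and the multiplicity parameter $\delta$ that keeps this factor bounded independently of $n$ and thus compatible with the $d^{-n}$ gained from the inductive expansion of Section~\ref{sec:(1)}. Everything else is a direct combination of Theorem 7.1 with the explicit norm bookkeeping of Lemma 6.9.
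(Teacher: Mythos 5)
The crucial flaw is in the way you apply Theorem~\ref{thm:exponential_estimate}. After normalizing $R_{n,n}$ by its $*$-norm and scaling back, you obtain a \emph{multiplicative} prefactor $\|R_{n,n}\|_{*}$ in front of the logarithmic term, and you then assert that $d^{-n}\|R_{n,n}\|_{*}$ is bounded by a constant thanks to the calibration $(40k^{2}\delta)^{6k}<d^{3/4}$. This cannot work: Lemma~\ref{lem:mass_estimates} gives $\|R_{n,n}\|_{*}\lesssim C_{m}^{n}\bigl(\prod_{j=1}^{n-1}\varepsilon_{n,j}\bigr)^{-6k^{2}}$, and since $\varepsilon_{n,j}=\varepsilon^{nC(4k+N_{E})(40k^{2}\delta)^{6kj}}$ the exponent of $\varepsilon^{-1}$ is a geometric sum $\sim(40k^{2}\delta)^{6kn}$, so the mass grows like $\varepsilon^{-c'(40k^{2}\delta)^{6kn}}$. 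With the eventual choice $\varepsilon=d^{-n}$ this is $d^{c'n(40k^{2}\delta)^{6kn}}$, which is \emph{doubly} exponential in $n$; the factor $d^{-n}$ is nowhere near enough to absorb it. The calibration \eqref{cond:delta_condition} is only strong enough to dominate a single geometric factor $(40k^{2}\delta)^{6kn}$, i.e.\ precisely the quantity that appears after you take a logarithm of the $C^{1}$-norm — not the quantity itself.

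The paper's proof sidesteps this precisely by keeping all of the degeneracy of $R_{n,n}$ \emph{inside} the logarithm: it bounds $|d^{-n}\langle U_{S-\alpha_S},R_{n,n}\rangle|\lesssim d^{-n}\|S\|(1+\log^{+}\|R_{n,n}\|_{C^{1}})$ with no $\|R_{n,n}\|_{*}$ prefactor, then uses $\|R_{n,n}\|_{C^{1}}\lesssim C_{m}^{n}\varepsilon_{n,n}^{-12k^{2}}$ from Lemma~\ref{lem:mass_estimates} (your observation that $\sum_{j\le n}\log\varepsilon_{n,j}^{-1}$ is comparable to its last term is correct and matches the paper's $\bigl(\prod_{j}\varepsilon_{n,j}\bigr)^{-6k^{2}}\le\varepsilon_{n,n}^{-12k^{2}}$), so that $\log^{+}\|R_{n,n}\|_{C^{1}}\lesssim n(40k^{2}\delta)^{6kn}(-\log\varepsilon)$. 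Then $d^{-n}(40k^{2}\delta)^{6kn}\lesssim d^{-n/4}$ is exactly what \eqref{cond:delta_condition} controls. The logarithmic (sub-linear) dependence on $R$ in the superpotential exponential estimate is the whole point of invoking it here; once you convert the theorem into a statement that is linear in $\|R\|_{*}$, the estimate becomes useless for the iterated regularizations $R_{n,n}$, whose mass blows up too fast. You should instead follow the paper and avoid pulling $\|R_{n,n}\|_{*}$ out as a multiplicative factor.
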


\begin{proof}
	Let $R_\pm$ be smooth positive closed currents such that $R\in\Dc_{k-p+1}^0$, $R=R_+-R_-$ and $\|R_\pm\|_{C^1}\le 1$. Then, from Proposition \ref{prop:C1_estimate_regularization} and Lemma \ref{lem:mass_estimates}, we get 
	\begin{align*}
		&\left|d^{-n}\langle U_{S-\alpha_S},R_{n,n}\rangle\right|\lesssim d^{-n}\|S\|_*\left(1+\log^+\|R_{n,n}\|_{C^1}\right)\le d^{-n}\|S\|_*\left(1+\log^+(\|(R_+)_{n,n}\|_{C^1} + \|(R_-)_{n,n}\|_{C^1})\right)\\
		&\lesssim d^{-n}\left|1+2\log\left( 2\|R\|_{C^1} C_m^n\left(\prod_{j=1}^n\varepsilon_{n,j}\right)^{-6k^2} \right)\right|\|S\|_*\le d^{-n}\left|1+2\log\left( 2\|R\|_{C^1} C_m^n\varepsilon_{n,n}^{-12k^2} \right)\right|\|S\|_*\\
		&\lesssim \frac{12k^2nC(4k+N_E)(40k^2\delta)^{6kn}}{d^n}(-\log \varepsilon)\|S\|_*\lesssim n\left(\frac{(40k^2\delta)^{6k}}{d}\right)^n(-\log \varepsilon)\|S\|_*\lesssim nd^{n/4}(-\log\varepsilon)\|S\|_*
	\end{align*}
	for $0<\varepsilon\ll 1$. The last inequality is from \eqref{cond:delta_condition}.
\end{proof}

\section{Green currents and (almost) simple action}\label{sec:Green}
On a general compact K\"ahler manifold, the convergence of the sequence $d_p^{-n}(f^n)^*\omega^p$ is not clear. For the existence and construction of the Green current, see \cite{DS05}, \cite{DS10-1}, \cite{dTD}, \cite{DNV}. In this section, we summarize some related results 
%
from \cite{DNV}. Let $X$ and $f$ be as in Theorem \ref{thm:main}. We say that the action of $f$ on cohomology is simple if there is an integer $0\le \pmain\le k$ such that
\begin{enumerate}
	\item the dynamical degree $d_{\pmain}$ of order $\pmain$ of $f$ is strictly larger than the other dynamical degrees,
	\item $d_{\pmain}$ is a simple eigenvalue of $f^*$ on $H^{\pmain, \pmain}(X, \R)$, and
	\item the other (real or complex) eigenvalues of this operator have modulus strictly smaller than $d_{\pmain}$.
\end{enumerate}

Under these conditions, we have the existence of the Green current $T^+$.
\begin{proposition}[Proposition 5.5 in {\cite{DNV}}]
	\label{prop:construction_Green}Let $p=\pmain$. The sequence $d_p^{-n}(f^n)^*(\omega^p)$ converges weakly to a positive closed $(p, p)$-current $T^+$ as $n$ tends to infinity. The sequence $d_p^{-n}(f^n)_*(\omega^{k-p})$ converges weakly to a positive closed $(k-p, k-p)$-current $T^-$ as $n$ tends to $\infty$. Moreover, $T^+$ and $T^-$ have continuous superpotentials.
\end{proposition}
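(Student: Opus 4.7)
\medskip
\noindent
\textbf{Proof proposal.} The plan is to combine the spectral gap on cohomology given by the simple-action hypothesis with the superpotential Cauchy technique used throughout the paper. I would work with the operator $L = d_\pf^{-1} f^*$ on currents in $\Dc_\pf$ and its dual $\Lambda = d_\pf^{-1} f_*$ on $\Dc_{k-\pf+1}$.

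First, at the cohomological level, simplicity of the eigenvalue $d_\pf$ together with the gap condition on the remaining spectrum of $f^*$ on $H^{\pf,\pf}(X,\C)$ gives a Jordan decomposition $f^* = d_\pf \Pi + N$ with $\Pi$ the rank-one projector onto the eigenline and $\|N^n\| \lesssim \rho^n$ for some $\rho < d_\pf$. Applied to $\{\omega^\pf\}$, this produces a limit class $\{T^+\} := \lim d_\pf^{-n} (f^*)^n \{\omega^\pf\}$ with exponentially fast convergence in $H^{\pf,\pf}(X,\R)$. Choosing a family of smooth closed representatives $\alpha_n$ of $\{d_\pf^{-n}(f^n)^*\omega^\pf\}$ that converge in the $C^2$-norm to a smooth representative $\alpha_\infty$ of $\{T^+\}$ exponentially fast, I would then write $S_n := d_\pf^{-n}(f^n)^*\omega^\pf - \alpha_n \in \Dc_\pf^0$ and aim to prove that the sequence $(S_n)$ is Cauchy in the $*$-topology.

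The Cauchy property is obtained by testing on smooth $R\in\widetilde{\Dc_{k-\pf+1}^0}$ via the normalized superpotentials $\Uc_{S_n}$ and telescoping $\Uc_{S_{n+1}}(R) - \Uc_{S_n}(R)$. Using $\langle f^*U, R\rangle = \langle U, f_*R\rangle$, each increment is controlled by a pairing involving $\Lambda^n R$, and the dual spectral gap (for $\Lambda$ on $\Dc_{k-\pf+1}^0$, obtained by transposing the Jordan decomposition) contracts these pairings exponentially. Combined with the H\"older continuity of the superpotentials of $f_*(\omega^l)$ established in Lemma \ref{lem:Holderconti_f_*}, this yields convergence of $\Uc_{S_n}$ uniformly on $*$-bounded subsets of $\widetilde{\Dc_{k-\pf+1}^0}$, hence weak convergence $S_n \to T^+ - \alpha_\infty$ in $\Dc_\pf^0$, and therefore $d_\pf^{-n}(f^n)^*\omega^\pf \to T^+$ as currents. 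The same H\"older-controlled estimates allow the limiting superpotential $\Uc_{T^+}$ to be extended continuously to $\Dc_{k-\pf+1}^0$ equipped with a $\|\cdot\|_{C^{-\alpha}}$ norm, which is exactly the continuity claim. The current $T^-$ is produced by an entirely symmetric argument: transpose everything and apply the same reasoning to the action of $f_*$ on $H^{k-\pf, k-\pf}(X,\C)$, whose spectral structure mirrors that of $f^*$ on $H^{\pf,\pf}(X,\C)$ by duality together with the identity $d_{k-\pf} = d_\pf$ (which here follows from the simplicity hypothesis applied to both operators).

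The main obstacle I anticipate is ensuring that the continuity of the superpotential passes to the limit. Convergence of the currents and convergence of the superpotentials at smooth test currents follow from the spectral gap relatively cleanly, but equicontinuity of $(\Uc_{S_n})$ on $*$-bounded sets is what makes the limit superpotential continuous rather than merely bounded. This is where the full quantitative H\"older input from Lemma \ref{lem:Holderconti_f_*}, together with the interpolation between $\dist_\alpha$-distances, is essential: it produces a uniform H\"older modulus for each $\Uc_{S_n}$ that survives the limit and gives the continuity of $\Uc_{T^+}$ (and analogously $\Uc_{T^-}$).
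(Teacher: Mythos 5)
The paper does not prove this proposition; it is quoted directly from \cite[Proposition 5.5]{DNV}, so there is no internal proof to compare against. Your proposal reconstructs the natural DNV/DS-style argument (spectral gap on cohomology driving a Cauchy estimate on superpotentials), and the overall shape is right, but there are two genuine problems.

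First, the claim that $d_{k-\pf}=d_\pf$ ``follows from the simplicity hypothesis'' is false. Simplicity makes $d_\pf$ the \emph{unique} maximal dynamical degree, so $d_{k-\pf}<d_\pf$ whenever $2\pf\neq k$. What you actually need for the symmetric construction of $T^-$ is that $f_*$ on $H^{k-\pf,k-\pf}(X,\C)$ is the Poincar\'e adjoint of $f^*$ on $H^{\pf,\pf}(X,\C)$ via the projection formula $\langle f^*a,b\rangle=\langle a,f_*b\rangle$, and hence shares its characteristic polynomial; thus $d_\pf$ is a simple dominant eigenvalue of $f_*$ on $H^{k-\pf,k-\pf}$ with the same gap. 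It is this adjoint relation, not any equality of dynamical degrees, that lets you transpose the spectral-gap argument.

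Second, the claim that a uniform H\"older modulus for the $\Uc_{S_n}$ ``survives the limit'' overreaches, and it is telling that the proposition only asserts \emph{continuity} (not H\"older continuity) of the superpotentials of $T^\pm$. Each application of the H\"older continuity of $u_f$ in the manner of Lemma \ref{lem:Holder_conti_qpotential} degrades the H\"older exponent by a fixed factor $\delta^{-1}$, so after $n$ telescoping steps the exponent has decayed geometrically in $n$ and there is no single surviving modulus. What the gap $d_{\pf-1}<d_\pf$ actually gives is that the $n$-th increment $\Uc_{S_{n+1}}-\Uc_{S_n}$ has sup-norm controlled by $(d_{\pf-1}/d_\pf)^n$ up to the logarithmic correction from Theorem \ref{thm:exponential_estimate}, so the telescoping series converges uniformly on $*$-bounded sets and the limit is continuous. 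You should frame the final step as uniform convergence of a series of continuous functions rather than as preservation of a H\"older exponent; as written, that paragraph proves a stronger statement than is true in general.
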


\cite[Lemma 4.1]{AV} implies the following convergence.
\begin{corollary}
	Let $p=\pmain$. We have $d_p^{-n}(f^n)^*(\alpha_S)\to c_{\alpha_S} T^+$ exponentially fast where $c_{\alpha_S}=\frac{\langle \alpha_S, T^-\rangle}{\langle \omega^p, T^-\rangle}$.
\end{corollary}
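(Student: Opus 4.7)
The plan is to combine the spectral decomposition of $f^*$ on $H^{\pf,\pf}(X,\R)$ granted by the simple-action hypothesis with the weak convergence already produced by Proposition \ref{prop:construction_Green}. Under that hypothesis we have a direct sum $H^{\pf,\pf}(X,\R) = \R\{T^+\}\oplus V$, with $V$ an $f^*$-invariant subspace of spectral radius some $\lambda<d_\pf$. Decompose $\{\alpha_S\} = c\{T^+\}+v$ with $v\in V$. The first step is to identify $c$ as $c_{\alpha_S}$: the relation $f_*T^-=d_\pf T^-$ together with the cohomological duality $\langle(f^*)^nv,T^-\rangle=\langle v,(f_*)^nT^-\rangle$ gives
\[
\langle v, T^-\rangle = d_\pf^{-n}\langle (f^*)^n v, T^-\rangle
\]
for every $n$, and since $(f^*)^nv$ has cohomology norm $O(n^k\lambda^n)$, letting $n\to\infty$ forces $\langle v,T^-\rangle=0$. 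Pairing $\{\alpha_S\} = c\{T^+\}+v$ with $\{T^-\}$ therefore yields $c = \langle \alpha_S,T^-\rangle/\langle T^+,T^-\rangle$, and applying the same identity to the decomposition $\{\omega^\pf\} = \{T^+\}+v_0$ (with $v_0\in V$, forced by the limit defining $T^+$) gives $\langle \omega^\pf,T^-\rangle=\langle T^+,T^-\rangle$, so $c$ matches the stated formula.

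Next I would split
\[
d_\pf^{-n}(f^n)^*\alpha_S - c_{\alpha_S}T^+ = c_{\alpha_S}\bigl(d_\pf^{-n}(f^n)^*\omega^\pf - T^+\bigr) + d_\pf^{-n}(f^n)^*\eta,
\]
where $\eta := \alpha_S - c_{\alpha_S}\omega^\pf$ is smooth with $\{\eta\}\in V$. Proposition \ref{prop:construction_Green} gives weak convergence of the first bracket to $0$; tracking constants in its proof, or equivalently reading the superpotential bounds of Section \ref{sec:superpotentials} for the specific class $\{\omega^\pf\}$, upgrades that to exponential rate governed by any fixed $\rho\in(\lambda,d_\pf)$. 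For the second piece I would invoke Lemma 4.1 of \cite{AV}: smooth representatives $\tau_n$ of $(f^*)^n\{\eta\}\in V$ may be chosen with $\|\tau_n\|_{C^2}=O(\lambda^n)$; the difference $(f^n)^*\eta-\tau_n$ is then written as $dd^cU_n$ via the Green potential kernel of Proposition \ref{prop:Green_kernel}, and testing against a smooth $(k-\pf,k-\pf)$-form $\varphi$, the $\tau_n$-contribution is manifestly $O((\lambda/d_\pf)^n)$ while $\langle dd^cU_n,\varphi\rangle = \langle U_n,dd^c\varphi\rangle$ is controlled by the exponential superpotential estimate Theorem \ref{thm:exponential_estimate} up to a logarithmic factor.

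The main conceptual obstacle is this last step: the smooth form $(f^n)^*\eta$ may blow up pointwise at rate $\|f\|_{C^1}^{kn}$, vastly faster than the cohomological decay $\lambda^n$ of its class. The DSH/superpotential framework developed in Sections \ref{sec:currents}--\ref{sec:superpotentials}, together with the fact that $T^+$ and $T^-$ admit continuous superpotentials, is designed precisely to bridge this discrepancy, and Lemma 4.1 of \cite{AV} packages exactly that bridge into an off-the-shelf exponential estimate.
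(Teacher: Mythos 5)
The central gap is in the first piece of your split. You claim that $c_{\alpha_S}\bigl(d_\pf^{-n}(f^n)^*\omega^\pf - T^+\bigr) \to 0$ exponentially fast because ``tracking constants in the proof of Proposition \ref{prop:construction_Green}'' or ``reading the superpotential bounds of Section \ref{sec:superpotentials}'' upgrades the weak convergence to an exponential rate. But Proposition \ref{prop:construction_Green} is quoted from \cite{DNV} and asserts only weak convergence with no rate, and Section \ref{sec:superpotentials} contains no exponential estimate applicable here. Moreover, this first piece is \emph{exactly} the corollary you are proving, applied to $\alpha_S=\omega^\pf$ (for which $c_{\alpha_S}=1$ by your own computation), so asserting it without proof is effectively circular. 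The paper's own proof is a single citation of Lemma 4.1 of \cite{AV}, which supplies the exponential convergence directly for an arbitrary smooth closed form; the intended argument is to apply that lemma to $\alpha_S$ itself, which makes your split unnecessary. Your spectral and duality computations (in particular $\langle v, T^-\rangle = 0$ for $v$ in the subdominant subspace $V$, deduced from $f_*T^-=d_\pf T^-$ in cohomology) are correct and are presumably part of what sits inside the cited lemma, but they do not close the gap on the $\omega^\pf$-piece.

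A secondary issue is your reconstruction of how Lemma 4.1 of \cite{AV} might work. Writing $(f^n)^*\eta-\tau_n=dd^cU_n$ and invoking Theorem \ref{thm:exponential_estimate} gives $|\Uc_S(R)|\le c\|S\|_*(1+\log^+\|R\|_{C^1})$ with $S=d_\pf^{-n}\bigl((f^n)^*\eta-\tau_n\bigr)$. To obtain decay you would need $\|S\|_*$ to be small, but the $*$-norm is governed by positive decompositions $S=S^+-S^-$, and if $\eta=\eta^+-\eta^-$ with $\eta^\pm$ positive closed, the classes $\{\eta^\pm\}$ generally have nonzero components along $\{T^+\}$; hence $d_\pf^{-n}\|(f^n)^*\eta^\pm\|$ stays bounded away from zero even though the class of $\eta$ lies in $V$. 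Thus $\|S\|_*$ does not decay at rate $(\lambda/d_\pf)^n$, and Theorem \ref{thm:exponential_estimate} alone produces a uniform bound, not an exponential rate. The actual content of Lemma 4.1 of \cite{AV} must be finer than this direct appeal; you should either cite it as a black box, as the paper does, or reproduce its genuine argument.
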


Together with Theorem \ref{thm:main}, we get
\begin{theorem} Let $X$ and $f$ be as in Theorem \ref{thm:main}. Assume further that the action of $f$ is simple and $p=\pmain$. Then, there exists a proper analytic subset $E$ invariant under $f$ such that if $S\in\Cc_p$ has a smooth representation in a neighborhood of $E$, then
	\begin{align*}
		d_p^{-n}(f^n)^*S \to c_ST^+
	\end{align*}
	exponentially fast where $c_S=\frac{\langle S, T^-\rangle}{\langle \omega^p, T^-\rangle}$. In particular, if $H$ is an analytic subset of pure dimension $k-p$ such that $H\cap E=\emptyset$, then the sequence $d_p^{-n}(f^n)^*[H]$ converges to $c_HT^+$ where $c_H=\frac{\langle [H], T^-\rangle}{\langle \omega^p, T^-\rangle}$.
\end{theorem}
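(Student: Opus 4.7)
The plan is essentially a two-line reduction to results already established. I would decompose
\[
d_\pf^{-n}(f^n)^*S \;=\; d_\pf^{-n}(f^n)^*(S-\alpha_S) \;+\; d_\pf^{-n}(f^n)^*\alpha_S,
\]
where $\alpha_S$ is any smooth closed $(\pf,\pf)$-form in the de Rham class $\{S\}$. The simple-action hypothesis forces $d_{\pf-1}<d_\pf$, so Theorem \ref{thm:main} applies with $p=\pf$ and produces the invariant analytic set $E$; whenever $S$ is smooth in a neighborhood of $E$, the first summand tends to $0$ exponentially fast in the sense of currents. For the second summand, which involves only a smooth form, the corollary stated immediately before the theorem under consideration (a consequence of Lemma 4.1 in \cite{AV}) gives exponentially fast convergence $d_\pf^{-n}(f^n)^*\alpha_S\to c_{\alpha_S}T^+$ with $c_{\alpha_S}=\langle\alpha_S,T^-\rangle/\langle\omega^\pf,T^-\rangle$. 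Adding the two estimates is the whole analytic content.

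What remains is the cohomological bookkeeping needed to identify $c_{\alpha_S}$ with $c_S$. Since $T^-$ is a positive closed current and $\alpha_S$ is smooth and closed, Stokes' theorem shows that $\int_X \alpha_S\wedge T^-$ depends on $\alpha_S$ only through its de Rham class; that is, the pairing equals $\{S\}\smile\{T^-\}$, which is precisely how $\langle S,T^-\rangle$ is to be read in the statement. (Alternatively, one invokes the continuous superpotential of $T^-$ provided by Proposition \ref{prop:construction_Green} to define the pairing directly on $\Cc_\pf$.) Either reading yields $c_{\alpha_S}=c_S$, which proves the first assertion.

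The ``in particular'' part is a direct specialization: if $H$ is a pure $(k-\pf)$-dimensional analytic subset with $H\cap E=\emptyset$, then the current of integration $[H]$ is a positive closed $(\pf,\pf)$-current that vanishes (and is therefore trivially smooth) in a neighborhood of $E$, so applying the main assertion to $S=[H]$ produces $d_\pf^{-n}(f^n)^*[H]\to c_HT^+$ exponentially fast with the advertised constant.

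The only genuine obstacle is a definitional one, namely assigning a meaning to $\langle S,T^-\rangle$ when both currents are positive closed and their wedge product is not a priori defined; this is handled either by the cohomological interpretation above or by the superpotential pairing, and no new analytic estimates beyond those of Sections \ref{sec:(1)}--\ref{sec:(2)} and \cite{DNV} are required.
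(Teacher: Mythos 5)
Your proposal is correct and follows essentially the same route as the paper: decompose $d_\pf^{-n}(f^n)^*S$ as $d_\pf^{-n}(f^n)^*(S-\alpha_S) + d_\pf^{-n}(f^n)^*\alpha_S$, apply Theorem \ref{thm:main} to the first summand and the corollary derived from Lemma 4.1 of \cite{AV} to the second, then read $\langle S,T^-\rangle$ cohomologically (or through the continuous superpotential of $T^-$) to identify $c_{\alpha_S}$ with $c_S$. The paper compresses all of this into "Together with Theorem \ref{thm:main}, we get," so your write-up is simply a more explicit version of the same argument, including the correct observation that $[H]$ vanishes near $E$ when $H\cap E=\emptyset$.
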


\begin{remark}
	If we do not require the existence of the limit $T^-:=d_p^{-n}(f^n)_*(\omega^{k-p})$, the convergence of $d_p^{-n}(f^n)^*(\omega^p)$ is true under slightly more relaxed conditions (1) through (3) as follows. Notice that $p$ is not necessarily equal to $\pmain$ in the following.
	\begin{enumerate}
		\item $d_{p-1}<d_p$,
		\item $d_p$ is a simple eigenvalue of $f^*$ on $H^{p, p}(X, \R)$, and
		\item the other (real or complex) eigenvalues of this operator on $H^{p, p}(X, \R)$ have modulus strictly smaller than $d_p$.
	\end{enumerate}
	The proofs of lemmas and propositions in \cite{DNV} work in the same way. So, we have Theorem \ref{thm:almost_simple}.
\end{remark}

\bibliographystyle{plain}
\bibliography{sn-bibliography_1}

\end{document}